\documentclass[1 [leqno,11pt]{amsart}
\usepackage{amssymb, amsmath}
\allowdisplaybreaks[4]
\usepackage{color}
\usepackage{hyperref}
\usepackage{mathrsfs}
\usepackage{tikz}

\let\pa=\partial
\let\f=\frac

\let\p=\partial
\let\om=\omega
\def\lam{\lambda}

\def\s{\sigma}

\def\oo{\infty}

\def\cF{{\mathcal F}}

\def\cL{{\mathcal L}}
\def\cM{{\mathcal M}}
\def\cN{{\mathcal N}}

\def\eqdef{\buildrel\hbox{\footnotesize def}\over =}
\def\Z{\mathop{\mathbb Z\kern 0pt}\nolimits}
\def\N{\mathop{\mathbb N\kern 0pt}\nolimits}
\def\Q{\mathop{\mathbb Q\kern 0pt}\nolimits}
\def\R{{\mathop{\mathbb R\kern 0pt}\nolimits}}
\def\C{{\mathop{\mathbb C\kern 0pt}\nolimits}}
\def\T{{\mathop{\mathbb T\kern 0pt}\nolimits}}

\def\dive{{\mathop{\rm div}\nolimits}\,}

\def\ccL{{\mathscr{L}}}

\def\sgn{{\rm sgn}}

\def\tt{(1+t)}

\def\e{\varepsilon}
\def\eqdefa{\buildrel\hbox{\footnotesize def}\over =}

\newcommand{\Rmnum}[1]{\uppercase\expandafter{\romannumeral #1} }

\newcommand{\beq}{\begin{equation}}
\newcommand{\eeq}{\end{equation}}
\newcommand{\ben}{\begin{eqnarray}}
\newcommand{\een}{\end{eqnarray}}
\newcommand{\beno}{\begin{eqnarray*}}
\newcommand{\eeno}{\end{eqnarray*}}

 \numberwithin{equation}{section}
\newcommand{\andf}{\quad\hbox{and}\quad}
\newcommand{\with}{\quad\hbox{with}\quad}

\newtheorem{thm}{Theorem}[section]
\newtheorem{lem}{Lemma}[section]
\newtheorem{rmk}{Remark}[section]
\newtheorem{col}{Corollary}[section]
\newtheorem{prop}{Proposition}[section]

\begin{document}

\title[Long-time behavior in presence of Couette]{Long-time behaviour of two-dimensional Navier-Stokes equations in the presence of Couette flow on the half plane}

\author{Ning Liu}
\address[N. Liu]{Department of Mathematics, New York University Abu Dhabi, Saadiyat Island, P.O. Box 129188, Abu Dhabi, United Arab Emirates.}
\email{nl2983@nyu.edu}

\author{Nader Masmoudi}
\address[N. Masmoudi]{NYUAD Research Institute, New York University Abu Dhabi, Saadiyat Island, Abu Dhabi, P.O. Box 129188, United Arab Emirates.\ 
Courant Institute of Mathematical Sciences, New York University, 251 Mercer Street New York, NY 10012 USA}
\email{masmoudi@cims.nyu.edu}

\author{Weiren Zhao}
\address[W. Zhao]{Department of Mathematics, New York University Abu Dhabi, Saadiyat Island, P.O. Box 129188, Abu Dhabi, United Arab Emirates.}
\email{zjzjzwr@126.com, wz19@nyu.edu}

\date{\today}
\begin{abstract}
    In this paper, we study the long-time behavior of solutions to the two-dimensional Navier-Stokes equations in the presence of Couette flow on the half plane with Navier-slip boundary conditions. We prove that the total vorticity will approach 
    \begin{align*}
        -1+\frac{M_2(\omega_{0})}{\nu^{3/2}(1+t)^{5/2}}
\bar{\Omega}\left(
\frac{x}{\sqrt{\nu(1+t)^3}},
\frac{y}{\sqrt{\nu(1+t)}}
\right),
    \end{align*}
    where $-1$ is the vorticity of the Couette flow and $\bar{\Omega}$ is the kernel of a Fokker-Planck type operator $\cL=\p_Y^2+\frac32 X\p_X+\frac12 Y\p_Y+\frac52-Y\p_X$. In the proof, we introduce a new idea of studying the spectrum of such type operators with boundary. 
\end{abstract}

\maketitle
\section{Introduction}

In this paper, we study the two-dimensional incompressible Navier--Stokes equations on the half-plane
$
\R^2_+ \eqdefa \{(x,y)\in\R^2 \mid y\ge 0\},
$
namely
\begin{equation*}
(NS)\quad
\left\{
\begin{aligned}
&\pa_t v-\nu\Delta v + v\cdot\nabla v+\nabla p =0,
&& (t,x,y)\in \R^+\times \R^2_+,\\
&\dive v =0,
\end{aligned}
\right.
\end{equation*}
where $v(t,x,y)=(v^1,v^2)$ denotes the velocity field, $p$ is the pressure, and $\nu>0$ is the viscosity coefficient. We impose the Navier-slip boundary condition
\[
\p_y v^1|_{y=0}=1,
\qquad
v^2|_{y=0}=0,
\]
and study the long-time behavior of the solution with initial data in the presence of the Couette flow, namely, $v_{\mathrm{in}}=(y,0)+u_{\mathrm{in}}$. Since the Couette flow $(y,0)$ is a solution to $(NS)$, it is natural to introduce the perturbation $u=v-(y,0)$, where $u$ solves
\begin{equation}\label{eq:intro-u}
\left\{
\begin{aligned}
&\pa_t u-\nu\Delta u+y\p_x u+(u^2,0)+u\cdot\nabla u+\nabla p=0,
&& (t,x,y)\in\R^+\times\R^2_+,\\
&\dive u=0,\\
&\p_y u_1|_{y=0}=0,\qquad u_2|_{y=0}=0,\\
&u|_{t=0}=u_{0}(x,y).
\end{aligned}
\right.
\end{equation}
Let
$
\omega \eqdefa \p_x u^2-\p_y u^1
$
be the associated vorticity. Then $\omega$ satisfies
\begin{equation}\label{eqs:w}
\left\{
\begin{aligned}
&\pa_t \omega-\nu\Delta\omega+y\p_x\omega+u\cdot\nabla\omega=0,
&& (t,x,y)\in\R^+\times\R^2_+,\\
&u=(-\p_y,\p_x)\phi,\qquad \Delta\phi=\omega,\\
&\omega|_{y=0}=0,\qquad \phi|_{y=0}=0,\\
&\omega|_{t=0}=\omega_{0}(x,y).
\end{aligned}
\right.
\end{equation}

The global well-posedness of $(NS)$ for initial velocity in $L^2(\R^2)$ has been known since Leray's classical work \cite{leray1933etude}. In \cite{kato1984strong}, Kato proved that the $d$-dimensional Navier--Stokes equations are locally well-posed for arbitrary initial data in $L^d(\R^d)$, and globally well-posed for sufficiently small data. On the vorticity side, the Cauchy problem on $\R^2$ with initial vorticity in $L^1$ was studied in \cite{ben1994global,brezis1994remarks,kato1994navier}, where analogues of Leray's and Kato's results were established. Cottet \cite{cottet1986equations} and, independently, Giga--Miyakawa--Osada \cite{giga1988two} extended the existence theory to finite measure initial data in $\cM(\R^2)$. Uniqueness in $\cM(\R^2)$ was later proved by Gallagher and Gallay \cite{gallagher2005uniqueness}, building on the partial results in \cite{giga1988two,kato1994navier,gallay2005global,gallagher2005dirac}.

For the long-time behavior of $(NS)$ on $\R^2$, many works have shown that vorticity solutions with initial data in $L^1$ are asymptotically governed by the linear heat equation with the same total mass. This was first established by Giga and Kambe \cite{giga1988large}; a dynamical-systems approach was later developed in \cite{gallay2002invariant}. In \cite{carpio1994asymptotic}, Carpio observed a deep connection between this asymptotic behavior and the uniqueness of the fundamental solution for the vorticity equation. Finally, Gallay and Wayne \cite{gallay2005global} proved the asymptotic approximation for arbitrary initial data in $L^1$. More precisely, if $\omega_0\in L^1(\R^2)$, then the corresponding vorticity converges to the Oseen vortex with mass
$M=\int_{\R^2}\omega_0\,dx\,dy .$

For the Navier--Stokes system $(NS)$ on $\R^2_+$ with Navier boundary conditions, one may extend the vorticity oddly across the boundary by setting $\omega(t,x,-y)=-\omega(t,x,y)$. The extended function then solves the vorticity equation on the whole plane and satisfies $\omega|_{y=0}=0$. Consequently, the half-plane problem with Navier boundary condition remains globally well posed for initial data in $\cM$. Moreover, the refined asymptotic expansions on $\R^2$ obtained in \cite{gallay2005global} imply the corresponding long-time behavior on $\R^2_+$; see Remark~\ref{rmk1.3} below. We also refer to \cite{abe2021vorticity, dalibard2026viscousevolutionpointvortex, wang2026navier} and the references therein for related studies of $(NS)$ on the half plane with no-slip boundary conditions.

The Navier--Stokes equations possess the scaling symmetry
\begin{equation}\label{scaling NS}
v_\lambda(t,x,y)=\lambda v(\lambda^2 t,\lambda x,\lambda y),
\qquad
\omega_\lambda(t,x,y)=\lambda^2\omega(\lambda^2 t,\lambda x,\lambda y),
\end{equation}
which plays a fundamental role in both well-posedness and long-time asymptotics. The global spaces appearing above, such as $L^2$ for the velocity and $L^1$ or $\cM$ for the vorticity, are invariant under this scaling. Moreover, the proof of convergence toward the Oseen vortex relies essentially on self-similar variables induced by \eqref{scaling NS}, which reduces the asymptotic problem to the convergence toward the only steady state.

For the perturbative system near Couette flow \eqref{eqs:w}, there is no exact scaling symmetry compatible with all terms in \eqref{eqs:w}. Nevertheless, scaling considerations remain highly informative for both the local theory and the long-time behavior. To understand the short-time regime on an interval $[0,t_0]$, one applies the scaling \eqref{scaling NS} with $\lambda=\sqrt{t_0}\ll1$ and obtains that $\omega_\lambda$ solves the following equation on $[0,1]$: 
\begin{equation}\label{eq0.3}
\p_t\omega_\lambda-\nu\Delta \omega_\lam + \nabla^\perp\Delta^{-1}\omega_\lam\cdot \nabla \omega_\lam +\lam^2 y\p_x\omega_\lam=0.
\end{equation}
This suggests that the transport term $y\p_x\omega$ is perturbative at small time, so that \eqref{eqs:w} should remain locally well-posed in $L^1$.

For the long-time regime, the hypo-elliptic structure of the linearized equation around Couette flow leads to a different anisotropic scaling,
\begin{equation}\label{scaling Couette}
\omega_\lambda(t,x,y)=\lambda^a\omega(\lambda^2 t,\lambda^3 x,\lambda y).
\end{equation}
Dimensional analysis then gives, for $\lambda=\sqrt{t_0}\gg1$,
\[
\p_t\omega_\lam -\nu \p_y^2\omega_\lam+y\p_x\omega_\lam -\lam^{-4}\p_x^2\omega_\lam+\lam^{-a} \nabla^\perp (\p_y^2+\lam^{-2}\p_x^2)^{-1}\omega_\lam\cdot\nabla \omega_\lam=0.
\]
The exponent $a$ should be chosen so as to preserve the relevant conserved quantity. On $\R^2$, for instance, one takes $a=4$ to preserve the total mass $M[\omega]=M[\omega_\lambda]$. In general, the nonlinear term is also asymptotically small when $a>0$. This suggests that solutions of \eqref{eqs:w} should behave, for large time, like solutions of the linear equation
\[
\p_t\omega-\nu\p_y^2\omega+y\p_x\omega=0.
\]

In the recent work \cite{liu2026nonlinear}, the authors studied the long-time behavior of \eqref{eqs:w} on the whole plane $\R^2$ for general initial data. More precisely, using the conservation of the total mass
$
M(t)=\int_{\R^2}\omega(t,x,y)\,dx\,dy,$
they proved that \eqref{eqs:w} is globally well-posed for initial data in $L^1$. Moreover, the solution converges in $L^1$ to $M(\omega_0)G_L(t)$ at the optimal rate $t^{-1/2}$ as $t\to\infty$, where $G_L$ is the convolution kernel of the linearized equation
\begin{equation}\label{linear w}
\pa_t\omega_L-\nu\Delta\omega_L+y\p_x\omega_L=0.
\end{equation}

Near monotone shear flows such as the Couette flow, the dynamics are known to enjoy important stabilizing effects, including enhanced dissipation and inviscid damping, see \cite{albritton2022enhanced, BM2015, chen2025nonlinear,IJ2018, IJ2020, MasmoudiZhao2020, zhao2025inviscid} and references therein. In this sense, the flow is expected to be more stable, and this feature is also reflected in the long-time behavior obtained in \cite{liu2026nonlinear}. We refer the reader to that work for further discussion on the relation between such asymptotic behavior and other stability phenomena \cite{BVW2018, MasmoudiZhao2019, MasmoudiZhao2020cpde, arbon2024, li2025stability} near Couette flow.

On the other hand, in many physical experiments, Couette-type flows are produced by boundary forcing rather than posed in the whole space. This naturally leads us to investigate the problem in domains with boundaries. As a first step, the half-plane provides the simplest setting in which one can isolate and understand the effect of the boundary on the large-time dynamics.

The purpose of the present paper is to extend these results to \eqref{eqs:w} on the half-plane $\R^2_+$. Due to the Couette flow, the odd extension no longer reduces the half-plane problem to the whole-space one, because the term $y\p_x\omega$ changes parity in $y$. Another essential difference is that the conserved quantity on the half plane is no longer the total mass. Indeed, for the half-plane problem, the mass
$$
M(t)=\int_{\R^2_+}\omega(t,x,y)\,dx\,dy$$
is not conserved, whereas the first moment in the $y$-direction,
$$
M_2(t)=\int_{\R^2_+} y\,\omega(t,x,y)\,dx\,dy,$$
is conserved.

As in Proposition~2.1 of \cite{liu2026nonlinear}, one can adapt Kato's classical argument to obtain local well-posedness in $L^1$ for \eqref{eqs:w}. In this paper, we further prove that the $L^1$ norm of the solution is non-increasing in time, which yields global well-posedness. The main issue is then to determine the asymptotic profile and the convergence rate as $t\to\infty$. Our result shows that the total mass $M(t)$ on $\R^2_+$ always decays to zero at rate $t^{-1/2}$, while the solution itself is asymptotically described by the conserved quantity $M_2$ multiplied by a distinguished profile generated by the corresponding linear evolution.

Motivated by the long-time scaling \eqref{scaling Couette}, we introduce the self-similar variables
\begin{equation}\label{change of variable w:Om}
\begin{aligned}
\omega(t,x,y)
&=
\frac{1}{\nu^{3/2}(1+t)^{5/2}}
\Omega\!\left(
t,\frac{x}{\sqrt{\nu(1+t)^3}},\frac{y}{\sqrt{\nu(1+t)}}
\right),\\
X&=\frac{x}{\sqrt{\nu(1+t)^3}},
\qquad
Y=\frac{y}{\sqrt{\nu(1+t)}}.
\end{aligned}
\end{equation}
The factor $\nu^{-3/2}(1+t)^{-5/2}$ is chosen so that the $y$-momentum is preserved, namely
\[
M_2[\omega]=M_2[\Omega].
\]
In these variables, \eqref{eqs:w} becomes
\begin{equation}\label{eqs:Om}
\left\{
\begin{aligned}
&(1+t)\pa_t\Omega=\cL_t\Omega+\cN_t\Omega,
&& (t,X,Y)\in\R_+\times\R^2_+,\\
&\Omega|_{Y=0}=0,\\
&\Omega|_{t=0}=\Omega_0(X,Y),
\end{aligned}
\right.
\end{equation}
where
\[
\Omega_0(X,Y)=\nu^{3/2}\omega_0(\nu^{1/2}X,\nu^{1/2}Y),
\]
and where the operators $\Delta_t$, $\cL_t$, and $\cN_t$ are defined by
\begin{equation}
\begin{aligned}
\Delta_t&\eqdefa (1+t)^{-2}\p_X^2+\p_Y^2,\\
\cL_t&\eqdefa \Delta_t+\frac32 X\p_X+\frac12 Y\p_Y+\frac52-Y\p_X,\\
\cN_t\Omega&\eqdefa
\nu^{-3/2}(1+t)^{-5/2}
\bigl(
\p_Y\Delta_t^{-1}\Omega\,\p_X\Omega
-\p_X\Delta_t^{-1}\Omega\,\p_Y\Omega
\bigr).
\end{aligned}
\end{equation}
For simplicity, we write $f=\Delta_t^{-1}g$ for the solution of $\Delta_t f=g$ on $\R^2_+$ with Dirichlet boundary condition $f|_{Y=0}=0$.

As indicated by the above scaling argument, for large time $t\gg \nu^{-1}$ the nonlinear term $\cN_t\Omega$ and the horizontal diffusion term $(1+t)^{-2}\p_X^2$ are negligible because of the small factors $\nu^{-3/2}(1+t)^{-5/2}$ and $(1+t)^{-2}$. The evolution should therefore be close to the linear equation
\begin{equation}\label{eqs:F}
(1+t)\p_tF=\cL F,
\qquad
\cL=\p_Y^2+\frac32 X\p_X+\frac12 Y\p_Y+\frac52-Y\p_X,
\end{equation}
which corresponds, in the original variables, to
\[
\p_t f-\nu\p_y^2f+y\p_x f=0.
\]

Our first main result concerns the operator $\cL$ on the weighted space $L^2(m)$ defined by
\[
\|f\|_{L^2(m)}^2
\eqdefa
\int_{\R^2_+}|f(X,Y)|^2(1+X^2+Y^2)^m\,dX\,dY .
\]

\begin{thm}[Linear estimates]\label{thm1}
For $m>5$, there exists a unique function $\bar{\Omega}\in L^2(m)$ such that
\[
\mathcal{L}\bar{\Omega}=0,
\qquad
M_2(\bar{\Omega})=1.
\]
Moreover, $\p_X^{\alpha_1}\p_Y^{\alpha_2}\bar{\Omega}\in L^2(m)$ for all $\alpha_1,\alpha_2,m\in\N$.

There exists $C_{m}>0$ such that, for all $\tau>0$ and $F_0\in L^2(m)$,
\begin{equation}\label{eq:thm1}
\|e^{\tau\cL}F_0-M_2(F_0)\bar{\Omega}\|_{L^2(m)}
\le
C_{m}e^{-\tau}
\|F_0-M_2(F_0)\bar{\Omega}\|_{L^2(m)}.
\end{equation}
\end{thm}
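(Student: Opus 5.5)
The plan is to work on the Fourier side in $X$ and to reduce the spectral analysis of $\cL$ to an explicit one–dimensional operator at $\xi=0$, following the strategy of Gallay--Wayne for the Oseen vortex.

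\textbf{Step 1: Fourier transform in $X$.} Writing $\hat F(\xi,Y)=\int F e^{-iX\xi}\,dX$, and using $\widehat{X\p_XF}=-\hat F-\xi\p_\xi\hat F$, the operator becomes
\[
\hat\cL\hat F=\p_Y^2\hat F+\tfrac12 Y\p_Y\hat F-\tfrac32\xi\p_\xi\hat F+\hat F-i\xi Y\hat F ,
\]
posed on $Y>0$ with $\hat F|_{Y=0}=0$. The weight $(1+X^2)^{m}$ becomes Sobolev regularity in $\xi$. The transport part $-\frac32\xi\p_\xi$ has characteristics $\xi(\tau)=\xi_0e^{-3\tau/2}$, which flow to $\xi=0$. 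Along a characteristic one solves a Dirichlet problem on the half-line with a time-dependent, purely imaginary potential $-i\xi(\tau)Y$. Since this potential is dissipation-neutral in $L^2$, the semigroup $e^{\tau\cL}$ is well defined and bounded on $L^2(m)$.

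\textbf{Step 2: analysis at $\xi=0$.} Here the operator reduces to
\[
A=\p_Y^2+\tfrac12Y\p_Y+1\qquad\text{on }(0,\infty),\quad\text{Dirichlet at }Y=0.
\]
Conjugating by $e^{-Y^2/8}$ and extending oddly in $Y$, $A$ becomes the harmonic oscillator restricted to odd functions. Its eigenfunctions are $\p_Y^{k}e^{-Y^2/4}$ with $k$ odd, with eigenvalues $\frac12-\frac k2=0,-1,-2,\dots$.

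The kernel is spanned by $Ye^{-Y^2/4}$, and the associated conserved functional is exactly $M_2$. Indeed, $\int Y\cdot(\cdot)\,dY$ is annihilated by $A^*$, and $\int_{\R}\hat F(0,Y)Y\,dY$ is preserved.

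Expanding $\hat F$ in a Taylor series in $\xi$ gives a triangular system. The $j$-th $\xi$-derivative at $0$ sees the spectrum of $A$ shifted by $-\frac{3j}{2}$, and couples to lower derivatives only through the $-iY$ term. Hence the candidate discrete eigenvalues of $\cL$ are $\frac{1-k}{2}-\frac{3j}{2}$ with $k$ odd and $j\ge0$. The only nonnegative one is $0$, which is simple; the next ones are $-1$ and $-\frac32$.

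\textbf{Step 3: essential spectrum, existence and regularity of $\bar\Omega$.} Using the characteristic flow, I will show that on $L^2(m)$ the essential spectrum lies in $\{\mathrm{Re}\,\lambda\le \frac52-\frac m2\}$ (possibly with a slightly different constant), which is $<-1$ when $m>5$. Concretely, I split $e^{\tau\cL}$ into a part localized near $\xi=0$ with finitely many Taylor modes, which is compact, plus a remainder whose norm is at most $Ce^{(\frac52-\frac m2)\tau}$. This remainder bound comes from weighted energy estimates in $Y$ and $\xi$-derivative estimates, in which the decay of the characteristics provides the gain.

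With this splitting, the spectrum in $\{\mathrm{Re}\,\lambda>-1-\delta\}$ is the single simple eigenvalue $0$. Its eigenfunction $\bar\Omega$, normalized by $M_2(\bar\Omega)=1$, is obtained by solving the triangular system (or by applying the Riesz projection), and this gives uniqueness. One can also identify $\bar\Omega$ as the self-similar profile of the boundary-dipole solution $\lim_{y_0\to0}G(t,\cdot;x_0,y_0)/y_0$ of $\p_tf-\nu\p_y^2f+y\p_xf=0$.

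Smoothness and membership of $\p_X^{\alpha_1}\p_Y^{\alpha_2}\bar\Omega$ in every $L^2(m)$ follow from two facts. First, $\hat{\bar\Omega}$ is Gaussian-like in $Y$ and analytic in $\xi$, via the triangular system. Second, hypoellipticity of $\cL$ (Hörmander bracket $[\p_Y,Y\p_X]=\p_X$), applied through the identity $\bar\Omega=e^{\tau\cL}\bar\Omega$ with smoothing estimates, supplies the derivatives.

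\textbf{Step 4: the decay estimate.} On the invariant subspace $\{M_2=0\}$, the spectral bound is $\le-1$. The rate $e^{-\tau}$ (rather than $e^{-(1-\epsilon)\tau}$) requires that the eigenvalue $-1$ be semisimple and that there be no other spectrum on the line $\mathrm{Re}\,\lambda=-1$. For semisimplicity I will use the Taylor/Hermite structure: at $\lambda=-1$ only the mode $k=3,j=0$ contributes, since $j\ge1$ gives $\mathrm{Re}\,\lambda\le-\frac32$. The bound then follows by combining the finite-dimensional spectral projection with the remainder estimate from Step 3.

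\textbf{Main obstacle.} I expect the hardest part to be the quantitative remainder estimate in Step 3. $\cL$ is non-self-adjoint, and the boundary destroys the explicit whole-plane kernel, so no Gearhart--Prüss-free shortcut is available. I would first try direct weighted $L^2$ energy estimates for $\xi$-derivatives of $\hat F$ along characteristics, with the Dirichlet condition handled by the odd Hermite basis. If those estimates lose too much, I would fall back on resolvent bounds for $\hat\cL$ on $\mathrm{Re}\,\lambda>-1-\delta$ together with Gearhart--Prüss.
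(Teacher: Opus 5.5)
\noindent Your architecture is a legitimate alternative to the paper's: quasi-compactness of $e^{\tau\cL}$ on $L^2(m)$, discrete spectrum identified through the $\xi$-jets at $\xi=0$, and $\bar\Omega$ obtained from a Riesz projection. But every conclusion depends on the essential growth bound of Step~3, which you only announce, and the mechanism you sketch for it does not work as stated.

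(i) The constant is wrong. The transport field $V=(\frac32X-Y,\frac12Y)$ has $\mathrm{div}\,V=2$, which costs $-1$, and its slowest expansion rate is $\frac12$. So the relevant threshold, the decay of widely spread mass far from the origin, is $\frac32-\frac m2$, not $\frac52-\frac m2$. Your value satisfies $\frac52-\frac m2<-1$ only for $m>7$, so Step~3 as written does not cover $5<m\le7$. The correct value satisfies $\frac32-\frac m2<-1$ exactly when $m>5$, so there is no room to lose anything.

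(ii) Energy estimates with the isotropic weight $b^{2m}$ do lose. The shear produces the indefinite term $m\int F^2XY\,b^{2m-2}$, and the best instantaneous rate is $\frac32-(1-\frac{\sqrt2}{2})m$; this is exactly the computation in the proof of Proposition~\ref{prop5.1}. That rate is $<-1$ only for $m>\frac{5(2+\sqrt2)}{2}\approx8.5$. You would need a weight adapted to the non-normal flow, such as $(1+(X-Y)^2+Y^2)^m$, plus a compactness argument (hypoelliptic smoothing up to $Y=0$) for the localized lower-order terms. None of this is in the proposal.

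(iii) The piece you call compact is not compact. Finitely many $\xi$-Taylor coefficients are still arbitrary functions of $Y$, and on a polynomially weighted $Y$-space the operator $A$ has essential spectrum of its own. Moreover, unlike in Gallay--Wayne, the span of such modes is not invariant, because the $\xi\neq0$ evolution with $-i\xi Y$ and the Dirichlet condition has no explicit solution.

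The Gearhart--Pr\"uss fallback needs uniform resolvent bounds in $\{\mathrm{Re}\,\lambda>-1-\delta\}$, which is the same missing estimate in another form. Likewise, the Taylor-jet argument only produces candidate eigenvalues; ruling out eigenfunctions whose jets vanish to the available order again needs the remainder bound.

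\noindent The existence part has the same dependence. The triangular system determines only $\p_\xi^j\hat{\bar\Omega}(0,\cdot)$. It does not produce a function of $\xi$ in $H^m_\xi$ with the right behaviour as $|\xi|\to\infty$, and the claimed analyticity in $\xi$ is not justified. So $\bar\Omega\in L^2(m)$, its uniqueness, and \eqref{eq:thm1} all rest on the unproven Step~3.

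The paper avoids spectral theory altogether, in three steps:
\begin{itemize}
\item It proves $\|f(t)\|_{L^2}\lesssim t^{-5/2}$ for $M_2[f_0]=0$ from the explicit Fourier--Laplace/Airy representation \eqref{eq2.15}--\eqref{eq2.16}, using contour deformation and the cancellation \eqref{eq2.21a}.
\item It transfers this to $L^2(m)$ by the hierarchical $y^m$-then-$x^m$ estimates of Lemma~\ref{lem2.6}, which sidestep the shear loss in (ii) and need only $\frac52<\frac m2$.
\item It constructs $\bar\Omega$ explicitly by characteristics in $(k,\eta)$ and the Volterra equation \eqref{eq3.9}. The real work there is the $H^m$ regularity at $k=0$ despite the fractional powers $k^{1/3},k^{2/3}$.
\end{itemize}
If completed, your route would give more, namely the full spectrum in $\{\mathrm{Re}\,\lambda>-\frac32\}$. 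As written, however, it reduces the theorem to its hardest step without proving that step.
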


Theorem~\ref{thm1} shows that, for any initial data $F_0\in L^2(m)$, the solution $F(\tau)$ of \eqref{eqs:F} converges to $M_2(F_0)\bar{\Omega}$ at the optimal rate $(1+t)^{-1}$ in the original time variable.

Our second main result states that the same asymptotic profile governs the nonlinear problem \eqref{eqs:Om}.

\begin{thm}[Nonlinear estimates]\label{Thm2}
Let $m>5$ and $\Omega_0\in L^2(m)$. For any $\epsilon>0$, the solution $\Omega(t)$ of \eqref{eqs:Om} satisfies
\begin{equation}\label{eq:thm2}
\|\Omega(t)-M_2(\Omega_0)\bar{\Omega}\|_{L^2(m)}
\le
C_{m,\epsilon}(1+t)^{-1}
\bigl(1+\nu^{-3/2}\|\Omega_0\|_{L^2(m)}\bigr)^{15m+14+\epsilon}
\|\Omega_0\|_{L^2(m)},
\end{equation}
for all
\[
t\ge
T_1=
C_{m,\epsilon}
\bigl(1+\nu^{-3/2}\|\Omega_0\|_{L^2(m)}\bigr)^{7m+7+\epsilon}.
\]
As a corollary, in the original variables, the solution $\omega(t)$ to \eqref{eqs:w} satisfies
\[
\left\|
\omega(t)-\f{M_2(\omega_0)}{\nu^{3/2}(1+t)^{5/2}}
\bar{\Omega}\!\left(
\frac{x}{\sqrt{\nu(1+t)^3}},
\frac{y}{\sqrt{\nu(1+t)}}
\right)
\right\|_{L^1}
=
O\bigl((1+t)^{-3/2}\bigr).
\]
\end{thm}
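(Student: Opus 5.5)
We work in the logarithmic time $\tau=\log(1+t)$, in which \eqref{eqs:Om} reads $\p_\tau\Omega=\cL\Omega+e^{-2\tau}\p_X^2\Omega+\cN_t\Omega$. The plan is to treat the last two terms as perturbations of $\cL$ and to use Theorem~\ref{thm1} through Duhamel's formula. Two preliminary facts are needed. First, $M_2$ is conserved: integrating $y\,\cdot$ against \eqref{eqs:w} and using $\omega|_{y=0}=0$, $\phi|_{y=0}=0$, the boundary terms cancel, and $\int y\,u\cdot\nabla\omega=-\int u^2\omega=0$ after integrating by parts with the Biot--Savart law. Consequently $M_2(\Omega(\tau))=M_2(\Omega_0)$. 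Second, the $L^1$ norm of $\omega$ is non-increasing, which is a maximum-principle/Kato-inequality argument for an advection--diffusion equation with Dirichlet data. The remainder $R=\Omega-M_2(\Omega_0)\bar\Omega$ then satisfies $M_2(R)=0$, and since $\cL\bar\Omega=0$,
\[
R(\tau)=e^{\tau\cL}R(0)+\int_0^\tau e^{(\tau-s)\cL}\Bigl(e^{-2s}\p_X^2\Omega+\cN\Omega\Bigr)(s)\,ds .
\]
The forcing also has zero $M_2$-moment, because $M_2$ is conserved by both the full flow and the $\cL$-flow. So \eqref{eq:thm1} applies to every term, with decay $e^{-(\tau-s)}$.

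\textbf{Step 1: a priori bound.} I would first obtain a uniform bound $\sup_\tau\|\Omega(\tau)\|_{L^2(m)}\lesssim(1+\nu^{-3/2}\|\Omega_0\|)^{p}\|\Omega_0\|$ by a weighted energy estimate. Multiplying by $\Omega(1+X^2+Y^2)^m$, the drift terms contribute $(\frac52-\frac{5}{4}-\frac{3m}{2}\cdot\ldots)$. Since the weight does not produce a positive sign, the estimate must exploit the decay already in the linear problem (for example, writing $\Omega$ as its $\cL$-evolution plus a remainder), together with vertical diffusion and moment bounds. For the nonlinear term the key inputs are the velocity bounds. Since $\nabla^\perp\Delta_t^{-1}$ is anisotropic, with $u^1\sim\p_Y\Delta_t^{-1}$ and $u^2\sim(1+t)^{-2}\p_X\Delta_t^{-1}$ after rescaling, I would bound $\|\nabla\Delta_t^{-1}\Omega\|_{L^\infty}$ by a growing factor $(1+t)^{\beta}$ times $\|\Omega\|_{L^2(m)}$ with $m>5$. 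Combined with the prefactor $\nu^{-3/2}(1+t)^{-5/2}$, this gives a net factor $\nu^{-3/2}(1+t)^{-\gamma}$ with $\gamma>0$, integrable in $\tau$ up to polynomial loss. The derivative falling on $\Omega$ in $\cN$ is absorbed by the antisymmetric transport structure: $\int u\cdot\nabla\Omega\,\Omega w=-\frac12\int\Omega^2u\cdot\nabla w$, since $\dive u=0$ and $u^2|_{Y=0}=0$. The $L^1$ monotonicity and interpolation control the low-regularity part.

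\textbf{Step 2: waiting time and closing.} For $t\le T_1$ I would accept a Gronwall-type growth $\exp(C\nu^{-3/2}\|\Omega_0\|\int(1+t)^{-\gamma})$, converted to polynomial size. For $t\ge T_1$ the factor $\nu^{-3/2}(1+t)^{-\gamma}\|\Omega\|$ is small. I would also need $\p_X\Omega$ bounded to handle the horizontal diffusion term. Using parabolic/hypoelliptic smoothing of $\cL$ (Hörmander bracket $[\p_Y,Y\p_X]=\p_X$), one gets $\|\p_X e^{s\cL}f\|\lesssim s^{-3/2}\|f\|$ for small $s$. This lets the Duhamel integrand of the $\p_X^2$ term be bounded by $e^{-2s}$ times integrable singularities, giving $O(e^{-\tau})$ since $e^{-2s}$ beats $e^{-(\tau-s)}$. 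The nonlinear term similarly yields $O(e^{-\tau})$ for the rate from $T_1$ on, provided the decay $\gamma\ge$ the balance $1$ in $\tau$ holds. Otherwise one bootstraps: first obtain a weaker rate, then use $\Omega\approx M_2\bar\Omega$ together with the symmetry of $\bar\Omega$-induced terms to improve. Tracking the exponents gives the powers $7m+7$ and $15m+14$.

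\textbf{Step 3: the corollary.} The $L^1$ statement follows from $\|f\|_{L^1(\R^2_+)}\lesssim\|f\|_{L^2(m)}$ for $m>1$, undoing the scaling. Since $L^1$ is invariant under the change of variables except for the factor $(1+t)^{-5/2}\cdot(1+t)^{2}$, this yields $(1+t)^{-1/2}$ times $(1+t)^{-1}$, i.e. $O((1+t)^{-3/2})$.

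\textbf{Main obstacle.} I expect the hardest part to be Step 1. The anisotropic Biot--Savart estimates on the half plane with Dirichlet data for $\Delta_t$ lose powers of $(1+t)$ from the $(1+t)^{-2}\p_X^2$ degeneracy. One must show the net decay still beats the $\frac52$ prefactor. I would try Fourier in $X$ with the explicit half-line Green's function $\frac{1}{2|\xi|}(e^{-|\xi||Y-Y'|}-e^{-|\xi|(Y+Y')})$, rescaled.
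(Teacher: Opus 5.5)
There is a genuine gap. The Duhamel step needs a priori bounds on derivatives of $\Omega$, and your proposal has no mechanism that produces them.

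\textbf{Missing derivative bounds.} The forcing terms $e^{-2s}\p_X^2\Omega$ and $\cN_s\Omega$ must be estimated in $L^2(m)$, so you need $\p_X^2\Omega$ and $\nabla\Omega$ in that space. Theorem~\ref{thm1} is an $L^2(m)\to L^2(m)$ bound with no smoothing. The transport antisymmetry you invoke only helps inside an energy identity, not in the mild formulation.

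Moving derivatives onto $e^{(\tau-s)\cL}$ does not fix this. $X$-smoothing of this Kolmogorov-type semigroup costs $(\tau-s)^{-3/2}$ per derivative, which is not integrable at $s=\tau$. Since $e^{\sigma\cL}\p_X=e^{-\frac32\sigma}\p_Xe^{\sigma\cL}$, putting both derivatives of $\p_X^2\Omega$ on the semigroup costs $(\tau-s)^{-3}$.

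The paper gets the needed bounds from the hypoelliptic functionals $E(t)$, $D(t)$ in \eqref{def:E}--\eqref{def:D}. These are weighted by powers of $\ln\frac{1+t}{1+t_0}$ and run on logarithmic windows of length $\sim(1+R)^{-4(m+1)/(1+2\sigma)}$, with $R=\nu^{-3/2}\|\Omega_0\|_{L^2(m)}$. The windows are short enough that the Gronwall factor stays below $\frac32$. This yields Proposition~\ref{S3prop3}: $\|\p_X^{\alpha_1}\p_Y^{\alpha_2}\Omega(t)\|_{L^2(m)}\lesssim(1+R)^{m+\frac{2(3\alpha_1+\alpha_2)(m+1)}{1+2\sigma}}(1+t)\|\Omega_0\|_{L^2(m)}$ for $\alpha_1+\alpha_2\le 2$ and for $(\alpha_1,\alpha_2)=(3,0)$.

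The case $(3,0)$ is used to interpolate $\|\p_X^2\Omega\|\le\|\p_X^3\Omega\|^{2/3}\|\Omega\|^{1/3}$. The direct bound $\|\p_X^2\Omega(s)\|\lesssim 1+s$ would leave a $\ln\frac{1+t}{1+t_0}$ loss in the $(1+s)^{-2}\p_X^2\Omega$ Duhamel term. These powers also fix $T_1\sim(1+R)^{7m+7+\epsilon}$ when the fully nonlinear term $\mathcal M_4$ is absorbed. So the stated exponents cannot be ``tracked'' out of your scheme.

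\textbf{Step 1 aims at the wrong target.} A uniform bound on $\|\Omega(\tau)\|_{L^2(m)}$ is essentially as strong as the theorem, and deriving it from the $\cL$-evolution plus a remainder is circular. Here is what the energy method actually gives.
\begin{itemize}
\item The weighted drift does damp at infinity once $m$ is large: with the homogeneous weight $a^{2m}$ its coefficient is $\frac32-(1-\frac{\sqrt2}{2})m$. So everything reduces to $\|\Omega\|_{L^2}$.
\item Your control on $\|\Omega\|_{L^2}$ is $L^1$ monotonicity plus the heat-type Moser decay $\|\omega(t)\|_{L^2}\lesssim(\nu t)^{-1/2}\|\omega_0\|_{L^1}$. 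In self-similar variables this only yields $\|\Omega(t)\|_{L^2}\lesssim(1+t)\|\Omega_0\|_{L^2(m)}$.
\item Hence Proposition~\ref{prop5.1} gives linear growth, $\|\Omega(t)\|_{L^2(m)}\lesssim(1+t)(1+R)^m\|\Omega_0\|_{L^2(m)}$. The dependence on $R$ is polynomial because the nonlinear term is absorbed by the damping and Young's inequality, so no factor $e^{CR}$ appears.
\end{itemize}
The Duhamel step is built to tolerate this growth. Starting at $t_0=T_1$ with $\tilde\Omega=\Omega-M_2(\Omega_0)\bar\Omega$, the free term contributes $(1+T_1)^2(1+t)^{-1}(1+R)^m\|\Omega_0\|_{L^2(m)}$. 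This is exactly where $15m+14=m+2(7m+7)$ comes from.

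Your waiting-time bound $\exp\bigl(C\nu^{-3/2}\|\Omega_0\|\int(1+t)^{-\gamma}\bigr)$ is exponential in $R$ and cannot be converted to polynomial size. So even a closed version of your argument would not give the estimate as stated.

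The conservation of $M_2$, the zero $M_2$-moment of the forcing, and the $L^1$ corollary in Step 3 are correct.
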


\begin{rmk}
The quantity $\nu^{-3/2}\|\Omega_0\|_{L^2(m)}$ seems different from the relative Reynolds number $\nu^{-1}\|\Omega_0\|_{L^2(m)}$ used in \cite{liu2026nonlinear}, but the two are equivalent up to the change of variables. Indeed, the relevant dimensionless quantity is
\[
\nu^{-1}\|\omega_0\|_{L^1}
=
\nu^{-3/2}\|\Omega_0\|_{L^1}
\le
C_m\nu^{-3/2}\|\Omega_0\|_{L^2(m)}.
\]
The exponent $3/2$ arises precisely from the normalization preserving $M_2$.
\end{rmk}

\begin{rmk}\label{rmk1.3}
We now compare the long-time behaviors of Navier-Stokes systems with and without the Couette flow on $\R^2$ and $\R^2_+$. Throughout, we write
\[
(M,M_1,M_2)=\int (1,x,y)\omega_0\,dx\,dy,
\]
where the integration is taken over the corresponding domain.

\begin{itemize}
\item $(NS)$ without the Couette flow on $\R^2$:
\[
\omega=
\frac{M}{\nu t}G\!\left(\frac{x}{\sqrt{\nu t}},\frac{y}{\sqrt{\nu t}}\right)
+\f{M_1x+M_2y}{2\nu^{2}t^{2}}
G\!\left(\frac{x}{\sqrt{\nu t}},\frac{y}{\sqrt{\nu t}}\right)
+O_{L^1}(t^{-1}).
\]
This asymptotic expansion is proved in \cite{gallay2005global}, where $G$ denotes the Gaussian profile.

\item $(NS)$ without the Couette flow on $\R^2_+$:
\[
\omega=
\f{M_2y}{\nu^{2}t^{2}}
G\!\left(\frac{x}{\sqrt{\nu t}},\frac{y}{\sqrt{\nu t}}\right)
+O_{L^1}(t^{-1}).
\]
By odd extension, the half-plane problem reduces to the whole-plane problem with $M=M_1=0$. We note that the remainder $O(t^{-1})$ is optimal, since the linear equation has a special solution of this order, namely
$
\f{xy}{\nu^{2}t^{3}}\,G\left(\frac{x}{\sqrt{\nu t}},\frac{y}{\sqrt{\nu t}}\right).$

\item $(NS)$ with the Couette flow on $\R^2$:
\[
\omega=
\f{M}{\nu t^{2}}G_L(\f{x}{\sqrt{\nu t^3}},\f{y}{\sqrt{\nu t}})
-
\f{M_2}{\nu^{3/2} t^{5/2}}(\p_X+\p_Y)G_L(\f{x}{\sqrt{\nu t^3}},\f{y}{\sqrt{\nu t}})
+
O_{L^1}(t^{-1}).
\]
Here
$
G_L(X,Y)=\frac{\sqrt{3}}{2\pi}\exp(-3X^2+3XY-Y^2).
$
This expression differs from that in \cite{liu2026nonlinear} only because of the choice of coordinates. The second-order term involving $M_2$ is not stated explicitly in \cite{liu2026nonlinear}, but it follows from a minor extension of their argument.

\item Our result for $(NS)$ with the Couette flow on $\R^2_+$:
\[
\omega=
\f{M_2}{\nu^{3/2}t^{5/2}}\bar{\Omega}(\f{x}{\sqrt{\nu t^3}},\f{y}{\sqrt{\nu t}})
+
O_{L^1}(t^{-3/2}).
\]
\end{itemize}

These comparisons show that, for the cases on $\R^2_+$, the leading term is analogous to the second-order term in the whole-space asymptotics, because the principal whole-space profiles do not satisfy the Dirichlet boundary condition. However, there is a crucial difference: for $(NS)$ without the Couette flow on $\R^2_+$, the leading asymptotic profile is simply the restriction of a whole-space profile satisfying the boundary condition, whereas with the Couette flow, none of the whole-space asymptotic terms satisfy the boundary condition. As a consequence, our profile $\bar{\Omega}$ is genuinely new and substantially more difficult to identify.

Also, we remark that the error $O(t^{-3/2})$ in our result is smaller than all the errors in the other cases.
\end{rmk}

We next explain the strategy of the proof and highlight the main new ingredients.

As discussed above, the asymptotic behavior of \eqref{eqs:w} on $\R^2_+$ cannot be deduced from the whole-space analysis in \cite{liu2026nonlinear}. The first main difficulty is to identify the kernel $\bar{\Omega}$ of $\cL$ and to prove Theorem~\ref{thm1}. We also note that, in the study of $\cL$ on $\R^2$, the authors of \cite{liu2026nonlinear} introduced a special change of variables reducing the operator to the standard Fokker--Planck operator. Because of the boundary, such a reduction is no longer available in the half-plane. To overcome this difficulty, we combine two complementary points of view.

By studying the Airy operator, we obtain the solution to the linearized problem. A formal approximation strongly suggests the following profile, see more details in section \ref{section 2}: 
\[
\omega(t,x,y)\approx M_2\nu^{-3/2}t^{-5/2}\bar{\Omega}(X,Y),
\]
which is precisely why we adopt the variables \eqref{change of variable w:Om}. To obtain the semi-group estimate, we use the resolvent estimate and a new representation formula with suitable decompositions, see \eqref{eq2.8}-\eqref{eq2.10} and \eqref{eq2.15}. 

\begin{prop}\label{prop1.1}
For $m>5$, there exists $C_{m}>0$ such that, for every $F_0\in L^2(m)$,
\begin{equation}\label{eq2.4}
\|e^{\tau\cL}F_0\|_{L^2(m)}
\le
C_{m}\|F_0\|_{L^2(m)}.
\end{equation}
Moreover, if we assume in addition
$
\int_{\R^2_+}YF_0\,dX\,dY=0,
$
then
\begin{equation}\label{eq2.5a}
\|e^{\tau\cL}F_0\|_{L^2(m)}
\le
C_{m}e^{-\tau}\|F_0\|_{L^2(m)}.
\end{equation}
\end{prop}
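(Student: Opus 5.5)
The plan is to solve the linear problem explicitly in the original variables and transfer the estimates back. Undoing the change of variables \eqref{change of variable w:Om} with $\nu=1$ and $\tau=\log(1+t)$, the semigroup $e^{\tau\cL}$ is conjugate to the evolution of $\p_tf-\p_y^2f+y\p_xf=0$ on $\R^2_+$ with $f|_{y=0}=0$. We take the Fourier transform in $x$ (frequency $k$) and obtain, for each $k$, the one-dimensional problem $\p_t\hat f=\p_y^2\hat f-iky\hat f$ on $y>0$ with Dirichlet condition. This is an Airy-type operator.

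\textbf{Step 1: representation formula.} For each fixed $k$ we write $\hat f(t,k,\cdot)$ through the resolvent $(\lambda-\p_y^2+iky)^{-1}$ with Dirichlet condition. This resolvent is expressed with Airy functions $\mathrm{Ai}(e^{i\pi/6}k^{1/3}(y+i\lambda/k))$, and the time evolution is recovered by a contour integral. We then decompose the kernel into two parts. The first is the whole-line kernel of $\p_t-\p_y^2+iky$, which is explicit (a Gaussian with the Kolmogorov structure). The second is an image or boundary-correction part, as in \eqref{eq2.8}--\eqref{eq2.10}.

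The whole-line part gives, after rescaling, the operator studied in \cite{liu2026nonlinear}. That operator is conjugate to a Fokker--Planck operator, so we inherit its weighted $L^2(m)$ bounds: boundedness in general, and decay once enough moments vanish. The boundary part is controlled using the Dirichlet condition. Its kernel vanishes at $y=0$ and decays like $e^{-c|k|^{1/3}t}$-type enhanced-dissipation factors mixed with heat-type decay. We aim for pointwise Gaussian-type bounds on this kernel in the self-similar variables, uniformly in $\tau$.

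\textbf{Step 2: uniform bound \eqref{eq2.4}.} In the self-similar variables, the pointwise bounds on the kernel mean the solution operator is dominated by an anisotropic Gaussian of the form
\[
C\exp\bigl(-c|X-e^{3\tau/2}X_0\cdots|^2\bigr).
\]
Uniform boundedness on $L^2(m)$ then follows from a weighted Schur test. The requirement $m>5$ enters here: it ensures $L^2(m)\subset L^1(\langle Y\rangle\,dX\,dY)$, since $(1+|X|+|Y|)^{2}(1+X^2+Y^2)^{-m}$ is integrable, so that the moment $M_2$ and the first moments that drive the leading terms are finite and controlled.

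\textbf{Step 3: decay \eqref{eq2.5a}.} The decay comes from an expansion of the kernel for large $\tau$. We Taylor-expand the kernel $K_\tau(X,Y;X_0,Y_0)$ in the source variables $(X_0,Y_0)$, which shrink by the factors $e^{-3\tau/2}$ and $e^{-\tau/2}$ respectively. The Dirichlet condition forces the kernel to vanish at $Y_0=0$, so the zeroth-order term is absent. The first-order term is therefore proportional to $Y_0e^{-\tau/2}$, which is not yet the full $e^{-\tau}$ needed.

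This is where the normalization matters. After multiplying by the prefactor $e^{5\tau/2}$ coming from the scaling, the leading term is exactly $\bar\Omega(X,Y)\int Y_0F_0$. It vanishes under the hypothesis $\int_{\R^2_+}YF_0\,dX\,dY=0$. The remaining terms involve $X_0$, with factor $e^{-3\tau/2}$, and $Y_0^2$, with factor $e^{-\tau}$. Both are $O(e^{-\tau})$ relative to the leading term, and the remainder is controlled by the moments available in $L^2(m)$ for $m>5$. For short times $\tau\le1$, \eqref{eq2.5a} follows directly from \eqref{eq2.4}.

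\textbf{Main obstacle.} The hard part is proving uniform Gaussian-type bounds for the boundary-correction kernel. The Airy resolvent with Dirichlet condition has singularities at the zeros of $\mathrm{Ai}$, and the contour deformation must be carried out uniformly in $k$. Low frequencies $k\to0$ behave like the heat equation with Dirichlet condition, where the odd-reflection kernel is explicit. High frequencies are dominated by the enhanced-dissipation rate $|k|^{2/3}$. I would handle these two regimes separately and glue them together. If the contour approach proves too delicate, my fallback is to derive the bounds from resolvent estimates for $\cL$ in $L^2(m)$: Gearhart--Prüss-type bounds on the half-plane $\mathrm{Re}\,\lambda>-1$, after removing the kernel direction.
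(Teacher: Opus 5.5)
Your representation step (Fourier in $x$, Airy resolvent, contour deformation, Taylor expansion in the source variable with the Dirichlet condition removing the zeroth order) follows the paper. There is, however, a genuine gap in Step 3, caused by a miscount that hides the key cancellation. The half-plane problem is invariant under $(t,x,y)\mapsto(\lambda^2t,\lambda^3x,\lambda y)$, so the Dirichlet kernel satisfies $G_t(x,y;x_0,y_0)=t^{-2}\mathcal G(xt^{-3/2},yt^{-1/2};x_0t^{-3/2},y_0t^{-1/2})$ with $\mathcal G=G_1$. Hence the kernel of $e^{\tau\cL}$ is $K_\tau\approx e^{\tau/2}\mathcal G(X,Y;e^{-3\tau/2}X_0,e^{-\tau/2}Y_0)$. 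Expanding with $\mathcal G(\cdot;\xi,0)=0$:
the $Y_0$ term is $O(1)$ (this is the $M_2$ term);
the $X_0Y_0$ term is $O(e^{-3\tau/2})$;
the $Y_0^2$ term is $e^{\tau/2}e^{-\tau}=e^{-\tau/2}$, not $O(e^{-\tau})$ relative to the leading term as you claim.
So your argument only yields $e^{-\tau/2}$ when $M_2=0$. The missing fact is that $\p_{y_0}^2G$ vanishes identically on $\{y_0=0\}$. As a function of the source, $G$ solves the adjoint equation $\p_tG=\p_{y_0}^2G+y_0\p_{x_0}G$ with $G|_{y_0=0}=0$, and on $y_0=0$ both $\p_tG$ and $y_0\p_{x_0}G$ vanish. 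Only then are the next terms $Y_0^3$ (exactly $e^{-\tau}$) and $X_0Y_0$. In the paper this is the exact cancellation, obtained from \eqref{eq2.11} and its $\p_t$-derivative, of the zeroth- and second-order $z$-Taylor terms of the Airy part against those of the Gaussian part in \eqref{eq2.14}. What remains is $T_1+T_2$ (with coefficient $\int z\hat f_0\,dz$) and the cubic remainders $T_3,T_4$.

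For the same reason, the two pieces of your Step 1 split cannot be bounded separately. The whole-plane bounds of \cite{liu2026nonlinear} hold in the mass-preserving normalization $e^{2\tau}$. Half-plane data generally have nonzero total mass, so the whole-line part has $L^2$ norm $\sim t^{-1}$ in the original variables, against $t^{-3/2}$ for the full solution; in the present normalization it grows like $e^{\tau/2}$. The boundary correction grows likewise and does not vanish at $y=0$ on its own.

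The second gap is that the weighted bounds are never actually produced. Steps 2--3 need pointwise Gaussian bounds on the full Dirichlet kernel, uniform in $\tau$, including a boundary factor $\sim Y_0$; comparison $0\le G^D\le G^{\R^2}$ alone gives only $e^{\tau/2}$ times a Gaussian. You also need such bounds on its third source derivatives. You list this as the main obstacle but give no mechanism for it. Getting $X$-decay out of the Fourier-side Airy formula means differentiating in $k$, and the formula, built from $k^{1/3}$ and $k^{2/3}$, is not smooth at $k=0$; the paper points out this difficulty after \eqref{eq2.42}.

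The paper avoids extracting weighted or pointwise information from the representation altogether:
first, it proves only the unweighted rates \eqref{eq2.33} and \eqref{eq2.34} in the original variables; the contour deformation turns powers of $k$ into powers of $t^{-3/2}$, and \eqref{eq2.21a} gains a factor $k$ when $M_2[f_0]=0$;
then Lemma~\ref{lem2.6} propagates the weights $y^m$ and $x^m$ by energy estimates and Gronwall, giving $\|x^{a_1}y^{a_2}f(t)\|_{L^2}\lesssim(1+t)^{\f{3a_1+a_2}2-\varsigma}\|f_0\|_{L^2(m)}$;
finally, by the scaling identity \eqref{eq2.41}, this is exactly boundedness ($\varsigma=\f32$) or $e^{-\tau}$ decay ($\varsigma=\f52$).
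This is also where $m>5$ enters: $\varsigma<\f m2$, and fourth moments appear in $T_3$. The embedding $L^2(m)\subset L^1(\langle Y\rangle\,dX\,dY)$ you invoke already holds for $m>2$. Your Gearhart--Pr\"uss fallback does not close the gap either. It requires uniform resolvent bounds for the non-normal $\cL$ in $L^2(m)$, which is the same difficulty, and even then gives only $e^{-(1-\delta)\tau}$.
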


At this point, once the semi-group bounds are available, it remains to prove the existence of a steady state $\bar{\Omega}\in L^2(m)$ satisfying $\cL\bar{\Omega}=0$. Our second point of view is to solve this steady equation directly in the self-similar variables. By extending the solution centrally symmetrically, we apply the Fourier transform in both $X$ and $Y$ directions, and the equation becomes
\begin{equation}\notag
\frac32 k\p_k f+\left(\frac12\eta-k\right)\p_\eta f+(\eta^2-\tfrac12)f=g(k),
\end{equation}
together with the normalization $\p_\eta  f(0,0)=-2i$ and the constraint 
\[
\int_\R f(k,\eta)\,d\eta=0,\quad \forall k\in \mathbb{R}
\]
Solving along the characteristic curves gives the representation
\begin{equation}\notag
\begin{aligned}
f(k,\eta)
&=
-2i(k+\eta)e^{-(k+\eta)^2+k(k+\eta)-\frac13k^2}+\frac23\int_0^k\left(\frac{k}{\ell}\right)^{1/3}
\exp\Bigl(
-(k+\eta)^2\bigl(1-(\tfrac{\ell}{k})^{2/3}\bigr)\\
&\quad\qquad\qquad\qquad
+k(k+\eta)\bigl(1-(\tfrac{\ell}{k})^{4/3}\bigr)
-\tfrac13(k^2-\ell^2)
\Bigr)
\frac{g(\ell)}{\ell}\,d\ell,
\end{aligned}
\end{equation}
where $g$ satisfies $g(0)=0$ and solves the convolution equation
\begin{equation}\notag
\int_0^k
(k^{2/3}-\ell^{2/3})^{-1/2}
e^{-\frac1{12}(k^{2/3}-\ell^{2/3})^3}
g(\ell)\ell^{-4/3}\,d\ell
=
\frac{3i}{2}k^{1/3}e^{-k^2/12}.
\end{equation}
Applying the Laplace transform once more allows us to solve for $g$, and hence to define $\bar{\Omega}$. A careful analysis then shows that the resulting profile indeed belongs to $L^2(m)$. Combined with Proposition~\ref{prop1.1}, this completes the proof of Theorem~\ref{thm1}.

\begin{rmk}

The explicit kernel $\bar{\Omega}$ obtained from the characteristic method does not look identical to the formal limit suggested by \eqref{eq1.4}. Since the two constructions should in fact coincide, we are led to the following conjecture.

\medskip
\noindent\textbf{Conjecture.}
We conjecture that the kernel $\bar{\Omega}$ of $\mathcal{L}$ has the following explicit representation via Airy functions
\[
\frac1{2\pi}\int_\R e^{i\ell X}|\ell|^{2/3}\sum_{n=1}^\infty
A_n^{-2}
e^{e^{i\frac{\pi}{3}\sgn(\ell)}\xi_n|\ell|^{2/3}}
Ai\!\left(
e^{i\frac{\pi}{6}\sgn(\ell)}|\ell|^{1/3}Y+\xi_n
\right)
\overline{Ai'(\xi_n)}\,d\ell
\]
To prove it, one may need to justify the approximation below \eqref{eq1.4} mathematically rigorously or to prove that the above function is in $L^2(m)$. 

We note that each individual term 
$$f_n(X,Y)=\frac1{2\pi}\int_\R e^{i\ell X}|\ell|^{2/3}
e^{e^{i\frac{\pi}{3}\sgn(\ell)}\xi_n|\ell|^{2/3}}
Ai\!\left(
e^{i\frac{\pi}{6}\sgn(\ell)}|\ell|^{1/3}Y+\xi_n
\right)\,d\ell$$ 
in the above series solves $\cL f_n=0$, but does not even belong to $L^1$. Therefore, proving the conjecture would require identifying very delicate cancellations inside the infinite sum.

\end{rmk}

Once Theorem~\ref{thm1} is established, the proof of Theorem~\ref{Thm2} follows the same general strategy as in \cite{liu2026nonlinear}. We first prove that the $L^1$ norm of $\omega(t)$ is non-increasing and then derive decay estimates for the $L^p$ norms by Moser's method. Next, we estimate the $L^2(m)$ norm of $\Omega(t)$ and exploit hypo-ellipticity to gain higher regularity. Finally, Duhamel's formula together with the semi-group estimate \eqref{eq:thm1} yields the nonlinear asymptotic stability stated in Theorem~\ref{Thm2}.

The remainder of the paper is organized as follows.

In Section~\ref{section 2}, we first discuss the computations by using the complex Airy operator, and then derive the formula for the semi-group $e^{\tau\cL}$ by a resolvent approach and prove Proposition~\ref{prop1.1}.

In Section~\ref{section 3}, we solve the eigenvalue problem $\cL\bar{\Omega}=0$ and prove the existence of $\bar{\Omega}$, thereby completing the proof of Theorem~\ref{thm1}.

In Section~\ref{section 4}, we establish several estimates in the weighted spaces $L^2(m)$.

In Section~\ref{section 5}, we prove Theorem~\ref{Thm2}.

We end this introduction with a few notational conventions used throughout the paper.

\noindent\textbf{Notations.}
For $a\lesssim b$, we mean that there exists a positive constant $C$, independent of the relevant parameters, such that $a\le Cb$. In estimates involving the weighted space $L^2(m)$, we use
\[
a(X,Y)=\sqrt{|X|^2+|Y|^2},
\qquad
b(X,Y)=\langle X,Y\rangle=\sqrt{1+|X|^2+|Y|^2}.
\]
Finally, we write $L_T^r(L^p)$ for the space $L^r([0,T];L^p(\R_{+}^2))$, and $L_{[T_1,T_2]}^r(L^p)$ for the space $L^r([T_1,T_2];L^p(\R_+^2))$.

\section{The semi-group associated to the linear operator}\label{section 2}
In this section, we study the semi-group $e^{t\mathcal{L}}$ generated by the linearized operator $\mathcal{L}$ defined in \eqref{eqs:F}. We remark that by a suitable change of coordinates, the operator $\mathcal{L}$ can be expressed as a Fokker-Planck type operator:
\begin{align*}
    \cL_{FK}\eqdefa 4\p_{\tilde Y}^2-\f{\tilde{Y}^2}{4}-\f{\sqrt{3}}2(\tilde{X}\p_{\tilde{Y}}-\tilde{Y}\p_{\tilde{X}})+C
\end{align*}
However, such a change of coordinates is not well-adapted to the boundary condition. 
\subsection{A formal computation by the eigen-system of the Airy operator}
In this subsection, we present the detailed derivation of the solution formula for the linearized equation and the formal limit.

Applying the tangential Fourier transform $\omega_k(t,y)\eqdefa \int_\R \omega(t,x,y) e^{-ikx}dx$, we deduce from \eqref{linear w} that
\begin{equation}\label{linear w k}
\pa_t \om_k-\nu(\p_y^2-k^2) \omega_k +ik y \omega_k=0,
\end{equation}
with Dirichlet boundary condition and initial data $\omega_{0,k}$.

For any fixed $0\neq k\in \mathbb{R}$, we denote the following complex Airy operator
$$
\cL_{\nu,k}\eqdefa \nu(\p_y^2-k^2)-iky
$$
and recall the following properties for $\nu$ and $k$ given:
\begin{itemize}
\item Let $e_n(y)=C_n Ai(e^{i\f{\pi}6 \sgn(k)}(\f{|k|}{\nu})^\f13 y+\xi_n)$ and $\lambda_n=-\nu|k|^2 +e^{i\f{\pi}3 \sgn(k)}\nu^\f13|k|^\f23 \xi_n$, where $C_n$ is a constant to be chosen, $Ai(z)$ is the complex Airy function solving $\p_z^2 Ai(z)=zAi(z)$, and $\xi_n\approx -n^\f23<0$ is the n-th zero point of $Ai(z)$. (All $\xi_n\in \R$ and $0>\xi_1>\xi_2>\cdots$)
By direct computations, we can check that 
$$
\cL_{\nu,k} e_n =\lambda_n e_n \andf e_n(0)=0.
$$

\item Similarly, considering the adjoint operator $\cL_{\nu,k}^*\eqdefa \nu(\p_y^2-k^2)+iky$, one can derive similar eigenvalues by changing $k$ into $-k$. Let $e_n^*(y)=C_n Ai(e^{-i\f{\pi}6 \sgn(k)}(\f{|k|}{\nu})^\f13 y+\xi_n)$ which will satisfy
$$
\cL_{\nu,k}^* e_n^* =\overline{\lambda_n} e_n^* \andf e_n^*(0)=0.
$$

\item The eigen-functions $e_n$ and $e_n^*$ are orthogonal in the following sense:
$$
( e_m, e_n^* )_{L^2_y} =0 \qquad \text{for} \qquad m\neq n
$$
We choose $C_n$ to be such that
$
1= ( e_n, e_n^* )_{L^2_y}.
$
Then, the orthogonality becomes
$$
( e_m, e_n^* )_{L^2_y} =\delta_{m,n}.
$$
From a change of variable, we know $C_n^2= A_n^{-2}(\f{|k|}{\nu})^\f13$ where $A_n$ denotes
$$
A_n^2\eqdefa \int Ai(e^{i\f{\pi}6 \sgn(k)}y+\xi_n)\overline{Ai(e^{-i\f{\pi}6 \sgn(k)}y+\xi_n)}dy.
$$

\item The eigen-functions $e^*_n$ form a complete basis of $L^2(\R_+)$, and for any function $f\in L^2(\R_+)$, we can decompose:
$$
f(y)=\sum_{n=1}^\oo ( f, e_n^* )_{L^2_y} e_n(y).
$$
The summation above holds in the sense of Abel summation of order $\beta$ for $1<\beta<\f43$, which means that for almost every $y>0$,
$$
f(y) = \lim_{x\rightarrow 1-}  \sum_{n=1}^\oo x^{n^\beta}( f, e_n^* )_{L^2_y} e_n(y).
$$
\end{itemize}

The above properties of the complex Airy operator on the half line can be found in some books, like section 14.5 of \cite{helffer2013spectral}. Also, we cite \cite{savchuk2017spectral} and the references within for the recent study of general complex Airy operators.

By applying the above points, we can solve \eqref{linear w k} as
\begin{equation}
\omega_k(t) = \sum_{n=1}^\oo e^{\lambda_n t} ( \omega_{0,k}, e_n^* )_{L^2_y} e_n(y).
\end{equation}
For the summation over $n$, it makes sense for any $t>0$, since $\Re\lambda_n \leq -c\nu^\f13|k|^\f23 n^{\f23}$. This expression converges to the initial data $\omega_{0,k}$ in a very weak sense as $t\rightarrow0+$,  since the Abel summation of order $\beta>1$ is not enough. 

Then, by taking the tangential Fourier inverse transform, we get the solution formula for \eqref{linear w}:
\begin{equation}\label{eq1.3}
\begin{aligned}
\omega (t,x,y) &= \f1{2\pi}\int_\R\sum_{n=1}^\oo e^{\lambda_n t} ( \omega_{0,k}, e_n^* )_{L^2_y} e_n(y) e^{ikx}dk\\
&= \f1{2\pi}\int_\R\sum_{n=1}^\oo A_n^{-2}(\f{|k|}{\nu})^\f13e^{(-\nu|k|^2 +e^{i\f{\pi}3 \sgn(k)}\nu^\f13|k|^\f23 \xi_n) t}  Ai(e^{i\f{\pi}6 \sgn(k)}(\f{|k|}{\nu})^\f13 y+\xi_n)\\
&\qquad\qquad\qquad\times \int_0^\oo \omega_{0,k}(z) \overline{ Ai(e^{-i\f{\pi}6 \sgn(k)}(\f{|k|}{\nu})^\f13 z+\xi_n)}dz\ e^{ikx}dk.
\end{aligned}
\end{equation}
To catch the main profile, we introduce a new variable $\ell=\nu^\f12 t^\f32 k$ which satisfies
$$
(\f{|k|}{\nu})^\f13 = \f{|\ell|^\f13}{\sqrt{\nu t}}, \qquad \nu|k|^2t=\f{|\ell|^2}{t^2} \andf \nu^\f13|k|^\f23t=|\ell|^\f23.
$$
After the transform, we derive from \eqref{eq1.3} that
\begin{equation}\label{eq1.4}
\begin{aligned}
\omega (t,x,y) &= \f1{2\pi\nu t^2}\int_\R |\ell|^\f13\sum_{n=1}^\oo A_n^{-2} e^{-\f{|\ell|^2}{t^2}+e^{i\f{\pi}3 \sgn(\ell)}\xi_n|\ell|^\f23}  Ai(e^{i\f{\pi}6 \sgn(\ell)}|\ell|^\f13 \f{y}{\sqrt{\nu t}}+\xi_n)\\
&\qquad\qquad\qquad\qquad\times\int_0^\oo \omega_{0,\f{\ell}{\sqrt{\nu t^3}}}(z) \overline{ Ai(e^{-i\f{\pi}6 \sgn(\ell)}|\ell|^\f13 \f{z}{\sqrt{\nu t}}+\xi_n)}dz\ e^{i\ell \f{x}{\sqrt{\nu t^3}}}d\ell,
\end{aligned}
\end{equation}
which gives the solution formula.

Next, we introduce a formal analysis to obtain the main limit in the above formula as $t$ goes to infinity. Given $\ell$ and $z$, there holds for large enough time $t$ 
$$
\overline{ Ai(e^{-i\f{\pi}6 \sgn(\ell)}|\ell|^\f13 \f{z}{\sqrt{\nu t}}+\xi_n)} \approx \overline{Ai'(\xi_n)} e^{i\f{\pi}6 \sgn(\ell)}|\ell|^\f13 \f{z}{\sqrt{\nu t}},
$$
and therefore,
\begin{align*}
\int_0^\oo \omega_{0,\f{\ell}{\sqrt{\nu t^3}}}(z) \overline{ Ai(e^{-i\f{\pi}6 \sgn(\ell)}|\ell|^\f13 \f{z}{\sqrt{\nu t}}+\xi_n)}dz
&\approx \overline{Ai'(\xi_n)} e^{i\f{\pi}6 \sgn(\ell)} \f{|\ell|^\f13}{\sqrt{\nu t}}\int_0^\oo z\,\omega_{0,0}(z)dz \\
&\approx M_2[\omega_0] \overline{Ai'(\xi_n)} e^{i\f{\pi}6 \sgn(\ell)} \f{|\ell|^\f13}{\sqrt{\nu t}}.
\end{align*}

Formally, for the very large time $t$, the solution formula \eqref{eq1.4} becomes
\begin{equation}\notag
\begin{aligned}
\omega (t,x,y) &\approx \f{M_2[\omega_0]}{2\pi\nu^\f32 t^\f52}\int_\R e^{i\ell \f{x}{\sqrt{\nu t^3}}}|\ell|^\f23\sum_{n=1}^\oo A_n^{-2} e^{e^{i\f{\pi}3 \sgn(\ell)}\xi_n|\ell|^\f23}  Ai(e^{i\f{\pi}6 \sgn(\ell)}|\ell|^\f13 \f{y}{\sqrt{\nu t}}+\xi_n)\overline{Ai'(\xi_n)} d\ell.
\end{aligned}
\end{equation} 

\subsection{The Derivation of the formula}

In this subsection, we study the evolution of the following linear equation on the half plane:
\begin{equation}\label{eq2.5}
    \left\{
\begin{aligned}
&\pa_t f-\p_y^2 f+y\p_x f=0,
&& (t,x,y)\in\R^+\times\R^2_+,\\
&f|_{y=0}=0,\\
&f|_{t=0}=f_{0}(x,y),
\end{aligned}
\right.
\end{equation}
which is equivalent to study $(1+t)\p_t F=\cL F$ with $F$ given by
\begin{equation}\label{eq2.6}
    f(t,x,y)=(1+t)^{-\f52} F(t,(1+t)^{-\f32}{x},(1+t)^{-\f12}y).
\end{equation}
    
Inspired by the scaling property, we first introduce the following Fourier transform in the $x$ direction as
$$
    \hat{f}(t,k,y)\eqdefa \int_\R f(t,x,y) e^{-ikx}dx,
$$
and then introduce the Fourier-Laplace transform in the $t$ direction as
$$
    f_{k,\lambda}(y) \eqdefa \int_0^\oo \hat{f}(t,k,y) e^{-ik^{\f23}\lambda t}dt.
$$
Then, $f_{k,\lambda}$ solves the following second order ODE in $y$ direction:
\begin{equation}\label{eq2.7}
    i(k^\f23\lambda+ky)f_{k,\lambda}(y)-\p_y^2 f_{k,\lambda}(y)=\hat{f_0}(k,y).
\end{equation}

To solve the equation \eqref{eq2.7}, we observe that the homogeneous equation has solutions expressed by the complex Airy function:
$$
    [i(k^\f23\lambda+ky)-\p_y^2] Ai\bigl(e^{i\f{\pi}6}(\lambda+k^\f13y)\bigr)=[i(k^\f23\lambda+ky)-\p_y^2] Ai\bigl(e^{i\f{5\pi}6}(\lambda+k^\f13y)\bigr)=0.
$$
Given any $\lambda\in \C$, we have $\lim\limits_{y\rightarrow\oo} Ai\bigl(e^{i\f{\pi}6}(\lambda+k^\f13y)\bigr)=0$ and $\lim\limits_{y\rightarrow\oo}Ai\bigl(e^{i\f{5\pi}6}(\lambda+k^\f13y)\bigr)=\oo$. When $\lambda$ is chosen such that $Ai(e^{i\f\pi6}\lambda)\neq 0$ (i.e. $\lambda\notin \{ e^{-i\f\pi6} \xi_n \}$), we define the following two solutions of the homogeneous equation:
$$
    A_{k,\lambda}\eqdefa Ai\bigl(e^{i\f{\pi}6}(\lambda+k^\f13y)\bigr),\qquad
    B_{k,\lambda}\eqdefa \f{Ai(e^{i\f{5\pi}6}\lambda)}{Ai(e^{i\f{\pi}6}\lambda)}Ai\bigl(e^{i\f{\pi}6}(\lambda+k^\f13y)\bigr)-Ai\bigl(e^{i\f{5\pi}6}(\lambda+k^\f13y)\bigr),
$$
which satisfy $\lim\limits_{y\rightarrow\oo} A_{k,\lambda}(y)=0$ and $B_{k,\lambda}(0)=0$. The Wronskian between $A_{k,\lambda}$ and $B_{k,\lambda}$ can be computed out as
\begin{align*}
    W_{k,\lambda}&\eqdefa A_{k,\lambda}'(0)B_{k,\lambda}(0)-A_{k,\lambda}(0)B_{k,\lambda}'(0)\\
    &=-e^{i\f\pi6}k^\f13Ai(e^{i\f{5\pi}6}\lambda)Ai'\bigl(e^{i\f{\pi}6}\lambda)+e^{i\f{5\pi}6}k^\f13 Ai\bigl(e^{i\f{\pi}6}\lambda)Ai'(e^{i\f{5\pi}6}\lambda)\\
    &=(e^{i\f{5\pi}6}-e^{i\f\pi6})k^\f13 Ai(0)Ai'(0)\\
    &=(2\pi)^{-1} k^\f13.
\end{align*}
Therefore, when $\lambda\notin \{ e^{-i\f\pi6} \xi_n \}$, the solution of \eqref{eq2.7} with boundary condition $f_{k,\lambda}(0)=\lim_{y\rightarrow\oo}f_{k,\lambda}(y)=0$ is given by
\begin{equation}\label{eq2.8}
    \begin{aligned}
        f_{k,\lambda}= &-\f1{W_{k,\lambda}}\Bigl(A_{k,\lambda}(y)\int_0^y B_{k,\lambda}(z)\hat{f_0}(k,z)dz+B_{k,\lambda}(y)\int_y^\oo A_{k,\lambda}(z)\hat{f_0}(k,z)dz\Bigr)\\
        =&- 2\pi k^{-\f13}\Bigl(A_{k,\lambda}(y)\int_0^y B_{k,\lambda}(z)\hat{f_0}(k,z)dz+B_{k,\lambda}(y)\int_y^\oo A_{k,\lambda}(z)\hat{f_0}(k,z)dz\Bigr).
    \end{aligned}
\end{equation}

From the inverse Laplace transform on $\lambda\in \R$, we already arrive at the following solution formula for \eqref{eq2.5} in the variable $(k,y)$:
\begin{equation}\label{eq2.9}
    \hat{f}(t,k,y)=\f{k^\f23}{2\pi} \int_\R f_{k,\lambda}(y)e^{ik^\f23\lambda t}d\lambda.
\end{equation}

Next, we transform the formula into a form that will be more convenient for subsequent use. By taking \eqref{eq2.8} into \eqref{eq2.9} and decompose the two parts in $B_{k,\lambda}$, we find 
\begin{equation}\label{eq2.10}
    \begin{aligned}
        \hat{f}(t,k,y)=&-k^\f13 \int_\R e^{ik^\f23\lambda t}\f{Ai(e^{i\f{5\pi}6}\lambda)}{Ai(e^{i\f{\pi}6}\lambda)} Ai\bigl(e^{i\f{\pi}6}(\lambda+k^\f13y)\bigr)\int_0^\oo Ai\bigl(e^{i\f{\pi}6}(\lambda+k^\f13z)\bigr) \hat{f_0}(k,z)dz d\lambda\\
        &+k^\f13 \int_\R e^{ik^\f23\lambda t}\Bigl(Ai\bigl(e^{i\f{\pi}6}(\lambda+k^\f13y)\bigr)\int_0^y Ai\bigl(e^{i\f{5\pi}6}(\lambda+k^\f13z)\bigr) \hat{f_0}(k,z)dz \\
        &\qquad\qquad\qquad + Ai\bigl(e^{i\f{5\pi}6}(\lambda+k^\f13y)\bigr)\int_y^\oo Ai\bigl(e^{i\f{\pi}6}(\lambda+k^\f13z)\bigr) \hat{f_0}(k,z)dz \Bigr) d\lambda.
    \end{aligned}
\end{equation}
For simplicity of notations, we only consider the case $k>0$ in the formula, and the negative cases can be handled in the same way.

For the last part, we shall use the following Lemma:
\begin{lem}
    Given $y>z\geq0$, there holds
    \begin{equation}\label{eq2.11}
        \begin{aligned}
            &\int_\R e^{ik^\f23\lambda t}Ai\bigl(e^{i\f{\pi}6}(\lambda+k^\f13y)\bigr)Ai\bigl(e^{i\f{5\pi}6}(\lambda+k^\f13z)\bigr) d\lambda\\
            &=-\f{k^{-\f13}}{\sqrt{4\pi t}}\exp \bigl(-\f{(y-z)^2}{4t}-i\f{kt}2(y+z)-\f{k^2}{12}t^3\bigr).
        \end{aligned}
    \end{equation}
\end{lem}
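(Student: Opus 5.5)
The plan is to use the integral representation of the Airy function and compute the $\lambda$-integral as a Gaussian. For real $s$,
\[
Ai(s)=\frac{1}{2\pi}\int_\R e^{i(\xi^3/3+s\xi)}\,d\xi .
\]
More generally, $Ai(z)=\frac{1}{2\pi i}\int_{\Gamma}e^{w^3/3-zw}\,dw$ for any contour $\Gamma$ running from $\infty e^{-i\pi/3}$ to $\infty e^{i\pi/3}$. Rotating $\Gamma$ gives representations of $Ai(e^{i\pi/6}s)$ and $Ai(e^{i5\pi/6}s)$ as integrals of exponentials that are linear in $s$ and cubic in the integration variable.

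Put $a=k^{1/3}y$ and $b=k^{1/3}z$, so that $a>b$ since $k>0$. Insert the two contour representations for $Ai(e^{i\pi/6}(\lambda+a))$ and $Ai(e^{i5\pi/6}(\lambda+b))$ into the left side of \eqref{eq2.11}. In both, the dependence on $\lambda$ is of the form $e^{-\lambda(e^{i\pi/6}w_1+e^{i5\pi/6}w_2)}$. Combined with $e^{ik^{2/3}\lambda t}$, the $\lambda$-integral (after a standard regularization) produces a delta function. This delta function forces $e^{i\pi/6}w_1+e^{i5\pi/6}w_2=ik^{2/3}t$.

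This kills one contour integral. What remains is a single integral whose exponent is cubic in the remaining variable. The cubic terms cancel because
\[
(e^{i\pi/6})^3+(e^{i5\pi/6})^3=i-i=0 .
\]
So the exponent is only quadratic, and evaluating the resulting Gaussian integral should give the factor $(4\pi t)^{-1/2}e^{-(y-z)^2/(4t)}$ together with the phase $-ikt(y+z)/2$ and the term $-k^2t^3/12$.

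I will cross-check this with a structural argument. Denote the left side by $K(t)$. It satisfies $\partial_t K=\partial_y^2K-iky K$: differentiating under the integral turns $\partial_t$ into multiplication by $ik^{2/3}\lambda$, and the homogeneous Airy equation in $y$ is
\[
[i(k^{2/3}\lambda+ky)-\partial_y^2]\,Ai\bigl(e^{i\pi/6}(\lambda+k^{1/3}y)\bigr)=0 .
\]
By symmetry, $K$ satisfies the adjoint equation in $z$. On the whole line, the heat kernel of $\partial_t-\partial_y^2+iky$ is known explicitly: by the usual Gaussian ansatz it is
\[
\frac{1}{\sqrt{4\pi t}}\exp\Bigl(-\frac{(y-z)^2}{4t}-\frac{ikt(y+z)}{2}-\frac{k^2t^3}{12}\Bigr).
\]
So the lemma says that $K$ equals $-k^{-1/3}$ times this whole-line kernel. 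I can therefore verify the Gaussian computation by checking the PDE and the initial singularity. As $t\to0$, the jump of $\partial_yK$ at $y=z$ comes from the Wronskian computed above, $W=(2\pi)^{-1}k^{1/3}$. This fixes the constant $-k^{-1/3}$, with the $2\pi$ factors consistent with those in \eqref{eq2.8}–\eqref{eq2.9}.

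The main obstacle is justifying the $\lambda$-integral. For real $\lambda\to-\infty$, one of $Ai(e^{i\pi/6}\cdot)$ or $Ai(e^{i5\pi/6}\cdot)$ grows. Their product behaves like an oscillatory factor times $e^{c(a-b)|\lambda|^{1/2}}$-type growth, which is balanced by decay only because $y>z$ and the phase $e^{ik^{2/3}\lambda t}$ is present. I would handle this in two steps. First, shift the $\lambda$ contour slightly into the half-plane $\Im\lambda<0$, where $e^{ik^{2/3}\lambda t}$ gives exponential decay; this is harmless since the integrand is entire in $\lambda$. Second, pass to the limit by dominated convergence using the standard asymptotics of $Ai$ in the relevant sectors. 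This also fixes the branch and sign choices, which is where I expect most of the care to go.
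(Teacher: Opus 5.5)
Your proposal does not yet prove the identity. The two steps you defer are exactly where it fails.

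\textbf{The direct route.} Convergence of the left side is not the difficulty. By the Airy asymptotics, the product of the two Airy factors is of size $|\lambda|^{-1/2}\exp\bigl(-\tfrac{1}{\sqrt2}k^{1/3}(y-z)|\lambda|^{1/2}\bigr)$ at both ends of $\R$, so it is absolutely integrable. The phase $e^{ik^{2/3}\lambda t}$ has modulus one there and plays no role. In $\mathrm{Im}\,\lambda<0$ the phase grows rather than decays, since $|e^{ik^{2/3}\lambda t}|=e^{-k^{2/3}t\,\mathrm{Im}\,\lambda}$, so the contour shift you propose accomplishes nothing.

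The real obstruction is the step you call a ``standard regularization''. Along $\R$ one Airy factor grows like $e^{c|\lambda|^{3/2}}$ at each end: $Ai(e^{5\pi i/6}\lambda)$ as $\lambda\to+\infty$, and $Ai(e^{i\pi/6}\lambda)$ as $\lambda\to-\infty$. The product decays only through cancellation. After you insert the two contour integrals, the triple integral is not absolutely convergent, so the exchange that produces the delta function is unavailable. Making it rigorous is the whole proof, and the orientation of the complex contours at the ``delta'' is what fixes the overall sign.

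Your cancellation identity is also false: $(e^{5\pi i/6})^3=e^{5\pi i/2}=i$, so the sum is $2i$, not $0$. The cubes cancel because $(e^{i\pi/6})^3=(e^{5\pi i/6})^3$ while the constraint forces $w_2=-e^{-2\pi i/3}w_1+\mathrm{const}$.

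\textbf{The normalization.} Your cross-check is essentially the paper's proof. The paper solves \eqref{eq2.7} on $\R$ with the same two Airy solutions and the same Wronskian, inverts via \eqref{eq2.9} to get \eqref{eq2.13}, and compares with the explicit kernel \eqref{eq2.12} for arbitrary $\hat f_0$. This avoids any delta-function exchange, and it is how your argument should be made rigorous. Two corrections to your version: the Wronskian enters through the jump of $\partial_y$ of the \emph{resolvent} kernel at $y=z$, not a jump of $\partial_yK$ at positive time, where the kernel is smooth; and you need the kernel for $y<z$ as well, obtained by swapping $y$ and $z$, because the integral diverges for $y<z$.

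Carried out, however, this does not give $-k^{-1/3}$. With $W_{k,\lambda}=(2\pi)^{-1}k^{1/3}$, the whole-line resolvent kernel for $y>z$ is $+2\pi k^{-1/3}Ai(e^{i\pi/6}(\lambda+k^{1/3}y))Ai(e^{5\pi i/6}(\lambda+k^{1/3}z))$, exactly as written just before \eqref{eq2.13}. Formula \eqref{eq2.9} multiplies this by $\frac{k^{2/3}}{2\pi}$, which gives $+k^{1/3}$, not the $-k^{1/3}$ appearing in \eqref{eq2.13}.

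An independent check: to leading order as $k\to0$, the Airy asymptotics reduce the left side of \eqref{eq2.11} to $\frac{k^{-1/3}}{4\pi}\int_\R e^{i\omega t}(i\omega)^{-1/2}e^{-(y-z)(i\omega)^{1/2}}\,d\omega$. This is the inverse Laplace transform of $\frac{1}{2\sqrt s}e^{-(y-z)\sqrt s}$ times $k^{-1/3}$, namely $+\frac{k^{-1/3}}{\sqrt{4\pi t}}e^{-(y-z)^2/(4t)}$.

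So the identity appears to hold with a plus sign, and the minus sign in \eqref{eq2.13}, hence in the statement, looks like a slip. Your claim that the Wronskian ``fixes the constant $-k^{-1/3}$'' was asserted without computation and is incorrect.
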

\begin{proof}
    One can directly prove this integration via the inverse  Fourier-Laplace transform representation of Airy functions. But here, we give an alternative proof by considering the equation \eqref{eq2.5} on the whole plane. 

    On one hand, this equation \eqref{eq2.5} on the whole plane has an explicit solution formula. In the variable $(k,y)$, it writes
    \begin{equation}\label{eq2.12}
        \hat{f}(t,k,y)=\int_\R \f1{\sqrt{4\pi t}}\exp \bigl(-\f{(y-z)^2}{4t}-i\f{kt}2(y+z)-\f{k^2}{12}t^3\bigr) \hat{f_0}(k,z)dz.
    \end{equation}

    On the other hand, after the Laplace transform, \eqref{eq2.7} on $\R$ can be solved by
\begin{align*}
    f_{k,\lambda}(y)=2\pi k^{-\f13}\Bigl( &Ai\bigl(e^{i\f\pi6}(\lambda+k^\f13y)\bigr)\int_{-\oo}^y Ai\bigl(e^{i\f{5\pi}6}(\lambda+k^\f13z)\bigr)\hat{f_0}(k,z)dz\\
    &+Ai\bigl(e^{i\f{5\pi}6}(\lambda+k^\f13y)\bigr)\int_y^\oo Ai\bigl(e^{i\f\pi6}(\lambda+k^\f13z)\bigr)\hat{f_0}(k,z)dz\Bigr),
\end{align*}
where we used the facts that $\lim\limits_{y\rightarrow +\infty}Ai\bigl(e^{i\f\pi6}(\lambda+k^\f13y)\bigr)=\lim\limits_{y\rightarrow-\oo}Ai\bigl(e^{i\f{5\pi}6}(\lambda+k^\f13y)\bigr)=0$ and the Wronskian between these two functions is
$$
    W[Ai\bigl(e^{i\f\pi6}(\lambda+k^\f13y),Ai\bigl(e^{i\f{5\pi}6}(\lambda+k^\f13y)]=-W_{k,\lambda}=-(2\pi)^{-1}k^\f13.
$$
Therefore, we apply the Laplace inverse transform and arrive at another formula:
\begin{equation}\label{eq2.13}
    \begin{aligned}
        \hat{f}(t,k,y)=-k^\f13\int_\R e^{ik^\f23\lambda t}&\Bigl( Ai\bigl(e^{i\f\pi6}(\lambda+k^\f13y)\bigr)\int_{-\oo}^y Ai\bigl(e^{i\f{5\pi}6}(\lambda+k^\f13z)\bigr)\hat{f_0}(k,z)dz\\
    &+Ai\bigl(e^{i\f{5\pi}6}(\lambda+k^\f13y)\bigr)\int_y^\oo Ai\bigl(e^{i\f\pi6}(\lambda+k^\f13z)\bigr)\hat{f_0}(k,z)dz\Bigr) d\lambda.
    \end{aligned}
\end{equation}

Comparing \eqref{eq2.12} and \eqref{eq2.13}, one can get the equality \eqref{eq2.11} and finish the proof.
\end{proof}

By applying \eqref{eq2.11} to the last two lines in \eqref{eq2.10}, we rewrite the formula as
\begin{equation}\label{eq2.14}
    \begin{aligned}
    \hat{f}(t,k,y)=&-k^\f13 \int_\R e^{ik^\f23\lambda t}\f{Ai(e^{i\f{5\pi}6}\lambda)}{Ai(e^{i\f{\pi}6}\lambda)} Ai\bigl(e^{i\f{\pi}6}(\lambda+k^\f13y)\bigr)\int_0^\oo Ai\bigl(e^{i\f{\pi}6}(\lambda+k^\f13z)\bigr) \hat{f_0}(k,z)dz d\lambda\\
        &- \f{1}{\sqrt{4\pi t}}\int_0^\oo \exp \bigl(-\f{(y-z)^2}{4t}-i\f{kt}2(y+z)-\f{k^2}{12}t^3\bigr) \hat{f_0}(k,z)dz.
\end{aligned}
\end{equation}

Inspired by the formal computations below \eqref{eq1.4}, the slowest decay comes from the parts with very small $k$. However, the formal limit in the first line of the above formula does not vanish because $Ai(e^{i\f\pi6}\lambda)\neq 0$. This suggests that we reorganize the formula before doing the semi-group estimates.

By the Taylor expansion, we have
\begin{align*}
    Ai\bigl(e^{i\f{\pi}6}(\lambda+k^\f13z)\bigr)
    =& Ai\bigl(e^{i\f{\pi}6}\lambda\bigr)+e^{i\f\pi6}k^\f13 z Ai'\bigl(e^{i\f{\pi}6}\lambda\bigr)+\f12 e^{i\f\pi3}k^\f23 z^2 Ai''\bigl(e^{i\f{\pi}6}\lambda\bigr)\\
    &+\f{i}2kz^3 \int_0^1 (1-s)^2  Ai'''\bigl(e^{i\f{\pi}6}(\lambda+sk^\f13z )\bigr)ds\\
    =& Ai\bigl(e^{i\f{\pi}6}\lambda\bigr)+e^{i\f\pi6}k^\f13 z Ai'\bigl(e^{i\f{\pi}6}\lambda\bigr)+ \f{i}2 \lambda k^\f23 z^2 Ai\bigl(e^{i\f{\pi}6}\lambda\bigr)\\
    &+\f12e^{i\f{2\pi}3}kz^3 \int_0^1 (1-s)^2 (\lambda+sk^\f13 z)  Ai'\bigl(e^{i\f{\pi}6}(\lambda+sk^\f13z )\bigr)ds.
\end{align*}
We observe that the $Ai(e^{i\f\pi6})$ in the first term and the third term above can be used to cancel the denominator. Therefore, one derives from \eqref{eq2.11} that 
\begin{align*}
    &-k^\f13 \int_\R e^{ik^\f23\lambda t}\f{Ai(e^{i\f{5\pi}6}\lambda)}{Ai(e^{i\f{\pi}6}\lambda)} Ai\bigl(e^{i\f{\pi}6}(\lambda+k^\f13y)\bigr)\int_0^\oo Ai\bigl(e^{i\f{\pi}6}\lambda\bigr)  \hat{f_0}(k,z)dz d\lambda\\
    &=\f{1}{\sqrt{4\pi t}}\int_0^\oo \exp \bigl(-\f{y^2}{4t}-i\f{kt}2y-\f{k^2}{12}t^3\bigr) \hat{f_0}(k,z)dz,
\end{align*}
and by taking $\p_t$ on \eqref{eq2.11} that
\begin{align*}
    &-k^\f13 \int_\R e^{ik^\f23\lambda t}\f{Ai(e^{i\f{5\pi}6}\lambda)}{Ai(e^{i\f{\pi}6}\lambda)} Ai\bigl(e^{i\f{\pi}6}(\lambda+k^\f13y)\bigr)\int_0^\oo \f{i}2\lambda k^\f23 z^2 Ai\bigl(e^{i\f{\pi}6}\lambda\bigr)  \hat{f_0}(k,z)dz d\lambda\\
    &=\f12\f{d}{dt}\Bigl(\f{1}{\sqrt{4\pi t}}\int_0^\oo \exp \bigl(-\f{y^2}{4t}-i\f{kt}2y-\f{k^2}{12}t^3\bigr) z^2 \hat{f_0}(k,z)dz\Bigr)\\
    &=\f{1}{\sqrt{4\pi t}}\int_0^\oo \bigl(-\f1{4t}+\f{y^2}{8t^2}-\f{iky}4 -\f{k^2t^2}8\bigr) \exp \bigl(-\f{y^2}{4t}-i\f{kt}2y-\f{k^2}{12}t^3\bigr)z^2 \hat{f_0}(k,z)dz.
\end{align*}

We denote $E(y,z)= \exp \bigl(-\f{(y-z)^2}{4t}-i\f{kt}2(y+z)\bigr)$ and use Taylor expansion to compute
\begin{align*}
    E(y,z)=&E(y,0)+z\p_z E(y,0)+\f12 z^2 \p_z^2 E(y,0)+\f12 z^3\int_0^1 (1-s)^2 E(y,sz)ds\\
    =& \exp \bigl(-\f{y^2}{4t}-i\f{kt}2y\bigr)+z(\f{y}{2t}-\f{i}2 kt)\exp \bigl(-\f{y^2}{4t}-i\f{kt}2y\bigr)\\
    &+ z^2 \bigl(-\f1{4t}+\f{y^2}{8t^2}-\f{iky}4 -\f{k^2t^2}8\bigr) \exp \bigl(-\f{y^2}{4t}-i\f{kt}2y-\f{k^2}{12}t^3\bigr)\\
    &+ \f12 z^3 \int_0^1 (1-s)^2 \Bigl(-\f{3(y-zs)}{4t^2}+\f{3ik}{4}+\bigl(\f{y-sz}{2t}-\f{ikt}2\bigr)^3\Bigr)\\
    &\qquad\times\exp \bigl(-\f{(y-sz)^2}{4t}-i\f{kt}2(y+sz)\bigr)ds.
\end{align*}

Combining \eqref{eq2.14} and all the above expansions, we arrive at the following formula:
\begin{equation}\label{eq2.15}
    \begin{aligned}
        \hat{f}(t,k,y)=&-T_1-T_2-T_3-T_4,
    \end{aligned}
\end{equation} 
where the four terms are given by
\begin{equation}\label{eq2.16}
    \begin{aligned}
        T_1&\eqdefa e^{i\f\pi6}k^\f23 \int_{0}^\oo z \hat{f_0}(k,z)dz \int_\R e^{ik^\f23\lambda t}\f{Ai(e^{i\f{5\pi}6}\lambda)}{Ai(e^{i\f{\pi}6}\lambda)} Ai\bigl(e^{i\f{\pi}6}(\lambda+k^\f13y)\bigr) Ai'\bigl(e^{i\f{\pi}6}\lambda\bigr)  d\lambda,\\
        T_2&\eqdefa \f1{\sqrt{4\pi t}}\bigl(\f{y}{2t}-\f{i}2 kt\bigr)\exp \bigl(-\f{y^2}{4t}-i\f{kt}2y-\f{k^2}{12}t^3\bigr)\int_{0}^\oo z \hat{f_0}(k,z)dz  ,\\
        T_3&\eqdefa \f12e^{i\f{2\pi}3} k^\f43 \int_\R e^{ik^\f23\lambda t}\f{Ai(e^{i\f{5\pi}6}\lambda)}{Ai(e^{i\f{\pi}6}\lambda)} Ai\bigl(e^{i\f{\pi}6}(\lambda+k^\f13y)\bigr)\\
        &\qquad\qquad\times\int_0^\oo z^3 \hat{f_0}(k,z)\int_0^1 (1-s)^2 (\lambda+sk^\f13 z)  Ai'\bigl(e^{i\f{\pi}6}(\lambda+sk^\f13z )\bigr)ds dz d\lambda,\\
        T_4 &\eqdefa \f{1}{4\sqrt{\pi t}}\int_0^\oo \int_0^1 (1-s)^2 \Bigl(-\f{3(y-zs)}{4t^2}+\f{3ik}{4}+\bigl(\f{y-sz}{2t}-\f{ikt}2\bigr)^3\Bigr)\\
        &\qquad\qquad\times\exp \bigl(-\f{(y-sz)^2}{4t}-i\f{kt}2(y+sz)-\f{k^2}{12}t^3\bigr) ds  z^3\hat{f_0}(k,z)dz.
    \end{aligned}
\end{equation}

We remark that $T_1$ and $T_2$ are the main terms with the coefficient $\int_0^\oo z \hat{f_0}(k,z)dz$, and we will show in the next subsection that they contain a main part depending only on the vertical momentum $M_2[f_0]=\int_0^\oo z \hat{f_0}(0,z)dz.$ The terms $T_3$ and $T_4$ will be proven to have better time decays.

\subsection{The time decay of the $L^2$ norms}

In this subsection, we study the $L^2$ norms of $\hat{f}(t,k,y)$ given by the solution formula \eqref{eq2.16} and \eqref{eq2.16}. With assumptions of initial data $f_0$ belonging to some weighted spaces, we shall prove the sharp time decay for this linear evolution. Recall that we only use the formula \eqref{eq2.15} to describe the solution for $k\geq0$, all the $L^2$ norms in this subsection refer to $L^2(\R_+\times\R_+)$ for $(k,y)$ variables.

First, we consider $T_2$ and $T_4$, which are expressed without Airy functions.

\begin{lem}
    Let $T_2$ and $T_4$ be given by \eqref{eq2.16}. Then, for $m>4$, one has
    \begin{equation}\label{eq2.17}
        t^\f12\left\|T_2-\f{M_2[f_0]}{\sqrt{4\pi t}}\bigl(\f{y}{2t}-\f{i}2 kt\bigr)\exp \bigl(-\f{y^2}{4t}-i\f{kt}2y-\f{k^2}{12}t^3\bigr)\right\|_{L^2}+\|T_4\|_{L^2}\leq C_m t^{-\f52}\|f_0\|_{L^2(m)}.
    \end{equation}
    Moreover, there holds
    \begin{equation}\label{eq2.18}
         \|T_2\|_{L^2}+\|\f{M_2[f_0]}{\sqrt{4\pi t}}\bigl(\f{y}{2t}-\f{i}2 kt\bigr)\exp \bigl(-\f{y^2}{4t}-i\f{kt}2y-\f{k^2}{12}t^3\bigr)\|_{L^2}\leq C_m t^{-\f32}\|f_0\|_{L^2(m)}.
    \end{equation}
\end{lem}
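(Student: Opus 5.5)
The plan is to reduce everything to explicit Gaussian integrals in $(k,y)$ plus simple bounds on the $k$-dependence of the initial moments. The moments are controlled by the weighted norm. For $m>4$ and $j\le 3$, Cauchy--Schwarz gives
\[
\sup_k\int_0^\infty z^j|\hat f_0(k,z)|\,dz\le \int_{\R^2_+} z^j|f_0|\,dx\,dz\lesssim \|f_0\|_{L^2(m)},
\]
since $\int (1+x^2+z^2)^{j-m}dx\,dz<\infty$ once $m>j+1$. In the same way,
\[
\Bigl|\int_0^\infty z\hat f_0(k,z)\,dz-M_2[f_0]\Bigr|\le |k|\int |x|\,z\,|f_0|\lesssim |k|\,\|f_0\|_{L^2(m)},
\]
using $|e^{-ikx}-1|\le |kx|$ and $m>3$.

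\textbf{The term $T_2$.} Set $\Phi(t,k,y)=\frac{1}{\sqrt{4\pi t}}\bigl(\frac{y}{2t}-\frac{i}{2}kt\bigr)\exp\bigl(-\frac{y^2}{4t}-i\frac{kt}2y-\frac{k^2}{12}t^3\bigr)$. Its modulus is bounded by $t^{-1/2}\bigl(\frac{y}{t}+kt\bigr)e^{-y^2/4t}e^{-k^2t^3/12}$. Rescaling $y=\sqrt t\,Y$ and $k=t^{-3/2}K$ gives
\[
\|\Phi\|_{L^2}^2\lesssim t^{-1}\cdot t^{-1}\cdot t^{1/2}\cdot t^{-3/2}=t^{-3}.
\]
The first factor $t^{-1}$ comes from the prefactor squared, the second $t^{-1}$ from $(y/t)^2\sim (kt)^2\sim 1/t$, and the last two from the Jacobians. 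Hence $\|\Phi\|_{L^2}\lesssim t^{-3/2}$. Combined with the sup bound on the first moment, this gives \eqref{eq2.18} for both $T_2$ and $M_2[f_0]\Phi$.

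For the difference $T_2-M_2[f_0]\Phi$, the moment-difference bound supplies an extra factor $|k|=t^{-3/2}K$. The $L^2$ norm is therefore $\lesssim t^{-3}\|f_0\|_{L^2(m)}$, which is exactly $t^{-1/2}\cdot t^{-5/2}$ as required.

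\textbf{The term $T_4$.} The bracket is bounded by
\[
\frac{|y-sz|}{t^2}+|k|+\frac{|y-sz|^3}{t^3}+|k|^3t^3 .
\]
After multiplying by the Gaussian $e^{-(y-sz)^2/4t}$, this is $\lesssim t^{-3/2}\bigl(1+K+K^3\bigr)$ with $K=t^{3/2}k$, using $w^je^{-w^2/4t}\lesssim t^{j/2}e^{-w^2/8t}$. I would then apply Minkowski's inequality, moving the $dz\,ds$ integral outside the $L^2_{k,y}$ norm. For fixed $z$ and $s$, the remaining $L^2_{k,y}$ norm of $t^{-1/2}\cdot t^{-3/2}(1+K^3)e^{-(y-sz)^2/8t}e^{-k^2t^3/12}$ is bounded by
\[
t^{-2}\cdot t^{1/4}\cdot t^{-3/4}=t^{-5/2}.
\]
Here I bound $\hat f_0(k,z)$ by its sup in $k$ and integrate in $z$ against $z^3$, so the $z$-integral contributes a factor $\sup_k\int z^3|\hat f_0|\lesssim\|f_0\|_{L^2(m)}$. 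This step needs $m>4$, which is the hypothesis.

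The main obstacle is this last Minkowski step, because $\hat f_0(k,z)$ depends on $k$. I would handle it by bounding $|\hat f_0(k,z)|\le \int|f_0(x,z)|\,dx=:g(z)$ and then using $\int z^3 g(z)\,dz\lesssim\|f_0\|_{L^2(m)}$. The remaining care is in tracking the powers of $t$ in the scalings, especially the $|k|^3t^3$ and $y^3/t^3$ contributions, and checking that each gives exactly $t^{-5/2}$.
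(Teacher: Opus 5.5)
Your proposal is correct and follows essentially the paper's proof. Both arguments use pointwise Gaussian bounds that trade powers of $y$ and $k$ for powers of $t$, and both control the $z$-moments uniformly in $k$ via $\|z^j\hat{f_0}(k,z)\|_{L^\infty_k(L^1_z)}\lesssim\|f_0\|_{L^2(m)}$, with Minkowski pulling the $z,s$ integrals outside for $T_4$. Both also gain the extra factor $|k|$ from $\bigl|\int_0^\infty z\hat{f_0}(k,z)\,dz-M_2[f_0]\bigr|\lesssim |k|\,\|f_0\|_{L^2(m)}$, and your power counting ($t^{-3/2}$, $t^{-3}$, $t^{-5/2}$) matches the paper's exactly.
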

\begin{proof}
    First, one observe that for $a,b\in \{0,1,2,3\}$ 
    $$
        |y^a k^b \exp \bigl(-\f{y^2}{4t}-i\f{kt}2y-\f{k^2}{12}t^3\bigr)|\leq C t^{\f{a}2-\f32b} \exp \bigl(-\f{y^2}{8t}-\f{k^2t^3}{20}\bigr),
    $$
    which implies that 
    \begin{equation}\label{eq2.19}
        \begin{aligned}
        |\f1{\sqrt{4\pi t}}\bigl(\f{y}{2t}-\f{i}2 kt\bigr)\exp \bigl(-\f{y^2}{4t}-i\f{kt}2y-\f{k^2}{12}t^3\bigr)|
        \leq C t^{-1} \exp \bigl(-\f{y^2}{8t}-\f{k^2t^3}{20}\bigr),
    \end{aligned}
    \end{equation}
    and 
    \begin{equation}\label{eq2.20}
        \begin{aligned}
        |\f{1}{4\sqrt{\pi t}} &  \Bigl(-\f{3(y-zs)}{4t^2}+\f{3ik}{4}+\bigl(\f{y-sz}{2t}-\f{ikt}2\bigr)^3\Bigr)\exp \bigl(-\f{(y-sz)^2}{4t}-i\f{kt}2(y+sz)-\f{k^2}{12}t^3\bigr)|\\
        &\leq C t^{-2} \exp \bigl(-\f{(y-sz)^2}{8t}-\f{k^2t^3}{20}\bigr).
        \end{aligned}
    \end{equation}

    From \eqref{eq2.20}, we find that 
    \begin{equation}\label{eq2.21}
        \begin{aligned}
            \|T_4\|_{L^2}&\leq C t^{-2}\|e^{-\f{k^2t^3}{20}}\int_0^1 (1-s)^2 \int_0^\oo \|e^{-\f{(y-sz)^2}{8t}} \|_{L^2_y(\R_+)} z^3 |\hat{f_0}(k,z)|dz ds \|_{L^2_k(\R_+)}\\
            &\leq C t^{-\f74}\|e^{-\f{k^2t^3}{20}}\|_{L^2_k(\R_+)} \|z^3\hat{f_0}(k,z)\|_{L^\oo_k(L^1_z)}\\
            &\leq C_m t^{-\f52} \|f_0\|_{L^2(m)},
        \end{aligned}
    \end{equation}
where we used that for $m>4$,
$$
    \|z^3\hat{f_0}(k,z)\|_{L^\oo_k(L^1_z)} \leq \|y^3 f_0(x,y)\|_{L^1(\R^2_+)}\leq C_m \|f_0\|_{L^2(m)}.
$$
Similarly, we deduce from \eqref{eq2.19} that
\begin{align*}
    &\|T_2\|_{L^2}+\|\f{M_2[f_0]}{\sqrt{4\pi t}}\bigl(\f{y}{2t}-\f{i}2 kt\bigr)\exp \bigl(-\f{y^2}{4t}-i\f{kt}2y-\f{k^2}{12}t^3\bigr)\|_{L^2}\\
    \leq & C t^{-1} \|e^{-\f{k^2 t^3}{20}}\|_{L^2_k(\R_+)} \|e^{-\f{y^2}{8t}} \|_{L^2_y} \|z\hat{f_0}(k,z)\|_{L^\oo_k(L^1_z)}\\
    \leq & C_m t^{-\f32} \|f_0\|_{L^2(m)},
\end{align*}
which gives \eqref{eq2.18}.

To achieve better decay, we can use \eqref{eq2.19} and 
\begin{equation}\label{eq2.21a}
    |\int_0^\oo z \hat{f_0}(k,z)dz-M_2[f_0]|\leq C k \| z\p_k \hat{f_0}(k,z)\|_{L^\oo_k(L^1_z)} \leq C_m k\|f_0\|_{L^2(m)}
\end{equation}
to derive
\begin{align*}
    &\|T_2-\f{M_2[f_0]}{\sqrt{4\pi t}}\bigl(\f{y}{2t}-\f{i}2 kt\bigr)\exp \bigl(-\f{y^2}{4t}-i\f{kt}2y-\f{k^2}{12}t^3\bigr)\|_{L^2}\\
    \leq & C_m t^{-1} \|k e^{-\f{k^2 t^3}{20}}\|_{L^2_k(\R_+)} \|e^{-\f{y^2}{8t}} \|_{L^2_y} \|f_0\|_{L^2(m)}\\
    \leq & C_m t^{-3} \|f_0\|_{L^2(m)},
\end{align*}
which together with \eqref{eq2.21} proves \eqref{eq2.17}. This finishes the proof.
\end{proof}

Now, we turn to the $L^2$ estimate of $T_1$. In order to change the power of $k$ into some time decay, we want to change the integral domain for $\lambda$ a little bit, so that $e^{ik^\f23\lambda t}$ relates $k$ with time decays.

For any fixed $y>0$, the function $e^{ik^\f23\lambda t}\f{Ai(e^{i\f{5\pi}6}\lambda)}{Ai(e^{i\f{\pi}6}\lambda)} Ai\bigl(e^{i\f{\pi}6}(\lambda+k^\f13y)\bigr) Ai'\bigl(e^{i\f{\pi}6}\lambda\bigr)$ is analytic with respect to $\lambda$, except at the singular points $\lambda\in \{e^{-i\f\pi6}\xi_n \}$ which are the zeros of $Ai(e^{i\f\pi6}\lambda)$. Also, the function $e^{ik^\f23\lambda t}\f{Ai(e^{i\f{5\pi}6}\lambda)}{Ai(e^{i\f{\pi}6}\lambda)} Ai\bigl(e^{i\f{\pi}6}(\lambda+k^\f13y)\bigr) Ai'\bigl(e^{i\f{\pi}6}\lambda\bigr)$ is bounded in the sector $\arg \lambda \in (-\f76\pi,\f16\pi)$ (decay with an exponential rate depending on $y$), and $e^{ik^\f23\lambda t}$ decays exponentially in the sector $\arg\lambda \in (0,\pi)$. Therefore, we apply Cauchy's Theorem to replace the integral over $\R$ by an integral over the contour consisting of the two rays $e^{i\delta}[0,\infty)$ and $-e^{-i\delta}[0,\infty)$ with $0<\delta<\f\pi6$. 

\begin{figure}[htbp]
    \centering

\begin{tikzpicture}[scale=2]

\draw[->] (-2,0) -- (2,0) node[right] {Re$\lambda$};
\draw[->] (0,-0.2) -- (0,1) node[above] {Im$\lambda$};

\draw[line width=1pt] ( {cos(10)}, {sin(10)} ) -- ( {2*cos(10)}, {2*sin(10)} );
\draw[line width=1pt] ( {-cos(10)}, {sin(10)} ) -- ( {-2*cos(10)}, {2*sin(10)} );

\draw[line width=1pt] ( {cos(10)}, {sin(10)} ) -- ( {-cos(10)}, {sin(10)} );

\draw[dashed] (0,0) -- ( {2*cos(10)}, {2*sin(10)} );
\draw[dashed] (0,0) -- ( {-2*cos(10)}, {2*sin(10)} );

\foreach \r in {0.5,0.8,1.1,1.4,1.7} {
    \fill ( {-\r*cos(30)}, {\r*sin(30)} ) circle (0.02);
}

\node at (2.2,0.5) {$L_3=e^{i\delta}[1,\infty)$};
\node at (-2.2,0.5) {$L_1=-e^{-i\delta}[1,\infty)$};
\node at (0.3,0.28) {$L_2$};

\node at (-1.5,1) {$e^{-i\pi/6}\xi_n$};
\node at (-0.33,0.35) {$e^{-i\pi/6}\xi_1$};

\end{tikzpicture}
\caption{The contour $L_1+L_2+L_3$.}
\end{figure}

Now, we fix the choice of $0<\delta<\f\pi{100}$ small enough such that $\sin \delta<\f12 |\xi_1|$. The term $T_1$ is equivalent to the one by replacing the integration over $\lambda\in \R$ to the integration on the contour consisting $L_1=-e^{-i\delta}[1,\infty)$, $L_2=\cos\delta[-1,1]+i\sin\delta$ and $L_3=e^{-i\pi/6}\xi_n$, which means 
\begin{equation}\label{decompose T1}
    T_1=\bigl(T_{1,1}+T_{1,2}+T_{1,3}\bigr)\int_{0}^\oo z \hat{f_0}(k,z)dz
\end{equation}
 with 
\begin{align*}
    T_{1,1}&\eqdefa e^{i\f\pi6}k^\f23 \int_{L_1} e^{ik^\f23\lambda t}\f{Ai\bigl(e^{i\f{5\pi}6}\lambda\bigr)}{Ai\bigl(e^{i\f{\pi}6}\lambda\bigr)} Ai\bigl(e^{i\f{\pi}6}(\lambda+k^\f13y)\bigr) Ai'\bigl(e^{i\f{\pi}6}\lambda\bigr)  d\lambda \\
    &= e^{i(\f\pi6-\delta)}k^\f23 \int_1^\oo e^{-ie^{-i\delta}k^\f23\lambda t}\f{Ai'\bigl(e^{-i(\f{5\pi}6+\delta)}\lambda\bigr)}{Ai\bigl(e^{-i(\f{5\pi}6+\delta)}\lambda\bigr)} Ai\bigl(e^{-i(\f{\pi}6+\delta)}\lambda\bigr) Ai\bigl(e^{-i(\f{5\pi}6+\delta)}\lambda+e^{i\f{\pi}6}k^\f13y\bigr)   d\lambda,\\
    T_{1,2}&\eqdefa e^{i\f\pi6}k^\f23  \int_{L_2} e^{ik^\f23\lambda t}\f{Ai\bigl(e^{i\f{5\pi}6}\lambda\bigr)}{Ai\bigl(e^{i\f{\pi}6}\lambda\bigr)} Ai\bigl(e^{i\f{\pi}6}(\lambda+k^\f13y)\bigr) Ai'\bigl(e^{i\f{\pi}6}\lambda\bigr)  d\lambda\\
    &=\cos\delta\, e^{i\f\pi6}k^\f23  \int_{-1}^1 e^{(-\sin \delta +i\cos\delta\,\lambda)k^\f23 t}\f{Ai'\bigl(e^{i\f{\pi}6}(\cos \delta\, \lambda+i\sin\delta)\bigr)}{Ai\bigl(e^{i\f{\pi}6}(\cos \delta\, \lambda+i\sin\delta)\bigr)}\\
    &\qquad \qquad\qquad\qquad\times Ai\bigl(e^{i\f{5\pi}6}(\cos \delta\, \lambda+i\sin\delta)\bigr) Ai\bigl(e^{i\f{\pi}6}(\cos \delta\, \lambda+i\sin\delta+k^\f13y)\bigr)   d\lambda,\\
    T_{1,3}&\eqdefa e^{i\f\pi6}k^\f23 \int_{L_3} e^{ik^\f23\lambda t}\f{Ai\bigl(e^{i\f{5\pi}6}\lambda\bigr)}{Ai\bigl(e^{i\f{\pi}6}\lambda\bigr)} Ai\bigl(e^{i\f{\pi}6}(\lambda+k^\f13y)\bigr) Ai'\bigl(e^{i\f{\pi}6}\lambda\bigr)  d\lambda \\
    &= e^{i(\f\pi6+\delta)}k^\f23 \int_1^\oo e^{ie^{i\delta}k^\f23\lambda t}\f{Ai'\bigl(e^{i(\f{\pi}6+\delta)}\lambda\bigr)}{Ai\bigl(e^{i(\f{\pi}6+\delta)}\lambda\bigr)}  Ai\bigl(e^{i(\f{5\pi}6+\delta)}\lambda\bigr) Ai\bigl(e^{i(\f{\pi}6+\delta)}\lambda+e^{i\f{\pi}6}k^\f13y\bigr)   d\lambda.
\end{align*}

For the Airy function in the above formulas, we shall use the following Lemma:
\begin{lem}\label{lem2.3}
    There exists some large constant $C$ independent of $\lambda$, such that when $\lambda\in [1,\oo)$, there hold
    \begin{equation}\label{eq2.22}
        \| \f{Ai'\bigl(e^{-i(\f{5\pi}6+\delta)}\lambda+e^{i\f\pi6}y\bigr)}{Ai\bigl(e^{-i(\f{5\pi}6+\delta)}\lambda\bigr)} \|_{L^\oo_y(\R_+)}
        +\| \f{Ai'\bigl(e^{i(\f{\pi}6+\delta)}\lambda+e^{i\f\pi6}y\bigr)}{Ai\bigl(e^{i(\f{\pi}6+\delta)}\lambda\bigr)} \|_{L^\oo_y(\R_+)}\leq C \lambda^\f12,
    \end{equation}
    and
    \begin{equation}\label{eq2.23}
        \begin{aligned}
            &\|Ai\bigl(e^{-i(\f{\pi}6+\delta)}\lambda\bigr) Ai\bigl(e^{-i(\f{5\pi}6+\delta)}\lambda+e^{i\f{\pi}6}k^\f13y\bigr) \|_{L^2_y(\R_+)}\\
        &\qquad\qquad+\|Ai\bigl(e^{i(\f{5\pi}6+\delta)}\lambda\bigr) Ai\bigl(e^{i(\f{\pi}6+\delta)}\lambda+e^{i\f{\pi}6}k^\f13y\bigr) \|_{L^2_y(\R_+)} \leq C k^{-\f16} \lambda^{-\f34};
        \end{aligned}
    \end{equation}
    when $\lambda\in [-1,1]$, there hold 
    \begin{equation}\label{eq2.24}
        \|\f{Ai'\bigl(e^{i\f{\pi}6}(\cos \delta\, \lambda+i\sin\delta+y)\bigr)}{Ai\bigl(e^{i\f{\pi}6}(\cos \delta\, \lambda+i\sin\delta)\bigr)}\|_{L^\oo_y(\R_+)}\leq C,
    \end{equation}
    and 
    \begin{equation}\label{eq2.25}
        \|Ai\bigl(e^{i\f{5\pi}6}(\cos \delta\, \lambda+i\sin\delta)\bigr) Ai\bigl(e^{i\f{\pi}6}(\cos \delta\, \lambda+i\sin\delta+k^\f13y)\bigr)\|_{L^2_y(\R_+)}\leq C k^{-\f16}.
    \end{equation}
\end{lem}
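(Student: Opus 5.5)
The plan is to prove all four bounds from the classical asymptotics of the Airy function in the complex plane,
\[
Ai(z)=\frac{1}{2\sqrt\pi}z^{-1/4}e^{-\frac23 z^{3/2}}\bigl(1+O(|z|^{-3/2})\bigr),\qquad
Ai'(z)=-\frac{1}{2\sqrt\pi}z^{1/4}e^{-\frac23 z^{3/2}}\bigl(1+O(|z|^{-3/2})\bigr),
\]
valid uniformly for $|\arg z|\le \pi-\epsilon$ and $|z|\ge1$. We combine these with compactness arguments on bounded sets where the Airy function has no zeros. The key quantity throughout is $\Re(z^{3/2})$. Along the rays in question we have $z=e^{i\theta}\lambda$ with $\theta\in\{-(\frac{5\pi}6+\delta),\ \frac{\pi}6+\delta,\ \frac{5\pi}{6}+\delta,\ -(\frac\pi6+\delta)\}$.

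\textbf{Step 1: bound \eqref{eq2.22}.} Write $z_0=e^{i\theta}\lambda$ and $z_y=z_0+e^{i\pi/6}y$. First I check that for $\theta=\frac\pi6+\delta$ (and for the conjugate-type direction $-(\frac{5\pi}6+\delta)$, which after the reflection $Ai(\bar z)=\overline{Ai(z)}$ sits at angle $\frac{5\pi}6+\delta<\pi$), the segment $\{z_y: y\ge0\}$ stays in a sector $|\arg z|\le\pi-\epsilon$ and away from the zeros $\xi_n$ on the negative axis. Next I show that $y\mapsto \Re(z_y^{3/2})$ is nondecreasing, up to a bounded error; this is where $\delta$ small matters, since the direction $e^{i\pi/6}$ moves into the region where $\Re z^{3/2}$ grows. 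Then the asymptotics give
\[
\left|\frac{Ai'(z_y)}{Ai(z_0)}\right|\lesssim \frac{|z_y|^{1/4}}{|z_0|^{-1/4}}\,e^{-\frac23\Re(z_y^{3/2}-z_0^{3/2})}.
\]
When $|z_y|\lesssim\lambda$ this is at most $\lambda^{1/2}$. For larger $y$, the exponential gain beats the polynomial factor $|z_y|^{1/4}$.

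\textbf{Step 2: bound \eqref{eq2.23}.} Multiply the two Airy asymptotics. The prefactor is $\lambda^{-1/4}|z_y|^{-1/4}$. The exponent is $-\frac23\Re\bigl((e^{-i(\frac\pi6+\delta)}\lambda)^{3/2}+z_y^{3/2}\bigr)$, and at $y=0$ this equals $-\frac23\Re\bigl(e^{-i(\frac\pi4+\frac32\delta)}+e^{-i(\frac{5\pi}4+\frac32\delta)}\bigr)\lambda^{3/2}$ with the appropriate branches. The two branches cancel, since $\cos$ of angles differing by $\pi$ gives $0$, up to $O(\delta)$ with a favorable sign. So the product is of size $\lambda^{-1/2}$ at $y=0$ and decays in $y$. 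Along $y$ the decay rate is $\partial_y\Re z_y^{3/2}\sim |z_y|^{1/2}\gtrsim \lambda^{1/2}k^{1/3}$, so the $L^2_y$ norm is about $\lambda^{-1/2}\cdot(\lambda^{1/2}k^{1/3})^{-1/2}=k^{-1/6}\lambda^{-3/4}$. This is exactly the claimed bound, and the rescaling $k^{1/3}y\mapsto y$ produces the factor $k^{-1/6}$. The precise sign of the $O(\delta)$ correction, and the monotonicity in $y$, are what I expect to be the main obstacle. I would handle them by writing $\Re z^{3/2}$ explicitly in polar form and using $\delta<\pi/100$.

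\textbf{Step 3: bounds \eqref{eq2.24}--\eqref{eq2.25}.} Here $\lambda$ lies in the compact segment $L_2$. The choice $\sin\delta<\frac12|\xi_1|$ guarantees that $e^{i\pi/6}(\cos\delta\,\lambda+i\sin\delta)$ stays a positive distance from all zeros $\xi_n$. Therefore $|Ai|$ is bounded below there and the denominator is harmless. For the numerator, $Ai'\bigl(e^{i\pi/6}(w+y)\bigr)$ with $w$ in a compact set is bounded uniformly in $y\ge0$. Indeed, $\arg\bigl(e^{i\pi/6}(w+y)\bigr)\to\pi/6$ as $y\to\infty$, so $Ai'$ decays like $e^{-c y^{3/2}}$. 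This gives \eqref{eq2.24}. Similarly, $|Ai(e^{i5\pi/6}w)|\le C$ and $|Ai(e^{i\pi/6}(w+k^{1/3}y))|\lesssim e^{-c(k^{1/3}y)^{3/2}}$ for large argument. Taking the $L^2_y$ norm and substituting $u=k^{1/3}y$ then gives $Ck^{-1/6}$, which is \eqref{eq2.25}.
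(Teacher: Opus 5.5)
Step 3 is fine and essentially matches the paper. Steps 1--2, however, rest on a false geometric claim about the direction $-(\frac{5\pi}{6}+\delta)$, and this is a genuine gap.

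\textbf{The path crosses the negative real axis.} Since $e^{i\frac{\pi}{6}}=-e^{-i\frac{5\pi}{6}}$, the path is
$z_y=e^{-i(\frac{5\pi}{6}+\delta)}\lambda+e^{i\frac{\pi}{6}}y=e^{i\frac{\pi}{6}}\bigl(y-\lambda\cos\delta+i\lambda\sin\delta\bigr)$.
It heads almost straight back toward the origin and passes within distance $\lambda\sin\delta$ of it. It crosses the negative real axis at $z=-2\lambda\sin\delta$, namely at $y=\lambda(\cos\delta-\sqrt{3}\sin\delta)$, before turning toward $\arg z=\frac{\pi}{6}$. The reflection $Ai(\bar z)=\overline{Ai(z)}$ does not avoid this, because it conjugates the shift $e^{i\frac{\pi}{6}}y$ as well.

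Consequently the sectorial asymptotics \eqref{eqA.1}, valid only for $|\arg z|\le\pi-\epsilon$, are unavailable on the whole half-line, both for $Ai'(z_y)$ in \eqref{eq2.22} and for $Ai(z_y)$ in \eqref{eq2.23}. Near $\arg z=\pi$, at distance $\sim\lambda$ from the origin, the Airy function is oscillatory and has zeros. This is precisely the case the paper's proof is organized around.

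\textbf{The monotonicity claim also fails.} We have $\partial_y\Re(z_y^{3/2})=\frac{3}{2}|z_y|^{1/2}\cos\bigl(\frac{1}{2}\arg z_y+\frac{\pi}{6}\bigr)$, which is negative for $\arg z_y\in(\frac{2\pi}{3},\pi)$. So after the crossing, $\Re(z_y^{3/2})$ drops from $0$ to $-(\lambda\sin\delta)^{3/2}$ at $y=\lambda\cos\delta$. This loss is unbounded in $\lambda$, not a bounded error. Likewise, the decay of the product in Step 2 at rate $\gtrsim\lambda^{1/2}$ holds only up to the crossing.

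\textbf{How the paper closes the argument.} What saves the estimates is a comparison with $y=0$, not monotonicity.
\begin{itemize}
\item One has $\Re(z_0^{3/2})=-\cos(\frac{\pi}{4}+\frac{3}{2}\delta)\lambda^{3/2}$. Hence the denominator in \eqref{eq2.22} is exponentially large: $|Ai(z_0)|^{-1}\lesssim\lambda^{1/4}e^{-\frac{2}{3}\cos(\frac{\pi}{4}+\frac{3}{2}\delta)\lambda^{3/2}}$.
\item The factor $Ai(e^{-i(\frac{\pi}{6}+\delta)}\lambda)$ in \eqref{eq2.23} has the same exponential smallness.
\item Near the negative axis $|z_y|\lesssim\delta\lambda$, so the Airy factor along the path grows at most like $e^{C(\delta\lambda)^{3/2}}$ there.
\item For $\delta$ small, the exponential smallness wins, and the expressions are $O(e^{-c\lambda^{3/2}})$ in that region.
\end{itemize}
The paper implements this by splitting $[0,\infty)=[0,y_1]\cup[y_1,y_2]\cup[y_2,\infty)$, with $\arg z_{y_1}=-\frac{8}{9}\pi$ and $\arg z_{y_2}=\frac{8}{9}\pi$. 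It uses \eqref{eqA.1} on the outer pieces and the oscillatory asymptotics \eqref{eqA.2} on the middle piece. The first piece, with gain $e^{-c\lambda^{1/2}y}$, produces the $\lambda^{1/2}$ in \eqref{eq2.22} and the $\lambda^{-3/4}$ in \eqref{eq2.23}.

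\textbf{A repair within your framework.} You can keep your $\Re z^{3/2}$ approach if you replace the sectorial asymptotics by global upper bounds valid on the whole plane (principal branch):
$|Ai(z)|\lesssim(1+|z|)^{-1/4}e^{-\frac{2}{3}\Re z^{3/2}}$ and $|Ai'(z)|\lesssim(1+|z|)^{1/4}e^{-\frac{2}{3}\Re z^{3/2}}$.
These follow from the connection formula $Ai(z)+\omega Ai(\omega z)+\omega^2 Ai(\omega^2 z)=0$ with $\omega=e^{2\pi i/3}$, together with $\Re z^{3/2}\le 0$ for $\frac{\pi}{3}\le|\arg z|\le\pi$. You would then need to prove:
\begin{itemize}
\item $\Re(z_y^{3/2})-\Re(z_0^{3/2})\gtrsim\lambda^{1/2}y$ up to the crossing;
\item $\Re(z_y^{3/2})-\Re(z_0^{3/2})\gtrsim\lambda^{3/2}+y^{3/2}$ after the crossing, which uses $(\sin\delta)^{3/2}<\cos(\frac{\pi}{4}+\frac{3}{2}\delta)$.
\end{itemize}
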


The proof of this lemma requires a detailed analysis of the asymptotic behavior of the Airy function and its derivative. We prove it in Appendix \ref{appendix A}.

With Lemma \ref{lem2.3}, we can prove the following estimate of $T_1$:
\begin{lem}
    Let $T_1$ be given by \eqref{eq2.16}. Then, for $m>3$, one has
    \begin{equation}\label{eq2.28}
        \begin{aligned}
            \|T_1-M_2[f_0] e^{i\f\pi6}k^\f23 \int_\R e^{ik^\f23\lambda t}\f{Ai(e^{i\f{5\pi}6}\lambda)}{Ai(e^{i\f{\pi}6}\lambda)} Ai\bigl(e^{i\f{\pi}6}(\lambda+k^\f13y)\bigr) Ai'\bigl(e^{i\f{\pi}6}\lambda\bigr)  d\lambda\|_{L^2} \leq C_m t^{-3}\|f_0\|_{L^2(m)}.
        \end{aligned}
    \end{equation}
    Moreover, there holds
    \begin{equation}\label{eq2.29}
        \begin{aligned}
            &\|T_1\|_{L^2}+\|M_2[f_0] e^{i\f\pi6}k^\f23 \int_\R e^{ik^\f23\lambda t}\f{Ai(e^{i\f{5\pi}6}\lambda)}{Ai(e^{i\f{\pi}6}\lambda)} Ai\bigl(e^{i\f{\pi}6}(\lambda+k^\f13y)\bigr) Ai'\bigl(e^{i\f{\pi}6}\lambda\bigr)  d\lambda\|_{L^2} \\
        &\qquad\qquad\leq C_m t^{-\f32}\|f_0\|_{L^2(m)}.
        \end{aligned}
    \end{equation}
\end{lem}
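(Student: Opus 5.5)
Write $K(t,k,y)\eqdefa T_{1,1}+T_{1,2}+T_{1,3}$, so that by \eqref{decompose T1} we have $T_1=K\cdot\int_0^\oo z\hat f_0(k,z)\,dz$. The integral $e^{i\f\pi6}k^\f23\int_\R(\cdots)d\lambda$ appearing in \eqref{eq2.28} equals $K$ by Cauchy's theorem, since it is the same integrand deformed onto the contour $L_1+L_2+L_3$. Hence the difference in \eqref{eq2.28} is exactly
\[
K(t,k,y)\Bigl(\int_0^\oo z\hat f_0(k,z)\,dz-M_2[f_0]\Bigr).
\]
By \eqref{eq2.21a}, this is bounded pointwise in $k$ by $C_mk\|f_0\|_{L^2(m)}|K|$. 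In \eqref{eq2.29}, the coefficient $\int_0^\oo z\hat f_0\,dz$, as well as $M_2[f_0]$, is bounded by $\|y f_0\|_{L^1}\le C_m\|f_0\|_{L^2(m)}$. Both estimates therefore reduce to a bound on $\|k^jK\|_{L^2_{k,y}(\R_+\times\R_+)}$ for $j=0,1$. The target is
\[
\|k^jK\|_{L^2}\lesssim t^{-\f32-\f32 j}.
\]

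\textbf{Fixed-$k$ bounds in $L^2_y$.} On $L_1$ and $L_3$, $\Re(ie^{\mp i\delta}\lambda)\le -\sin\delta\,\lambda$ for $\lambda\ge1$, so the exponential factor is bounded by $e^{-\sin\delta\,k^{2/3}\lambda t}$. I rewrite the ratio so that it matches \eqref{eq2.22} at $y=0$, giving a factor $C\lambda^{1/2}$, and bound the product of the two remaining Airy factors in $L^2_y$ by \eqref{eq2.23}, giving $Ck^{-1/6}\lambda^{-3/4}$. Minkowski's inequality in $\lambda$ then yields
\[
\|T_{1,1}\|_{L^2_y}+\|T_{1,3}\|_{L^2_y}\lesssim k^{\f23-\f16}\int_1^\oo e^{-\sin\delta\,k^{2/3}t\lambda}\lambda^{-\f14}\,d\lambda\lesssim k^{\f12}\min\bigl(1,(k^{\f23}t)^{-1}\bigr)e^{-\f12\sin\delta\,k^{2/3}t}.
\]
If the $\lambda$-integral fails to converge absolutely, I will instead use $\int_1^\oo e^{-a\lambda}\lambda^{-1/4}d\lambda\lesssim a^{-1}e^{-a}$ for $a\gtrsim1$ and $\lesssim a^{-3/4}$ for small $a$. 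On $L_2$, the exponential factor is exactly $e^{-\sin\delta\,k^{2/3}t}$, and \eqref{eq2.24} and \eqref{eq2.25} give $\|T_{1,2}\|_{L^2_y}\lesssim k^{\f23-\f16}e^{-\sin\delta\,k^{2/3}t}$. Here the choice $\sin\delta<\f12|\xi_1|$ keeps the segment $L_2$ away from the poles $e^{-i\pi/6}\xi_n$.

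\textbf{Integrating in $k$.} Altogether, $\|K\|_{L^2_y}\lesssim k^{1/2}e^{-ck^{2/3}t}$. Substituting $k=t^{-3/2}s$,
\[
\|k^jK\|_{L^2}^2\lesssim\int_0^\oo k^{1+2j}e^{-2ck^{2/3}t}\,dk=t^{-3(1+j)}\int_0^\oo s^{1+2j}e^{-2cs^{2/3}}\,ds\lesssim t^{-3-3j}.
\]
This gives $t^{-3/2}$ for $j=0$, proving \eqref{eq2.29}, and $t^{-3}$ for $j=1$, proving \eqref{eq2.28}. The condition $m>3$ is only needed so that $\|yf_0\|_{L^1}$ and $\|y\,x f_0\|_{L^1}$ (the quantity behind \eqref{eq2.21a}) are controlled by $\|f_0\|_{L^2(m)}$.

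\textbf{Main obstacle.} The delicate step is applying Lemma~\ref{lem2.3} to the integrands as actually written in $T_{1,1}$ and $T_{1,3}$. Their ratio has $Ai'$ at the bare point and no $y$-shift, while \eqref{eq2.22} is stated with the $y$-shift in $Ai'$. I will evaluate \eqref{eq2.22} at $y=0$ to get $|Ai'/Ai|\lesssim\lambda^{1/2}$ and pair this with \eqref{eq2.23} for the remaining product. I also need to check that the power of $\lambda$ yields a convergent, or at least correctly $t$-scaled, integral against $e^{-c k^{2/3}t\lambda}$; the exponential decay from the contour deformation makes this harmless for $k^{2/3}t\gtrsim1$. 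Justifying the contour deformation itself, that is, the decay of the integrand in the sector and the absence of poles crossed between $\R$ and $L_1+L_2+L_3$, is taken from the discussion preceding \eqref{decompose T1}.
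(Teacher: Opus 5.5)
Your argument is essentially the paper's proof. The reduction is the same: via \eqref{decompose T1} and \eqref{eq2.21a} you reduce to $\|k^a(T_{1,1}+T_{1,2}+T_{1,3})\|_{L^2}\lesssim t^{-\frac32(1+a)}$ for $a=0,1$. You then use Lemma~\ref{lem2.3} in the same way, taking \eqref{eq2.22} (and \eqref{eq2.24}) at $y=0$ for the ratio and \eqref{eq2.23}, \eqref{eq2.25} for the remaining Airy product.

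\textbf{One step to fix: the fixed-$k$ bound.} The integral $\int_1^\infty e^{-a\lambda}\lambda^{-1/4}\,d\lambda$ always converges, but it behaves like $\Gamma(\tfrac34)a^{-3/4}$ as $a\to0$. It is therefore not bounded by $\min(1,a^{-1})$. What you actually get is $\|T_{1,1}\|_{L^2_y}+\|T_{1,3}\|_{L^2_y}\lesssim k^{1/2}\bigl(1+(k^{2/3}t)^{-3/4}\bigr)e^{-ck^{2/3}t}$, which is of size $t^{-3/4}$ for $k\lesssim t^{-3/2}$. So your claimed bound $\|K\|_{L^2_y}\lesssim k^{1/2}e^{-ck^{2/3}t}$ is false in that range.

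The conclusion survives. After the substitution $k=t^{-3/2}s$, the correction only adds a term $s^{2j}e^{-2cs^{2/3}}$ near $s=0$, which is integrable. You still obtain $\|k^jK\|_{L^2}^2\lesssim t^{-3(1+j)}$.

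\textbf{How the paper avoids this.} The paper never needs a pointwise-in-$k$ bound. It applies Minkowski in $\lambda$ to the full $L^2_{k,y}$ norm and uses $\|k^{\frac12+a}e^{-\sin\delta\,k^{2/3}\lambda t}\|_{L^2_k}=C(\lambda t)^{-\frac32(1+a)}$. It then concludes with $\int_1^\infty\lambda^{-\frac32(1+a)-\frac14}\,d\lambda<\infty$.
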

\begin{proof}
    From \eqref{decompose T1}, we have that 
    \begin{equation}\notag
        \begin{aligned}
        &\|T_1\|_{L^2}+\|M_2[f_0] e^{i\f\pi6}k^\f23 \int_\R e^{ik^\f23\lambda t}\f{Ai(e^{i\f{5\pi}6}\lambda)}{Ai(e^{i\f{\pi}6}\lambda)} Ai\bigl(e^{i\f{\pi}6}(\lambda+k^\f13y)\bigr) Ai'\bigl(e^{i\f{\pi}6}\lambda\bigr)  d\lambda\|_{L^2} \\
        \leq & C\|z\hat{f_0}(k,z)\|_{L^\oo_k(L^1_z)} \bigl(\|T_{1,1}\|_{L^2 }+\|T_{1,2}\|_{L^2 }+\|T_{1,3}\|_{L^2 }\bigr)\\
        \leq & C_m\|f_0\|_{L^2(m)} \bigl(\|T_{1,1}\|_{L^2 }+\|T_{1,2}\|_{L^2 }+\|T_{1,3}\|_{L^2 }\bigr).
    \end{aligned}
    \end{equation}
    Also, from \eqref{eq2.21a}, we derive that 
    \begin{equation}\notag
        \begin{aligned}
            &\|T_1-M_2[f_0] e^{i\f\pi6}k^\f23 \int_\R e^{ik^\f23\lambda t}\f{Ai(e^{i\f{5\pi}6}\lambda)}{Ai(e^{i\f{\pi}6}\lambda)} Ai\bigl(e^{i\f{\pi}6}(\lambda+k^\f13y)\bigr) Ai'\bigl(e^{i\f{\pi}6}\lambda\bigr)  d\lambda\|_{L^2}\\
            \leq &C \|\f1k \bigl(\int_0^\oo z \hat{f_0}(k,z)dz-M_2[f_0]\bigr)\|_{L^\oo_k} \bigl(\|kT_{1,1}\|_{L^2 }+\|kT_{1,2}\|_{L^2 }+\|kT_{1,3}\|_{L^2 }\bigr)\\
            \leq & C_m\|f_0\|_{L^2(m)} \bigl(\|kT_{1,1}\|_{L^2 }+\|kT_{1,2}\|_{L^2 }+\|kT_{1,3}\|_{L^2 }\bigr).
        \end{aligned}
    \end{equation}
    Therefore, to prove \eqref{eq2.28} and \eqref{eq2.29}, it suffices to prove that for $a=0$ or $1$,
    \begin{equation}\label{eq2.30}
        \|k^a T_{1,1}\|_{L^2 }+\|k^aT_{1,2}\|_{L^2 }+\|k^aT_{1,3}\|_{L^2 }
        \leq C t^{-\f32(1+a)}.
    \end{equation}

    By applying \eqref{eq2.22} and \eqref{eq2.23}, we have that 
    \begin{align*}
        &\|k^a T_{1,1}\|_{L^2 }+\|k^aT_{1,3}\|_{L^2 } \\
        \leq &\| k^{\f23+a} \int_1^\oo e^{-\sin \delta\, k^\f23 \lambda t} \Bigl(| \f{Ai'\bigl(e^{-i(\f{5\pi}6+\delta)}\lambda\bigr)}{Ai\bigl(e^{-i(\f{5\pi}6+\delta)}\lambda\bigr)} |\|Ai\bigl(e^{-i(\f{\pi}6+\delta)}\lambda\bigr) Ai\bigl(e^{-i(\f{5\pi}6+\delta)}\lambda+e^{i\f{\pi}6}k^\f13y\bigr) \|_{L^2_y(\R_+)}\\
        &\qquad\qquad+| \f{Ai'\bigl(e^{i(\f{\pi}6+\delta)}\lambda\bigr)}{Ai\bigl(e^{i(\f{\pi}6+\delta)}\lambda\bigr)} | \|Ai\bigl(e^{i(\f{5\pi}6+\delta)}\lambda\bigr) Ai\bigl(e^{i(\f{\pi}6+\delta)}\lambda+e^{i\f{\pi}6}k^\f13y\bigr) \|_{L^2_y(\R_+)} \Bigr)d\lambda \|_{L^2_k(\R_+)}\\
        \leq &C\int_1^\oo\|k^{\f12+a}  e^{-\sin (\delta) k^\f23 \lambda t}  \|_{L^2_k(\R_+)}\lambda^{-\f14}d\lambda\\
        \leq & C \int_1^\oo (\lambda t)^{-\f32(1+a)}\lambda^{-\f14}d\lambda \leq C t^{-\f32(1+a)}.
    \end{align*}
    Similarly, we deduce from \eqref{eq2.24} and \eqref{eq2.25} that 
    \begin{align*}
        \|k^a T_{1,2}\|_{L^2} \leq C \|k^{\f12+a} e^{-\sin\delta\, k^\f23 t}\|_{L^2_k} \leq C t^{-\f32(1+a)}.
    \end{align*}
    
    Combining the above two estimates, we have proven \eqref{eq2.30}, which finishes the proof.
\end{proof}

Finally, to estimate $T_3$, we again need to change the integral domain as for $T_1$. Similar to \eqref{decompose T1}, we introduce the decomposition of $T_3$ as
\begin{equation}\label{decompose T2}
    T_3=T_{3,1}+T_{3,2}+T_{3,3}
\end{equation}
 with 
\begin{align*}
    T_{3,1}&\eqdefa \f12e^{i\f{2\pi}3} k^\f43 \int_{L_1} e^{ik^\f23\lambda t}\f{Ai(e^{i\f{5\pi}6}\lambda)}{Ai(e^{i\f{\pi}6}\lambda)} Ai\bigl(e^{i\f{\pi}6}(\lambda+k^\f13y)\bigr)\\
        &\qquad\qquad\times\int_0^\oo z^3 \hat{f_0}(k,z)\int_0^1 (1-s)^2 (\lambda+sk^\f13 z)  Ai'\bigl(e^{i\f{\pi}6}(\lambda+sk^\f13z )\bigr)ds dz d\lambda \\
    &= \f12 e^{i(\f{2\pi}3-\delta)}k^\f43 \int_1^\oo e^{-ie^{-i\delta}k^\f23\lambda t} Ai\bigl(e^{-i(\f{\pi}6+\delta)}\lambda\bigr) Ai\bigl(e^{-i(\f{5\pi}6+\delta)}\lambda+e^{i\f{\pi}6}k^\f13y\bigr)  \\
    &\qquad\qquad\times \int_0^\oo z^3 \hat{f_0}(k,z)\int_0^1 (1-s)^2 (\lambda+sk^\f13 z) \f{ Ai'\bigl(e^{-i(\f{5\pi}6+\delta)}\lambda+e^{i\f{\pi}6}sk^\f13z \bigr)}{Ai\bigl(e^{-i(\f{5\pi}6+\delta)}\lambda\bigr)}ds dz d\lambda,\\
    T_{3,2}&\eqdefa \f12e^{i\f{2\pi}3} k^\f43 \int_{L_2} e^{ik^\f23\lambda t}\f{Ai(e^{i\f{5\pi}6}\lambda)}{Ai(e^{i\f{\pi}6}\lambda)} Ai\bigl(e^{i\f{\pi}6}(\lambda+k^\f13y)\bigr)\\
        &\qquad\qquad\times\int_0^\oo z^3 \hat{f_0}(k,z)\int_0^1 (1-s)^2 (\lambda+sk^\f13 z)  Ai'\bigl(e^{i\f{\pi}6}(\lambda+sk^\f13z )\bigr)ds dz d\lambda\\
    &=\f12\cos\delta\, e^{i\f{2\pi}3}k^\f43  \int_{-1}^1 e^{(-\sin \delta +i\cos\delta\,\lambda)k^\f23 t}Ai\bigl(e^{i\f{5\pi}6}(\cos \delta\, \lambda+i\sin\delta)\bigr) Ai\bigl(e^{i\f{\pi}6}(\cos \delta\, \lambda+i\sin\delta+k^\f13y)\bigr)\\
    &\qquad\qquad\times   \int_0^\oo z^3 \hat{f_0}(k,z)\int_0^1 (1-s)^2 (\lambda+sk^\f13 z) \f{Ai'\bigl(e^{i\f{\pi}6}(\cos \delta\, \lambda+i\sin\delta+sk^\f13 z)\bigr)}{Ai\bigl(e^{i\f{\pi}6}(\cos \delta\, \lambda+i\sin\delta)\bigr)}ds dz d\lambda,\\
    T_{3,3}&\eqdefa \f12e^{i\f{2\pi}3} k^\f43 \int_{L_3} e^{ik^\f23\lambda t}\f{Ai(e^{i\f{5\pi}6}\lambda)}{Ai(e^{i\f{\pi}6}\lambda)} Ai\bigl(e^{i\f{\pi}6}(\lambda+k^\f13y)\bigr)\\
        &\qquad\qquad\times\int_0^\oo z^3 \hat{f_0}(k,z)\int_0^1 (1-s)^2 (\lambda+sk^\f13 z)  Ai'\bigl(e^{i\f{\pi}6}(\lambda+sk^\f13z )\bigr)ds dz d\lambda \\
    &= \f12 e^{i(\f{2\pi}3+\delta)}k^\f43 \int_1^\oo e^{ie^{i\delta}k^\f23\lambda t}Ai\bigl(e^{i(\f{5\pi}6+\delta)}\lambda\bigr) Ai\bigl(e^{i(\f{\pi}6+\delta)}\lambda+e^{i\f{\pi}6}k^\f13y\bigr)\\
    &\qquad\qquad\times    \int_0^\oo z^3 \hat{f_0}(k,z)\int_0^1 (1-s)^2 (\lambda+sk^\f13 z) \f{Ai'\bigl(e^{i(\f{\pi}6+\delta)}\lambda+e^{i\f\pi6}sk^\f13z\bigr)}{Ai\bigl(e^{i(\f{\pi}6+\delta)}\lambda\bigr)}  ds dz d\lambda.
\end{align*}

\begin{lem}
    Let $T_3$ be given by \eqref{eq2.16}. Then, for $m>5$, one has
    \begin{equation}\label{eq2.32}
        \|T_3\|_{L^2}\leq C_m \|f_0\|_{L^2(m)}(t^{-\f52}+t^{-3}).
    \end{equation}
\end{lem}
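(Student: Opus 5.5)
We estimate each piece of the decomposition \eqref{decompose T2} separately, mirroring the proof of \eqref{eq2.30} for $T_1$. Compared with $T_{1,j}$ there are two differences. First, there is an extra prefactor $k^{\f23}$: the $T_3$ prefactor is $k^{\f43}$ versus $k^{\f23}$. Second, the constant $\int_0^\oo z\hat{f_0}\,dz$ is replaced by the inner integral
\[
\int_0^\oo z^3 \hat{f_0}(k,z)\int_0^1 (1-s)^2 (\lambda+sk^\f13 z)\, \frac{Ai'(\cdots+e^{i\f\pi6}sk^\f13 z)}{Ai(\cdots)}\,ds\,dz .
\]
The Airy-quotient factor is controlled uniformly in $z$ by \eqref{eq2.22} on $L_1,L_3$ and by \eqref{eq2.24} on $L_2$. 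We apply these with $y$ replaced by $sk^{\f13}z\ge0$, since they are $L^\oo_y(\R_+)$ bounds. The quotient is therefore bounded by $C\lambda^{\f12}$ on $L_1,L_3$ and by $C$ on $L_2$. The factor $|\lambda+sk^{\f13}z|$ is bounded by $|\lambda|+k^{\f13}z$. Hence the inner integral is bounded by
\[
C\lambda^{\f12}\bigl(\lambda\|z^3\hat{f_0}\|_{L^\oo_k(L^1_z)}+k^{\f13}\|z^4\hat{f_0}\|_{L^\oo_k(L^1_z)}\bigr)\le C_m\lambda^{\f12}(\lambda+k^{\f13})\|f_0\|_{L^2(m)}.
\]
This uses $\|y^4f_0\|_{L^1}\le C_m\|f_0\|_{L^2(m)}$, which requires $m>5$ and explains the hypothesis. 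The remaining $y$-dependent factor $Ai(\cdot)Ai(\cdot+e^{i\f\pi6}k^{\f13}y)$ is bounded in $L^2_y$ by \eqref{eq2.23} or \eqref{eq2.25}, giving $Ck^{-\f16}\lambda^{-\f34}$ on $L_1,L_3$ and $Ck^{-\f16}$ on $L_2$.

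On $L_1$ and $L_3$ we use $|e^{\pm ie^{\mp i\delta}k^{\f23}\lambda t}|=e^{-\sin\delta\,k^{\f23}\lambda t}$ and Minkowski in $\lambda$. This gives
\[
\|T_{3,1}\|_{L^2}+\|T_{3,3}\|_{L^2}\le C_m\|f_0\|_{L^2(m)}\int_1^\oo \bigl\|k^{\f76}(\lambda+k^{\f13})e^{-\sin\delta\,k^{\f23}\lambda t}\bigr\|_{L^2_k(\R_+)}\lambda^{-\f14}\,d\lambda .
\]
By scaling $k=(\lambda t)^{-\f32}\kappa$ we have $\|k^{b}e^{-ck^{\f23}\lambda t}\|_{L^2_k}\simeq(\lambda t)^{-\f32(b+\f12)}$. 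The term with $b=\f76$ contributes $(\lambda t)^{-\f52}\lambda^{\f34}$. The term with $b=\f32$ contributes $(\lambda t)^{-3}\lambda^{-\f14}$. Both are integrable over $\lambda\in[1,\oo)$: the powers are $\lambda^{-\f74}$ and $\lambda^{-\f{13}4}$. We thus obtain $C_m\|f_0\|_{L^2(m)}(t^{-\f52}+t^{-3})$. On $L_2$ we have $|\lambda|\le1$ and the bound $|e^{(-\sin\delta+i\cos\delta\lambda)k^{\f23}t}|=e^{-\sin\delta\, k^{\f23}t}$. The same computation yields $\|(k^{\f76}+k^{\f32})e^{-\sin\delta\,k^{\f23}t}\|_{L^2_k}\lesssim t^{-\f52}+t^{-3}$. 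Summing the three pieces gives \eqref{eq2.32}.

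The main point to check carefully is the justification of the contour deformation for $T_3$. Unlike $T_1$, the integrand now contains $Ai'(e^{i\f\pi6}(\lambda+sk^{\f13}z))$ inside a $z$-integral, together with a polynomial factor $\lambda+sk^{\f13}z$. We must verify that the full integrand is still analytic off the zeros $e^{-i\f\pi6}\xi_n$ and decays in the region between $\R$ and $L_1\cup L_2\cup L_3$, uniformly enough in $z$ to exchange the $z$- and $\lambda$-integrals. We would handle this by first truncating $z$ (or taking $f_0$ compactly supported and arguing by density). Then we use the same sector bounds as for $T_1$, namely the Airy asymptotics behind Lemma \ref{lem2.3} applied with the shifted argument. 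The key fact is that the ratio $Ai'(w+e^{i\f\pi6}s k^{\f13}z)/Ai(w)$ stays bounded by $C|w|^{\f12}$ for $w$ in the relevant sector, since the shift only moves the argument into a region of further decay. With this justification in place, the estimates above are routine.
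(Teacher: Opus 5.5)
Your proposal is correct and follows the paper's proof essentially step by step. You use the same contour decomposition $T_3=T_{3,1}+T_{3,2}+T_{3,3}$ and the same bounds \eqref{eq2.22}--\eqref{eq2.25}, with the shift $sk^{\f13}z$ absorbed into the $L^\oo_y$ bounds. You use the same moment bound on $z^4\hat{f_0}$, which is exactly where $m>5$ enters, and the same scaling computation of $\|k^{b}e^{-\sin\delta\,k^{\f23}\lambda t}\|_{L^2_k}$. The differences are minor: keeping $|\lambda|+k^{\f13}z$ instead of the paper's cruder $\lambda(1+z)(1+k^{\f13})$ changes nothing in the final rate, and your concern about justifying the contour shift is a point the paper simply takes for granted.
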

\begin{proof}
    Due to the decomposition \eqref{decompose T2}, we shall do the estimates of $T_{3,j}$. 

    For $T_{3,1}$ and $T_{3,3}$, we use $|\lambda+sk^\f13z|\leq \lambda(1+z)(1+k^\f13)$ to find
    \begin{align*}
        &\|T_{3,1}\|_{L^2}+\|T_{3,3}\|_{L^2}\leq \f12 \|(1+z)z^3\hat{f_0}(k,z)\|_{L^\oo_k(L^1_z)} \| k^\f43(1+k^\f13) \int_1^\oo e^{-\sin\delta\, k^\f23\lambda t} \lambda \\
        &\times\Bigl(\|Ai\bigl(e^{-i(\f{\pi}6+\delta)}\lambda\bigr) Ai\bigl(e^{-i(\f{5\pi}6+\delta)}\lambda+e^{i\f{\pi}6}k^\f13y\bigr) \|_{L^2_y(\R_+)}\| \f{Ai'\bigl(e^{-i(\f{5\pi}6+\delta)}\lambda+e^{i\f\pi6 z}\bigr)}{Ai\bigl(e^{-i(\f{5\pi}6+\delta)}\lambda\bigr)} \|_{L^\oo_z(\R_+)}\\
        &\quad+ \|Ai\bigl(e^{i(\f{5\pi}6+\delta)}\lambda\bigr) Ai\bigl(e^{i(\f{\pi}6+\delta)}\lambda+e^{i\f{\pi}6}k^\f13y\bigr) \|_{L^2_y(\R_+)}\| \f{Ai'\bigl(e^{i(\f{\pi}6+\delta)}\lambda+e^{i\f\pi6}z\bigr)}{Ai\bigl(e^{i(\f{\pi}6+\delta)}\lambda\bigr)} \|_{L^\oo_z(\R_+)} \Bigr)d\lambda \|_{L^2_k(\R_+)}\\
        &\leq C_m \|f_0\|_{L^2(m)} \int_1^\oo \lambda^\f34\| k^\f76(1+k^\f13)e^{-\sin\delta\, k^\f23\lambda t}\|_{L^2_k(\R_+)} d\lambda.
    \end{align*}
    Thanks to 
    $$
        \int_1^\oo \lambda^\f34\| k^\f76(1+k^\f13)e^{-\sin\delta\, k^\f23\lambda t}\|_{L^2_k(\R_+)} d\lambda \leq C \int_1^\oo \lambda^\f34 (\lambda t)^{-\f52}\bigl(1+(\lambda t)^{-\f12}\bigr)d\lambda
        \leq C (t^{-\f52}+t^{-3}),
    $$
    we arrive at $\|T_{3,1}\|_{L^2}+\|T_{3,3}\|_{L^2}\leq C_m \|f_0\|_{L^2(m)}(t^{-\f52}+t^{-3}).$

    Similarly, we can prove $T_{3,2}$ satisfies
    \begin{align*}
        \|T_{3,2}\|_{L^2}&\leq C\|(1+z)z^3\hat{f_0}(k,z)\|_{L^\oo_k(L^1_z)} \|k^\f76(1+k^\f13)e^{-\sin\delta\, k^\f23 t}\|_{L^2_k(\R_+)}\\
        &\leq C_m \|f_0\|_{L^2(m)}(t^{-\f52}+t^{-3}),
    \end{align*}
    which finishes the proof.
\end{proof}

At last, we conclude the $L^2$ estimates achieved in this subsection:
\begin{prop}
    { Let $m>5$ and $f(t)$ solve \eqref{eq2.5} with $f_0\in L^2(m)$. Then, we have 
        \begin{equation}\label{eq2.33}
            \|f(t)\|_{L^2(\R^2_+)}\leq C_m (1+t)^{-\f32} \|f_0\|_{L^2(m)}.
        \end{equation}
    Moreover, if we assume in addition $M_2[f_0]=0$, then
    \begin{equation}\label{eq2.34}
        \|f(t)\|_{L^2(\R^2_+)}\leq C_m (1+t)^{-\f52} \|f_0\|_{L^2(m)}.
    \end{equation}
    }
\end{prop}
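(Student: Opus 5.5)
The plan is to assemble the decay estimate from the representation \eqref{eq2.15} and the lemmas of this subsection, adding a separate short-time bound to cover the region $0<t\le 1$.

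\textbf{Large times, $t\ge1$.} By Plancherel in $x$,
\[
\|f(t)\|_{L^2(\R^2_+)}\simeq\|\hat f(t,k,y)\|_{L^2_{k,y}}.
\]
We treat $k\ge0$ through \eqref{eq2.15}. The case $k<0$ is the mirror image: replace $k$ by $-k$, conjugate the rotations $e^{i\pi/6}\to e^{-i\pi/6}$, etc. Equivalently, since $f$ is real, $\hat f(t,-k,y)=\overline{\hat f(t,k,y)}$, so it suffices to bound $k\ge0$.

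Then $\|\hat f\|_{L^2}\le \sum_{j=1}^4\|T_j\|_{L^2}$, and the four terms are controlled as follows:
\begin{itemize}
\item \eqref{eq2.29} gives $\|T_1\|_{L^2}\le C_mt^{-3/2}\|f_0\|_{L^2(m)}$.
\item \eqref{eq2.18} gives the same bound for $\|T_2\|_{L^2}$.
\item \eqref{eq2.17} gives $\|T_4\|_{L^2}\le C_mt^{-5/2}\|f_0\|_{L^2(m)}$.
\item \eqref{eq2.32} gives $\|T_3\|_{L^2}\le C_m(t^{-5/2}+t^{-3})\|f_0\|_{L^2(m)}$.
\end{itemize}
For $t\ge1$ all of these are bounded by $C_m(1+t)^{-3/2}\|f_0\|_{L^2(m)}$, which proves \eqref{eq2.33} there. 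The restriction $m>5$ is needed only because \eqref{eq2.32} requires it.

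\textbf{The case $M_2[f_0]=0$, $t\ge1$.} The main parts of $T_1$ and $T_2$ are multiples of $M_2[f_0]$, so they vanish. \eqref{eq2.28} then gives $\|T_1\|_{L^2}\le C_mt^{-3}\|f_0\|_{L^2(m)}$. The first part of \eqref{eq2.17}, after multiplying through by $t^{-1/2}$, gives $\|T_2\|_{L^2}\le C_mt^{-3}\|f_0\|_{L^2(m)}$. Together with the bounds on $T_3$ and $T_4$, this yields $C_m t^{-5/2}\|f_0\|_{L^2(m)}$ for $t\ge1$, hence \eqref{eq2.34} on $t\ge1$.

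\textbf{Short times, $0\le t\le1$.} Here the time-decay bounds are useless because of the negative powers of $t$, so we use an energy estimate instead. Multiply \eqref{eq2.5} by $f$ and integrate over $\R^2_+$. The transport term contributes
\[
\int y\,\p_x(f^2/2)=0,
\]
and the diffusion term gives $-\|\p_yf\|_{L^2}^2\le0$, with no boundary contribution since $f|_{y=0}=0$. Hence
\[
\|f(t)\|_{L^2}\le\|f_0\|_{L^2}\le\|f_0\|_{L^2(m)}\le 2^{5/2}(1+t)^{-5/2}\|f_0\|_{L^2(m)}\quad\text{for } t\le1.
\]
This settles both \eqref{eq2.33} and \eqref{eq2.34} for $t\le1$.

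\textbf{Expected obstacle.} The only real difficulty is justifying the representation \eqref{eq2.15} itself, namely the inverse Laplace transform and the Taylor-expansion manipulations. That justification is already completed earlier in the section, so the proof here reduces to bookkeeping. The only point that needs care is to apply the $t^{-3}$ estimates \eqref{eq2.17} and \eqref{eq2.28} only for $t\ge1$, so that $t^{-3}\le t^{-5/2}$.
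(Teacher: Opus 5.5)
Your proposal is correct and follows the paper's proof essentially verbatim. You use the energy inequality $\|f(t)\|_{L^2}\le\|f_0\|_{L^2}$ for $t\le1$, the term-by-term bounds on $T_1,\dots,T_4$ from \eqref{eq2.15} for $t\ge1$, and symmetry to handle $k<0$. In the case $M_2[f_0]=0$ you correctly invoke \eqref{eq2.28}, together with the first part of \eqref{eq2.17}, to get the $t^{-3}$ bounds for $T_1$ and $T_2$; this is what the paper's proof intends, although its text cites \eqref{eq2.18} at that point.
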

\begin{proof}
    First, by taking the $L^2$ inner production of \eqref{eq2.5} with $f$, we have 
    \begin{equation}\label{eq2.35}
        \|f(t)\|_{L^2}\leq \|f_0\|_{L^2}.
    \end{equation}
        
    From \eqref{eq2.15}, \eqref{eq2.17}, \eqref{eq2.18}, \eqref{eq2.29} and \eqref{eq2.32}, we have shown that for $t>1$,
    \begin{align*}
        \|\hat{f}(t,k,y)\|_{L^2(\R_+\times\R_+)} \leq C_m (1+t)^{-\f32}\|f_0\|_{L^2(m)}.
    \end{align*}
    It's easy to use symmetry to get the same conclusion on the region $k<0$. Therefore, together with \eqref{eq2.35}, we have proven \eqref{eq2.33}.

    When we assume $M_2[f_0]=0$, one can observe that \eqref{eq2.17} and \eqref{eq2.18} gives better time decay for $T_2$ and $T_1$, which enable us to prove the $t^{-\f52}$ decay in \eqref{eq2.34}. This finishes the proof.
\end{proof}

\subsection{The decay estimates in the weighted space}
The goal of this section is to prove Proposition \ref{prop1.1}.

\begin{lem}\label{lem2.6}
    {
    Let $m>5$ and $f(t)$ solve \eqref{eq2.5} with $f_0\in L^2(m)$. If there holds 
        \begin{equation}\label{eq2.36}
            \|f(t)\|_{L^2(\R^2_+)}\leq C_m (1+t)^{-\varsigma } \|f_0\|_{L^2(m)},
        \end{equation}
        with some $\varsigma<\f{m}2$,
    then for all $a_1,a_2\in \N$ with $a_1+a_2\leq m$, we have 
    \begin{equation}\label{eq2.37}
        \|x^{a_1} y^{a_2} f(t)\|_{L^2(\R_+^2)}\leq C_m (1+t)^{\f{3a_1+a_2}{2}-\varsigma} \|f_0\|_{L^2(m)}.
    \end{equation}
    }
\end{lem}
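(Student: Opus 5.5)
We will prove \eqref{eq2.37} by weighted energy estimates on \eqref{eq2.5}, with an induction on the total degree $a_1+a_2$ and, within each degree, on $a_1$.

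\textbf{The weighted energy identity.} For a monomial weight $w=x^{a_1}y^{a_2}$, multiply \eqref{eq2.5} by $w^2 f$ and integrate over $\R^2_+$. The boundary terms vanish because $f|_{y=0}=0$ and $w$ vanishes to order $a_2$ at $y=0$. Using $\p_x(x^{2a_1})=2a_1x^{2a_1-1}$, the transport term gives
\[
\int y\p_x f\, w^2 f=-a_1\int x^{2a_1-1}y^{2a_2+1}f^2 .
\]
The diffusion term gives
\[
\int(-\p_y^2 f)w^2 f=\|\p_y(wf)\|_{L^2}^2-a_2(a_2-1)\int x^{2a_1}y^{2a_2-2}f^2 ,
\]
after the usual integration by parts, where $\|w\p_yf\|^2$ is rewritten in terms of $\|\p_y(wf)\|^2$. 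Dropping the nonnegative dissipation, we obtain
\[
\f{d}{dt}\|wf\|^2\le 2a_1\|x^{a_1-1/2}y^{a_2+1/2}f\|\,\|\ldots\|+2a_2(a_2-1)\|x^{a_1}y^{a_2-1}f\|^2 .
\]
The first term we bound by Cauchy--Schwarz:
\[
\Big|\int x^{2a_1-1}y^{2a_2+1}f^2\Big|\le \|x^{a_1}y^{a_2}f\|\,\|x^{a_1-1}y^{a_2+1}f\| .
\]
It therefore suffices to write, with $N_{a_1,a_2}(t)=\|x^{a_1}y^{a_2}f(t)\|_{L^2}$,
\[
\f{d}{dt}N_{a_1,a_2}\le a_1N_{a_1-1,a_2+1}+a_2(a_2-1)\f{N_{a_1,a_2-2}^2}{N_{a_1,a_2}} .
\]
The last term is handled by instead integrating the squared inequality. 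In practice we bound
\[
\f{d}{dt}N^2\le 2a_1N\,N_{a_1-1,a_2+1}+2a_2^2N_{a_1,a_2-2}^2 .
\]

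\textbf{The induction.} The base case $a_1=a_2=0$ is the hypothesis \eqref{eq2.36}. Order the pairs $(a_1,a_2)$ by total degree, and within a fixed degree by $a_1$ increasing. Then $N_{a_1,a_2-2}$ has lower degree, and $N_{a_1-1,a_2+1}$ has the same degree but smaller $a_1$, so both are already controlled. By induction,
\[
N_{a_1-1,a_2+1}\lesssim(1+t)^{\f{3a_1+a_2-2}{2}-\varsigma}\|f_0\|_{L^2(m)},\qquad N_{a_1,a_2-2}^2\lesssim(1+t)^{3a_1+a_2-2-2\varsigma}\|f_0\|^2_{L^2(m)} .
\]
Integrating from $0$ to $t$ then gives
\[
N\le N(0)+\int_0^t a_1N_{a_1-1,a_2+1}\,ds+\Big(\int_0^t a_2^2N_{a_1,a_2-2}^2\,ds\Big)^{1/2}.
\]
The exponents are $\f{3a_1+a_2}{2}-\varsigma-1$ in the first integral and $3a_1+a_2-2-2\varsigma$ in the second. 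If both exponents exceed $-1$, integration yields $(1+t)^{\f{3a_1+a_2}{2}-\varsigma}$. The initial term satisfies $N(0)\le\|f_0\|_{L^2(m)}$ whenever $a_1+a_2\le m$, so it is bounded by $(1+t)^{\f{3a_1+a_2}{2}-\varsigma}\|f_0\|_{L^2(m)}$ provided this power is nonnegative.

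\textbf{Main obstacle: low-degree moments.} The difficulty is when $\f{3a_1+a_2}{2}<\varsigma$, that is, low-degree moments with $\varsigma$ as large as $5/2$. There the constant $N(0)$ does not decay, and the crude integration fails. For these cases I would interpolate instead of using the ODE. By Hölder/interpolation in the weight, for degree $d=a_1+a_2\le m$,
\[
\|x^{a_1}y^{a_2}f\|\le\|f\|^{1-\theta}\,\|x^{a_1/\theta}y^{a_2/\theta}f\|^{\theta},
\]
choosing $\theta$ so that $a_i/\theta$ are of high degree, where the ODE argument works. The resulting power is $(1-\theta)(-\varsigma)+\theta\big(\f{3a_1+a_2}{2\theta}-\varsigma\big)=\f{3a_1+a_2}{2}-\varsigma$, which is exactly consistent. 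The high moments have nonnegative exponent precisely because $\varsigma<m/2$: degree $m$ with $a_2=m$ gives $\f m2-\varsigma>0$. So the procedure is to prove the top-degree bounds first via the ODE, using $\varsigma<\f m2$ to ensure the exponents exceed $-1$ and dominate $N(0)$, and then fill in the lower ones by interpolation with \eqref{eq2.36}.

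The step I expect to be most delicate is arranging the induction so that the top-degree estimates only use lower-order quantities controlled at the right rate. If needed, I would use the real-valued interpolation inequality directly inside the ODE for $N_{a_1,a_2-2}$ and $N_{a_1-1,a_2+1}$, and then close with Gronwall-type absorption.
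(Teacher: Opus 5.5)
Your endgame (weighted energy estimates for the top-degree moments, then H\"older interpolation against \eqref{eq2.36}) matches the paper. However, the one nontrivial step is left unresolved, and the scheme as you set it up does not close. You correctly note that the degree induction cannot supply the lower-order term in the top-degree inequality when $\frac{3a_1+a_2}{2}<\varsigma$. Your sequential ordering by $a_1$ within degree $m$ does not close either. For $x^{a_1}y^{a_2}$ with $a_1\ge1$, the term $\|x^{a_1}y^{a_2-1}f\|$ can only be interpolated between $\|f\|$ and degree-$m$ moments of \emph{larger} $x$-degree, and those come later in your ordering.

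The missing idea is the concrete closure used in the paper: estimate only $\|y^mf\|$, and then $\|x^mf\|$. For the first, $\frac{d}{dt}\|y^mf\|^2+2\|y^m\partial_yf\|^2=2m(2m-1)\|y^{m-1}f\|^2$, and one absorbs with a \emph{time-dependent} Young inequality,
\[
\|y^{m-1}f\|^2\le\|f\|^{\frac2m}\|y^mf\|^{\frac{2(m-1)}{m}}\le\frac{\delta}{m(2m-1)(1+t)}\|y^mf\|^2+C_{\delta,m}(1+t)^{m-1}\|f\|^2 .
\]
The factor $(1+t)^{-1}$ is essential. A fixed $\epsilon$ would give $e^{\epsilon t}$ in Gronwall, whereas this gives only $(1+t)^{2\delta}$. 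The hypothesis $\varsigma<\frac m2$ is used exactly here: it lets you choose $2\delta<m-2\varsigma$. Then both $(1+t)^{2\delta}\|y^mf_0\|^2$ and $(1+t)^{2\delta}\int_0^t(1+s)^{m-1-2\delta-2\varsigma}\,ds\,\|f_0\|_{L^2(m)}^2$ are $\lesssim(1+t)^{m-2\varsigma}\|f_0\|_{L^2(m)}^2$.

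For $x^m$, bound the transport term $m\int x^{2m-1}yf^2$ by $m\|x^mf\|^{2-\frac1m}\|y^mf\|^{\frac1m}$; this is your Cauchy--Schwarz followed by H\"older on $\|x^{m-1}yf\|$. Absorb it the same way against $C_{\delta,m}(1+t)^{2m-1}\|y^mf\|^2$, which gives $(1+t)^{3m-2\varsigma}$. Every $x^{a_1}y^{a_2}$ with $a_1+a_2\le m$ then follows from the three-factor H\"older inequality with $\|f\|,\|x^mf\|,\|y^mf\|$ and exponents $\frac{m-a_1-a_2}{m},\frac{a_1}{m},\frac{a_2}{m}$. Your two-factor version with $x^{a_1/\theta}y^{a_2/\theta}$ produces non-integer weights that your energy identity does not cover.

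Your identity also has slips. Written with $\|\partial_y(wf)\|^2$, the diffusion term is $\|\partial_y(wf)\|^2-a_2^2\int x^{2a_1}y^{2a_2-2}f^2$. Equivalently, with $\|w\partial_yf\|^2$ it is $\|w\partial_yf\|^2-a_2(2a_2-1)\int x^{2a_1}y^{2a_2-2}f^2$. The coefficient is not $a_2(a_2-1)$; for instance, for $a_2=1$ the correction is nonzero. The lower-order quantity is $\|x^{a_1}y^{a_2-1}f\|=N_{a_1,a_2-1}$, not $N_{a_1,a_2-2}$. Its square is $\lesssim(1+t)^{3a_1+a_2-1-2\varsigma}$, and after integration and a square root this gives exactly (not better than) $(1+t)^{\frac{3a_1+a_2}{2}-\varsigma}$.
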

\begin{proof}
    We first consider the highest derivatives in both directions one by one.
    \begin{itemize}
        \item Estimates of the horizontal weighted norms:
    \end{itemize}

    By taking $L^2$ inner of \eqref{eq2.5} with $y^{2m} f$, we do integration by parts to compute
    $$
    \f12\f{d}{dt} \|y^m f\|_{L^2}^2 +\|y^m\p_y f\|_{L^2}^2 = -m\int_{\R^2_+} y^{2m-1} \p_y|f|^2 dxdy = m(2m-1) \|y^{m-1} f\|_{L^2}^2.
    $$
    Then, we apply Holder's inequality and Young's inequality to get that for any $\delta>0$,
    $$
    \|y^{m-1} f\|_{L^2}^2  \leq \|f\|_{L^2}^\f2m\|y^m f\|_{L^2}^\f{2(m-1)}m \leq \f\delta{m(2m-1)(1+t)} \|y^m f\|_{L^2}^2+C_{\delta,m} (1+t)^{m-1}\|f\|_{L^2}^2,
    $$ 
    which implies
    \begin{equation}\notag
        \f{d}{dt} \|y^m f\|_{L^2}^2-\f{2\delta}{1+t}\|y^m f\|_{L^2}^2 \leq C_{\delta,m} (1+t)^{m-1} \|f\|_{L^2}^2.
    \end{equation}
     Then, using Gronwall's inequality, we derive from \eqref{eq2.36} that 
    \begin{align*}
        \|y^m f(t)\|_{L^2}^2 &\leq (1+t)^{2\delta } \|y^mf_0(x,y)\|_{L^2}^2+C_{\delta,m}  (1+t)^{2\delta}\int_0^t (1+s)^{m-1-2\delta} \|f(s)\|_{L^2}^2 ds \\
        &\leq C_{\delta,m}  (1+t)^{2\delta } \|f_0\|_{L^2(m)}^2\bigl( 1+ 
        \int_0^t (1+s)^{m-1-2\delta-2\varsigma}ds\bigr)\\
        &\leq C_{\delta,m}  (1+t)^{m-2\varsigma } \|f_0\|_{L^2(m)}^2,
    \end{align*}
    where we take a small $\delta$ such that $m-2\delta-2\varsigma>0$.
    By taking the square root, we conclude that
    \begin{equation}\label{eq2.38}
        \|y^m f(t)\|_{L^2}\leq C_{m}  (1+t)^{\f{m}2-\varsigma} \|f_0\|_{L^2(m)}.
    \end{equation}

    \begin{itemize}
        \item Estimates of the tangential weighted norms:
    \end{itemize}

    We take $L^2$ inner of \eqref{eq2.5} with $x^{2m} f$ and do integration by parts to write
    $$
    \f12\f{d}{dt} \|x^m f\|_{L^2}^2 +\|x^m\p_y f\|_{L^2}^2 = m\int_{\R^2_+} x^{2m-1} y|f|^2 dxdy.
    $$
    Again, we apply Holder's inequality and Young's inequality to get that for any $\delta>0$,
    $$
    \int_{\R^2_+} x^{2m-1} y|f|^2 dxdy \leq \|x^m f\|_{L^2}^{2-\f1m}\|y^m f\|_{L^2}^\f1m \leq \f\delta{m(1+t)} \|x^m f\|_{L^2}^2+C_{\delta,m}(1+t)^{2m-1}  \|y^m f\|_{L^2}^2,
    $$ 
    which implies
    \begin{equation}\notag
        \f{d}{dt} \|x^m f\|_{L^2}^2-\f{2\delta}{1+t}\|x^m f\|_{L^2}^2 \leq C_{\delta,m} (1+t)^{2m-1} \|y^m f\|_{L^2}^2.
    \end{equation}
    Again, we use Gronwall's inequality to derive from \eqref{eq2.12} that
    \begin{align*}
        \|x^m f(t)\|_{L^2}^2 &\leq (1+t)^{2\delta } \|x^m f_0(x,y)\|_{L^2}^2+C_{\delta,m}  (1+t)^{2\delta}\int_0^t (1+s)^{2m-1-2\delta} \|y^m f(s)\|_{L^2}^2 ds \\
        &\leq C_{\delta,m}  (1+t)^{2\delta } \|f_0\|_{L^2(m)}^2\bigl( 1+ 
        \int_0^t (1+s)^{3m-1-2\varsigma-2\delta} ds\bigr)\\
        &\leq C_{\delta,m}  (1+t)^{3m-2\varsigma } \|f_0\|_{L^2(m)}^2,
    \end{align*}
    where we choose $\delta$ so small that $3m-2\varsigma-2\delta>0$.
    By taking the square root, we conclude that
    \begin{equation}\label{eq2.40}
        \|x^m f(t)\|_{L^2}\leq C_{m} (1+t)^{\f32m-\varsigma}  \|f_0\|_{L^2(m)}.
    \end{equation}

    By doing interpolation among \eqref{eq2.36}, \eqref{eq2.38} and \eqref{eq2.40}, we prove \eqref{eq2.37}.
\end{proof}

Now, we can present the proof of Proposition \ref{prop1.1}.
\begin{proof}[Proof of Proposition \ref{prop1.1}]
    By denoting $F(t)=e^{\ln(1+t) \cL}F_0$, we have $(1+t)\p_t F=\cL F$ with $F(0)=F_0$. Then, we introduce $f(t,x,y)$ by \eqref{eq2.6}, which solves \eqref{eq2.5} with $f_0=F_0$.

    From \eqref{eq2.6}, we have that for any $a_1,a_2\in \N$, 
    \begin{equation}\label{eq2.41}
        \begin{aligned}
            \|X^{a_1} Y^{a_2} F(t,X,Y)\|_{L^2(\R_+^2)}&=(1+t)^\f52 \|X^{a_1}Y^{a_2} f(t,(1+t)^\f32X,(1+t)^\f12Y)\|_{L^2_{X,Y}(\R_+^2)}\\
            &= (1+t)^{\f{3-3a_1-a_2}2}\|x^{a_1} y^{a_2} f(t,x,y)\|_{L^2(\R^2_+)}.
        \end{aligned}
    \end{equation}

    From \eqref{eq2.33}, we can apply Lemma \ref{lem2.6} with $\varsigma=\f32$, and use \eqref{eq2.41} to find
    \begin{align*}
        \|F(t)\|_{L^2(m)}&=\sum_{a_1+a_2\leq m}\| X^{a_1} Y^{a_2} F(t)\|_{L^2(\R^2_+)}=\sum_{a_1+a_2\leq m}(1+t)^{\f{3-3a_1-a_2}2}\| x^{a_1} y^{a_2} f(t,x,y)\|_{L^2(\R^2_+)}\\
        &\leq \sum_{a_1+a_2\leq m}C_m\| f_0\|_{L^2(m)} \leq C_m \|F_0\|_{L^2(m)},
    \end{align*}
    which implies \eqref{eq2.4}.
    
    For \eqref{eq2.5a}, the proof can be similarly derived from \eqref{eq2.34}, Lemma \ref{lem2.6} and \eqref{eq2.41}. This finishes the proof.
\end{proof}

\begin{rmk}
    In this section, we also get that 
    \begin{align*}
        \bigg\|\hat{f}(t,k,y)&+M_2[f_0] e^{i\f\pi6}k^\f23 \int_\R e^{ik^\f23\lambda t}\f{Ai(e^{i\f{5\pi}6}\lambda)}{Ai(e^{i\f{\pi}6}\lambda)} Ai\bigl(e^{i\f{\pi}6}(\lambda+k^\f13y)\bigr) Ai'\bigl(e^{i\f{\pi}6}\lambda\bigr)  d\lambda\\
        &+\f{M_2[f_0]}{\sqrt{4\pi t}}\bigl(\f{y}{2t}-\f{i}2 kt\bigr)\exp \bigl(-\f{y^2}{4t}-i\f{kt}2y-\f{k^2}{12}t^3\bigr)\bigg\|_{L^2}
        \leq C_m (1+t)^{-\f52}\|f_0\|_{L^2(m)}.
    \end{align*}
    This main object with coefficient $M_2[f_0]$ actually gives out the kernel of $\cL$. From the change of variable \eqref{eq2.6}, we know that the horizontal Fourier transform of $F(t)$ is
    $$\hat{F}(t,\ell,Y)=(1+t)\hat{f}(t,(1+t)^{-\f32}{\ell},(1+t)^\f12Y).$$
    By removing some decaying terms, we can define the $\cF[\bar{\Omega}](\ell,Y)$ on $\ell>0$ as
    \begin{equation}\label{eq2.42}
        \begin{aligned}
        \cF[\bar{\Omega}](\ell,Y)=&-e^{i\f\pi6}\ell^\f23 \int_\R e^{i\ell^\f23\lambda }\f{Ai(e^{i\f{5\pi}6}\lambda)}{Ai(e^{i\f{\pi}6}\lambda)} Ai\bigl(e^{i\f{\pi}6}(\lambda+\ell^\f13Y)\bigr) Ai'\bigl(e^{i\f{\pi}6}\lambda\bigr)  d\lambda\\
        &-\f{1}{\sqrt{4\pi }}\bigl(Y-\f{i}2 \ell\bigr)\exp \bigl(-\f{1}{4}Y^2-\f{i}2\ell Y-\f{1}{12}\ell^2\bigr),
    \end{aligned}
    \end{equation}
    and on the region $\ell<0$ by symmetry. A slight modification of our proof here shows $\|e^{\tau\cL}F_0-\bar{\Omega}\|_{L^2}\rightarrow 0$, as $\tau$ goes to infinity. However, it is difficult to prove that $\bar{\Omega}$ defined by the formula \eqref{eq2.42} belongs to $L^2(m)$, since the derivatives in $\ell$ look complicated and may cause a singularity near $\ell=0$. For such a reason, we will find another formula for the kernel in the next section.
\end{rmk}

\section{The kernel of $\cL$}\label{section 3}
In this section, we prove the existence of the one-dimensional kernel of $\cL$ on $L^2(m)$. More precisely, we have 
\begin{prop}\label{prop3.1}
    {  There exists a unique function $\bar{\Omega}$ in all $L^2(m)$ with $5<m\in\N$, which solves 
    \begin{equation}\label{eq3.28}
        \cL \bar{\Omega}=0, \quad \bar\Omega(X, 0)=0, \andf \int_{\R^2_+} Y\bar{\Omega}(X,Y)dXdY=1.
    \end{equation}
    }
\end{prop}

From the estimates of the semi-group $e^{t\cL}$ in \eqref{eq2.4} and \eqref{eq2.5}, it would be expected that $\cL$ has a kernel in $L^2(m)$, and the real part of all other eigenvalues of $\cL$ cannot be larger than $-1$ with associated eigen-functions with zero vertical momentum. 

\subsection{Uniqueness of the kernel}
In this section, we prove that if the kernel of $\mathcal{L}$ exists, then the associated eigen-space must be of dimension one. 
\begin{proof}[Uniqueness part of Proposition \ref{prop3.1}]
  For the uniqueness part, we assume by contradiction that there exist two such functions $\bar{\Omega}_1$ and $\bar{\Omega}_2$. Then $\Omega(X,Y)\eqdefa \bar{\Omega}_1-\bar{\Omega}_2$ is a steady solution of $\p_\tau\Omega=\cL\Omega$, and $\int_{\R^2_+} Y\Omega(X,Y)dXdY=0$. Therefore, we derive from \eqref{eq2.5a} that 
    $$
        \|\Omega\|_{L^2(6)}=\|e^{\tau\cL}\Omega\|_{L^2(6)}\leq C_\delta e^{-\tau}\|\Omega\|_{L^2(6)}.
    $$
    As $\tau$ goes to infinity, the only possibility for the above inequality to hold is that $\Omega=0$. This means $\bar{\Omega}_1=\bar{\Omega}_2$, which finishes the proof for uniqueness.
\end{proof}

\subsection{Formula in the Fourier variables}

In this section, we construct the kernel $\bar\Omega\in L^2(m)$ of $\mathcal{L}$. Let us recall the equation of the kernel
\begin{align*}
    \left\{\begin{aligned}
 &   \p_Y^2\bar\Omega+\frac32 X\p_X\bar\Omega+\frac12 Y\p_Y\bar\Omega+\frac52\bar\Omega-Y\p_X\bar\Omega=0,\quad X\in \mathbb{R}, \ Y>0\\
& \bar\Omega(X,0)=0,\quad \lim_{Y\to \infty}\bar\Omega=0. 
        \end{aligned}
        \right.
\end{align*}
We define the central symmetric extension of $\bar{\Omega}$ to the entire space by setting $\bar\Omega^-(X, Y) = -\bar\Omega(-X, -Y)$ for $Y<0$. It follows from the symmetry that $\bar{\Omega}^-$ solves the same equation: 
\begin{align*}
    \left\{\begin{aligned}
 &   \p_Y^2\bar\Omega^-+\frac32 X\p_X\bar\Omega^-+\frac12 Y\p_Y\bar\Omega^-+\frac52\bar\Omega^--Y\p_X\bar\Omega^-=0,\quad X\in \mathbb{R}, \  Y<0\\
& \bar\Omega^-(X,0)=0,\quad \lim_{Y\to \pm\infty}\bar\Omega^-=0. 
        \end{aligned}
        \right.
\end{align*}
Let $\tilde\Omega=\left\{\begin{aligned}
    &\bar{\Omega}, \quad &&Y>0,\\
    &0,\quad &&Y=0,\\
    &\bar{\Omega}^-,\quad &&Y<0.
\end{aligned}\right. $
 It is easy to verify that in the sense of distribution, $\tilde{\Omega}$ satisfies 
\begin{align}\label{eq: extend kernel}
    \left\{\begin{aligned}
 &   \p_Y^2\tilde\Omega+\frac32 X\p_X\tilde\Omega+\frac12 Y\p_Y\tilde\Omega+\frac52\tilde\Omega-Y\p_X\tilde\Omega=G(X)\delta_0(Y),\quad (X,Y)\in \mathbb{R}^2\\
& \tilde\Omega(X,0)=0,\quad \lim_{|Y|\to \infty}\tilde\Omega=0. 
        \end{aligned}
        \right.
\end{align}
where $G(X)=\lim\limits_{Y\to 0+}\partial_Y\bar{\Omega}(X,Y)-\lim\limits_{Y\to 0-}\partial_Y\bar{\Omega}^-(X,Y)= \lim\limits_{Y\to 0+}\bigl(\partial_Y\bar{\Omega}(X,Y)-\p_Y \bar{\Omega}(-X,Y)\bigr)$ is an odd function representing the jump of $\p_Y\tilde{\Omega}$ near the boundary. 

Let us introduce the Fourier transform of $\tilde\Omega$ as
$$
f(k,\eta) \eqdefa \int_{\R^2} \tilde\Omega (X,Y)e^{-ikX-i\eta Y} dX dY.
$$
We note that the Fourier transform should be in the sense of distribution. Later, once we show $f\in H^m(\mathbb{R}^2)$, the Fourier transform here is in the classic sense. 

In $(k,\eta)$ variables, the equation becomes
\begin{subequations}
    
\begin{equation}\label{eq3.1}
    \f32k\p_k f+(\f12\eta-k)\p_\eta f+(\eta^2-\f12)f=g,
\end{equation}
where $g(k)$ is a function depending only on $k$ which is the Fourier transform of $G(X)$, namely, 
\begin{align*}
    g(k)=\int_{\mathbb{R}}G(X)e^{-ikX}dX. 
\end{align*}
Here, our solution has to satisfy the boundary condition $\bar\Omega(X,0)=0$, which is equivalent to 
\begin{equation}\label{eq3.2}
    \int_{\R} f(k,\eta) d\eta =0, \qquad \forall k\in \R.
\end{equation}
From the central symmetry, one also have
\begin{equation}\label{sym f g}
    f(k,\eta)=-f(-k,-\eta) \andf g(k)=-g(-k).
\end{equation}

Together with $\|\bar\Omega\|_{L^2(m)}=C\|f\|_{H^m}$ from the Plancherel equality, our goal is to search for $(f,g)$ solving \eqref{eq3.1} and \eqref{eq3.2} such that $f$ belongs to $H^m$. 
The smoothness of $(f,g)$ implies $g(0)=0$ and 
\begin{equation}\label{eq3.3}
    \p_\eta  f(0,0)=-i\int_{\R^2} Y\tilde\Omega(X,Y)dXdY=-2i \int_{\R^2_+} Y\bar{\Omega}(X,Y)dXdY=-2i,
\end{equation}
where we used \eqref{sym f g} and normalize $\int_{\R^2_+} Y\bar\Omega(X,Y)dXdY=1$. 
\end{subequations}

To find smooth $(f,g)\in H^m(\R^2)\times H^m(\R)$ satisfying \eqref{eq3.1}-\eqref{eq3.3}, we have the following two lemmas to give a solution formula: 
\begin{lem}\label{lem3.11}
     Let $g(k)$ be an odd function. Then,
    \begin{equation}\label{eq3.7}
    \begin{aligned}
        f(k,\eta) 
    =&-2i(k+\eta) e^{-(k+\eta)^2+k(k+\eta)-\f13k^2}+\f23\int_0^{|k|} (\f{|k|}\ell)^\f13 \exp \Bigl( -(k+\eta)^2(1-(\f{\ell}{|k|})^\f23) \\
        &\qquad\qquad\qquad+k(k+\eta)(1-(\f{\ell}{|k|})^\f43)-\f13(k^2-\ell^2)\Bigr) \f{g\bigl(\sgn(k)\ell\bigr)}{\ell}d\ell,
    \end{aligned}
\end{equation}
solves \eqref{eq3.1}, \eqref{sym f g} and \eqref{eq3.3}. 
\end{lem}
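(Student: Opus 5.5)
The plan is to verify Lemma~\ref{lem3.11} directly by the method of characteristics for the first-order operator
\[
D\eqdef \f32k\p_k+(\f12\eta-k)\p_\eta .
\]
I would treat the two pieces of \eqref{eq3.7} separately. The first piece, $f_h(k,\eta)\eqdef-2i(k+\eta)e^{-(k+\eta)^2+k(k+\eta)-\f13k^2}$, should solve the homogeneous equation $Df_h+(\eta^2-\f12)f_h=0$. The integral piece $f_p$ should solve $Df_p+(\eta^2-\f12)f_p=g$.

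For the homogeneous piece, introduce $\zeta=k+\eta$. Then
\[
D\zeta=\f32k+\f12\eta-k=\f12\zeta, \qquad Dk=\f32k,
\]
and writing $\eta^2=\zeta^2-2k\zeta+k^2$, set $\Phi=-\zeta^2+k\zeta-\f13k^2$. A direct computation gives
\[
D\Phi=-\zeta^2+(\f12+\f32)k\zeta-k^2=-\zeta^2+2k\zeta-k^2=-\eta^2 .
\]
Together with $D\zeta=\f12\zeta$ (the prefactor $\zeta$ contributes $+\f12$), this yields $D(\zeta e^\Phi)=(\f12-\eta^2)\zeta e^\Phi$, which is exactly the homogeneous equation.

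For the integral piece, the characteristics through $(k,\eta)$ are
\[
k(s)=ke^{\f32 s}, \qquad \zeta(s)=\zeta e^{\f12 s}.
\]
I would parametrize by $\ell=|k|e^{\f32 s}$ for $s\le0$, which gives $\zeta(s)=\zeta(\ell/|k|)^{1/3}$. The solution obtained by integrating $g$ backward along characteristics to $k=0$ is then
\[
f_p(k,\eta)=\int_{-\infty}^0 \exp\Bigl(\Phi(k,\zeta)-\Phi(k(s),\zeta(s))\Bigr)e^{-\f12 s}\,g(k(s))\,ds .
\]
Substituting $ds=\f23\,d\ell/\ell$ and $e^{-\f12 s}=(|k|/\ell)^{1/3}$ reproduces exactly the exponent and prefactor in \eqref{eq3.7}. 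I would double-check this by directly applying $D$ to $f_p$. Writing $f_p(k,\eta)=\int_{-\infty}^0 H(k,\zeta,s)\,ds$ with $H$ evaluated along the flow, $D$ acts as a derivative in $s$ through the flow shift. The fundamental theorem of calculus then gives the boundary term at $s=0$, namely $g(k)$, minus the contribution at $s=-\infty$. That contribution vanishes provided $g(\ell)/\ell^{4/3}$ is integrable near $0$, which follows from $g$ odd and smooth, so that $g(\ell)=O(\ell)$. The $(\eta^2-\f12)$ term is absorbed by $D\Phi=-\eta^2$ and by the $e^{-s/2}$ weight.

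Next come the symmetry and the normalization. Replacing $(k,\eta)\to(-k,-\eta)$ sends $\zeta\to-\zeta$ and leaves $\Phi$ unchanged, so $f_h$ is odd. In $f_p$ the exponent is even, and $g(\sgn(k)\ell)$ flips sign because $g$ is odd, so $f_p$ is odd as well; this gives \eqref{sym f g}. For \eqref{eq3.3}, at $k=0$ the integral term vanishes, because its domain $[0,|k|]$ is empty. Hence $f(0,\eta)=-2i\eta e^{-\eta^2}$, and $\p_\eta f(0,0)=-2i$.

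The main obstacle I expect is justifying the $k$-derivative of $f_p$ near $k=0$ and the integrability at $\ell\to0$. The kernel contains $(|k|/\ell)^{1/3}\ell^{-1}$, and the dependence on $\sgn(k)$ must be handled. I would restrict to $k>0$ first, use the substitution $\ell=k\theta$ so that $f_p=\f23\int_0^1\theta^{-4/3}e^{(\cdots)}g(k\theta)\,d\theta$, and then differentiate under the integral using $|g(k\theta)|\lesssim k\theta$. This makes the verification of \eqref{eq3.1} pointwise for $k\ne0$, and the case $k<0$ follows by the oddness already established.
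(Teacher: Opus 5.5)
Your proposal is correct, but it is organized differently from the paper's proof. You \emph{verify} the formula. The Gaussian term is checked through the identities $D\zeta=\f12\zeta$ and $D\Phi=-\eta^2$. The integral term is identified as the Duhamel solution along the flow $(k,\zeta)\mapsto(ke^{\f32 s},\zeta e^{\f12 s})$, anchored at $s=-\infty$, i.e.\ at the stagnation point $k=0$. You then read off \eqref{sym f g} and \eqref{eq3.3} explicitly.

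The paper instead \emph{derives} \eqref{eq3.7}. It integrates along the same characteristics but anchored at $k_0=1$, which gives a general solution \eqref{eq3.6} containing a free function $F\bigl(\f{k+\eta}{k^{1/3}}\bigr)$. The normalization, in the form $\lim_{k\to0+}\p_\eta f(k,\xi k^{1/3}-k)=F'(\xi)$, forces $F(\xi)=-2i\xi+C_F$. The requirement $\p_k f\in L^2$ rules out the term $\f13 C_F k^{-2/3}e^{-\eta^2}$, so $C_F=0$. The case $k<0$ then follows by symmetry.

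What each route buys:
\begin{itemize}
\item The paper's argument proves more than the lemma states. It shows \eqref{eq3.7} is the \emph{only} $H^m$ candidate for a given odd $g$, which explains why no multiple of $k^{1/3}e^{\Phi}$ appears.
\item Your argument proves exactly the sufficiency claimed. It is more self-contained, since the oddness of $f$ and the identity $\p_\eta f(0,0)=-2i$ (via $f(0,\eta)=-2i\eta e^{-\eta^2}$) are checked explicitly. The paper leaves these implicit behind ``one can easily check by direct computations.''
\end{itemize}

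Two small remarks:
\begin{itemize}
\item Oddness of $g$ alone is not enough. You are right to assume $g(\ell)=O(\ell)$ with $g'$ locally bounded. The paper uses this tacitly, and it holds for the $g(k)=k\,h(k^{2/3})$ it actually uses.
\item You attach the wrong condition to the boundary term at $s=-\infty$. Its vanishing needs $\ell^{-1/3}g(\ell)\to0$ as $\ell\to0$. Integrability of $g(\ell)\ell^{-4/3}$ is instead what makes $f_p$ converge. Both follow from $g(\ell)=O(\ell)$, so nothing breaks.
\end{itemize}
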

To make $f(k,\eta)$ satisfy \eqref{eq3.2}, we have the following lemma. 
\begin{lem}\label{lem3.12}
    Assume that $g(k)=-g(-k)=k\,h(k^{2/3})$ for $k\geq0$, where $h(s)$ is a function on $\R_+$ given by the following convolution equation
\begin{equation}\label{eq3.9}
    \int_0^\tau (\tau-s)^{-\f12}e^{-\f1{12}(\tau-s)^3} h(s)ds =i \tau^\f12 e^{-\f1{12}\tau^3},\qquad\forall \tau>0.
\end{equation}
Then, $f(k,\eta)$ defined by \eqref{eq3.7} satisfies \eqref{eq3.2}.
\end{lem}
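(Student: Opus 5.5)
For $k>0$ I would compute $\int_\R f(k,\eta)\,d\eta$ explicitly from \eqref{eq3.7}: each term becomes a Gaussian integral in $\eta$, and after the change of variables $s=\ell^{2/3}$ the $g$-term turns into the left-hand side of \eqref{eq3.9}. The case $k<0$ then follows from the symmetry \eqref{sym f g}, which Lemma \ref{lem3.11} guarantees, and $k=0$ is checked directly. Throughout I substitute $u=k+\eta$.

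\emph{Explicit term.} Completing the square gives $-u^2+ku-\frac13k^2=-(u-\frac k2)^2-\frac{k^2}{12}$. Since $\int_\R u\,e^{-(u-k/2)^2}\,du=\frac k2\sqrt\pi$, we get
\[
\int_\R -2i\,u\,e^{-u^2+ku-\frac13k^2}\,du=-i\sqrt\pi\,k\,e^{-k^2/12}.
\]

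\emph{Integral term.} Fix $\ell\in(0,k)$ and set $r=(\ell/k)^{2/3}\in(0,1)$, so that $a:=1-r>0$. The exponent is $-a u^2+k(1-r^2)u-\frac13k^2(1-r^3)$. Integrating in $u$ gives the factor $\sqrt{\pi/a}$ times $\exp\bigl(k^2\bigl[\frac{(1-r)(1+r)^2}{4}-\frac{1-r^3}{3}\bigr]\bigr)$. The bracket simplifies to
\[
(1-r)\,\frac{3(1+r)^2-4(1+r+r^2)}{12}=-\frac{(1-r)^3}{12}.
\]
Hence the exponent equals $-\frac1{12}(k^{2/3}-\ell^{2/3})^3$, and $\sqrt{\pi/a}=\sqrt\pi\,k^{1/3}(k^{2/3}-\ell^{2/3})^{-1/2}$. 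After exchanging the $\eta$ and $\ell$ integrals, the $\eta$-integral of the second term is
\[
\frac{2\sqrt\pi}{3}\,k^{2/3}\int_0^k \ell^{-1/3}(k^{2/3}-\ell^{2/3})^{-1/2}e^{-\frac1{12}(k^{2/3}-\ell^{2/3})^3}\,\frac{g(\ell)}{\ell}\,d\ell.
\]
Now insert $g(\ell)=\ell\,h(\ell^{2/3})$ and change variables $s=\ell^{2/3}$, $ds=\frac23\ell^{-1/3}d\ell$, writing $\tau=k^{2/3}$. The expression becomes
\[
\sqrt\pi\,\tau\int_0^\tau(\tau-s)^{-1/2}e^{-\frac1{12}(\tau-s)^3}h(s)\,ds.
\]
By \eqref{eq3.9} this equals $\sqrt\pi\,\tau\cdot i\tau^{1/2}e^{-\tau^3/12}=i\sqrt\pi\,k\,e^{-k^2/12}$, which exactly cancels the explicit term. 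So \eqref{eq3.2} holds for every $k>0$.

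\emph{Remaining cases.} For $k<0$, the relation $f(k,\eta)=-f(-k,-\eta)$ from Lemma \ref{lem3.11} gives $\int_\R f(k,\eta)\,d\eta=-\int_\R f(-k,\eta)\,d\eta=0$. For $k=0$ the integral term is absent, and the explicit term $-2i\eta e^{-\eta^2}$ is odd in $\eta$, so its integral vanishes.

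\emph{Main obstacle.} The only delicate point is justifying the exchange of the $\eta$ and $\ell$ integrals (Fubini). The $u$-integrand is a Gaussian with width $a^{-1/2}=(1-r)^{-1/2}$, so after the $u$-integration one is left with the weight $(k^{2/3}-\ell^{2/3})^{-1/2}$, which is integrable at $\ell=k$. Absolute convergence therefore reduces to local integrability of $h$ near $s=0$, which I would take from the construction of $h$ via \eqref{eq3.9} (e.g.\ by Laplace transform). Everything else is the algebraic simplification of the exponent, which I verified above.
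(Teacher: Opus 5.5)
Your proposal is correct and is essentially the paper's proof. You use the same Gaussian evaluation of the explicit term, the same completion of the square in the $\eta$-integral after exchanging the order of integration, and the same substitution $s=\ell^{2/3}$, $\tau=k^{2/3}$; you run it as a verification rather than a derivation of \eqref{eq3.9}, and you make the case $k\le 0$ explicit via \eqref{sym f g}. One small correction to your Fubini remark: what Tonelli actually requires is $(\tau-s)^{-1/2}h(s)\in L^1(0,\tau)$, not just integrability of $h$ near $s=0$. This is exactly absolute convergence of the left side of \eqref{eq3.9}, and it holds anyway since $h$ is smooth by Corollary~\ref{col3.1}.
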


\begin{proof}[Proof of Lemma \ref{lem3.11}] 
One can easily check by direct computations that the formula \eqref{eq3.7} satisfies \eqref{eq3.1} and \eqref{eq3.3}. Here, we present a more inspiring proof by first solving the transport equation \eqref{eq3.1} and then using $g(0)=0$ and \eqref{eq3.3} to simplify the expression. In fact, we shall prove that given an odd function $g(k)$, the only possible function $f\in H^m(\R^2)$ solving \eqref{eq3.1}, \eqref{sym f g} and \eqref{eq3.3} are given by our formula \eqref{eq3.7}.

To solve \eqref{eq3.1}, we first introduce the characteristic lines $\{ (k+\eta)^3=Ck \}$. Due to the central symmetry, we first consider the regime $k>0$.

Denote $k(t,k_0)=e^{\f32 t}k_0$ and $\eta(t,k_0,\eta_0)=e^{\f12 t}(k_0+\eta_0)-e^{\f32t}k_0$ which solves
$$
    \f{d}{dt}k=\f32 k\andf \f{d}{dt}\eta=\f12\eta-k.
$$
Therefore, we deduce from \eqref{eq3.1} that
$$
    \f{d}{dt}f\bigl(k(t,k_0),\eta(t,k_0,\eta_0)\bigr)=\bigl(\f12-\eta^2(t,k_0,\eta_0)\bigr)f\bigl(k(t,k_0,),\eta(t,k_0,\eta_0)\bigr)+g\bigl(k(t,k_0)\bigr).
$$
By solving the ODE, one has the following solution formula
\begin{equation}\notag
    f\bigl(k(t),\eta(t)\bigr) = e^{\f12 t -\int_0^t \eta^2(s)ds} \Bigl( f(k_0,\eta_0)+\int_0^t e^{-\f12s +\int_0^s \eta^2(s')ds'} g(k(s))ds\Bigr).
\end{equation}
By computing out the integration:
$$
    \int_0^t \eta^2(s)ds=(k_0+\eta_0)^2 (e^t-1)-k_0(k_0+\eta_0)(e^{2t}-1)+\f13 k_0^2(e^{3t}-1),
$$
we arrive at
\begin{equation}\label{eq3.4}
    \begin{aligned}
        f\bigl(k(t),\eta(t)\bigr) = &e^{\f12 t -\bigl(k(t)+\eta(t)\bigr)^2+k(t)\bigl(k(t)+\eta(t)\bigr)-\f13k^2(t)} \Bigl( e^{(k_0+\eta_0)^2-k_0(k_0+\eta_0)+\f13k_0^2)} f(k_0,\eta_0)\\
        &\qquad+\int_0^t e^{-\f12s +(k_0+\eta_0)^2e^s-k_0(k_0+\eta_0)e^{2s}+\f13k_0^2e^{3s}}g(e^{\f32s}k_0)ds\Bigr),\\
        =&e^{\f12 t -\bigl(k(t)+\eta(t)\bigr)^2+k(t)\bigl(k(t)+\eta(t)\bigr)-\f13k^2(t)} \Bigl( e^{(k_0+\eta_0)^2-k_0(k_0+\eta_0)+\f13k_0^2)} f(k_0,\eta_0)\\
        &\qquad+\f23\int_1^{k(t)} (\f{k_0}\ell)^\f13 e^{(k_0+\eta_0)^2 k_0^{-\f23}\ell^\f23-k_0^\f23(k_0+\eta_0)\ell^\f43+\f13 \ell^2}g(\ell)\f{d\ell}{\ell}\Bigr),
    \end{aligned}
\end{equation}
where we changed the variables from $s$ to $\ell=e^{\f32 s}k_0$.

Then, we notice that every characteristic line $\eta=Ck^\f13-k$ crosses the vertical line $k=1$ exactly once, and therefore we can choose $k_0=1$. By replacing $(k_0,\eta_0,t)$ in terms of $(k(t),\eta(t))$ by
$$
    k_0=1,\qquad \eta_0=\f{k+\eta}{k^\f13} -1 \andf t= \f23\ln k,
$$
we can reformulate \eqref{eq3.4} to 
\begin{equation}\label{eq3.5}
    \begin{aligned}
        f(k,\eta) 
        =&k^\f13 e^{-(k+\eta)^2+k(k+\eta)-\f13k^2} \Bigl( \exp \bigl((\f{k+\eta}{k^\f13})^2-\f{k+\eta}{k^\f13}+\f13\bigr) f(1,\f{k+\eta}{k^\f13}-1)\\
        &\qquad+\f23\int_1^{k} \ell^{-\f13} \exp \bigl((\f{k+\eta}{k^\f13})^2\ell^\f23-\f{k+\eta}{k^\f13}\ell^\f43+\f13\ell^2\bigr) \f{g(\ell)}{\ell}d\ell\Bigr).
    \end{aligned}
\end{equation}
Here, $f(1,\cdot)$ and $g(\cdot)$ should be carefully chosen to make $f$ smooth enough and satisfy \eqref{eq3.3}.

Noticing from $g(k)=-g(-k)$ that $g(0)=0$, the following integration 
$$
    \int_0^{1} \ell^{-\f13} \exp \bigl((\f{k+\eta}{k^\f13})^2\ell^\f23-\f{k+\eta}{k^\f13}\ell^\f43+\f13\ell^2\bigr) \f{g(\ell)}{\ell}d\ell
$$
has no singularity and can be viewed as a function of $\f{k+\eta}{k^\f13}$. Therefore, we introduce a new function 
\begin{align*}
    F(\f{k+\eta}{k^\f13}) =&\exp \bigl((\f{k+\eta}{k^\f13})^2-\f{k+\eta}{k^\f13}+\f13\bigr) f(1,\f{k+\eta}{k^\f13}-1) \\
    &-\f23 \int_0^{1} \ell^{-\f13} \exp \bigl((\f{k+\eta}{k^\f13})^2\ell^\f23-\f{k+\eta}{k^\f13}\ell^\f43+\f13\ell^2\bigr) \f{g(\ell)}{\ell}d\ell,
\end{align*}
which together with \eqref{eq3.5} gives a simplified formula
\begin{equation}\label{eq3.6}
    \begin{aligned}
        f(k,\eta) 
        =&k^\f13 e^{-(k+\eta)^2+k(k+\eta)-\f13k^2}  F(\f{k+\eta}{k^\f13})\\
        &+\f23\int_0^{k} (\f{k}{\ell})^{\f13} \exp \bigl(-({k+\eta})^2(1-(\f{\ell}{k})^\f23)+k(k+\eta)(1-(\f{\ell}{k})^\f43)-\f13(k^2-\ell^2)\bigr) \f{g(\ell)}{\ell}d\ell.
    \end{aligned}
\end{equation}

Now, we turn to determine $F$ from the smoothness of $f$ and \eqref{eq3.3}. First, one derives from \eqref{eq3.3} and \eqref{eq3.6} that for all $\xi\in \R$,
$$
    -2i=\p_\eta f(0,0)=\lim_{k\rightarrow0+} \p_\eta f(k,\xi k^\f13-k)=F'(\xi),
$$
and therefore, the only possibility for the choice $F$ is $F(\xi)=-2i\xi+C_F$. By taking $F(\xi)=-2i\xi+C_F$, \eqref{eq3.6} becomes 
\begin{align*}
    &f(k,\eta) 
    =-2i(k+\eta) e^{-(k+\eta)^2+k(k+\eta)-\f13k^2} +C_F k^\f13 e^{-(k+\eta)^2+k(k+\eta)-\f13k^2}  +I(k,\eta),
\end{align*}
where $I$ denote
\begin{align*}
    &I(k,\eta)=\f23\int_0^{k} (\f{k}\ell)^\f13 e^{ -(k+\eta)^2(1-(\f{\ell}{k})^\f23)+k(k+\eta)(1-(\f{\ell}{k})^\f43)-\f13(k^2-\ell^2)} \f{g(\ell)}{\ell}d\ell.
\end{align*}
Then, we fix $\eta\in\R$ and consider the limit $k\rightarrow0+$ for $\p_k f$. The exponential weight satisfies
\begin{equation}\label{eq3.6a}
    \begin{aligned}
        &-(k+\eta)^2(1-(\f{\ell}{k})^\f23)+k(k+\eta)(1-(\f{\ell}{k})^\f43)-\f13(k^2-\ell^2)\\
        =&-(1-(\f{\ell}k)^\f23)\Bigl(\eta^2-\bigl(1-(\f{\ell}k)^\f23\bigr)k\eta+\f13k^2 \bigl(1-(\f{\ell}k)^\f23\bigr)^2 \Bigr)\\
        \leq &-\f14(1-(\f{\ell}k)^\f23)\eta^2\leq 0,
\end{aligned}
\end{equation}
from which, we derive that as $k\rightarrow0+$,
\begin{align*}
    |\p_k I| \leq  \f23|\f{g(k)}k| + C_\eta\int_0^k \Bigl(k^{-\f23}\ell^{-\f13}+k^\f13\ell^{-\f13}+k\Bigr)d\ell \leq C_{g,\eta}.
\end{align*}
Together with the regularity of terms from $F$, we find that as $k\rightarrow0+$,
$$
    \p_k f(k,\eta)=\f13 C_F k^{-\f23}e^{-\eta^2}+O(1).
$$
Since our goal is to make $f$ belong to $H^m$, one should have $\p_k f(k,\eta)\in L^2$. Therefore, the only possibility is that the term of order $k^{-\f23}$, which is not square integrable, vanishes, which means $C_F=0$.
 Finally, we plug $F(\xi)=-2i\xi$ into\eqref{eq3.6} and arrive at the formula \eqref{eq3.7} for $k>0$.
 
 When $k<0$, we use the symmetry \eqref{sym f g} to extend \eqref{eq3.7} to $k<0$. This finishes the proof.
\end{proof}

\begin{proof}[Proof of Lemma \ref{lem3.12}] Due to the symmetry, we again only need to consider the case $k>0$. 
One directly computes
\begin{align*}
    &\int_\R -2i(k+\eta) e^{-(k+\eta)^2+k(k+\eta)-\f13k^2}d\eta 
    =-2i\int_\R (\eta+\f{k}2+\f{k}2)e^{-(\eta+\f{k}2)^2-\f{k^2}{12}}d\eta =-{i\sqrt{\pi}} ke^{-\f{k^2}{12}}. 
\end{align*}
Then, we change the order of integrations and use the equality in \eqref{eq3.6a} to derive 
\begin{align*}
    &\int_\R\f23\int_0^{k} (\f{k}\ell)^\f13 \exp \bigl( -(k+\eta)^2(1-(\f{\ell}{k})^\f23)+k(k+\eta)(1-(\f{\ell}{k})^\f43)-\f13(k^2-\ell^2)\bigr) \f{g(\ell)}{\ell}d\ell d\eta\\
    =& \f23 \int_0^{k} (\f{k}\ell)^\f13 \int_\R \exp \Bigl( -(1-(\f{\ell}k)^\f23)\bigl(\eta-\bigl(1-(\f{\ell}k)^\f23\bigr)\f{k}2\bigr)^2\Bigr)d\eta e^{-\f{1}{12}(k^\f23-\ell^\f23)^3} \f{g(\ell)}{\ell}d\ell \\
    =& \f23 \sqrt{\pi}\int_0^{k} (\f{k}\ell)^\f13 \bigl(1-(\f{\ell}k)^\f23\bigr)^{-\f12} e^{-\f{1}{12}(k^\f23-\ell^\f23)^3} \f{g(\ell)}{\ell}d\ell.
\end{align*}
Combining the above two computations, we deduce from \eqref{eq3.2} that $g$ solves
\begin{equation}\notag
    \int_0^{k} (k^\f23-\ell^\f23)^{-\f12} e^{-\f{1}{12}(k^\f23-\ell^\f23)^3} g(\ell)\ell^{-\f43}d\ell=\f{3i}2 k^\f13e^{-\f{k^2}{12}}.
\end{equation}

By introducing the variable transform $s=\ell^\f23$, we find that $g$ solves the following convolution-type integral equation
$$
    \int_0^{k^\f23}(k^\f23-s)^{-\f12}e^{-\f1{12}(k^\f23-s)^3}\f{g(s^\f32)}{s^\f32}ds =i k^\f13 e^{-\f1{12} k^2},\qquad \forall k>0.
$$
Therefore,we denote $\tau=k^\f23$ and $h(s)=\f{g(s^\f32)}{s^\f32}$ and arrive at \eqref{eq3.9}. This finishes the proof.
\end{proof}
\begin{rmk}
      Here we give some ideas to understand the solution formula \eqref{eq3.7} and \eqref{eq3.9}. The first term $-2i(k+\eta)e^{-(k+\eta)^2+k(k+\eta)-\f13k^2}$ is exactly the Fourier expression of the second eigenfunction $-2(\p_X+\p_Y)G_L$ on the whole plane $\R^2$. However, on the half plane $\R^2_+$, it doesn't satisfy the Dirichlet boundary condition at $y=0$. Our method is to find another group of solutions to \eqref{eq3.1} which are expressed by the integration of the Neumann boundary values $g$ in the second line of \eqref{eq3.7}, and then we choose a suitable $g$ as in \eqref{eq3.9} to make the Dirichlet boundary vanish.
\end{rmk}

\subsection{The regularity of the kernel}

In this subsection, we aim to use the formula \eqref{eq3.7} to prove that $f$ belongs to $H^m$, with some high regularity assumptions on the boundary value $g$ to be checked in the next subsection. 

\begin{lem}\label{lem3.1}
    {  Let $f$ be given by \eqref{eq3.7}. If $\f{g(\ell)}{\ell}$ is smooth enough on $l\in [0,+\oo)$, then it holds
    \begin{equation}\label{eq3.10}
        \begin{aligned}
            &\|f(k,\eta)\|_{H^m(k>0)}
            \leq  C_m \Bigl(1+ \|(\ell^{-\f16}+\ell^\f12) \f{g(\ell)}\ell\|_{L^2_\ell(\R_+)}+\sum_{j=1}^{m} \|(\ell^{-\f16}+\ell^{\f56})\p_\ell^{j}(\f{g(\ell)}{\ell})\|_{L^2_\ell(\R_+)}  \Bigr). 
        \end{aligned}
    \end{equation}
    }
\end{lem}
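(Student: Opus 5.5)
We estimate the two pieces of \eqref{eq3.7} separately on $\{k>0\}$, writing $f=f_1+I$ with
\[
f_1=-2i(k+\eta)e^{-(k+\eta)^2+k(k+\eta)-\f13k^2},
\]
and $I$ the integral term. The function $f_1$ is an explicit Schwartz function: by \eqref{eq3.6a} with $\ell=0$ its exponent equals $-(\eta+\f k2)^2-\f{k^2}{12}$. Hence all its derivatives lie in $L^2$, and this part produces the constant $C_m$ in \eqref{eq3.10}.

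For $I$ we first remove the $k$-dependence of the integration domain by the substitution $\ell=k\theta$, $\theta\in(0,1)$. Writing $\psi(\ell)=g(\ell)/\ell$ and $\sigma=1-\theta^{2/3}\in(0,1)$, we get
\[
I(k,\eta)=\f23\, k\int_0^1 \theta^{-\f13} e^{\Phi(k,\eta,\theta)}\,\psi(k\theta)\,d\theta,
\qquad
\Phi=-\sigma\Bigl(\eta^2-\sigma k\eta+\f13 k^2\sigma^2\Bigr),
\]
which is again \eqref{eq3.6a}. Now $\p_k^{a}\p_\eta^{b}$ falls only on smooth factors. The factor $k$ and the exponential are polynomial in $(k,\eta)$ with $\theta$-dependent coefficients. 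Each $\p_k$ landing on $\psi(k\theta)$ produces $\theta^j\psi^{(j)}(k\theta)$. Derivatives of $e^\Phi$ produce polynomials $P(\sigma,k,\eta)$, where every $\eta$ carries a factor $\sigma$, times $e^\Phi$. The quadratic form in $\Phi$ satisfies
\[
\Phi\le -c\,\sigma\bigl(\eta^2+\sigma^2k^2\bigr),
\]
since its discriminant $\sigma^2k^2-\f43\sigma^2k^2<0$. Therefore $|P|e^{\Phi}\lesssim e^{\Phi/2}$ whenever each power of $\eta$ comes with a matching power of $\sigma^{1/2}$. When it does not, we gain $k$-powers instead and put them on $\psi$, which is where the weights $\ell^{1/2}$ and $\ell^{5/6}$ in \eqref{eq3.10} come from.

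It then remains to bound, for each $j\le m$, terms of the schematic form
\[
J(k,\eta)=k^{p}\int_0^1 \theta^{j-\f13} e^{\Phi/2}\,|\psi^{(j)}(k\theta)|\,d\theta
\]
in $L^2_{k,\eta}$. We apply Minkowski in $\theta$ and integrate out $\eta$ first, using $\|e^{\Phi/2}\|_{L^2_\eta}\lesssim \sigma^{-1/4}e^{-c\sigma^3k^2}$, where the extra $k$-Gaussian comes from completing the square. Then we return to $\ell=k\theta$ in the $k$-integral, so that $\|\psi^{(j)}(k\theta)\|_{L^2_k}=\theta^{-1/2}\|\psi^{(j)}\|_{L^2}$, weighted by $k^p=\ell^p\theta^{-p}$. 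The $\ell$-weights are determined by the range of $p$, between $0$ and about $1$ after the Gaussian in $\sigma k$ is used, and the $\theta$-integral must converge.

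The main obstacle is this last bookkeeping near the two endpoints. Near $\theta\to0$ (so $\ell\ll k$) the factor $\theta^{-1/3-1/2-p}$ threatens integrability. Near $\theta\to1$ we have $\sigma\to0$, the $\eta$-Gaussian degenerates, and $\sigma^{-1/4}$ appears. The case $j=0$ is the worst, since there is no $\theta^j$ gain. I would handle it by not fully substituting: I would keep $\ell$ as the variable and use the weight $\ell^{-1/6}$ together with Schur/Hardy-type bounds for the kernel $(k/\ell)^{1/3}\sigma^{-1/4}e^{-c\sigma^3k^2}$ on $\{\ell<k\}$. This is where $\ell^{-1/6}$ enters, matching the $k^{1/3}$ singularity. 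For $\theta$ near $1$ the factor $\sigma^{-1/4}$ is integrable, so it causes no harm. Large $k$ is controlled by the $\ell^{1/2}$, $\ell^{5/6}$ weights, since $k\lesssim \ell\,\theta^{-1}$ in the region where $\sigma k$ is not large.
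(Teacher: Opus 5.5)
Your setup matches the paper's: you split off the Schwartz part $f_1$; the substitution $\ell=k\theta$ is equivalent to the paper's derivative formula \eqref{eq3.13}; you bound derivatives of the kernel via the definite quadratic form, as in \eqref{eq3.14}; and you integrate out $\eta$ first. On the kernel bounds, note that in $\tilde k=\sigma^{3/2}k$, $\tilde\eta=\sigma^{1/2}\eta$ every $\partial_k$ costs $\sigma^{3/2}$ and every $\partial_\eta$ costs $\sigma^{1/2}$. So no unmatched powers ever come from $e^\Phi$; the only unmatched $k$ is the prefactor from $d\ell=k\,d\theta$.

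\textbf{Where Minkowski in $\theta$ works, and where it fails.} The gap is in the final $L^2_k$ bound, which is the real content of the lemma. Minkowski in $\theta$ does work for two kinds of terms: those with at least one derivative on $\psi$, and those of $\partial_k^m I$, $\partial_\eta^m I$ ($m\ge1$) where all derivatives land on $K$, thanks to the extra $\sigma^{3m/2}$, resp.\ $\sigma^{m/2}$. But it works only if you use $e^{-c\sigma^3k^2}$ to restrict to $\ell\lesssim\theta\sigma^{-3/2}$ and extract $\ell^{1/6}$. With the bookkeeping you wrote (drop the Gaussian, use $k^p=\ell^p\theta^{-p}$) you get $\|\ell\,\partial_\ell^j\psi\|$ instead of $\|\ell^{5/6}\partial_\ell^j\psi\|$.

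For the undifferentiated term $\int_0^k K\psi\,d\ell$ (your $j=0$), Minkowski genuinely fails, and not where you locate the trouble. Near $\theta=0$ the Gaussian is $\approx e^{-ck^2}$ and the weight $\ell^{-1/6}$ suffices. Near $\theta=1$ with $k$ large, however, the Gaussian lets $k$ range up to $\sigma^{-3/2}$. Then $\|k e^{-c\sigma^3k^2}\psi(k\theta)\|_{L^2_k}$ can only be bounded by $\sigma^{-3/4}\|\ell^{1/2}\psi\|$, and with the extra $\sigma^{-1/4}$ you get $\int\sigma^{-1}\,d\theta=\infty$. For instance, take $\psi=\ell^{-1}(\log\ell)^{-\alpha}$ on $\ell>2$ and $0$ otherwise, with $\frac12<\alpha\le1$: the right-hand side of \eqref{eq3.10} is finite, but the Minkowski bound is infinite. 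So ``$\sigma^{-1/4}$ is integrable, so it causes no harm'' is the wrong diagnosis. Likewise, the heuristic $k\lesssim\ell\theta^{-1}$ would put the weight $\ell$ on $\psi$, not the $\ell^{1/2}$ of \eqref{eq3.10}.

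\textbf{The missing idea.} It is the paper's change of variables $\tau=k^{2/3}$, $s=\ell^{2/3}$. Then $\sigma=(\tau-s)/\tau$ and $\sigma^3k^2=(\tau-s)^3$. The Jacobians, the factor $(k/\ell)^{1/3}$ and the factor $\sigma^{-1/4}$ combine exactly into
\[
\Bigl\|\int_0^k K\,\psi\,d\ell\Bigr\|_{L^2}\lesssim\Bigl\|\tau\int_0^\tau(\tau-s)^{-\frac14}e^{-\frac1{20}(\tau-s)^3}h(s)\,ds\Bigr\|_{L^2_\tau},\qquad h(s)=\psi(s^{3/2}),
\]
which is a convolution up to the factor $\tau$. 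Writing $\tau=(\tau-s)+s$ and applying Young's inequality gives $\|(1+s)h\|_{L^2_s}$, which is $\|(\ell^{-1/6}+\ell^{1/2})\psi\|_{L^2_\ell}$ up to constants.

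This is the mechanism behind both weights. The effective near-diagonal window is $\tau-s\lesssim1$, i.e.\ $1-\theta\lesssim k^{-2/3}$, and that window turns your prefactor $k$ into $k^{1/2}$. For the terms with $n\ge1$ derivatives on $g$, the paper uses $(s/\tau)^{\frac32n-\frac12}\le s/\tau$ and a $\tau\le1$ / $\tau\ge1$ split to get $\|\ell^{-1/6}\partial_\ell^n g\|$. This yields the weights $\ell^{5/6}$ on $\partial_\ell^n\psi$ and $\ell^{-1/6}$ on $\partial_\ell^{n-1}\psi$.

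A Schur/Hardy argument in the $(k,\ell)$ variables can be made to work, but it must capture this $k^{-2/3}$ window near the diagonal. It also needs nontrivial test weights near $\ell=0$, where the unweighted Schur column sums grow like $\ell^{-1/6}$. Your proposal does not supply this argument.
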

\begin{proof}
    It is obvious that $-2i(k+\eta)e^{-(k+\eta)^2+k(k+\eta)-\f13k^2}\in H^\oo$, and therefore we only need to consider the regularity for the integration of $g$. Let us introduce the notation of the following kernel with a variable $\xi\in(0,1)$:
    \begin{equation}\label{eq3.11}
        K(k,\eta,\xi)\eqdefa \xi^{-\f13} \exp \bigl( -(k+\eta)^2(1-\xi^\f23)+k(k+\eta)(1-\xi^\f43)-\f13k^2(1-\xi^2)\bigr).
    \end{equation}
    It remains to show 
    \begin{equation}\label{eq3.11a}
        \begin{aligned}
            &\|\int_0^k K(k,\eta,\f{\ell}k)\f{g(\ell)}{\ell}d\ell\|_{H^m(k>0)}\\
            \leq & C_m \Bigl( \|(\ell^{-\f16}+\ell^\f12) \f{g(\ell)}\ell\|_{L^2_\ell(\R_+)}+\sum_{j=1}^{m} \|(\ell^{-\f16}+\ell^{\f56})\p_\ell^{j}(\f{g(\ell)}{\ell})\|_{L^2_\ell(\R_+)}  \Bigr). 
        \end{aligned}
    \end{equation}

    \begin{itemize}
        \item[{\bf Step 1:}] The derivation rules.
    \end{itemize}

    For the $\p_\eta$ derivatives, it can be easily computed that 
    \begin{equation}\label{eq3.12}
        \p_\eta^\beta \int_0^k K(k,\eta,\f{\ell}k)\f{g(\ell)}{\ell}d\ell=\int_0^k \p_2^\beta K(k,\eta,\f{\ell}k)\f{g(\ell)}{\ell}d\ell.
    \end{equation}

    For the $\p_k$ derivatives, we shall prove by induction that 
    \begin{equation}\label{eq3.13}
        \p_k^\beta \int_0^k K(k,\eta,\f{\ell}k)\f{g(\ell)}{\ell}d\ell
        =\sum_{j=0}^\beta \binom{\beta}{j} \int_0^k \p_1^{j}K(k,\eta,\f\ell{k})(\f{\ell}{k})^{\beta-j}\f{\p_\ell^{\beta-j}g(\ell)}{\ell} d\ell.
    \end{equation}
    Obviously, when $\beta=0$, \eqref{eq3.13} is a trivial equality. We now assume that \eqref{eq3.13} is true for $\beta=N$, and show that the case $\beta=N+1$ holds.

    By directly computations, we see
    \begin{align*}
        &\p_k\int_0^k \p_1^{j}K(k,\eta,\f\ell{k})(\f{\ell}{k})^{N-j}\f{\p_\ell^{N-j}g(\ell)}{\ell} d\ell\\
        =&  \p_1^{j}K(k,\eta,1)\f{\p_k^{N-j}g(k)}{k} -\f{N-j}k\int_0^k \p_1^{j}K(k,\eta,\f\ell{k})(\f{\ell}{k})^{N-j}\f{\p_\ell^{N-j}g(\ell)}{\ell} d\ell\\
        &+\int_0^k \p_1^{j+1}K(k,\eta,\f\ell{k})(\f{\ell}{k})^{N-j}\f{\p_\ell^{N-j}g(\ell)}{\ell} d\ell-\int_0^k \f{\ell}{k^2}\p_1^{j}\p_3 K(k,\eta,\f\ell{k})(\f{\ell}{k})^{N-j}\f{\p_\ell^{N-j}g(\ell)}{\ell} d\ell.
    \end{align*}
    For the last integral above, we view $\f{\ell}{k^2}\p_1^{j}\p_3 K(k,\eta,\f\ell{k})=\f{\ell}k\f{d}{d\ell}\p_1^j K(k,\eta,\f\ell{k})$ and do integration by parts to find
    \begin{align*}
        &-\int_0^k \f{\ell}{k^2}\p_1^{j}\p_3 K(k,\eta,\f\ell{k})(\f{\ell}{k})^{N-j}\f{\p_\ell^{N-j}g(\ell)}{\ell} d\ell\\
        =& -\p_1^{j}K(k,\eta,1)\f{\p_k^{N-j}g(k)}{k} +\int_0^k \p_1^{j} K(k,\eta,\f\ell{k})\f{d}{d\ell}\Bigl((\f{\ell}{k})^{N-j+1}\f{\p_\ell^{N-j}g(\ell)}{\ell}\Bigr) d\ell\\
        =& -\p_1^{j}K(k,\eta,1)\f{\p_k^{N-j}g(k)}{k} +\int_0^k \p_1^{j} K(k,\eta,\f\ell{k})(\f{\ell}{k})^{N-j+1}\f{\p_\ell^{N-j+1}g(\ell)}{\ell} d\ell\\
        & +\f{N-j}k\int_0^k \p_1^{j} K(k,\eta,\f\ell{k})(\f{\ell}{k})^{N-j}\f{\p_\ell^{N-j}g(\ell)}{\ell} d\ell,
    \end{align*}
    where we used that $\ell^{N-j-1}\p_\ell^{N-j}g(\ell)=\ell^{N-j}\p_\ell^{N-j}(\f{g(\ell)}\ell)+(N-j)\ell^{N-j-1}\p_\ell^{N-j-1}(\f{g(\ell)}{\ell})$ has no singularity near $\ell=0$ and $\ell \p_1K(k,\eta,\f\ell{k})$ vanishes at $\ell=0$.
    By adding together the above two equalities, we arrive at
    \begin{align*}
        &\p_k\int_0^k \p_1^{j}K(k,\eta,\f\ell{k})(\f{\ell}{k})^{N-j}\f{\p_\ell^{N-j}g(\ell)}{\ell} d\ell\\
        =& \int_0^k \p_1^{j+1}K(k,\eta,\f\ell{k})(\f{\ell}{k})^{N-j}\f{\p_\ell^{N-j}g(\ell)}{\ell} d\ell+\int_0^k \p_1^{j} K(k,\eta,\f\ell{k})(\f{\ell}{k})^{N-j+1}\f{\p_\ell^{N-j+1}g(\ell)}{\ell} d\ell.
    \end{align*}
    Finally, we conclude that 
    \begin{align*}
        &\p_k^{N+1} \int_0^k K(k,\eta,\f{\ell}k)\f{g(\ell)}{\ell}d\ell
        =\sum_{j=0}^N \binom{N}{j}\p_k \int_0^k \p_1^{j}K(k,\eta,\f\ell{k})(\f{\ell}{k})^{N-j}\f{\p_\ell^{N-j}g(\ell)}{\ell} d\ell\\
        =& \sum_{j=0}^N \binom{N}{j}\Bigl(\int_0^k \p_1^{j+1}K(k,\eta,\f\ell{k})(\f{\ell}{k})^{N-j}\f{\p_\ell^{N-j}g(\ell)}{\ell} d\ell\\
        &\qquad\qquad\qquad\qquad+\int_0^k \p_1^{j} K(k,\eta,\f\ell{k})(\f{\ell}{k})^{N-j+1}\f{\p_\ell^{N-j+1}g(\ell)}{\ell} d\ell\Bigr)\\
        =&\sum_{j=0}^{N+1} (\binom{N}{j}+\binom{N}{j-1})\int_0^k \p_1^{j} K(k,\eta,\f\ell{k})(\f{\ell}{k})^{N-j+1}\f{\p_\ell^{N-j+1}g(\ell)}{\ell} d\ell\\
        =&\sum_{j=0}^{N+1} \binom{N+1}{j}\int_0^k \p_1^{j} K(k,\eta,\f\ell{k})(\f{\ell}{k})^{N+1-j}\f{\p_\ell^{N+1-j}g(\ell)}{\ell} d\ell,
    \end{align*}
    which gives \eqref{eq3.13} for $\beta=N+1$. By induction, we have already proven \eqref{eq3.13}. 

    \begin{itemize}
        \item[{\bf Step 2:}] Estimates for derivatives of $K(k,\eta,\xi)$.
    \end{itemize}

    As in \eqref{eq3.6a}, we write
    $$
        K(k,\eta,\xi)=\xi^{-\f13}\exp \bigl( -\f13(1-\xi^\f23)^3k^2-(1-\xi^\f23)^2k\eta-(1-\xi^\f23)\eta^2\bigr).
    $$
    By introducing new variables $\tilde{k}=(1-\xi^\f23)^\f32k$ and $\tilde{\eta}=(1-\xi^\f23)^\f12\eta$, we denote
    $$
        K(k,\eta,\xi)=\tilde{K}(\tilde{k},\tilde{\eta},\xi)\eqdefa \xi^{-\f13} \exp \bigl(-\f13 \tilde{k}^2+\tilde{k}\tilde{\eta}-\tilde{\eta}^2\bigr).
    $$
    Thanks to $-\f13 \tilde{k}^2+\tilde{k}\tilde{\eta}-\tilde{\eta}^2\leq -\f1{16}(\tilde{k}^2+\tilde{\eta}^2)$, we know that for any $a,b>0$,
    $$
        \p_{\tilde{k}}^a \p_{\tilde{\eta}}^b \tilde{K}(\tilde{k},\tilde{\eta},\xi)=\xi^{-\f13} P_{a,b}(\tilde{k},\tilde{\eta})\exp \bigl(-\f1{16}(\tilde{k}^2+\tilde{\eta}^2)\bigr)\leq C_{a,b} \xi^{-\f13} \exp \bigl(-\f1{20}(\tilde{k}^2+\tilde{\eta}^2)\bigr).
    $$
    Therefore, the change of variables gives us 
    \begin{equation}\label{eq3.14}
        \begin{aligned}
            \p_k^a\p_\eta^b K(k,\eta,\xi)&=(1-\xi^\f23)^\f{3a+b}2 \p_{\tilde{k}}^a \p_{\tilde{\eta}}^b \tilde{K}(\tilde{k},\tilde{\eta},\xi)\leq C_{a,b} \xi^{-\f13}(1-\xi^\f23)^\f{3a+b}2 \exp \bigl(-\f1{20}(\tilde{k}^2+\tilde{\eta}^2)\bigr)\\
            &\leq C_{a,b} \xi^{-\f13}(1-\xi^\f23)^\f{3a+b}2 \exp \bigl(-\f1{20}(1-\xi^\f23)^3k^2-\f1{20}(1-\xi^\f23)\eta^2\bigr)\\
            &\leq C_{a,b}  \exp \bigl(-\f1{20}(1-\xi^\f23)^3k^2-\f1{20}(1-\xi^\f23)\eta^2\bigr),
        \end{aligned}
    \end{equation}
    where we used $0<\xi<1$.

    \begin{itemize}
        \item[{\bf Step 3:}] Proof of \eqref{eq3.11a}.
    \end{itemize}

    First, we consider the $L^2$ norm. By applying \eqref{eq3.14} with $a=b=0$, one has
    \begin{align*}
        \|\int_0^k K(k,\eta,\f{\ell}k)\f{g(\ell)}{\ell}d\ell\|_{L^2(k>0)} 
        &\leq C \|\int_0^k(\f{\ell}k)^{-\f13} e^{-\f1{20}(k^\f23-\ell^\f23)^3} \|e^{-\f1{20}\bigl(1-(\f{\ell}k)^\f23\bigr)\eta^2}\|_{L^2_\eta}\f{g(\ell)}{\ell}d\ell \|_{L^2_k(\R_+)}\\
        &\leq C \|\int_0^k(\f{\ell}k)^{-\f13}\bigl(1-(\f{\ell}k)^\f23\bigr)^{-\f14} e^{-\f1{20}(k^\f23-\ell^\f23)^3} \f{g(\ell)}{\ell}d\ell \|_{L^2_k(\R_+)}
    \end{align*}
    Next, we introduce the new variables $\tau=k^\f23$ and $s=\ell^\f23$ to write
    \begin{align*}
        &\|\int_0^k(\f{\ell}k)^{-\f13}\bigl(1-(\f{\ell}k)^\f23\bigr)^{-\f14} e^{-\f1{20}(k^\f23-\ell^\f23)^3} \f{g(\ell)}{\ell}d\ell \|_{L^2_k(\R_+)}\\
        =& (\f32)^\f32 \|\tau^\f14\int_0^\tau (\f\tau{s})^{\f12} (\f{\tau}{\tau-s})^\f14 e^{-\f1{20}(\tau-s)^3} \f{g(s^\f32)}{s^\f32}\sqrt{s}ds\|_{L^2_\tau(\R_+)}\\
        =& (\f32)^\f32 \|\tau\int_0^\tau ({\tau-s})^{-\f14} e^{-\f1{20}(\tau-s)^3} \f{g(s^\f32)}{s^\f32}ds\|_{L^2_\tau(\R_+)}.
    \end{align*} 
    One can view $\tau=(\tau-s)+s$ and use Young's inequality to estimate
    \begin{align*}
        &\|\tau\int_0^\tau ({\tau-s})^{-\f14} e^{-\f1{20}(\tau-s)^3} \f{g(s^\f32)}{s^\f32}ds\|_{L^2_\tau(\R_+)}\\
        \leq & \|s^\f34 e^{-\f1{20}s^3}\|_{L^1_s(\R_+)} \|\f{g(s^\f32)}{s^\f32}\|_{L^2_s(\R_+)} + \|s^{-\f14} e^{-\f1{20}s^3}\|_{L^1_s(\R_+)} \|s\f{g(s^\f32)}{s^\f32}\|_{L^2_s(\R_+)}\\
        \leq & C \|(1+s) \f{g(s^\f32)}{s^\f32}\|_{L^2_s(\R_+)}.
    \end{align*}
    Combining the above estimates, we use $\|(1+s) \f{g(s^\f32)}{s^\f32}\|_{L^2_s(\R_+)}=(\f23)^\f12\|(\ell^{-\f16}+\ell^\f12) \f{g(\ell)}\ell\|_{L^2_\ell(\R_+)}$ to conclude 
    \begin{equation}\label{eq3.15}
        \|\int_0^k K(k,\eta,\f{\ell}k)\f{g(\ell)}{\ell}d\ell\|_{L^2(k>0)}\leq C \|(\ell^{-\f16}+\ell^\f12) \f{g(\ell)}\ell\|_{L^2_\ell(\R_+)}.
    \end{equation}

    By repeating exactly the same process together with \eqref{eq3.12}, one can prove
    \begin{equation}\label{eq3.16}
        \|\p_\eta^m\int_0^k K(k,\eta,\f{\ell}k)\f{g(\ell)}{\ell}d\ell\|_{L^2(k>0)}\leq C_m \|(\ell^{-\f16}+\ell^\f12) \f{g(\ell)}\ell\|_{L^2_\ell(\R_+)}.
    \end{equation}

    However, for the derivatives in $k$ direction, we shall carefully deal with the terms in \eqref{eq3.13}. When $j=m$, we derive again from the proof of \eqref{eq3.15} that
    \begin{equation}\label{eq3.17}
        \|\int_0^k \p_1^m K(k,\eta,\f{\ell}k)\f{g(\ell)}{\ell}d\ell\|_{L^2(k>0)}\leq C_m \|(\ell^{-\f16}+\ell^\f12) \f{g(\ell)}\ell\|_{L^2_\ell(\R_+)}.
    \end{equation}
    For the cases $0\leq j\leq m-1$, we apply \eqref{eq3.14} with $a=j$ and $b=0$ together with \eqref{eq3.13} to compute
    \begin{align*}
        &\|\int_0^k \p_1^j K(k,\eta,\f{\ell}k)(\f{\ell}k)^{m-j}\f{\p_\ell^{m-j}g(\ell)}{\ell}d\ell\|_{L^2(k>0)} \\
        \leq &C_j \|\int_0^k(\f{\ell}k)^{m-j-\f13} e^{-\f1{20}(k^\f23-\ell^\f23)^3} \|e^{-\f1{20}\bigl(1-(\f{\ell}k)^\f23\bigr)\eta^2}\|_{L^2_\eta}\f{\p_\ell^{m-j}g(\ell)}{\ell}d\ell \|_{L^2_k(\R_+)}\\
        \leq &C_j \|\int_0^k(\f{\ell}k)^{m-j-\f13}\bigl(1-(\f{\ell}k)^\f23\bigr)^{-\f14} e^{-\f1{20}(k^\f23-\ell^\f23)^3} \f{\p_\ell^{m-j}g(\ell)}{\ell}d\ell \|_{L^2_k(\R_+)}.
    \end{align*}
    In the new variables $\tau=k^\f23$ and $s=\ell^\f23$, we rewrite
    \begin{align*}
        &\|\int_0^k(\f{\ell}k)^{m-j-\f13}\bigl(1-(\f{\ell}k)^\f23\bigr)^{-\f14} e^{-\f1{20}(k^\f23-\ell^\f23)^3} \f{\p_\ell^{m-j}g(\ell)}{\ell}d\ell \|_{L^2_k(\R_+)}\\
        =& (\f32)^\f32 \|\tau^\f14\int_0^\tau (\f{s}\tau)^{\f32(m-j)-\f12} (\f{\tau}{\tau-s})^\f14 e^{-\f1{20}(\tau-s)^3} \f{(\p_\ell^{m-j}g)(s^\f32)}{s^\f32}\sqrt{s}ds\|_{L^2_\tau(\R_+)}\\
        \leq & (\f32)^\f32 \|\tau^{-\f12}\int_0^\tau ({\tau-s})^{-\f14} e^{-\f1{20}(\tau-s)^3} (\p_\ell^{m-j}g)(s^\f32)ds\|_{L^2_\tau(\R_+)},
    \end{align*}
    where we used $(\f{s}\tau)^{\f32(m-j)-\f12}\leq \f{s}\tau$ due to $m-j\geq 1$.

    When $\tau\leq 1$, we use $\tau^{-\f12}\leq \tau^{-\f38}s^{-\f18}$ and Holder's inequality to get
    \begin{align*}
        &\|\tau^{-\f12}\int_0^\tau ({\tau-s})^{-\f14} e^{-\f1{20}(\tau-s)^3} (\p_\ell^{m-j}g)(s^\f32)ds\|_{L^2_\tau((0,1])}\\
        \leq &\|\tau^{-\f38}\|_{L^2_\tau((0,1])} \|s^{-\f14}e^{-\f1{20}s^3}\|_{L^3_s(\R_+)}\|s^{-\f18}\|_{L^6_s((0,1])} \|(\p_\ell^{m-j}g)(s^\f32)\|_{L^2_s(\R_+)}\\
        \leq &C \|(\p_\ell^{m-j}g)(s^\f32)\|_{L^2_s(\R_+)}.
    \end{align*}
    On the other hand, for $\tau\geq 1$, we deduce from $\tau^{-\f12}\leq 1$ and Young's inequality that 
    \begin{align*}
        &\|\tau^{-\f12}\int_0^\tau ({\tau-s})^{-\f14} e^{-\f1{20}(\tau-s)^3} (\p_\ell^{m-j}g)(s^\f32)ds\|_{L^2_\tau([1,\oo))}\\
        \leq & \|s^{-\f14}e^{-\f1{20}s^3}\|_{L^1_s(\R_+)} \|(\p_\ell^{m-j}g)(s^\f32)\|_{L^2_s(\R_+)}\\
        \leq &C \|(\p_\ell^{m-j}g)(s^\f32)\|_{L^2_s(\R_+)}.
    \end{align*}
    Combining the above estimates, we use $\|(\p_\ell^{m-j}g)(s^\f32)\|_{L^2_s(\R_+)}=(\f23)^\f12\|\ell^{-\f16}\p_\ell^{m-j}g(\ell)\|_{L^2_\ell(\R_+)}$ to conclude that for $0\leq j\leq m-1$,
    \begin{equation}\label{eq3.18}
        \|\int_0^k \p_1^j K(k,\eta,\f{\ell}k)(\f{\ell}k)^{m-j}\f{\p_\ell^{m-j}g(\ell)}{\ell}d\ell\|_{L^2(k>0)}\leq C_m\|\ell^{-\f16}\p_\ell^{m-j}g(\ell)\|_{L^2_\ell(\R_+)}.
    \end{equation}

    Noticing $\p_\ell^{m-j}g(\ell)=\ell \p_\ell^{m-j}(\f{g(\ell)}\ell)+(m-j)\p_\ell^{m-j-1}(\f{g(\ell)}{\ell})$, we apply \eqref{eq3.13}, \eqref{eq3.17} and \eqref{eq3.18} to conclude that 
    \begin{equation}\label{eq3.19}
        \begin{aligned}
            &\|\p_k^m\int_0^k K(k,\eta,\f{\ell}k)\f{g(\ell)}{\ell}d\ell\|_{L^2(k>0)}
            \leq C_m \Bigl( \|(\ell^{-\f16}+\ell^\f12) \f{g(\ell)}\ell\|_{L^2_\ell(\R_+)}\\
            &\qquad\qquad\qquad+\sum_{j=0}^{m-1} \bigl(\|\ell^{\f56}\p_\ell^{m-j}(\f{g(\ell)}{\ell})\|_{L^2_\ell(\R_+)}  +\|\ell^{-\f16}\p_\ell^{m-j-1}(\f{g(\ell)}{\ell})\|_{L^2_\ell(\R_+)}\bigr)  \Bigr)\\
            \leq &C_m \Bigl( \|(\ell^{-\f16}+\ell^\f12) \f{g(\ell)}\ell\|_{L^2_\ell(\R_+)}+\sum_{j=1}^{m} \|(\ell^{-\f16}+\ell^{\f56})\p_\ell^{j}(\f{g(\ell)}{\ell})\|_{L^2_\ell(\R_+)}  \Bigr).
        \end{aligned}
    \end{equation}
    
Finally, we combine \eqref{eq3.15}, \eqref{eq3.16} and \eqref{eq3.19} to finish the proof of \eqref{eq3.11a}. This finishes the proof.
\end{proof}

\subsection{The regularity for the boundary value}\label{subsection3.3}
In this subsection, we check the smoothness of $\f{g(\ell)}{\ell}$ where $g$ is given by \eqref{eq3.9} in terms of $h$. Therefore, we rewrite the norms in \eqref{eq3.10} in $s=\ell^\f23$ variables for the unknown $h(s)=\f{g(s^\f32)}{s^\f32}$ as
\begin{equation}\label{eq3.20a}
    \begin{aligned}
    &\|(\ell^{-\f16}+\ell^\f12) \f{g(\ell)}\ell\|_{L^2_\ell(\R_+)}+\sum_{j=1}^{m} \|(\ell^{-\f16}+\ell^{\f56})\p_\ell^{j}(\f{g(\ell)}{\ell})\|_{L^2_\ell(\R_+)}  \\
    = & (\f32)^\f12 \Bigl( \|(1+s) h(s)\|_{L^2_s(\R_+)}+\sum_{j=1}^{m} \|(1+s^{\f32})(\f23s^{-\f12}\p_s)^{j}h(s)\|_{L^2_s(\R_+)}  \Bigr).
\end{aligned}
\end{equation}

Before studing these norms of $h(s)$, we denote $H_{\kappa}(\tau)=\tau^\kappa e^{-\f1{12}\tau^3}$ and turns \eqref{eq3.9} into
\begin{equation}\label{eq3.21a}
    \int_0^\tau H_{-\f12}(\tau-s) h(s)ds=i H_\f12(\tau),\qquad \forall \tau>0.
\end{equation}

By applying the Laplace transform, we get 
$$
    \ccL[H_{-\f12}](\lambda) \ccL[h](\lambda)=i \ccL[H_\f12](\lambda).
$$
Since $H_{-\f12}(\tau)$ is a strictly decreasing function on $\R_+$, $\cL[H_{-\f12}](\lambda)$ has no zero on the half complex plane $\{\text{Re} \lambda\geq0\}$. Therefore, we get a solution formula for the integral equation in the Laplacian variable:
\begin{equation}\label{eq3.21}
    \ccL[h](\lambda)=i \f{\ccL[H_\f12](\lambda)}{\ccL[H_{-\f12}](\lambda) }.
\end{equation}

First, we prove the following lemma showing that $h(s)$ belongs to the $(1+s^2)^2$-weighted $L^2$ space.

\begin{lem}\label{lem3.2}
    {  Let $h(s)$ solve the integral equation \eqref{eq3.9}. Then, $(1+s^2)h(s)\in L^2_s(\R_+)$.
    }
\end{lem}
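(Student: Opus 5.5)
We work on the Laplace side and use \eqref{eq3.21}, together with Plancherel for the Laplace transform on the imaginary axis. Since $(1+s^2)h\in L^2_s(\R_+)$ is equivalent to $h,\ s^2h\in L^2$, and $\ccL[s^2h](\lambda)=\p_\lambda^2\ccL[h](\lambda)$, it suffices to show three things about $\Phi(\lambda)\eqdefa i\,\ccL[H_{\f12}](\lambda)/\ccL[H_{-\f12}](\lambda)$ and its first two $\lambda$-derivatives. First, they are analytic on $\{\Re\lambda>0\}$ and continuous up to $\Re\lambda=0$. Second, they lie in $L^2$ on the vertical lines $\Re\lambda=c\ge0$, uniformly in $c$. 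Third, $h$ is a priori a tempered function supported on $\R_+$, so that Paley--Wiener applies. With these three facts, the Paley--Wiener theorem identifies $\Phi$ as the Laplace transform of an $L^2(\R_+)$ function, and likewise $\Phi''$ as that of $s^2h\in L^2$.

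The first step is to study the two transforms $\ccL[H_\kappa](\lambda)=\int_0^\infty \tau^\kappa e^{-\tau^3/12-\lambda\tau}d\tau$ for $\kappa=\pm\f12$. Both are entire functions of $\lambda$, because $e^{-\tau^3/12}$ has superexponential decay. Every $\p_\lambda$-derivative is again of the same form with $\kappa$ raised by one, so it is smooth up to the axis.

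Non-vanishing on the closed half plane is already stated before \eqref{eq3.21}: $H_{-\f12}$ is strictly decreasing and positive. The standard argument I would make precise is $\Re\,\lambda\ccL[H_{-\f12}](\lambda)>0$ for $\Re\lambda\ge0$. This follows by integrating by parts to get $\lambda\ccL[H](\lambda)=H(0^+)+\int e^{-\lambda\tau}(-H')$. The boundary term is singular here, since $H_{-\f12}(0)=+\infty$, so I would instead use the decreasing-kernel positivity directly. Either way this gives local boundedness of $\Phi,\Phi',\Phi''$ on compact subsets of the closed half plane.

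The second step, which is the main obstacle, is the large-$|\lambda|$ asymptotics in the closed right half plane. Here the endpoint $\tau=0$ dominates, and Watson's lemma applied to $\tau^\kappa(1-\tau^3/12+\dots)$ gives
$$\ccL[H_\kappa](\lambda)=\Gamma(\kappa+1)\lambda^{-\kappa-1}\bigl(1+O(|\lambda|^{-3})\bigr).$$
This expansion is valid uniformly in $|\arg\lambda|\le\f\pi2$, including the imaginary axis; to get it there I would rotate the $\tau$-contour slightly, using that $e^{-\tau^3/12}$ decays in the sector $|\arg\tau|<\f\pi6$. It follows that
$$\Phi(\lambda)=i\f{\Gamma(3/2)}{\Gamma(1/2)}\lambda^{-1}\bigl(1+O(|\lambda|^{-3})\bigr)=\f{i}{2}\lambda^{-1}+O(|\lambda|^{-4}).$$
Derivatives obey the same expansion differentiated term by term, so $\Phi'=O(|\lambda|^{-2})$ and $\Phi''=O(|\lambda|^{-3})$.

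On vertical lines, $\lambda^{-1}$ is square integrable away from $0$. Near $\lambda=0$ we have continuity and a nonzero denominator, so there is no singularity. Hence $\Phi,\Phi',\Phi''\in L^2$ uniformly on lines $\Re\lambda=c\ge0$.

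The last step is to conclude. The Hardy-space version of Paley--Wiener gives $h=\ccL^{-1}\Phi\in L^2(\R_+)$, and this $h$ solves \eqref{eq3.21a} by uniqueness of Laplace transforms. Applying the same reasoning to $\Phi''=\ccL[s^2h]$ gives $s^2h\in L^2$. Combining the two, $(1+s^2)h\in L^2_s(\R_+)$.

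A remark on the leading term: $\f{i}{2}\lambda^{-1}$ is the transform of the constant $\f i2$, which is not in $L^2$. The $L^2$ statement for $h$ itself therefore relies only on square integrability on vertical lines, which $\lambda^{-1}$ does satisfy away from the origin. If that subtlety causes trouble, I would subtract an explicit profile such as $\f i2\cdot\f{\lambda}{\lambda(\lambda+1)}$ and treat the remainder.
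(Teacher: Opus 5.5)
Your proposal is correct and is essentially the paper's argument. You bound $\Phi=i\ccL[H_{\f12}]/\ccL[H_{-\f12}]$ on the compact part of the closed right half-plane using the non-vanishing of the denominator, and use the expansion \eqref{eq:lem A} to get $|\Phi|\lesssim|\lambda|^{-1}$ and $|\p_\lambda^2\Phi|\lesssim|\lambda|^{-3}$ at infinity. You then conclude via Plancherel for $\ccL[h]$ and $\ccL[s^2h]=\p_\lambda^2\ccL[h]$; your Paley--Wiener/Hardy-space phrasing is just a more careful version of that step. Your two additions are also sound. First, you justify the non-vanishing, which the paper only asserts, through $\lambda\ccL[H_{-\f12}](\lambda)=\int_0^\infty\bigl(-\p_sH_{-\f12}(s)\bigr)\bigl(1-e^{-\lambda s}\bigr)\,ds$, whose real part is positive for $\Re\lambda\ge0$, $\lambda\neq0$; at $\lambda=0$ non-vanishing is trivial. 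Second, you differentiate the asymptotic expansion by Cauchy estimates instead of computing $\p_\lambda^2\Phi$ explicitly as the paper does.
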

\begin{proof}
    First, for the $L^2$ norm, we can apply Plancherel's equality to get 
    $$
        \|h\|_{L^2(\R_+)}\leq C \|\ccL[h]\|_{L^2(i\R)}.
    $$
    From the fact that $\ccL[H_{-\f12}](\lambda)$ has no zeros in $\text{Re} \lambda\geq 0$, and the continuity of $\ccL[H_\f12](\lambda)$ and $\ccL[H_{-\f12}](\lambda)$, we know from \eqref{eq3.21} that $\ccL[h](\lambda)$ is bounded on the half ball $\{ \text{Re}\lambda\geq 0,|\lambda|\leq N \}$. At infinity, we can use \eqref{eq:lem A} to find that for $|\arg \lambda|<\f23\pi-\delta$ and $\lambda>>1$,
    $$
        |\ccL[h](\lambda)|\leq C_\delta \f{|\lambda|^{-\f32}}{|\lambda|^{-\f12}}\leq C_\delta|\lambda|^{-1},
    $$
    which gives the $L^2$ integrability on the imaginal line $i\R$, by taking $\delta<\f\pi6$.

    For the $s^2$-weighted $L^2$ norm, we use the derivation property of the Laplace transform to compute from \eqref{eq3.21} that 
    \begin{align*}
        \ccL[s^2h(s)](\lambda)&=\p_\lambda^2\ccL[h](\lambda)=i \p_\lambda^2\f{\ccL[H_\f12](\lambda)}{\ccL[H_{-\f12}](\lambda) }=i \p_\lambda\Bigl(-\f{\ccL[H_\f32](\lambda)}{\ccL[H_{-\f12}](\lambda) }+(\f{\ccL[H_\f12](\lambda)}{\ccL[H_{-\f12}](\lambda) })^2\Bigr)\\
        &=i \Bigl(\f{\ccL[H_\f52](\lambda)}{\ccL[H_{-\f12}](\lambda) }-3\f{\ccL[H_\f32](\lambda)}{\ccL[H_{-\f12}](\lambda) }\f{\ccL[H_\f12](\lambda)}{\ccL[H_{-\f12}](\lambda) }+(\f{\ccL[H_\f12](\lambda)}{\ccL[H_{-\f12}](\lambda) })^3\Bigr),
    \end{align*}
    where we used the fact that $\p_\lambda \ccL[H_\kappa](\lambda)=-\ccL[H_{\kappa+1}](\lambda)$. Then, it's similar to estimate that for $\lambda\in i\R$,
    $$
        |\ccL[s^2h(s)](\lambda)| \leq C (1+|\lambda|)^{-3},
    $$
    which is $L^2$ integrable. This, together with Plancherel's equality, finishes the proof.
\end{proof}

\begin{rmk}
    {  By following this proof, one can easily find that $h$ belongs to any polynomial-weighted $L^2$ space.
    }
\end{rmk}

For the higher regularity norms, we first apply $\p_\tau $ derivatives to \eqref{eq3.21a} and derive the equations for $\p_s^m h$.

\begin{lem}\label{lem3.3}
    {  Let $h(s)$ solve the integral equation \eqref{eq3.9}. Then, for any $m\in \N$, there exist constants $(a^{3m}_j)_{j=0}^{2m}$, $(a^{3m+1}_j)_{j=0}^{2m}$ and $(a^{3m+2}_j)_{j=0}^{2m+1}$ such that $\p_s^{3m}h(0)=\f12 a_0^m$, $\p_s^{3m+1}h(0)=\p_s^{3m+2}h(0)=0$ and
    \begin{subequations}\label{eq:lem3.3}
        \begin{align}\label{eq:lem3.3a}
            &\int_0^\tau H_{-\f12}(\tau -s) \p_s^{3m} h(s)ds = \sum_{j=0}^{2m} a_j^{3m} H_{\f12+3j}(\tau),\\
            &\int_0^\tau H_{-\f12}(\tau -s) \p_s^{3m+1} h(s)ds = \sum_{j=0}^{2m} a_j^{3m+1} H_{\f52+3j}(\tau),\label{eq:lem3.3b}\\
            &\int_0^\tau H_{-\f12}(\tau -s) \p_s^{3m+2} h(s)ds = \sum_{j=0}^{2m+1} a_j^{3m+2} H_{\f32+3j}(\tau).\label{eq:lem3.3c}
        \end{align}
    \end{subequations}
    }
\end{lem}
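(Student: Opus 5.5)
The plan is an induction on the number of derivatives. I would carry it out entirely at the level of the Laplace transform \eqref{eq3.21}, which sidesteps the singularity of $H_{-\f12}$ at the origin. Two elementary facts about $H_\kappa(\tau)=\tau^\kappa e^{-\f1{12}\tau^3}$ drive everything. First, the derivative rule
\[
H_\kappa'(\tau)=\kappa H_{\kappa-1}(\tau)-\f14 H_{\kappa+2}(\tau),
\]
so differentiating lowers the index by $1$ or raises it by $2$; both shifts are $\equiv -1 \pmod 3$. Second, $H_\kappa(0)=0$ for $\kappa>0$, together with the small-$\tau$ asymptotics $\ccL[H_\kappa](\lambda)\sim\Gamma(\kappa+1)\lambda^{-\kappa-1}$ as $|\lambda|\to\infty$ in $\text{Re}\lambda\ge0$.

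The abstract step is the following. Suppose $\varphi$ solves $\int_0^\tau H_{-\f12}(\tau-s)\varphi(s)\,ds=R(\tau)$, where $R=\sum_j c_j H_{\kappa_j}$ with all $\kappa_j>0$. Then $\ccL[\varphi]=\ccL[R]/\ccL[H_{-\f12}]$. I claim:
\begin{itemize}
\item $\varphi(0)=\lim_{\lambda\to\infty}\lambda\ccL[\varphi](\lambda)$ equals $c_{1/2}\,\Gamma(\f32)/\Gamma(\f12)=\f12 c_{1/2}$, where $c_{1/2}$ is the coefficient of $H_{\f12}$ in $R$; it is $0$ if no $H_{\f12}$ term is present. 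Terms with $\kappa>\f12$ contribute $o(\lambda^{-1})$.
\item $\varphi'$ solves the same type of equation with right-hand side $\tilde R\eqdefa R'-\varphi(0)H_{-\f12}$, with the $H_{-\f12}$ term exactly cancelled.
\end{itemize}
To justify the second claim, define $\psi$ through $\ccL[\psi]=\ccL[\tilde R]/\ccL[H_{-\f12}]$. Using $\ccL[R']=\lambda\ccL[R]-R(0)=\lambda\ccL[R]$, this gives $\ccL[\psi]=\lambda\ccL[\varphi]-\varphi(0)$, which is precisely the identity $\ccL[\varphi']=\lambda\ccL[\varphi]-\varphi(0)$. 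Then $\varphi(s)=\varphi(0)+\int_0^s\psi$, so $\varphi'=\psi$.

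One also needs $\psi$ to be a genuine function, say in $L^2$ with weights. This follows exactly as in Lemma~\ref{lem3.2}: $\ccL[H_{-\f12}]$ has no zeros in $\text{Re}\lambda\ge0$ and behaves like $|\lambda|^{-\f12}$ at infinity by \eqref{eq:lem A}, while $\ccL[\tilde R]=O(|\lambda|^{-\f32})$ once the $H_{-\f12}$ term has been removed. The same bound also gives the limit defining $\varphi(0)$.

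I then run the three-step cycle starting from \eqref{eq3.21a}, whose right-hand side is $a_0^0H_{\f12}$ with $a_0^0=i$.
\begin{itemize}
\item \emph{Step $3m\to 3m+1$.} The right-hand side is a combination of $H_{\f12+3j}$, $0\le j\le 2m$. The abstract step gives $\p_s^{3m}h(0)=\f12 a_0^{3m}$. The only $H_{-\f12}$ produced by differentiation comes from $\f12 H_{\f12}$ and is cancelled. What remains are terms $H_{\f52+3j}$, giving \eqref{eq:lem3.3b} with $j\le 2m$.
\item \emph{Step $3m+1\to 3m+2$.} No $H_{\f12}$ is present, so $\p_s^{3m+1}h(0)=0$. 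Differentiating $H_{\f52+3j}$ produces $H_{\f32+3j}$ and $H_{\f32+3(j+1)}$, giving \eqref{eq:lem3.3c} with $j\le 2m+1$.
\item \emph{Step $3m+2\to 3m+3$.} Again no $H_{\f12}$ is present, so $\p_s^{3m+2}h(0)=0$. Differentiating $H_{\f32+3j}$ produces $H_{\f12+3j}$ and $H_{\f12+3(j+1)}$. This gives \eqref{eq:lem3.3a} at level $m+1$ with $j\le 2m+2$, which closes the induction.
\end{itemize}

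The only delicate point is the justification in the abstract step: that $\varphi$ is continuous up to $0$ with the value given by the Laplace limit, and that $\psi\in L^1_{loc}$. I would handle this through the Laplace decay bound described above, together with the Paley--Wiener/Plancherel argument already used in Lemma~\ref{lem3.2}. Everything else is bookkeeping of the indices modulo $3$.
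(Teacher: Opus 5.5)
Your proposal is correct, but it takes a different route from the paper.

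The paper stays in physical space. It first proves the commutation identity \eqref{eq3.23},
$\partial_\tau\int_0^\tau H_\kappa(\tau-s)f(s)\,ds=H_\kappa(\tau)f(0)+\int_0^\tau H_\kappa(\tau-s)f'(s)\,ds$,
by integrating by parts on $[0,\tau-\delta]$ and letting $\delta\to0$. It then differentiates the level-$n$ identity in $\tau$ using $H_\kappa'=\kappa H_{\kappa-1}-\frac14H_{\kappa+2}$. Finally it reads off $\partial_s^nh(0)$ by matching the coefficients of the $\tau^{-1/2}$ singularity on both sides as $\tau\to0^+$.

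That argument is shorter, but it presupposes that $h$ is smooth with $\partial_s^{n+1}h$ locally bounded near $0$. This is needed both for the integration by parts and for the remaining integral to be $o(\tau^{-1/2})$. In the paper, however, smoothness of $h$ is only asserted afterwards, in Corollary~\ref{col3.1}, whose proof uses the formulas of this very lemma.

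Your Laplace-side version avoids that circularity. You define $\varphi(0)$ as the initial-value limit and build $\psi$ from $\mathscr{L}[\tilde R]/\mathscr{L}[H_{-1/2}]$ via Paley--Wiener. You then recover $\varphi=\varphi(0)+\int_0^s\psi$ from $\mathscr{L}[\psi]=\lambda\mathscr{L}[\varphi]-\varphi(0)$ and uniqueness of the Laplace transform. This proves existence and $L^2$-integrability of each derivative at the same time as the identities, so you also get the unweighted part of Corollary~\ref{col3.1} essentially for free.

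The price is that every step relies on two inputs: the non-vanishing of $\mathscr{L}[H_{-1/2}]$ on $\{\mathrm{Re}\,\lambda\ge0\}$, and the expansion \eqref{eq:lem A}. Note that Paley--Wiener needs the bound $|\mathscr{L}[\tilde R]/\mathscr{L}[H_{-1/2}]|\lesssim(1+|\lambda|)^{-1}$ uniformly on the closed right half-plane, not only on $i\mathbb{R}$ as in Lemma~\ref{lem3.2}. That bound is available, since \eqref{eq:lem A} holds on a sector containing this half-plane.

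Finally, your value $\partial_s^{3m}h(0)=\frac12a_0^{3m}$ is the correct reading of the statement's $\frac12a_0^{m}$, which is a typo.
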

\begin{proof}
    First, we show the following equality
    \begin{equation}\label{eq3.23}
        \p_\tau\int_0^\tau H_{\kappa}(\tau-s) f(s)ds= H_\kappa(\tau)f(0)+\int_0^{\tau} H_{\kappa}(\tau-s) \p_sf(s)ds.
    \end{equation}
    For any $\delta>0$, we do integration by parts to see
    \begin{align*}
        \p_\tau \int_0^{\tau-\delta} H_{\kappa}(\tau-s) f(s)ds &= H_\kappa(\delta)f(\tau-\delta)+\int_0^{\tau-\delta} H'_{\kappa}(\tau-s) f(s)ds\\
        &=H_\kappa(\delta)f(\tau-\delta)-\int_0^{\tau-\delta} \f{d}{ds} \bigl(H_{\kappa}(\tau-s) \bigr) f(s)ds\\
        &=H_\kappa(\tau)f(0)+\int_0^{\tau-\delta} H_{\kappa}(\tau-s) \p_sf(s)ds.
    \end{align*}
    By taking $\delta\rightarrow0+$, we finish the proof of \eqref{eq3.23}.

    By induction, we try to show \eqref{eq:lem3.3a}. When $m=0$, this equation reduce to \eqref{eq3.21a} with $a^0_0=i$. Now, we assume the case $m=M$ that 
    $$
        \int_0^\tau H_{-\f12}(\tau -s) \p_s^{3M} h(s)ds = \sum_{j=0}^{2M} a_j^{3M} H_{\f12+3j}(\tau),
    $$
    and show the equation for the case $m=M+1$.

    By applying $\p_\tau$ and using \eqref{eq3.23}, we find that 
    \begin{align*}
        H_{-\f12}(\tau)\p_s^{3M}h(0)&+\int_0^\tau H_{-\f12}(\tau -s) \p_s^{3M+1} h(s)ds
        =\sum_{j=0}^{2M} a_j^{3M} \bigl( (\f12+3j)H_{-\f12+3j}(\tau)-\f14 H_{\f52+3j}(\tau) \bigr)\\
        &=\f12 a^M_0 H_{-\f12}(\tau) +\sum_{j=0}^{2M-1} (-\f14 a_j^{3M}+(\f72+3j)a_{j+1}^{3M})H_{\f52+3j}(\tau)-\f14 a_{2M}^{3M} H_{\f52+6M}(\tau).
    \end{align*}
    Then, by comparing the limit as $\tau\rightarrow0+$, we find that $\p_s^{3M}h(0)=\f12 a_0^M$ and therefore,
    \begin{equation}
        \begin{aligned}
            \int_0^\tau H_{-\f12}(\tau -s) \p_s^{3M+1} h(s)ds
            &=\sum_{j=0}^{2M} a_{j}^{3M+1}H_{\f52+3j}(\tau),
        \end{aligned}
    \end{equation}
    with $a_j^{3M+1}=-\f14 a_j^{3M}+(\f72+3j)a_{j+1}^{3M}$ for $0\leq j\leq 2M-1$ and $a^{3M+1}_{2M}=-\f14 a^{3M}_{2M}$.

    Then, we apply $\p_\tau$ and use \eqref{eq3.23} again to get
    \begin{align*}
        &\qquad H_{-\f12}(\tau)\p_s^{3M+1}h(0)+\int_0^\tau H_{-\f12}(\tau -s) \p_s^{3M+2} h(s)ds \\
        &=\sum_{j=0}^{2M} a_j^{3M+1}\bigl( (\f52+3j)H_{\f32+3j}(\tau)-\f14 H_{\f92+3j}(\tau)\bigr)\\
        &=\f52 a^{3M+1}_0 H_{\f32}(\tau) +\sum_{j=1}^{2M} \bigl((\f{5}2+3j)a_{j}^{3M+1}-\f14a_{j-1}^{3M+1}\bigr)H_{\f92+3j}(\tau)-\f14 a_{2M}^{3M+1} H_{\f92+6M}(\tau).
    \end{align*}
    By comparing the limit as $\tau\rightarrow0+$, one derives $\p_s^{3M+1}h(0)=0$ and therefore, 
    \begin{equation}
        \begin{aligned}
            \int_0^\tau H_{-\f12}(\tau -s) \p_s^{3M+2} h(s)ds
            &=\sum_{j=0}^{2M+1} a_{j}^{3M+2}H_{\f32+3j}(\tau),
        \end{aligned}
    \end{equation}
    with $a_0^{3M+2}=\f52 a^{3M+1}_0$, $a_j^{3M+2}=(\f52+3j)a_j^{3M+1}-\f14 a_{j-1}^{3M+1}$ for $1\leq j\leq 2M$ and $a_{2M+1}^{3M+2}=-\f14 a_{2M}^{3M+1}$.

    Similarly, we take $\p_\tau$ derivatives, use \eqref{eq3.23} and comparing the limit as $\tau\rightarrow0+$ to conclude $\p_s^{3M+2}h(0)=0$ and 
    \begin{equation}
        \begin{aligned}
            \int_0^\tau H_{-\f12}(\tau -s) \p_s^{3(M+1)} h(s)ds
            &=\sum_{j=0}^{2(M+1)} a_{j}^{3(M+1)}H_{\f12+3j}(\tau),
        \end{aligned}
    \end{equation}
    with $a_0^{3(M+1)}=\f32 a^{3M+2}_0$, $a_j^{3(M+1)}=(\f32+3j)a_j^{3M+2}-\f14 a_{j-1}^{3M+2}$ for $1\leq j\leq 2M+1$ and $a_{2(M+1)}^{3(M+1)}=-\f14 a_{2M+1}^{3M+2}$.
        
    This finishes the induction proof of \eqref{eq:lem3.3a}, and \eqref{eq:lem3.3b} and \eqref{eq:lem3.3c} are already derived from \eqref{eq:lem3.3a} during the above proof.
\end{proof}

\begin{col}\label{col3.1}
    {  Let $h(s)$ solve the integral equation \eqref{eq3.9}. Then, for any $m\in \N$, $(1+s^2)\p_s^m h(s)\in L^2_s(\R_+)$.
    }
\end{col}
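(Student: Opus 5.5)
The plan is to repeat the Laplace-transform argument of Lemma \ref{lem3.2}, this time applied to the integral equations satisfied by the derivatives $\p_s^m h$ that were derived in Lemma \ref{lem3.3}.

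Fix $m\in\N$ and write $m=3n+r$ with $r\in\{0,1,2\}$. By Lemma \ref{lem3.3}, $\p_s^m h$ satisfies a convolution equation
\[
\int_0^\tau H_{-\f12}(\tau-s)\,\p_s^m h(s)\,ds=\sum_{j} a_j^{m}H_{\kappa_j}(\tau),
\]
where the sum is finite and every exponent satisfies $\kappa_j\ge \f12$. Taking the Laplace transform gives
\[
\ccL[\p_s^m h](\lambda)=\sum_j a_j^m\,\f{\ccL[H_{\kappa_j}](\lambda)}{\ccL[H_{-\f12}](\lambda)}.
\]
This formula is legitimate because $\ccL[H_{-\f12}]$ has no zeros in $\{\text{Re}\lambda\ge0\}$. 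The conclusion is then a statement about each quotient separately: it suffices to show that for every $\kappa\ge\f12$, the function $Q_\kappa\eqdefa \ccL[H_\kappa]/\ccL[H_{-\f12}]$ and its derivatives $\p_\lambda Q_\kappa$, $\p_\lambda^2 Q_\kappa$ all lie in $L^2(i\R)$. Once this is known, Plancherel gives $\p_s^m h\in L^2$ and $s^2\p_s^m h\in L^2$, because $\ccL[s^2\p_s^mh]=\p_\lambda^2\ccL[\p_s^mh]$. One has to be careful here: the $\p_s^m h$ in the convolution equation is the genuine derivative, and $\ccL$ of it is not simply $\lambda^m\ccL[h]$ minus boundary terms. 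Lemma \ref{lem3.3} is exactly what lets us avoid the polynomial growth $\lambda^m$ that such a formula would introduce.

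The derivatives of $Q_\kappa$ are handled as in Lemma \ref{lem3.2}, using $\p_\lambda\ccL[H_\kappa]=-\ccL[H_{\kappa+1}]$. This gives
\[
\p_\lambda Q_\kappa=-Q_{\kappa+1}+Q_\kappa Q_{\f12},
\]
and differentiating once more yields
\[
\p_\lambda^2Q_\kappa=Q_{\kappa+2}-Q_{\kappa+1}Q_{\f12}-Q_{\kappa+1}Q_{\f12}+Q_\kappa\bigl(-Q_{\f32}+Q_{\f12}^2\bigr)+\dots
\]
In every case the result is a finite sum of products of the $Q_\mu$ with $\mu\ge\f12$. On a bounded half-disc $\{\text{Re}\lambda\ge0,|\lambda|\le N\}$ each $Q_\mu$ is continuous, hence bounded, so all these products are bounded there.

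The main step is the behaviour at infinity along $i\R$. There I will use the asymptotic \eqref{eq:lem A}, which is the same input used in Lemma \ref{lem3.2}. Since $H_\kappa(\tau)\sim\tau^\kappa$ as $\tau\to0^+$, it should give $|\ccL[H_\kappa](\lambda)|\sim\Gamma(\kappa+1)|\lambda|^{-\kappa-1}$ for $|\arg\lambda|<\f23\pi-\delta$. The one thing I have to verify is that this asymptotic holds uniformly for each $\kappa$ that appears; this is the step I expect to be the main obstacle. With it in hand, $|Q_\mu(\lambda)|\lesssim|\lambda|^{-\mu-\f12}\le|\lambda|^{-1}$ for $\mu\ge\f12$. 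Each product appearing above therefore decays at least like $|\lambda|^{-1}$, which is square integrable at infinity on $i\R$, and the proof is complete.
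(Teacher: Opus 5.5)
Your proposal is correct and follows the paper's proof: the paper omits the details, saying only that one feeds the integral equations \eqref{eq:lem3.3} of Lemma \ref{lem3.3} into the Laplace-transform/Plancherel argument of Lemma \ref{lem3.2}, and that is what you do with the quotients $Q_\kappa$ and the rule $\p_\lambda\ccL[H_\kappa]=-\ccL[H_{\kappa+1}]$. The step you flag as the main obstacle is not a real difficulty: for fixed $m$ only finitely many exponents $\kappa_j\ge\f12$ occur, and \eqref{eq:lem A} with $n=0$ already gives the needed decay for each of them (and, with $\kappa=-\f12$, the lower bound on $|\ccL[H_{-\f12}]|$ along $i\R$).
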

\begin{proof}
    By using the formulas in \eqref{eq:lem3.3}, one can easily repeat the proof of Lemma \ref{lem3.2} to prove this high regularity estimate. We omit the details here.
\end{proof}

From Lemma \ref{lem3.3}, we see that $\p_s^m h(0)=0$ for all $3\nmid m $, which implies $h(s)$ behaves like $\sum_{j=0}^\oo a_j s^{3j}$ as $s$ small enough. Such a polynomial will not become singular under the $(s^{-\f12}\p_s)^m$ derivatives. To present the detailed control for $s$ small, we need the following Lemma:

\begin{lem}\label{lem3.4}
    {  Let $h(s)$ solve the integral equation \eqref{eq3.9}. Then, for any $m\in \N$, $(s^{-\f12}\p_s)^m h(s)\in L^2_s(0,1)$.
    }
\end{lem}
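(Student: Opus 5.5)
The plan is to exploit Lemma \ref{lem3.3}: all derivatives $\p_s^jh(0)$ with $3\nmid j$ vanish, and $h$ is smooth up to $s=0$ with every derivative in $L^2$ (Corollary \ref{col3.1}). Write $D=s^{-\f12}\p_s$; in the variable $\ell=s^{3/2}$ we have $D=\f32\p_\ell$. So the claim says that $\ell\mapsto h(\ell^{2/3})$ is smooth near $\ell=0$ in the $L^2$ sense with the appropriate weight. Near $0$, the only obstruction is the terms $s^j$ with $3\nmid j$ in the Taylor expansion of $h$, and these all vanish.

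Fix $m$ and set $N=3m$. The first step is Taylor expansion with integral remainder:
\[
h(s)=\sum_{j=0}^{m-1}\f{\p_s^{3j}h(0)}{(3j)!}s^{3j}+R_N(s),\qquad R_N(s)=\f{1}{(N-1)!}\int_0^s (s-r)^{N-1}\p_s^{N}h(r)\,dr.
\]
The polynomial part lies in $\mathrm{span}\{s^{3j}\}=\mathrm{span}\{\ell^{2j}\}$, which is smooth in $\ell$. Hence $D^m$ of it is a bounded polynomial on $(0,1)$, and it is trivially in $L^2(0,1)$.

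The second step handles the remainder. By induction, $D^m$ applied to a function $R$ is a combination $\sum_{i=1}^{m} c_{i}\, s^{i-\f32 m}\p_s^{i}R$, since each application of $D$ lowers the homogeneity by $\f32$. For $R=R_N$ we have $\p_s^iR_N(s)=\f{1}{(N-i-1)!}\int_0^s(s-r)^{N-i-1}\p_s^Nh(r)\,dr$. Cauchy--Schwarz together with $\p_s^Nh\in L^2$ then gives $|\p_s^iR_N(s)|\lesssim s^{N-i-\f12}\|\p_s^Nh\|_{L^2}$. Each term is therefore bounded by $s^{i-\f32m+3m-i-\f12}=s^{\f32m-\f12}$, which is bounded on $(0,1)$. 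It follows that $D^mh\in L^\infty(0,1)\subset L^2(0,1)$.

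I expect the main obstacle to be justifying the Taylor expansion up to $s=0$: one needs $\p_s^Nh$ continuous (or at least $L^1_{loc}$) on $[0,1)$ with the values $\p_s^jh(0)$ given by Lemma \ref{lem3.3}. For this I would combine Corollary \ref{col3.1}, applied with $N+1$ derivatives in $L^2$, with one-dimensional Sobolev embedding $H^1(0,1)\subset C([0,1])$. That makes $\p_s^jh$ continuous at $0$ for $j\le N$, so the boundary values identified in Lemma \ref{lem3.3} via limits $\tau\to0+$ are legitimate. The other ingredient is the bookkeeping of the coefficient formula for $D^m$, which is a routine induction.
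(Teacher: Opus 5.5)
Your argument is correct and is essentially the paper's proof. Like the paper, you Taylor-expand $h$ at $s=0$, using the vanishing of $\partial_s^j h(0)$ for $3\nmid j$ (Lemma~\ref{lem3.3}) and the smoothness from Corollary~\ref{col3.1}, note that $(s^{-1/2}\partial_s)^m$ is harmless on the span of the $s^{3j}$, and control the remainder through its high-order vanishing at $s=0$. The only difference is in the remainder: you bound it pointwise by Cauchy--Schwarz on the integral form, getting $|s^{i-\frac32 m}\partial_s^i R_N|\lesssim s^{\frac32 m-\frac12}$ and hence even an $L^\infty(0,1)$ bound, while the paper expands to order $3m+3$ and iterates a weighted Hardy inequality; your route is slightly more elementary and its exponent bookkeeping is cleaner.
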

\begin{proof}
    From Corollary \ref{col3.1}, we have that $h$ is a smooth function. Therefore, we use Taylor expansion and the boundary condition of $\p_s^m$ derivatives obtained in Lemma \ref{lem3.3} to write that for $0<s\leq1$,
    $$
        h(s)= \sum_{j=0}^m \f{\p_s^{3j}h(0)}{(3j)!} s^{3j} +\f1{(3m+2)!}\int_0^s (s-s')^{3m+2} \p_s^{3m+3} h(s') ds' .
    $$

    For the polynomial part, we directly compute that
    $$
        \| ( s^{-\f12}\p_s)^m \sum_{j=0}^m \f{\p_s^{3j}h(0)}{(3j)!} s^{3j}\|_{L^2_s(0,1)}
        \leq C_{m} \sum_{j=0}^{m} \|s^{\f32 j}\|_{L^2_s(0,1)} \leq C_{m}.
    $$

    For the integration part, we denote $\tilde{h}(s)=\f1{(3m+2)!}\int_0^s (s-s')^{3m+2} \p_s^{3m+3} h(s') ds'$ which is a $C^{3m+3}([0,1])$ function satisfying $\p_s^j \tilde{h}=0$ for all $j<3m+3$. Therefore, we deduce from the weighted Hardy inequality $\|s^{\alpha} f\|_{L^2(0,1)}\leq C \|s^{\alpha+1}\p_sf\|_{L^2(0,1)}$ for $\alpha<-\f12$ and $f(0)=0$ that 
    $$
        \|( s^{-\f12}\p_s)^m \tilde{h}\|_{L^2(0,1)}\leq C_{m} \sum_{j=0}^m \| s^{-\f{m}2-j}\p_s^{m-j}\tilde{h}\|_{L^2(0,1)} \leq C_m  \| \p_s^{\f32m+1}\tilde{h}\|_{L^2(0,1)} \leq C_{m}. 
    $$

    Combining the above estimates, we arrive at 
    $$
        \|( s^{-\f12}\p_s)^m h\|_{L^2(0,1)} \leq C_{m},
    $$
    which finishes the proof.
\end{proof}

\subsection{Existence of the kernel}
In this subsection, we prove the existence of the kernel $\bar{\Omega}$ and finish the proof of Theorem \ref{thm1}.

Let us first complete the proof of Proposition \ref{prop3.1}. 
\begin{proof}[Existence part of Proposition \ref{prop3.1}]
    The existence of such a function solving $\cL \bar{\Omega}=0$ and $\bar{\Omega}(X,0)=0$ is given by the inverse Fourier transform of \eqref{eq3.7}. 
    From \eqref{eq3.3}, we also have $\int_{\R^2_+} Y \bar{\Omega}(X,Y)dXdY=1.$
    
    It remains to check that $\bar{\Omega}$ belongs to $L^2(m)$. By Lemma \ref{lem3.1} and \eqref{eq3.20a}, we only need to show the regularity of $h(s)$. From Lemma \ref{lem3.2}, we see that  $$
    \|(1+s) h(s)\|_{L^2_s(\R_+)}\leq C.
    $$
    Also, we deduce from Corollary \ref{col3.1} that 
    $$
        \|(1+s^{\f32})(\f23s^{-\f12}\p_s)^{m}h(s)\|_{L^2_s(1,\oo)} \leq C_m \sum_{j=1}^{m}\|(1+s)\p_s^j h\|_{L^2_s(\R^2_+)}\leq C_m,
    $$
    which together with Lemma \ref{lem3.4} implies
    $$
        \|(1+s^{\f32})(\f23s^{-\f12}\p_s)^{m}h(s)\|_{L^2_s(\R_+^2)} \leq C_m.
    $$
    Combining the above estimates, we already checked all the norms needed and show $\bar{\Omega}\in L^2(m)$ for all $m\in\N$. 
     This finishes the proof.
\end{proof}

\begin{col}\label{col3.2}
    {  Let $\bar{\Omega}$ be given by Proposition \ref{prop3.1}. Then, for any $\alpha_1,\alpha_2,m\in \N$, $\p_X^{\alpha_1}\p_Y^{\alpha_2} \bar{\Omega}$ belongs to $L^2(m)$. }
\end{col}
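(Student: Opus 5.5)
The plan is to work on the Fourier side, as in the existence part of Proposition \ref{prop3.1}. By Plancherel, $\p_X^{\alpha_1}\p_Y^{\alpha_2}\bar\Omega\in L^2(m)$ is equivalent to $k^{\alpha_1}\eta^{\alpha_2}f\in H^m(\R^2)$, where $f$ is given by \eqref{eq3.7} and $\tilde\Omega$ is the central symmetric extension. The extension only has a jump in $\p_Y$ across $Y=0$, but on $\R^2_+$ we only need the restriction. It is therefore cleaner to argue by induction in the physical variables, using the equation $\cL\bar\Omega=0$ together with the Fourier bounds.

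The first step handles $X$-derivatives, and here I would use the formula directly. Multiplying \eqref{eq3.7} by $k^{\alpha_1}$ and differentiating with Leibniz, we need $\p_k^a\p_\eta^b$ of $k^{\alpha_1}f$ in $L^2$. The Gaussian first term is harmless. For the integral term, I would repeat the proof of Lemma \ref{lem3.1}, writing $k^{\alpha_1}=(k^{2/3}-\ell^{2/3}+\ell^{2/3})^{3\alpha_1/2}$. Alternatively, since $k^{\alpha_1}\le C(\tilde k^{\alpha_1}(1-\xi^{2/3})^{-3\alpha_1/2}+\ell^{\alpha_1})$, it is simplest to split $k^{\alpha_1}\lesssim (k-\ell)^{\alpha_1}+\ell^{\alpha_1}$. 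The piece $(k-\ell)^{\alpha_1}$ is absorbed by the factor $e^{-\frac1{20}(k^{2/3}-\ell^{2/3})^3}$, because $k^{2/3}-\ell^{2/3}\gtrsim (k-\ell)k^{-1/3}$ for large $k$, and the $k^{\alpha_1}$ growth is compensated in the $\tau$-variable estimates by polynomial weights on $h$. The piece $\ell^{\alpha_1}$ is moved onto $g(\ell)/\ell$, giving the function $\ell^{\alpha_1}g(\ell)/\ell=s^{3\alpha_1/2}h(s)$. The norms \eqref{eq3.20a} are then replaced by the same norms with extra polynomial weights $s^{3\alpha_1/2}$. These are finite by the remark after Lemma \ref{lem3.2} (all polynomial weights), by Corollary \ref{col3.1} extended with arbitrary polynomial weights (same Laplace-transform proof using \eqref{eq:lem3.3}), and by Lemma \ref{lem3.4} near $s=0$, since $s^{3\alpha_1/2}$ times a power series in $s^3$ remains regular under $(s^{-1/2}\p_s)$ derivatives after the substitution $\ell=s^{3/2}$. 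The derivation rule \eqref{eq3.13} applies verbatim with $g$ replaced by $\ell^{\alpha_1}g$. This gives $\p_X^{\alpha_1}\bar\Omega\in L^2(m)$ for all $\alpha_1,m$.

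The second step handles $Y$-derivatives, and here I would use the equation. From $\cL\bar\Omega=0$ we have
\[
\p_Y^2\bar\Omega=-\tfrac32X\p_X\bar\Omega-\tfrac12Y\p_Y\bar\Omega-\tfrac52\bar\Omega+Y\p_X\bar\Omega.
\]
A weighted energy estimate controls $\p_Y\bar\Omega$: multiply by $b^{2m}\bar\Omega$ and integrate by parts, using $\bar\Omega|_{Y=0}=0$, to get $\|\p_Y\bar\Omega\|_{L^2(m)}\lesssim\|\bar\Omega\|_{L^2(m+1)}+\|\p_X\bar\Omega\|_{L^2(m+1)}$. Alternatively, $\eta f\in H^m$ follows from the same Fourier argument with weight $\eta$, using \eqref{eq3.12}. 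The displayed identity then expresses $\p_Y^2\bar\Omega$ in weighted $L^2$ through lower-order $Y$-derivatives and $X$-derivatives with one more weight. Applying $\p_X^{\alpha_1}\p_Y^{j}$ to this identity (commutators only produce lower-order terms with polynomial weights) and inducting on $\alpha_2$ yields $\p_X^{\alpha_1}\p_Y^{\alpha_2}\bar\Omega\in L^2(m)$ for all $m$, since each step costs finitely many weights and $X$-derivatives, all of which are available from the first step.

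I expect the main obstacle to be the first step: the uniform control near $\ell=0$ and at large $k$ of the weighted integral term. The issue is that the weight $k^{\alpha_1}$ must be transferred onto $h$ without losing the cancellation structure $\p_s^{j}h(0)=0$ for $3\nmid j$ that Lemma \ref{lem3.4} relies on. I would try first the splitting $k^{\alpha_1}\lesssim (k-\ell)^{\alpha_1}+\ell^{\alpha_1}$ described above.
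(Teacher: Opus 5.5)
Your route is genuinely different from the paper's, and it is sound in outline.

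\textbf{The paper's route.} The paper does not use the explicit formula here. It notes that $\bar\omega(t,x,y)=(1+t)^{-\f52}\bar\Omega((1+t)^{-\f32}x,(1+t)^{-\f12}y)$ solves $\p_t\bar\omega-\p_y^2\bar\omega+y\p_x\bar\omega=0$ with Dirichlet condition and data $\bar\Omega$. It then invokes, without proof, the hypoelliptic smoothing bound $\|\p_x^{\alpha_1}\p_y^{\alpha_2}\bar\omega(t)\|_{L^2(m)}\lesssim t^{-\f{3\alpha_1+\alpha_2}2}\|\bar\Omega\|_{L^2(m)}$ and reads off the derivatives of $\bar\Omega$ at $t=1$. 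This is short and structural: a stationary state of a smoothing flow is smooth. But it rests on an omitted estimate up to the boundary, and for large $\alpha_2$ that is exactly where boundary terms appear (compare Remark \ref{Rmk: 4.1}).

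\textbf{Your route.} You obtain the tangential derivatives and one normal derivative from \eqref{eq3.7} and the properties of $h$. You then recover all normal derivatives by reading $\cL\bar\Omega=0$ as an equation for $\p_Y^2\bar\Omega$. That bootstrap takes place in $\mathcal D'$ of the open half-plane, and the $L^2(m)$ norms never see the boundary, so no boundary conditions on higher $Y$-derivatives are needed. This is a real advantage over the paper's sketch, at the price of heavier Fourier-side bookkeeping.

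Three points need repair.

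\textbf{Plancherel equivalence.} The equivalence you start from holds only for $\alpha_2\le1$. The source $G(X)\delta_0(Y)$ of the extended equation contributes the $\eta$-independent term $g(k)\not\equiv0$ to $-\eta^2 f$, so $\eta^{\alpha_2}f\notin L^2$ for $\alpha_2\ge2$. Since you only use $\alpha_2\le1$ on the Fourier side, this is harmless.

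\textbf{Handling the weight $k^{\alpha_1}$.} The bound $k^{\alpha_1}\lesssim(k-\ell)^{\alpha_1}+\ell^{\alpha_1}$ is an inequality. You therefore cannot move $\ell^{\alpha_1}$ under the integral and then apply \eqref{eq3.13} with $g$ replaced by $\ell^{\alpha_1}g$. The exact binomial expansion would instead produce kernels $k^i(1-\xi)^iK$, which \eqref{eq3.14} does not cover as it stands, since $k(1-\xi)$ is not bounded by $(1-\xi^{\f23})^{\f32}k$ as $\xi\to1$. The fix is to differentiate first:
\begin{itemize}
\item By Leibniz, $\p_k^a(k^{\alpha_1}I)$ is a combination of the terms $k^{\alpha_1-i}\p_k^{a-i}I$, and \eqref{eq3.13} applies to these unchanged.
\item Only afterwards, inside the absolute-value bounds of Step 3 of Lemma \ref{lem3.1}, insert $k^{p}=\tau^{\f{3p}2}\lesssim(\tau-s)^{\f{3p}2}+s^{\f{3p}2}$, and absorb the first piece into $e^{-\f1{20}(\tau-s)^3}$.
\end{itemize}
The weights are then never differentiated. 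You need only polynomially weighted $L^2$ bounds on $\p_s^jh$, which the Laplace-transform argument of Lemma \ref{lem3.2} together with \eqref{eq:lem3.3} does give. The obstacle you anticipated, preserving $\p_s^jh(0)=0$ for $3\nmid j$, does not arise. Near $k=0$ nothing new is needed, since $k^{\alpha_1}$ is smooth there and $f\in H^m(\R^2)$.

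\textbf{Base case of the $Y$-induction.} The induction on $\alpha_2$ needs $\p_X^{a}\p_Y\bar\Omega\in L^2(m)$ for every $a$, not only $\p_Y\bar\Omega$. Your energy estimate supplies this once it is applied to $\p_X^a\bar\Omega$. By \eqref{[px;L]}, $\p_X^a\bar\Omega$ satisfies $(\cL+\f32a)\p_X^a\bar\Omega=0$ with zero trace.
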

\begin{proof}
    From $\cL\bar{\Omega}=0$, we write back in the original variables (with $\nu=1$) that $\bar{\omega}(t,x,y) =(1+t)^{-\f52}\bar{\Omega}((1+t)^{-\f32}x, (1+t)^{-\f12}y)$ solves
    $$
        \p_t \bar{\omega} -\p_y^2 \bar{\omega}+y\p_x \bar{\omega}=0 \andf \bar{\omega}|_{t=0}=\bar{\Omega}(x,y),
    $$
    with Dirichlet boundary conditions. By standard hypo-ellipticity theory (see for instance \cite{hormander1967hypoelliptic,villani2009hypocoercivity}), one can prove that 
    $$
        \|\p_x^{\alpha_1}\p_y^{\alpha_2}\bar{\omega}(t)\|_{L^2(m)}\leq C_m t^{-\f{3\alpha_1+\alpha_2}2} \|\bar{\Omega}\|_{L^2(m)},
    $$
    which we omit the detailed proof. Therefore, we conclude that 
    $$
        \|\p_X^{\alpha_1}\p_Y^{\alpha_2} \Omega\|_{L^2(m)}\leq C_{{\alpha_1},{\alpha_2}}  \|\p_x^{\alpha_1} \p_y^{\alpha_2}\bar{\omega}(1)\|_{L^2(m)}\leq C_{m,{\alpha_1},{\alpha_2}} \|\bar{\Omega}\|_{L^2(m)},
    $$
    which finishes the proof. We remark that one can see subsection \ref{subsection5.2} for a proof of the smoothing effect of \eqref{eqs:Om} from the hypo-ellipticity of $\cL_t$.
\end{proof}

\begin{proof}[Proof of Theorem \ref{thm1}]
    From Proposition \ref{prop3.1} and Corollary \ref{col3.2}, we have already proven the existence, uniqueness, and regularity of $\bar{\Omega}\in L^2(m)$ solving $\cL\bar{\Omega}=0$. Then, for any $F_0\in L^2(m)$, it follows from $$\int_{\R^2_+}Y \bigl(F_0-M_2[F_0]\bar{\Omega}\bigr)dXdY=M_2[F_0]-M_2[F_0]=0$$
    and \eqref{eq2.5a} that 
    $$
        \|e^{\tau\cL}\bigl(F_0-M_2(F_0)\bar{\Omega}\bigr)\|_{L^2(m)} \leq C_{m,\delta}e^{-(1-\delta)\tau} \|F_0-M_2(F_0)\bar{\Omega}\|_{L^2(m)},
    $$
    which gives \eqref{eq:thm1}. This finishes the proof.
\end{proof}

\section{Estimates in the weighted $L^2(m)$ space}\label{section 4}

In this section, we aim to derive some a priori estimates for $\Omega(t)$ and its derivatives in $L^2(m)$. To do so, we first give some decay estimates for $\omega(t)$ in $L^p$ by Moser's method to avoid the difficulties from nonlinearity, and then use the $L^p$ norms to estimate the evolution of $\Omega(t)$ in $L^2(m)$. Finally, we shall use hypo-ellipticity to control $\p_X^{\alpha_1}\p_Y^{\alpha_2}\Omega(t)$ in $L^2(m)$ for ${\alpha_1}+{\alpha_2}\leq2$ or $(\alpha_1,\alpha_2) =(3,0)$.

\subsection{Estimates in the $L^p$ space}

For any $L^1$ initial data $\omega_0$, the system \eqref{eqs:w} should be local-wellposed, since the influence of the Couette flow is ignorable for some time scale, see \eqref{eq0.3}. Therefore, in this paper, we omit the details for the local-well-posedness theory in $L^1$ and present that the $L^1$ norm is always decreasing:

\begin{prop}\label{prop4.1}
    { 
    For any non-trivial initial data $\omega_0\in L^1$, there exists a global solution $\omega(t)$ of \eqref{eqs:w}, whose $L^1$ norm decreases.
    }
\end{prop}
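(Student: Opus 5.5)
The plan is to combine local well-posedness in $L^1$, which the paper takes for granted, with an a priori bound showing that $\|\omega(t)\|_{L^1}$ is non-increasing. That bound rules out blow-up of the $L^1$ norm, so the local solution extends globally by the usual continuation argument.

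\textbf{Step 1: smooth approximation.} Approximate $\omega_0$ in $L^1$ by smooth, compactly supported data $\omega_0^n$ with $\omega_0^n|_{y=0}=0$. Kato-type local theory, as in Proposition~2.1 of \cite{liu2026nonlinear} and supported by the scaling \eqref{eq0.3}, gives smooth local solutions $\omega^n$ of \eqref{eqs:w}. On any compact time interval these decay rapidly in $x$ and $y$, since the Couette term only shears the support.

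\textbf{Step 2: the $L^1$ monotonicity.} Fix a smooth convex approximation $\varphi_\e$ of $|\cdot|$, for example $\varphi_\e(s)=\sqrt{s^2+\e^2}-\e$. Multiply the equation by $\varphi_\e'(\omega)$ and integrate over $\R^2_+$. The transport terms contribute
\[
\int_{\R^2_+} (y,0)\cdot\nabla\varphi_\e(\omega)+u\cdot\nabla\varphi_\e(\omega)=0,
\]
because $(y,0)+u$ is divergence free and its normal component $u^2$ vanishes at $y=0$. The diffusion term contributes
\[
\nu\int\Delta\omega\,\varphi_\e'(\omega)=-\nu\int\varphi_\e''(\omega)|\nabla\omega|^2-\nu\int_{\R}\varphi_\e'(\omega)\p_y\omega\big|_{y=0}\,dx .
\]
The bulk integral is nonpositive. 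The boundary integral vanishes, since $\omega|_{y=0}=0$ and $\varphi_\e'(0)=0$. This Dirichlet condition is the key point: with Neumann-type data the boundary flux would not disappear, and this is exactly why the mass $M(t)$ is not conserved while $|\omega|$ still dissipates. Hence $\frac{d}{dt}\int\varphi_\e(\omega)\le 0$, and letting $\e\to0$ gives $\|\omega^n(t)\|_{L^1}\le\|\omega_0^n\|_{L^1}$. One should check that the growth of $y$ at infinity causes no trouble in these integrations by parts; this follows from the rapid decay established in Step 1.

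\textbf{Step 3: global existence and passage to the limit.} The local existence time in $L^1$ depends only on the size, or more precisely the concentration, of the data in $L^1$. The concentration issue can be controlled as in the Gallay--Wayne $L^1$ theory via the $L^1$--$L^p$ smoothing estimates on short time intervals. The uniform $L^1$ bound from Step 2 therefore lets us iterate the local result and obtain global solutions $\omega^n$. Passing $n\to\infty$ uses $L^1$ stability of the local theory, and the inequality $\|\omega(t)\|_{L^1}\le\|\omega(s)\|_{L^1}$ for $t\ge s$ survives the limit.

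\textbf{Main obstacle.} The delicate step is the continuation argument for rough $L^1$ data. The local existence time is not determined by $\|\omega\|_{L^1}$ alone, because of possible concentration, for instance near Dirac masses. I would handle this by restarting at a positive time $t_0$: for $t_0>0$ the solution is smooth, in $L^1\cap L^\infty$, and the unbounded coefficient $y\p_x$ is harmless on bounded intervals after the shear change of variables $x\mapsto x-ty$. From then on a standard energy/$L^p$ bootstrap, driven by the monotone $L^1$ norm, extends the solution for all time.
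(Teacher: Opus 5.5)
Your argument is correct for the non-increase of $\|\omega(t)\|_{L^1}$, but it follows a different route from the paper.

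\textbf{The paper's route.} The paper never differentiates a convex functional of $\omega$ itself. It splits $\omega_0=\omega_0^+-\omega_0^-$ and solves, for each part, the \emph{linear} drift--diffusion equation with the velocity $u$ of the actual solution frozen. The strong maximum principle gives $\omega^\pm\ge0$ and the boundary sign $\p_y\omega^\pm(t,x,0)\ge0$. Hence $\frac{d}{dt}\int\omega^\pm=-\nu\int_\R\p_y\omega^\pm(t,x,0)\,dx\le0$. Linearity and uniqueness of the linear problem give $\omega=\omega^+-\omega^-$, and the triangle inequality finishes.

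\textbf{What your route buys.} Your renormalization with $\varphi_\e$ kills the boundary term because $\varphi_\e'(0)=0$. It therefore needs no maximum principle on an unbounded domain with the unbounded drift $y\p_x$, and no auxiliary linear problems or their uniqueness. The same computation with $\varphi(s)=|s|^p$ shows every $L^p$ norm is non-increasing. That is exactly what your restart-at-$t_0$ continuation needs. The continuation issue itself (that $L^1$ is critical, so the local existence time is not controlled by the norm alone) is left implicit in the paper, and you handle it more carefully.

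\textbf{What the paper's route buys.} It gives finer information: the masses of $\omega^\pm$ are separately non-increasing. The equality case of the strong maximum principle, $\p_y\omega^\pm(t,x,0)>0$ unless $\omega^\pm\equiv0$, also yields the \emph{strict} decrease for non-trivial data. This is asserted at the end of the paper's proof and is what the word ``non-trivial'' in the statement alludes to. Your argument as written gives only non-increase. To get strictness you would have to keep the dissipation $\nu\int\varphi_\e''(\omega)|\nabla\omega|^2$, whose limit as $\e\to0$ contains $\nu\int_\R|\p_y\omega(t,x,0)|\,dx$, and show it is positive. That again requires a Hopf-type or unique-continuation argument.

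\textbf{A side remark that is wrong.} Your comment about Neumann data is backwards. Under $\p_y\omega|_{y=0}=0$ the boundary term $\varphi_\e'(\omega)\p_y\omega$ vanishes as well, and $M(t)$ would then be conserved. It is precisely the Dirichlet condition that produces the nonzero flux $-\nu\int_\R\p_y\omega(t,x,0)\,dx$ breaking conservation of $M$. The remark plays no role in your proof.
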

\begin{proof}
    Given a local $L^1$ solution of \eqref{eqs:w}, we decompose the initial data $\omega_0=\omega_0^+-\omega_0^-$ with $\omega^\pm_0 \eqdefa \f12(|\omega_0|\pm \omega_0 )$, and consider the functions $\omega^\pm$ solving the following transport-diffusion equation on half space 
    \begin{equation}\label{eqs:w +-}
        \quad \left\{\begin{array}{l}
        \displaystyle \pa_t \om^\pm-\nu\Delta \omega^\pm +y\p_x \omega^\pm +u\cdot\nabla \omega^\pm=0, \qquad (t,x,y)\in\R^+\times\R^2_+, \\
        \displaystyle \omega^\pm|_{y=0}=0,\\
        \displaystyle  \omega^\pm|_{t=0}=\omega_{0}^\pm(x,y).
        \end{array}\right.
    \end{equation} 
    Due to strong maximal principle, we know $\om^\pm(t,x,y)\geq 0$ for all $t> 0$ and $y>0$, and $\p_y \omega^\pm (t,x,0)\geq 0$ for all $t>0$, and the two inequalities becomes equalities if and only if $\omega^\pm=0$. Therefore, we integrate the equations over $\R^2_+$ to find that for all $t>0$,
    $$
    \f{d}{dt}\int_{\R^2_+} \omega^\pm(t,x,y) dxdy =-\nu \int_\R \p_y \om^\pm (t,x,0)dx \leq 0,
    $$
    which implies the $L^1$ norms for $\omega^\pm$ are non-increasing, in particularly, 
    $$\int_{\R^2_+}\omega^\pm(t,x,y) dxdy\leq \int_{\R^2_+}\om^\pm_0(x,y)dxdy.$$
    Finally, we arrive at
    $$
    \|\omega(t)\|_{L^1} \leq \|\omega^+(t)\|_{L^1}+\|\omega^-(t)\|_{L^1}\leq \|\omega^+_0\|_{L^1}+\|\omega^-_0\|_{L^1}=\|\omega_0\|_{L^1}.
    $$
    One can check conditions for equality in the above inequalities to see that the $L^1$ norm always decreases except when the initial data is trivial. 
\end{proof}

By Moser's method, one has the decay for the $L^p$ norm from the scale of the heat equation:
\begin{lem}\label{S2lem3}
    {  For any $\omega_0\in L^1$, the solution of \eqref{eqs:w} satisfies that for $1\leq p\leq \oo$,
    \begin{equation}\label{eq4.2}
        \|\omega(t)\|_{L^p}\lesssim (\nu t)^{\f1p-1}\|\omega_0\|_{L^1}.
    \end{equation}
    }
\end{lem}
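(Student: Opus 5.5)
We follow the classical Moser/Nash iteration for drift-diffusion equations with divergence-free drift. The plan is to get an $L^1\to L^2$ bound and an $L^2\to L^\infty$ bound, then interpolate.

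The structural point is that the total drift $b=(y,0)+u$ is divergence free. Moreover $b\cdot n=0$ on $\{y=0\}$, since $u^2|_{y=0}=0$ and $(y,0)$ is tangent to the boundary. Hence for any $p\ge1$,
\[
\int_{\R^2_+}(b\cdot\nabla\omega)\,|\omega|^{p-2}\omega\,dx\,dy=\frac1p\int_{\R^2_+} b\cdot\nabla|\omega|^p\,dx\,dy=0 .
\]
The Dirichlet condition $\omega|_{y=0}=0$ kills the boundary term in the diffusion. Multiplying \eqref{eqs:w} by $|\omega|^{p-2}\omega$ therefore gives
\[
\frac1p\frac{d}{dt}\|\omega\|_{L^p}^p+\nu\frac{4(p-1)}{p^2}\big\|\nabla|\omega|^{p/2}\big\|_{L^2}^2=0 .
\]
So all $L^p$ norms are non-increasing, and the Couette term plays no role at all. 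The case $p=1$ is Proposition~\ref{prop4.1}.

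\textbf{Step 1: the $L^1\to L^2$ bound.} Extend $\omega$ oddly across $y=0$. Since $\omega$ vanishes on the boundary, this extension lies in $H^1(\R^2)$, so the two-dimensional Nash inequality holds on the half plane:
\[
\|\omega\|_{L^2}^2\le C\|\omega\|_{L^1}\|\nabla\omega\|_{L^2} .
\]
Combining it with the $p=2$ energy identity and $\|\omega(t)\|_{L^1}\le\|\omega_0\|_{L^1}$ gives
\[
\frac{d}{dt}\|\omega\|_{L^2}^2\le -c\,\nu\,\frac{\|\omega\|_{L^2}^4}{\|\omega_0\|_{L^1}^2}.
\]
Integrating yields $\|\omega(t)\|_{L^2}\lesssim(\nu t)^{-1/2}\|\omega_0\|_{L^1}$.

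\textbf{Step 2: the $L^2\to L^\infty$ bound.} For the upper range we use the same Nash inequality applied to $|\omega|^{p/2}$, which also vanishes on the boundary. This is Moser's iteration along $p_k=2^k$ on dyadic time intervals $[t(1-2^{-k}),t]$. At each step the energy identity combined with Nash gives
\[
\|\omega(t_k)\|_{L^{2p}}\lesssim \big(C\,p^{2}/(\nu\,\Delta t_k)\big)^{\frac{1}{2p}}\sup_{s\ge t_{k-1}}\|\omega(s)\|_{L^{p}} ,
\]
which is the usual Carlen--Loss/Nash argument. The resulting product of constants converges, so we obtain $\|\omega(t)\|_{L^\infty}\lesssim(\nu t)^{-1/2}\|\omega(t/2)\|_{L^2}$. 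Alternatively, one can run the $L^1\to L^\infty$ iteration directly starting from $p=1$.

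\textbf{Step 3: conclusion.} Composing the two steps on $[0,t/2]$ and $[t/2,t]$ gives $\|\omega(t)\|_{L^\infty}\lesssim(\nu t)^{-1}\|\omega_0\|_{L^1}$. Interpolating with the $L^1$ bound gives $\|\omega\|_{L^p}\le\|\omega\|_{L^1}^{1/p}\|\omega\|_{L^\infty}^{1-1/p}$, which is \eqref{eq4.2}.

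The only delicate step is justifying the energy identities for a solution that is merely $L^1$. We will do this by working with the smooth local solutions for $t>0$ given by parabolic smoothing, where the unbounded coefficient $y$ must be handled. We expect a cutoff in $x$ to suffice, with the commutator term vanishing in the limit because the solution is integrable.
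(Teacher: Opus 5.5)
Your proposal is correct and follows essentially the paper's route. The paper only says the bound follows from Moser's method as in Lemma~2.3 of \cite{liu2026nonlinear}, and your argument supplies those omitted details: energy identities using that $(y,0)+u$ is divergence free and tangent to $\{y=0\}$, the Nash inequality via odd extension, Moser iteration, and interpolation with the $L^1$ bound of Proposition~\ref{prop4.1}.

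One small fix to your justification remark. An $x$-cutoff $\chi(x/R)$ alone leaves the commutator $R^{-1}\int y|\omega|^p\chi'(x/R)\,dx\,dy$, and mere integrability of $\omega$ does not control it because the weight $y$ is unbounded. Use instead a product cutoff $\chi(x/R)\chi(y/R)$. On its support $y/R\le 2$, so this commutator is bounded by $C\int_{|x|\ge R}|\omega|^p\to0$.
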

\begin{proof}
    The estimate is standard, and we omit the details here. One can follow the proof of Lemma 2.3 in \cite{liu2026nonlinear} to complete the proof.
\end{proof}

\subsection{The estimates of the $L^2(m)$ norm}
In this subsection, we give an upper bound estimate of the $L^2(m)$ norm of $\Omega(t)$ under time evolution.

\begin{prop}\label{prop5.1}
    {  Let $m>6$. For any $\Omega_0\in L^2(m)$, the solution of \eqref{eqs:Om} satisfies
    \begin{equation}\label{eq5.1}
        \|\Omega(t)\|_{L^2(m)}\leq C_m (1+t) \bigl(1+\nu^{-\f32}\|\Omega_0\|_{L^2(m)}\bigr)^m \|\Omega_0\|_{L^2(m)}.
    \end{equation}
    }
\end{prop}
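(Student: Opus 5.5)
We would work in the original variables, where the weighted norms translate via the scaling identity analogous to \eqref{eq2.41}: $\|X^{a_1}Y^{a_2}\Omega(t)\|_{L^2}$ equals $\nu^{-(a_1+a_2)/2}(1+t)^{\frac{3-3a_1-a_2}{2}}\nu^{\f32}\cdot\nu^{-\f12}\|x^{a_1}y^{a_2}\omega(t)\|_{L^2}$, up to harmless powers of $\nu$. Thus \eqref{eq5.1} is equivalent to bounds of the form $\|x^{a_1}y^{a_2}\omega(t)\|_{L^2}\lesssim (1+t)^{\f{3a_1+a_2-1}{2}}$ times the stated constant. These bounds would follow the scheme of Lemma~\ref{lem2.6}, with the linear decay input \eqref{eq2.36} replaced by the nonlinear $L^2$ decay from Lemma~\ref{S2lem3}. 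That lemma gives $\|\omega(t)\|_{L^2}\lesssim(\nu t)^{-1/2}\|\omega_0\|_{L^1}$ and $\|\omega_0\|_{L^1}\lesssim\|\Omega_0\|_{L^2(m)}$, which is the weaker rate $\varsigma=\f12$. This loss of one power of $(1+t)$ compared with $\varsigma=\f32$ is exactly the factor $(1+t)$ in \eqref{eq5.1}. For $t\le 1$ we would use the plain energy estimate together with local well-posedness instead.

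\textbf{Step 1: vertical weights.} We test \eqref{eqs:w} against $y^{2m}\omega$. The linear terms behave exactly as in Lemma~\ref{lem2.6}, plus a term $\nu\|y^m\p_x\omega\|^2$ with a good sign. The transport term $u\cdot\nabla\omega$ contributes, after integration by parts using $\dive u=0$ and $u^2|_{y=0}=0$, the quantity $m\int y^{2m-1}u^2|\omega|^2$. We would bound this by $\|u^2\|_{L^\infty}\|y^{m}\omega\|^{2-1/m}\|\omega\|^{1/m}$ (Hölder as in Lemma~\ref{lem2.6}). The velocity bound $\|u\|_{L^\infty}\lesssim\|\omega\|_{L^1}^{1/2}\|\omega\|_{L^\infty}^{1/2}\lesssim \nu^{-1/2}t^{-1/2}\|\omega_0\|_{L^1}$ comes from the Biot–Savart law on the half plane together with Lemma~\ref{S2lem3}. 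Young's inequality then yields a Gronwall inequality of the form $\f{d}{dt}\|y^m\omega\|^2\le \f{2\delta+C\nu^{-1}\|\omega_0\|_{L^1}\cdots}{1+t}\|y^m\omega\|^2+\dots$. The coefficient of $1/(1+t)$ here is not small. Rather than trying to make it small, we would first wait until the time $T_0\sim(1+\nu^{-3/2}\|\Omega_0\|_{L^2(m)})^{c}$ at which $\|u\|_{L^\infty}$ is small relative to the available diffusion. On $[0,T_0]$ we would bound the weights crudely by exponential-in-time Gronwall. The accumulated polynomial in $T_0$ is the source of the factor $(1+\nu^{-3/2}\|\Omega_0\|)^m$.

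\textbf{Step 2: horizontal and mixed weights.} We test against $x^{2m}\omega$. The Couette term produces $m\int x^{2m-1}y|\omega|^2$, handled as in Lemma~\ref{lem2.6} via Step 1. The nonlinear term gives $m\int x^{2m-1}u^1|\omega|^2$, bounded by $\|u^1\|_{L^\infty}\|x^m\omega\|^{2-1/m}\|\omega\|^{1/m}$ and treated in the same way. Mixed weights $x^{a_1}y^{a_2}$ then follow by interpolation. Converting back to $\Omega$ gives \eqref{eq5.1}.

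\textbf{Main obstacle.} The main difficulty is the nonlinear contributions in Steps 1–2. The decay $\|u\|_{L^\infty}\lesssim t^{-1/2}$ is borderline: it produces precisely $1/(1+t)$-type coefficients with an $O(\nu^{-1}\|\omega_0\|_{L^1})$ constant, which would cause polynomial growth with a data-dependent exponent. We expect to need an improved decay $\|u\|_{L^\infty}\lesssim t^{-1/2-\epsilon'}$ for large times. We would try to extract this from the anisotropic spreading, namely the enhanced $L^\infty$ decay of $\omega$ like $t^{-2}$ in the Couette scaling, obtained by rerunning Moser's method with the hypoelliptic scaling. Failing that, we would accept growth bounded by the polynomial factor $(1+\nu^{-3/2}\|\Omega_0\|)^m$ incurred before $T_0$.
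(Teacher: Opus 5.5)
The gap is in how you treat the transport term, and your ``main obstacle'' paragraph and both proposed remedies rest on it. From your own Hölder bound, Young's inequality with exponents $\f{2m}{2m-1}$ and $2m$ gives
\[
m\|u\|_{L^\infty}\|y^m\omega\|_{L^2}^{2-\f1m}\|\omega\|_{L^2}^{\f1m}\le \f{\delta}{1+t}\|y^m\omega\|_{L^2}^2+C_{\delta,m}(1+t)^{2m-1}\|u\|_{L^\infty}^{2m}\|\omega\|_{L^2}^2
\]
with $\delta$ arbitrary. The data therefore never enter the coefficient of $\|y^m\omega\|_{L^2}^2/(1+t)$. Your Gronwall inequality, with a non-small coefficient $C\nu^{-1}\|\omega_0\|_{L^1}/(1+t)$, is not what the estimate forces.

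Neither remedy built on that inequality works. The rate $\|u\|_{L^\infty}\lesssim t^{-1/2-\epsilon'}$ is not available at this stage: the Moser bounds only give heat-equation rates, and the sharper behaviour follows from Theorem~\ref{Thm2}, which itself uses this proposition. Waiting until a time $T_0$ cannot help either. The only bound you have for $\sqrt{t/\nu}\,\|u\|_{L^\infty}$ is $\nu^{-1}\|\omega_0\|_{L^1}$, uniformly in $t$, so it never becomes small. If the coefficient really were $C\nu^{-1}\|\omega_0\|_{L^1}/(1+t)$, you would get growth $(1+t)^{C\nu^{-1}\|\omega_0\|_{L^1}}$ for all times. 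The factor $(1+\nu^{-3/2}\|\Omega_0\|_{L^2(m)})^m$ does not come from a waiting time.

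What closes the argument is the splitting above, which is what the paper does in \eqref{eq5.6}. For $t\ge1$ its source term is $\lesssim\nu^m(1+t)^{m-1}(\nu^{-1}\|\omega_0\|_{L^1})^{2m}\|\omega\|_{L^2}^2$. This has the same time power as the linear source $C\nu^m(1+t)^{m-1}\|\omega\|_{L^2}^2$ obtained by the analogous splitting of $\nu m(2m-1)\|y^{m-1}\omega\|_{L^2}^2$. So Lemma~\ref{lem2.6} goes through with $\varsigma=\f12$, and the data enter only as $(\nu^{-1}\|\omega_0\|_{L^1})^{2m}\lesssim(\nu^{-3/2}\|\Omega_0\|_{L^2(m)})^{2m}$. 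Taking the square root gives the $m$-th power in \eqref{eq5.1}, and the $x^{2m}$ weight is handled the same way.

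Near $t=0$ you need more than ``plain energy plus local well-posedness''. With $\|u\|_{L^\infty}\lesssim t^{-1/2}$ the source behaves like $t^{-m}$, which is not integrable. A crude Gronwall with $\int_0^1\|u\|_{L^\infty}\,dt$ in the exponent gives exponential rather than polynomial dependence on the data. The paper instead bounds the velocity through \eqref{eq5.5a}, by $\|\Omega\|_{L^{2m/(m-1)}}^{1/2}\|\Omega\|_{L^{2m/(m+1)}}^{1/2}$ combined with the $L^2$-based bound \eqref{eq5.4}. After raising to the power $2m$, only an integrable $t^{-1/2}$ singularity remains.

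With these two corrections your original-variable scheme becomes a legitimate alternative to the paper's. The paper works in self-similar variables with the homogeneous weight $a^{2m}$, where the drift $\f32X\p_X+\f12Y\p_Y-Y\p_X$ supplies the damping $\f32-(1-\f{\sqrt2}2)m$; that is where $m>6$ enters. As far as I can see, your route would not need that restriction.
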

\begin{proof}
    We first remark that $L^2(m)$ can be embedded into $L^1$, when $m>1$.
    Therefore, the Moser type estimate \eqref{eq4.2} gives
    \begin{equation}\label{eq5.2}
        \begin{aligned}
            \|\Omega(t)\|_{L^p} &= \nu^\f32 (1+t)^\f52\|\omega (t, \nu^\f12 (1+t)^\f32 X, \nu^\f12 (1+t)^\f12 Y)\|_{L^p}\\&= \nu^{\f32-\f1p}(1+t)^{\f52-\f2p}\|\omega(t)\|_{L^p}
            \leq C \nu^\f12 (1+t)^{\f52-\f2p}t^{\f1p-1} \|\omega_0\|_{L^1} \\
            &\leq C (1+t)^{\f52-\f2p}t^{\f1p-1}\|\Omega_0\|_{L^1}\leq C_m(1+t)^{\f52-\f2p}t^{\f1p-1}\|\Omega_0\|_{L^2(m)}.
        \end{aligned}
    \end{equation}
    On the other hand, the $L^p$ norms of $\omega(t)$ are non-increasing as time $t$ grows, which implies that for $1\leq p\leq 2$,
    \begin{align*}
        \|\Omega(t)\|_{L^p} &= \nu^{\f32-\f1p}(1+t)^{\f52-\f2p}\|\omega(t)\|_{L^p}\leq  \nu^{\f32-\f1p}(1+t)^{\f52-\f2p}\|\omega_0\|_{L^p} \\
        &= (1+t)^{\f52-\f2p}\|\Omega_0\|_{L^p}\leq C_m(1+t)^{\f52-\f2p}\|\Omega_0\|_{L^2(m)}.
    \end{align*}
    By discussing whether $t$ is small or not, one concludes that for $1\leq p\leq 2$,
    \begin{equation}\label{eq5.3}
        \|\Omega(t)\|_{L^p}\leq C_m (1+t)^{\f32-\f1p} \|\Omega_0\|_{L^2(m)}
    \end{equation}
    Similarly as \eqref{eq4.2}, for $2\leq p\leq \oo$, Moser's method shows that $\|\omega(t)\|_{L^p}\lesssim (\nu t)^{\f1p-\f12}\|\omega_0\|_{L^2}$, which together with the non-increasing property of $\|\omega(t)\|_{L^2}$ imply
    \begin{align*}
        \|\Omega(t)\|_{L^p} &= \nu^{\f32-\f1p}(1+t)^{\f52-\f2p}\|\omega(t)\|_{L^p}\leq C \nu (1+t)^{\f52-\f2p} t^{\f1p-\f12}\|\omega(t)\|_{L^2}\\
        &\leq C \nu(1+t)^{\f52-\f2p}t^{\f1p-\f12}\|\omega_0\|_{L^2} \leq C (1+t)^{\f52-\f2p}t^{\f1p-\f12}\|\Omega_0\|_{L^2}.
    \end{align*}
    Again, we discuss whether $t$ is small to prove that for $2\leq p\leq \oo$,
    \begin{equation}\label{eq5.4}
        \|\Omega(t)\|_{L^p}\leq C_m (1+t)^{2-\f2p} t^{\f1p-\f12}\|\Omega_0\|_{L^2(m)}. 
    \end{equation}

    Now, let us consider the estimates with the homogeneous weight $a^m(X,Y)\eqdefa (X^2+Y^2)^\f{m}2$. By using integration by parts and $\Omega(X,0)=0$, we have that
    \begin{equation}
            \begin{aligned}\label{eq5.4a}
        &\int_{\R^2_+} \Delta_t \Omega \Omega a^{2m} dXdY= -\f1{(1+t)^2}\int_{\R^2_+} |\p_X\Omega|^2 a^{2m} dXdY-\int_{\R^2_+} |\p_Y\Omega|^2 a^{2m} dXdY \\
        &\qquad\qquad+\f{1}{2(1+t)^2}\int_{\R^2_+} \Omega^2 \p_X^2 a^{2m} dXdY+\int_{\R^2_+} \Omega^2 \p_Y^2 a^{2m} dXdY\\
       & = -\f1{(1+t)^2}\|\p_X \bigl(a^m\Omega\bigr) \|_{L^2}^2-\|\p_Y \bigl(a^m\Omega\bigr) \|_{L^2}^2 +\f1{(1+t)^2}\| \Omega \p_X a^m\|_{L^2}^2+\| \Omega \p_Y a^m\|_{L^2}^2,
    \end{aligned}
    \end{equation}
    where $\Delta_t =\f1{(1+t)^2}\p_X^2+\p_Y^2$. Also, we compute that
    \begin{align*}
        &\int_{\R^2_+} \bigl(\f32X\p_X+\f12Y\p_Y+\f52-Y\p_X\bigr)\Omega \cdot \Omega a^{2m} dXdY
        = -\f34 \int_{\R^2_+} \Omega^2 \p_X\bigl( X a^{2m}\bigr) dXdY \\
        &\qquad\qquad-\f{1}{4}\int_{\R^2_+} \Omega^2 \p_Y \bigl(Ya^{2m}\bigr) dXdY+\f52\|\Omega a^m\|_{L^2}^2 +\f12\int_{\R^2_+} \Omega^2 Y\p_X a^{2m} dXdY\\
        & =\f32 \|\Omega a^m\|_{L^2}^2 -\f32 m\|\Omega X a^{m-1}\|_{L^2}^2 -\f{m}{2}\| \Omega Y a^{m-1}\|_{L^2}^2+m\int_{\R_+^2}\Omega^2 XY a^{2m-2} dXdY.
    \end{align*}
    Combining the above two estimates, we use $\p_Xa^m=mXa^{m-2}$ and $\p_Ya^m=mYa^{m-2}$ to get
    \begin{align*}
        &\int_{\R^2_+} \cL_t \Omega \cdot \Omega a^{2m} dXdY= -\f1{(1+t)^2}\|\p_X \bigl(a^m\Omega\bigr) \|_{L^2}^2-\|\p_Y \bigl(a^m\Omega\bigr) \|_{L^2}^2 \\
        &\qquad\qquad+\f{m^2}{(1+t)^2}\| \Omega X a^{m-2}\|_{L^2}^2+m^2\| \Omega Y a^{m-2}\|_{L^2}^2+\f32 \|\Omega a^m\|_{L^2}^2\\
        &\qquad\qquad-\f32 m\|\Omega X a^{m-1}\|_{L^2}^2 -\f{m}{2}\| \Omega Y a^{m-1}\|_{L^2}^2+m\int_{\R_+^2}\Omega^2 XY a^{2m-2} dXdY\\
        &\leq  \bigl(\f32-(1-\f{\sqrt{2}}{2})m\bigr)\|\Omega a^m\|_{L^2}^2+m^2\| \Omega a^{m-1}\|_{L^2}^2.
    \end{align*}
    Now, we use the interpolation inequality that for $\delta>0$
    $$
        m^2\| \Omega a^{m-1}\|_{L^2}^2 \leq \delta m\|\Omega a^m\|_{L^2}^2 +C \delta^m m^{m+1} \|\Omega \|_{L^2}^2
    $$
    to conclude
    \begin{equation}\label{eq5.5}
        \int_{\R^2_+} \cL_t \Omega \cdot \Omega a^{2m} dXdY \leq \bigl(\f32-(1-\f{\sqrt{2}}{2}-\delta)m\bigr)\|\Omega a^m\|_{L^2}^2+ C_{\delta,m} \|\Omega \|_{L^2}^2.
    \end{equation}

    For the nonlinear parts in \eqref{eqs:Om}, we compute by using integration by parts and Holder's inequality that  
    \begin{align*}
        &\int_{\R^2_+} \cN_t \Omega \cdot \Omega a^{2m} dXdY  = \nu^{-\f32}(1+t)^{-\f52}\int_{\R^2_+} \bigl(\p_Y\Delta_t^{-1}\Omega \p_X \Omega -\p_X\Delta_t^{-1}\Omega \p_Y \Omega\bigr) \cdot \Omega a^{2m} dXdY\\
         &\qquad = \nu^{-\f32}(1+t)^{-\f52}\int_{\R^2_+} \bigl(\p_X\Delta_t^{-1}\Omega \p_Y a^{2m} -\p_Y\Delta_t^{-1}\Omega \p_X a^{2m}\bigr) \Omega^2 dXdY\\
         &\qquad \leq C_m \nu^{-\f32}(1+t)^{-\f52} \|(\p_X,\p_Y)\Delta_t^{-1}\Omega\|_{L^\oo}\|\Omega a^{m-1}\|_{L^2} \|\Omega a^m\|_{L^2}.
    \end{align*}
    Here, we use the re-scaled version of the elliptic estimates on the half plane, \eqref{eq5.3} and \eqref{eq5.4} to find
    \begin{equation}\label{eq5.5a}
    \begin{aligned}
        &\|\p_X\Delta_t^{-1}\Omega\|_{L^\oo} +(1+t)\|\p_Y \Delta_t^{-1}\Omega\|_{L^\oo} \leq C (1+t)^\f32 \|\Omega\|_{L^{\f{2m}{m-1}}}^\f12 \|\Omega\|_{L^{\f{2m}{m+1}}}^\f12\\
        &\qquad\leq C_m (1+t)^\f32 \bigl( (1+t)^{1+\f1m}t^{-\f1{2m}}\|\Omega_0\|_{L^2(m)}\bigr)^\f12 \bigl((1+t)^{1-\f1{2m}}\|\Omega_0\|_{L^2(m)}\bigr)^\f12 \\
        &\qquad\leq C_m (1+t)^{\f52+\f1{4m}}t^{-\f1{4m}} \|\Omega_0\|_{L^2(m)},
    \end{aligned}
    \end{equation}
    which together with the interpolation inequality $\|\Omega a^{m-1}\|_{L^2} \leq \|\Omega a^m\|_{L^2}^{\f{m-1}{m}} \|\Omega\|_{L^2}^{\f{1}{m}}$ implies that 
    \begin{equation}\label{eq5.6}
        \begin{aligned}
            &\int_{\R^2_+} \cN_t \Omega \cdot \Omega a^{2m} dXdY \leq  C_m \nu^{-\f32}(1+t)^{\f1{4m}}t^{-\f1{4m}} \|\Omega_0\|_{L^2(m)} \|\Omega \|_{L^2}^\f1m\|\Omega a^m\|_{L^2}^{2-\f1m}\\
            &\qquad\leq \delta m \|\Omega a^m\|_{L^2}^2 + C_{\delta,m} \nu^{-3m}(1+t)^{\f1{2}}t^{-\f1{2}} \|\Omega_0\|_{L^2(m)}^{2m} \|\Omega\|_{L^2}^2.
        \end{aligned}
    \end{equation}
       
    Finally, we take the inner product of \eqref{eqs:Om} with $\Omega a^{2m}$ and use \eqref{eq5.5} and \eqref{eq5.6} with $\delta=\f{3-2\sqrt{2}}8$ to obtain
    \begin{align*}
        \f{1+t}2 \f{d}{dt} \|\Omega a^m\|_{L^2}^2 &\leq  (\f32-\f{m}4)\|\Omega a^m\|_{L^2}^2 + C_m \bigl(1+\nu^{-3m}(1+t)^{\f12}t^{-\f1{2}} \|\Omega_0\|_{L^2(m)}^{2m}\bigr) \|\Omega\|_{L^2}^2.
    \end{align*}
    Then, we apply Gronwall's inequality and use the decay estimate \eqref{eq5.3} to conclude that for $m>2$,
    \begin{align*}
        \|\Omega (t) a^m\|_{L^2}^2 &\leq (1+t)^{3-\f{m}2}\|\Omega_0 a^m\|_{L^2}^2 + C_m (1+t)^{3-\f{m}2} \int_0^t (1+s)^{\f{m}2-4} \\
        &\qquad\qquad\qquad\times \bigl(1+\nu^{-3m}(1+s)^{\f1{2}}s^{-\f1{2}} \|\Omega_0\|_{L^2(m)}^{2m}\bigr) (1+s)^2 \|\Omega_0\|_{L^2(m)}^2 ds\\
        &\leq C_m (1+t)^2 \bigl(1+\nu^{-3}\|\Omega_0\|_{L^2(m)}^2\bigr)^{m} \|\Omega_0\|_{L^2(m)}^2.
    \end{align*}
    By taking the square root and using \eqref{eq5.3}, we arrive at \eqref{eq5.1}, which finishes the proof.
\end{proof}

\subsection{Estimates of the higher regularities}\label{subsection5.2}
The goal of this subsection is to control the derivatives of $\Omega(t)$ in $L^2(m)$. When $t$ is finite, the rescaled heat operator $\Delta_t$ can gain regularity in both $X$ and $Y$ variables. Since we are considering the long-time behavior, one has to use the hypo-elliptic structure to gain horizontal regularities for large $t$.

Let us introduce the following energy functional:
\begin{align}\label{def:E}
E(t)\eqdefa &\|\Omega(t)\|_{L^2(m)}^2+c_1\bigl(\ln\f{1+t}{1+t_0}\bigr)\|\p_Y \Omega(t)\|_{L^2(m)}^2+c_2\bigl(\ln\f{1+t}{1+t_0}\bigr)^2 \bigl(\p_X \Omega (t)\big|\p_Y \Omega(t) \bigr)_{L^2(m)} \\
\nonumber&+c_3\bigl(\ln\f{1+t}{1+t_0}\bigr)^3\|\p_X \Omega(t)\|_{L^2(m)}^2+c_4\bigl(\ln\f{1+t}{1+t_0}\bigr)^2\|\p_Y^2\Omega(t)\|_{L^2(m)}^2\\
\nonumber&+c_5\bigl(\ln\f{1+t}{1+t_0}\bigr)^4\|\p_X\p_Y\Omega(t)\|_{L^2(m)}^2+c_6\bigl(\ln\f{1+t}{1+t_0}\bigr)^5 \bigl(\p_X^2 \Omega(t) \big|\p_X\p_Y \Omega(t) \bigr)_{L^2(m)}\\
\nonumber& +c_7\bigl(\ln\f{1+t}{1+t_0}\bigr)^6\|\p_X^2\Omega(t)\|_{L^2(m)}^2 +c_{8}\bigl(\ln\f{1+t}{1+t_0}\bigr)^7\|\p_X^2\p_Y\Omega(t)\|_{L^2(m)}^2\\
\nonumber&+c_{9}\bigl(\ln\f{1+t}{1+t_0}\bigr)^8 \bigl(\p_X^3 \Omega(t) \big|\p_X^2\p_Y \Omega(t) \bigr)_{L^2(m)} +c_{10}\bigl(\ln\f{1+t}{1+t_0}\bigr)^9\|\p_X^3\Omega(t)\|_{L^2(m)}^2,
\end{align}
and the associated dissipation energy functional:
\begin{align}\label{def:D}
D(t)\eqdefa &\tt^{-2}\|\p_X\Omega(t)\|_{L^2(m)}^2+\|\p_Y\Omega(t)\|_{L^2(m)}^2+c_1\bigl(\ln\f{1+t}{1+t_0}\bigr)\Bigl(\|\p_Y^2\Omega(t)\|_{L^2(m)}^2\\
\nonumber&+\tt^{-2}\|\p_X\p_Y\Omega(t)\|_{L^2(m)}^2\Bigr)
+c_2\bigl(\ln\f{1+t}{1+t_0}\bigr)^2\|\p_X\Omega(t)\|_{L^2(m)}^2\\
\nonumber&+c_3\bigl(\ln\f{1+t}{1+t_0}\bigr)^3\Bigl(\tt^{-2}\|\p_X^2\Omega(t)\|_{L^2(m)}^2+\|\p_X\p_Y\Omega(t)\|_{L^2(m)}^2\Bigr)\\
\nonumber&+c_4\bigl(\ln\f{1+t}{1+t_0}\bigr)^2\Bigl(\tt^{-2}\|\p_X\p_Y^2\Omega(t)\|_{L^2(m)}^2+\|\p_Y^3\Omega(t)\|_{L^2(m)}^2\Bigr) \\
\nonumber&+c_5\bigl(\ln\f{1+t}{1+t_0}\bigr)^4\Bigl(\tt^{-2}\|\p_X^2\p_Y\Omega(t)\|_{L^2(m)}^2+\|\p_X\p_Y^2\Omega(t)\|_{L^2(m)}^2\Bigr)\\
\nonumber&
+c_6\bigl(\ln\f{1+t}{1+t_0}\bigr)^5\|\p_X^2\Omega(t)\|_{L^2(m)}^2\\
\nonumber&+c_7\bigl(\ln\f{1+t}{1+t_0}\bigr)^6\Bigl(\tt^{-2}\|\p_X^3\Omega(t)\|_{L^2(m)}^2+\|\p_X^2\p_Y\Omega(t)\|_{L^2(m)}^2\Bigr)\\
\nonumber&+c_{8}\bigl(\ln\f{1+t}{1+t_0}\bigr)^7\Bigl(\tt^{-2}\|\p_X^3\p_Y\Omega(t)\|_{L^2(m)}^2+\|\p_X^2\p_Y^2\Omega(t)\|_{L^2(m)}^2\Bigr)\\
\nonumber&+c_{9}\bigl(\ln\f{1+t}{1+t_0}\bigr)^8\|\p_X^3\Omega(t)\|_{L^2(m)}^2\\
\nonumber&+c_{10}\bigl(\ln\f{1+t}{1+t_0}\bigr)^9\Bigl(\tt^{-2}\|\p_X^4\Omega(t)\|_{L^2(m)}^2+\|\p_X^3\p_Y\Omega(t)\|_{L^2(m)}^2\Bigr).
\end{align}

In the above energy functionals,  $t_0$ will be taken to be an {arbitrarily} large time and $t$ to belong to $[t_0,2t_0]$ so that $\ln \f{1+t}{1+t_0}$ is small. All the estimates below hold for $t_0\geq T_0$ for some universal $T_0$. The small constants from $c_1$ to $c_{10}$ are chosen to satisfy
\begin{equation}\label{assumptions on c1-7}
\quad \left\{\begin{array}{l}
\displaystyle 1\gg c_1\gg c_2=c_4 \gg c_3\gg c_5\gg c_6\gg c_7 \gg c_{8}\gg c_{9}\gg c_{10}, \\
\displaystyle  c_1^2\ll c_2, \quad c_2^2\ll c_1c_3, \quad c_5^2\ll c_3c_6,\quad c_6^2\ll c_5c_7,
\quad c_{8}^2\ll c_7 c_{9},\quad c_{9}^2\ll c_{8}c_{10},
\end{array}\right.
\end{equation}
where  $f\ll g$ means that there is a large constant $C$ so that $f\leq {C} g$.
Below we present one example of $c_1$ to $c_{10}$  which can satisfy \eqref{assumptions on c1-7}. We first take a large enough constant $A$ and then take
\begin{align*}
    &(c_1,c_2,c_3,c_4,c_5,c_6,c_7,c_8,c_9,c_{10})\\
=&(A^{-3},A^{-5},A^{-6},A^{-5} ,A^{-9},A^{-11},A^{-12},A^{-15},A^{-17},A^{-18}).
\end{align*}

Before proceeding, we first introduce some useful anisotropic inequalities.

\begin{lem}\label{S3lem1}
{  Let $m>1$ and $t>1$, one has
\begin{equation}\label{eq:lem3.1a}
\|\p_X\Delta_t^{-1}\Omega(t)\|_{L^\oo}+(1+t)\|\p_Y\Delta_t^{-1}\Omega(t)\|_{L^\oo}\lesssim (1+t)^\f52\|\Omega_0\|_{L^2(m)},
\end{equation}
and
\begin{equation}\label{eq:lem3.1b}
\|\p_X\p_Y\Delta_t^{-1}\Omega(t)\|_{L^2}+(1+t)\|\p_Y^2\Delta_t^{-1}\Omega(t)\|_{L^2}\lesssim (1+t)^{2} \|\Omega_0\|_{L^2(m)},
\end{equation}
}\end{lem}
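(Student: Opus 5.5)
The plan is to remove the anisotropy of $\Delta_t$ by a horizontal rescaling, reduce everything to the standard Dirichlet Laplacian on $\R^2_+$, and then feed in the $L^p$ decay bounds \eqref{eq5.3}--\eqref{eq5.4} already obtained in the proof of Proposition~\ref{prop5.1}.

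\textbf{Step 1: rescaling.} Write $\psi=\Delta_t^{-1}\Omega$ and set $X'=(1+t)X$. Then $(1+t)^{-2}\p_X^2=\p_{X'}^2$, so $\psi(X,Y)=\Phi((1+t)X,Y)$, where $\Delta\Phi=\tilde\omega$ on $\R^2_+$ with $\Phi|_{Y=0}=0$ and $\tilde\omega(X',Y)=\Omega(X'/(1+t),Y)$. Consequently
\[
\p_X\psi=(1+t)(\p_{X'}\Phi)\circ S,\qquad \p_Y\psi=(\p_Y\Phi)\circ S,\qquad \|\tilde\omega\|_{L^p}=(1+t)^{1/p}\|\Omega\|_{L^p},
\]
where $S(X,Y)=((1+t)X,Y)$. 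Composition with $S$ preserves $L^\infty$ norms and multiplies $L^2$ norms by $(1+t)^{-1/2}$.

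\textbf{Step 2: elliptic estimates on the half plane.} Extend $\tilde\omega$ oddly in $Y$. Then $\nabla\Phi$ is the whole-plane Biot--Savart field of the extension, and the standard interpolation bound for the kernel $|z|^{-1}$ gives
\[
\|\nabla\Phi\|_{L^\infty}\lesssim\|\tilde\omega\|_{L^4}^{1/2}\|\tilde\omega\|_{L^{4/3}}^{1/2}.
\]
This is obtained by splitting the kernel at a radius $R$ and optimizing in $R$. In addition, Plancherel or Calder\'on--Zygmund applied to the odd extension gives
\[
\|\nabla^2\Phi\|_{L^2}\lesssim\|\tilde\omega\|_{L^2}.
\]

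\textbf{Step 3: the $L^\infty$ bound \eqref{eq:lem3.1a}.} The rescaling contributes a factor $(1+t)^{1/2}$ from $\|\tilde\omega\|_{L^4}^{1/2}\|\tilde\omega\|_{L^{4/3}}^{1/2}$, since the two exponents satisfy $\tfrac14+\tfrac34=1$. This yields
\[
\|\p_X\psi\|_{L^\infty}+(1+t)\|\p_Y\psi\|_{L^\infty}\lesssim(1+t)^{3/2}\|\Omega\|_{L^4}^{1/2}\|\Omega\|_{L^{4/3}}^{1/2}.
\]
For $t>1$, \eqref{eq5.4} gives $\|\Omega\|_{L^4}\lesssim(1+t)^{5/4}\|\Omega_0\|_{L^2(m)}$ and \eqref{eq5.3} gives $\|\Omega\|_{L^{4/3}}\lesssim(1+t)^{3/4}\|\Omega_0\|_{L^2(m)}$. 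Their geometric mean is $\lesssim(1+t)\|\Omega_0\|_{L^2(m)}$, which proves \eqref{eq:lem3.1a}. The only requirement on $m$ is the embedding $L^2(m)\subset L^1$ used to derive \eqref{eq5.3}--\eqref{eq5.4}, and this holds for $m>1$.

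\textbf{Step 4: the $L^2$ bound \eqref{eq:lem3.1b}.} We have $\p_X\p_Y\psi=(1+t)(\p_{X'}\p_Y\Phi)\circ S$. Hence
\[
\|\p_X\p_Y\psi\|_{L^2}=(1+t)^{1/2}\|\p_{X'}\p_Y\Phi\|_{L^2}\lesssim(1+t)^{1/2}\|\tilde\omega\|_{L^2}=(1+t)\|\Omega\|_{L^2}.
\]
In the same way, $\|\p_Y^2\psi\|_{L^2}=(1+t)^{-1/2}\|\p_Y^2\Phi\|_{L^2}\lesssim\|\Omega\|_{L^2}$. Finally, \eqref{eq5.4} with $p=2$ gives $\|\Omega\|_{L^2}\lesssim(1+t)\|\Omega_0\|_{L^2(m)}$, which yields \eqref{eq:lem3.1b}.

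\textbf{Main obstacle.} The step needing the most care is the half-plane elliptic input of Step~2. One must check that the odd extension is compatible with the Dirichlet condition $\Phi|_{Y=0}=0$, so that the whole-plane Biot--Savart and Calder\'on--Zygmund estimates apply with constants independent of $t$. The rest is bookkeeping of the powers of $(1+t)$.
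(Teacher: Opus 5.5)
Your proposal is correct and follows the paper's argument. The paper gets \eqref{eq:lem3.1a} from \eqref{eq5.5a}, which is exactly your rescaled half-plane Biot--Savart interpolation combined with \eqref{eq5.3}--\eqref{eq5.4}; the only difference is that it uses the conjugate pair $L^{2m/(m-1)}, L^{2m/(m+1)}$ where you use $L^4, L^{4/3}$. It gets \eqref{eq:lem3.1b} from the elliptic bound $\|\p_X\p_Y\Delta_t^{-1}\Omega\|_{L^2}+(1+t)\|\p_Y^2\Delta_t^{-1}\Omega\|_{L^2}\lesssim(1+t)\|\Omega\|_{L^2}$ plus the $L^2$ decay, as in your Step~4, so your write-up just makes explicit the rescaling and odd-extension details the paper leaves implicit.
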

\begin{proof}
 \eqref{eq:lem3.1a} follows from \eqref{eq5.5a} and $t>1$. By classical elliptic regularity theory, we have
 \begin{align*}
 \|\p_X\p_Y\Delta_t^{-1}\Omega(t)\|_{L^2}+\tt\|\p_Y^2\Delta_t^{-1}\Omega(t)\|_{L^2}\lesssim \tt\|\Omega(t)\|_{L^2},
 \end{align*}
 which together with \eqref{eq5.3} for $p=2$ ensures
 \eqref{eq:lem3.1b}.
\end{proof}

\begin{lem}\label{S3lem2}
{  For any $0<\sigma<\f12$, one has
\begin{equation}\label{eq5.14}
\|\p_X\Delta_t^{-1} f\|_{L^\oo}+\tt\|\p_Y\Delta_t^{-1}f\|_{L^\oo}
\leq C_\sigma \tt^{1+\sigma} \|f\|_{L^2(1)}^{\f12+\sigma}\|\p_X f\|_{L^2(1)}^{\f12-\sigma},
\end{equation}
and
\begin{equation}\label{eq5.14a}
    \begin{aligned}
        \|\p_Y^2\Delta_t^{-1}f\|_{L^\oo}
\leq C_\sigma \tt^{\sigma} \|\p_Y f\|_{L^2(1)}^{\f12+\sigma}\|\p_X\p_Y f\|_{L^2(1)}^{\f12-\sigma},\\
 \|\p_Y^3\Delta_t^{-1}f\|_{L^\oo}
\leq C_\sigma \tt^{\sigma} \|\p_Y^2 f\|_{L^2(1)}^{\f12+\sigma}\|\p_X\p_Y^2 f\|_{L^2(1)}^{\f12-\sigma}.
    \end{aligned}
\end{equation}
}\end{lem}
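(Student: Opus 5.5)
The plan is to reduce to the whole plane by odd reflection in $Y$, work on the Fourier side, and split the multiplier anisotropically so that only a fractional power $\tt^{\sigma}$ of the large factor $\tt$ is lost.

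\textbf{Step 1: reduction to the whole plane.} Extend $f$ oddly in $Y$, setting $f(X,-Y)=-f(X,Y)$. The Dirichlet solution $\Delta_t^{-1}f$ on $\R^2_+$ is then the restriction of the whole-plane inverse of $\tt^{-2}\p_X^2+\p_Y^2$ applied to the extension. The $L^2(1)$ norms of $f$ and $\p_Xf$ at most double under reflection. Moreover $\p_Y$ commutes with the reflection once $\p_Yf$ is extended evenly and $\p_Y^2 f$ oddly, so the same reduction applies to the second and third estimates.

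\textbf{Step 2: pointwise multiplier bounds.} Let $(k,\eta)$ be the Fourier variables and
\[
D=\tt^{-2}k^2+\eta^2 .
\]
Then
\[
\|\p_X\Delta_t^{-1}f\|_{L^\oo}+\tt\|\p_Y\Delta_t^{-1}f\|_{L^\oo}\lesssim \int_{\R^2}\frac{|k|+\tt|\eta|}{D}\,|\hat f|\,dk\,d\eta .
\]
Write $|k|=\tt\cdot \tt^{-1}|k|$. Then
\[
\frac{|k|+\tt|\eta|}{D}\le \frac{2\tt}{\sqrt D}.
\]
Next use $\sqrt D\ge |\eta|^{1-\sigma}(\tt^{-1}|k|)^{\sigma}$. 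This gives
\[
\frac{|k|+\tt|\eta|}{D}\le 2\tt^{1+\sigma}|\eta|^{-(1-\sigma)}|k|^{-\sigma}.
\]
For \eqref{eq5.14a}, write $\eta^2\hat f/D=(|\eta|/D)\,|\eta\hat f|$, where $\eta\hat f$ is the transform of $\p_Y f$. The same splitting gives
\[
\frac{|\eta|}{D}\le \frac{1}{\sqrt D}\le \tt^{\sigma}|\eta|^{-(1-\sigma)}|k|^{-\sigma}.
\]
The $\p_Y^3$ case is identical, with $\p_Y^2f$ in place of $\p_Yf$. Hence all three estimates reduce to the single inequality
\[
\int_{\R^2}|\eta|^{-(1-\sigma)}|k|^{-\sigma}|\hat g|\,dk\,d\eta\le C_\sigma \|g\|_{L^2(1)}^{\f12+\sigma}\|\p_Xg\|_{L^2(1)}^{\f12-\sigma}.
\]

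\textbf{Step 3: the key inequality (main step).} This is where $0<\sigma<\f12$ is used twice.

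First fix $k$. On $|\eta|\le1$, bound $|\hat g(k,\eta)|$ by $\|\hat g(k,\cdot)\|_{L^\oo_\eta}\lesssim\|\hat g(k,\cdot)\|_{H^1_\eta}$; the weight $|\eta|^{-(1-\sigma)}$ is integrable there. On $|\eta|>1$, use Cauchy--Schwarz; this works because $2(1-\sigma)>1$. Together these give
\[
\int_\R|\eta|^{-(1-\sigma)}|\hat g(k,\eta)|\,d\eta\lesssim \varphi(k)\eqdefa\|\hat g(k,\cdot)\|_{H^1_\eta}.
\]
By Plancherel, $\|\varphi\|_{L^2_k}\lesssim\|(1+|Y|)g\|_{L^2}\le\|g\|_{L^2(1)}$. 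Likewise $\|k\varphi\|_{L^2_k}\lesssim\|\p_Xg\|_{L^2(1)}$.

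Next split the $k$-integral at a threshold $R$. On $|k|<R$, Cauchy--Schwarz gives a bound $R^{\f12-\sigma}\|\varphi\|_{L^2}$; this needs $2\sigma<1$. On $|k|>R$, it gives $R^{-\f12-\sigma}\|k\varphi\|_{L^2}$. Choosing $R=\|\p_Xg\|_{L^2(1)}/\|g\|_{L^2(1)}$ yields $\|g\|^{\f12+\sigma}\|\p_Xg\|^{\f12-\sigma}$.

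Applying this with $g=f$, $\p_Yf$, $\p_Y^2f$ respectively gives \eqref{eq5.14} and \eqref{eq5.14a}. The only delicate point I expect is Step 3. Estimating through $L^\oo_k$ would require an $L^1_X$ bound, which the weight $(1+|X|)$ does not supply, so one must stay in $L^2_k$; this is exactly why the restriction $\sigma<\f12$ appears.
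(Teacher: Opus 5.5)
Your proposal is correct and follows the paper's proof. The shared steps are odd reflection in $Y$, the same multiplier bound by $(1+t)^{1+\sigma}|k|^{-\sigma}|\eta|^{\sigma-1}$ (resp.\ $(1+t)^{\sigma}$ after moving one or two factors of $\eta$ onto $\hat f$; identifying $\eta\hat f$ with the even extension of $\partial_Y f$ tacitly uses $f|_{Y=0}=0$, exactly as the paper does), and the same optimized splitting of the $k$-integral at $|k|=R$ using $L^2_k$ versus $kL^2_k$. The only difference is in the $\eta$-direction: you cut at $|\eta|=1$ and control $L^\infty_\eta$ by $H^1_\eta$ (i.e.\ by $\|Yg\|_{L^2}$ via Plancherel), whereas the paper uses the scale-optimized bound $\||\eta|^{\sigma-1}g\|_{L^1_\eta}\lesssim\|g\|_{L^\infty_\eta}^{1-2\sigma}\|g\|_{L^2_\eta}^{2\sigma}$ together with $L^2(1)\hookrightarrow L^1_Y(L^2_X)$, and both variants work equally well.
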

\begin{proof}
Since the operator $\Delta_t^{-1}$ is compatible with odd extension, we extend the function $f$ oddly across the boundary. Therefore, it suffices to prove the desired estimate in the whole space by the Fourier method.

Let us denote $\hat{f}(\xi,\eta)$ as the Fourier transform of $f(X,Y)$ on $R^2$.  It is easy to observe that
\begin{align*}
\|\p_X\Delta_t^{-1} f\|_{L^\oo}+(1+t)\|\p_Y\Delta_t^{-1} f\|_{L^\oo}
\leq & \bigl\|{\xi}\Bigl((1+t)^{-2}\xi^2+\eta^2\Bigr)^{-1}\hat{f}\bigr\|_{L^1}\\
&+(1+t)\bigl\|{\eta}\Bigl((1+t)^{-2}\xi^2+\eta^2\Bigr)^{-1}
\hat{f}\bigr\|_{L^1} \\
\lesssim &(1+t)^{1+\sigma}\bigl\|{|\xi|^{-\sigma}|\eta|^{\sigma-1}}{\hat{f}}\bigr\|_{L^1}.
\end{align*}

Notice that due to $\s\in (0,\f12),$ for any $R>0,$ we have
\begin{align*}
\||\xi|^{-\sigma}g\|_{L^1_{\xi}}\leq &\Bigl(\int_{|\xi|\leq R}|\xi|^{-2\s}\Bigr)^{\f12}\|g\|_{L^2_\xi}+
\Bigl(\int_{|\xi|\geq R}|\xi|^{-2(1+\s)}\Bigr)^{\f12}\|\xi g\|_{L^2_\xi}
\\
\lesssim &R^{\f12-\s}\|g\|_{L^2_\xi}+R^{-\f12-\s}\|\xi g\|_{L^2_\xi}.
\end{align*}
Taking $R=\f{\|\xi g\|_{L^2_\xi}}{\|g\|_{L^2_\xi}}$ in the above inequality leads to
\beq \label{S3eq1} \||\xi|^{-\sigma}g\|_{L^1_{\xi}}\lesssim \|g\|_{L^2_\xi}^{\f12+\sigma}\|\xi g\|_{L^2_\xi}^{\f12-\sigma},.
\eeq
Similarly, one can show that
\beq \label{S3eq2}
\||\eta|^{\sigma-1}g\|_{L^1_{\eta}} \lesssim \|g\|_{L^\oo_{\eta}}^{1-2\sigma}\|g\|_{L^2_\eta}^{2\sigma}.
\eeq

Thanks to \eqref{S3eq1} and \eqref{S3eq2}, we get, by using
 the fact that $L^2(1)$ can be continuously embedded into $L^1_Y(L^2_X),$ that
\begin{align*}
\bigl\|{\hat{f}}{|\xi|^{-\sigma}|\eta|^{\sigma-1}}\bigr\|_{L^1}
\lesssim &\bigl\| \|\hat{f}\|_{L^\oo_{\eta}}^{1-2\sigma}\|\hat{f}\|_{L^2_\eta}^{2\sigma}\bigr\|_{L^2_\xi}^{\f12+\sigma}
\bigl\| \xi \|\hat{f}\|_{L^\oo_{\eta}}^{1-2\sigma}\|\xi\hat{f}\|_{L^2_\eta}^{2\sigma}\bigr\|_{L^2_\xi}^{\f12-\sigma} \\
\lesssim &\|\hat{f}\|_{L^2_\xi(L^\oo_{\eta})}^{(1-2\sigma)(\f12+\sigma)}\|\hat{f}\|_{L^2}^{2\sigma(\f12+\sigma)}
\|\xi\hat{f}\|_{L^2_\xi(L^\oo_{\eta})}^{(1-2\sigma)(\f12-\sigma)}\|\xi\hat{f}\|_{L^2}^{2\sigma(\f12-\sigma)}\\
\lesssim &\|f\|_{L^1_Y(L^2_{X})}^{(1-2\sigma)(\f12+\sigma)}\|f\|_{L^2}^{2\sigma(\f12+\sigma)}
\|\p_X f\|_{L^1_Y(L^2_{X})}^{(1-2\sigma)(\f12-\sigma)}\|\p_X f\|_{L^2}^{2\sigma(\f12-\sigma)}\\
\lesssim &\|f\|_{L^2(1)}^{\f12+\sigma}\|\p_X f\|_{L^2(1)}^{\f12-\sigma}.
\end{align*}
This finishes the proof of the whole plane version of \eqref{eq5.14}, and the whole plane version of \eqref{eq5.14a} is the direct corollary. By restricting the odd extension back to $\R^2_+$, we finish the proof.
\end{proof}

\begin{rmk}
    { 
    Due to $\Delta_t^{-1}$ does not commute with $\p_Y$, \eqref{eq5.14a} can not be directly derived from \eqref{eq5.14}, although the whole plane versions are related.
    }
\end{rmk}

Let us turn to the estimates of the terms appearing in \eqref{def:E}.

\begin{lem}\label{S3lem3}
Let $m>1$ and $t_0$ be a large enough positive constant. Then for $t\geq t_0$, one has
\begin{equation}\label{eq:lem1}
    \begin{aligned}
        &(1+t)\f{d}{dt}\|\Omega(t) \|_{L^2(m)}^2 +2(1+t)^{-2}\|\p_X\Omega\|_{L^2(m)}^2+2\|\p_Y\Omega\|_{L^2(m)}^2 \\
&\qquad\qquad\qquad\leq C_{m} \bigl(1+{\nu}^{-\f32}{\|\Omega_0\|_{L^2(m)}}\bigr) E(t).
    \end{aligned}
\end{equation}
\end{lem}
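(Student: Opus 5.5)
Multiply \eqref{eqs:Om} by $\Omega\,b^{2m}$, where $b=\langle X,Y\rangle$, and integrate over $\R^2_+$. This gives
\[
\frac{1+t}{2}\frac{d}{dt}\|\Omega\|_{L^2(m)}^2=\int_{\R^2_+}\cL_t\Omega\cdot\Omega\, b^{2m}\,dXdY+\int_{\R^2_+}\cN_t\Omega\cdot\Omega\, b^{2m}\,dXdY .
\]
We treat the linear and nonlinear integrals separately and aim to bound each by a constant times $\|\Omega\|_{L^2(m)}^2\le E(t)$, minus the dissipation.

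\emph{Linear part.} We repeat the computation \eqref{eq5.4a} with the weight $b^{2m}$ in place of $a^{2m}$; since $\Omega(X,0)=0$, the boundary terms vanish. The diffusion gives
\[
-(1+t)^{-2}\|b^m\p_X\Omega\|_{L^2}^2-\|b^m\p_Y\Omega\|_{L^2}^2+\frac12\int\Omega^2\,\Delta_t b^{2m} .
\]
Because $|\nabla^2 b^{2m}|\lesssim_m b^{2m-2}\le b^{2m}$ and $t\ge t_0$, the last term is at most $C_m\|\Omega\|_{L^2(m)}^2$. The transport-type terms $\frac32X\p_X+\frac12Y\p_Y+\frac52-Y\p_X$ become, after integration by parts, $\int\Omega^2\,\Phi$ with
\[
\Phi=\tfrac32 b^{2m}-\tfrac34X\p_Xb^{2m}-\tfrac14Y\p_Yb^{2m}+\tfrac12Y\p_Xb^{2m},
\]
and $|\Phi|\lesssim_m b^{2m}$. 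So the linear contribution is at most $C_m\|\Omega\|_{L^2(m)}^2$ minus the dissipation. The $L^2(m)$ norm is equivalent to $\|b^m\cdot\|_{L^2}$, and the factor $2$ on the dissipation in \eqref{eq:lem1} comes from multiplying the identity by $2$. Terms in which a derivative hits the weight, i.e.\ $\int b^{2m-1}\nabla b\,\Omega\,\p\Omega$, are absorbed by Cauchy--Schwarz into half the dissipation plus $C\|\Omega\|_{L^2(m)}^2$; this is harmless up to adjusting the constants in front of the dissipation.

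\emph{Nonlinear part.} The velocity $(\p_Y\Delta_t^{-1}\Omega,-\p_X\Delta_t^{-1}\Omega)$ is divergence free and tangent to the boundary, so integrating by parts leaves
\[
\nu^{-3/2}(1+t)^{-5/2}\int\Omega^2\bigl(\p_X\Delta_t^{-1}\Omega\,\p_Yb^{2m}-\p_Y\Delta_t^{-1}\Omega\,\p_Xb^{2m}\bigr) .
\]
By \eqref{eq:lem3.1a} of Lemma~\ref{S3lem1},
\[
\|\p_X\Delta_t^{-1}\Omega\|_{L^\infty}+\|\p_Y\Delta_t^{-1}\Omega\|_{L^\infty}\lesssim(1+t)^{5/2}\|\Omega_0\|_{L^2(m)} .
\]
The factor $(1+t)^{5/2}$ cancels exactly against the prefactor $(1+t)^{-5/2}$, and $|\nabla b^{2m}|\lesssim b^{2m}$. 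Hence the nonlinear term is bounded by $C_m\nu^{-3/2}\|\Omega_0\|_{L^2(m)}\|\Omega\|_{L^2(m)}^2$.

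\emph{Conclusion.} Adding the two bounds yields \eqref{eq:lem1}. The only point needing care is the nonlinear term: the exact cancellation of the time powers is essential, since any leftover growth in $t$ would destroy the estimate. If this bound were not sufficient, one could instead use Lemma~\ref{S3lem2} together with $E(t)$ to control the velocity, but \eqref{eq:lem3.1a} is enough here.
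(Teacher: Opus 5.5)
Your proposal is correct and follows the paper's proof essentially step by step: the weighted $L^2(m)$ energy identity, integration by parts of the diffusion and transport terms using $\Omega|_{Y=0}=0$ (which gives \eqref{eq5.18}), and, for $\cN_t$, the divergence-free, boundary-tangent velocity together with \eqref{eq:lem3.1a}, whose factor $(1+t)^{5/2}$ cancels the prefactor. One remark: the Cauchy--Schwarz absorption you mention is unnecessary and would lower the constant $2$ in front of the dissipation, whereas your own integration by parts already turns the terms where a derivative hits the weight exactly into $\frac12\int\Omega^2\,\Delta_t b^{2m}$, which is how the paper keeps the full dissipation.
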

\begin{proof}
The proof of this lemma will follow along the same line as that of   Proposition \ref{prop5.1}. For simplicity, we shall denote $\langle X,Y\rangle$ by $b(X,Y).$ We first get, by a similar derivation of \eqref{eq5.4a}, that
\begin{align*}
\int_{\R^2_+} \Delta_t\Omega \, \Omega \, b^{2m}dXdY=&-\bigl(1+{t}\bigr)^{-2}\bigl\|\p_X\Omega\bigr\|_{L^2(m)}^2-2m(1+t)^{-2}\int_{\R^2_+} b^{2m-2}X\p_X\Omega \,\Omega dXdY\\
&-\|\p_Y\Omega\|_{L^2(m)}^2-2m \int_{\R^2_+} b^{2m-2} Y\p_Y \Omega\,\Omega dXdY\\
\leq&-(1+t)^{-2}\|\p_X\Omega\|_{L^2(m)}^2-\|\p_Y\Omega\|_{L^2(m)}^2
+C_m\|\Omega\|_{L^2(m-1)}^2.
\end{align*}
Then, we use $\int_{\R^2_+} (\f32 X\p_X+\f12Y\p_Y-Y\p_X+\f52)\Omega\,\Omega\,b^{2m}dXdY\leq C_m\|\Omega\|_{L^2(m)}^2 $ to find
\begin{equation}\label{eq5.18}
\int_{\R^2_+} \cL_t\Omega \, \Omega \, b^{2m}dXdY\leq-(1+t)^{-2}\|\p_X\Omega\|_{L^2(m)}^2-\|\p_Y\Omega\|_{L^2(m)}^2+C_m\|\Omega\|_{L^2(m)}^2.
\end{equation}

While we get, by using  integration by parts, that
\begin{align*}
\int_{\R^2_+}\cN_t \Omega\, \Omega\, b^{2m}dXdY=& m\nu^{-\f32} (1+t^2)^{-\f52}\int_{\R^2_+} b^{2m-2}  \Omega^2\bigl( Y\p_X\Delta_t^{-1}\Omega -X \p_Y \Delta_t^{-1}\Omega \bigr)dXdY\\
\leq &C_m \nu^{-\f32} (1+t^2)^{-\f52}\|\Omega\|_{L^2(m)}^2 \bigl( \|\p_X\Delta_t^{-1}\Omega \|_{L^\oo}+\|\p_Y\Delta_t^{-1}\Omega \|_{L^\oo} \bigr),
\end{align*}
from which and \eqref{eq:lem3.1a}, we infer
\begin{equation}\label{eq5.19}
\int_{\R^2_+}\cN_t \Omega\, \Omega\, b^{2m}dXdY \leq  C_m \nu^{-\f32}{\|\Omega_0\|_{L^2(m)}} \|\Omega\|_{L^2(m)}^2.
\end{equation}

Thanks to \eqref{eq5.18} and \eqref{eq5.19}, by taking $L^2(m)$ inner product of \eqref{eqs:Om} with $\Omega,$
we conclude the proof of \eqref{eq:lem1}.
\end{proof}

\begin{lem}\label{S3lem4}
{  Under the assumptions of Lemma \ref{S3lem3}, for $t_0\leq t\leq 2t_0 $, we have
\begin{equation}\label{eq:lem2}
\begin{aligned}
&(1+t) \f{d}{dt}\Bigl(\bigl(\ln\f{1+t}{1+t_0}\bigr)\|\p_Y\Omega\|_{L^2(m)}^2\Bigr) +2\bigl(\ln\f{1+t}{1+t_0}\bigr)\bigl(\tt^{-2}\|\p_X\p_Y\Omega\|_{L^2(m)}^2+\|\p_Y^2\Omega\|_{L^2(m)}^2\bigr)\\
&\leq \|\p_Y\Omega\|_{L^2(m)}^2+2\bigl(\ln\f{1+t}{1+t_0}\bigr)\|\p_X\Omega\|_{L^2(m)}\|\p_Y\Omega\|_{L^2(m)}\\
&\quad+C_m \bigl(1+\nu^{-1}{\|\Omega(1)\|_{L^2(m)}}\bigr)\bigl(\ln\f{1+t}{1+t_0}\bigr)^{\f12}  D(t).
\end{aligned}\end{equation}
}\end{lem}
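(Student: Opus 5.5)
We prove Lemma \ref{S3lem4} by an $L^2(m)$ energy estimate for $\p_Y\Omega$. Write $\theta(t)=\ln\f{1+t}{1+t_0}$, so that $(1+t)\f{d}{dt}\theta=1$. The product rule gives
\[
(1+t)\f{d}{dt}\bigl(\theta\|\p_Y\Omega\|_{L^2(m)}^2\bigr)=\|\p_Y\Omega\|_{L^2(m)}^2+2\theta\bigl((1+t)\p_t\p_Y\Omega\,\big|\,\p_Y\Omega\bigr)_{L^2(m)}.
\]
The first term is exactly the first term on the right of \eqref{eq:lem2}. Apply $\p_Y$ to \eqref{eqs:Om}. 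The commutators are $[\p_Y,\f12Y\p_Y]=\f12\p_Y$ and $[\p_Y,-Y\p_X]=-\p_X$, so
\[
(1+t)\p_t\p_Y\Omega=\cL_t\p_Y\Omega+\f12\p_Y\Omega-\p_X\Omega+\p_Y\cN_t\Omega.
\]
The $-\p_X\Omega$ term gives $2\theta|(\p_X\Omega|\p_Y\Omega)_{L^2(m)}|\le 2\theta\|\p_X\Omega\|_{L^2(m)}\|\p_Y\Omega\|_{L^2(m)}$, which is the second term on the right. The term $\f12\p_Y\Omega$, together with the zeroth-order terms of $\cL_t$ and the weight commutators $C_m\theta\|\p_Y\Omega\|_{L^2(m)}^2$, is bounded by $C\theta\|\p_Y\Omega\|^2$. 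Since $\|\p_Y\Omega\|^2\le D(t)$ and $\theta\le\ln 2$ on $[t_0,2t_0]$, this is at most $C\theta^{1/2}D$.

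\emph{Boundary terms.} Because $\Omega|_{Y=0}=0$, the restriction $\p_Y\Omega|_{Y=0}$ is not zero, so integrating $\p_Y^2\p_Y\Omega$ against $\p_Y\Omega\, b^{2m}$ produces a boundary term. Evaluating the vorticity equation at $Y=0$, where $\Omega=\p_X\Omega=\p_X^2\Omega=\p_t\Omega=0$ and the transport terms vanish (because $\p_X\Delta_t^{-1}\Omega=0$ there), we get $\p_Y^2\Omega|_{Y=0}=0$. The term $-\int\p_Y^3\Omega\,\p_Y\Omega\,b^{2m}$ integrates by parts to $\|\p_Y^2\Omega\|^2$ plus weight terms, with boundary contribution $\p_Y^2\Omega\,\p_Y\Omega=0$. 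The horizontal part $(1+t)^{-2}\p_X^2$ has no boundary issue. This yields the dissipation $2\theta\bigl((1+t)^{-2}\|\p_X\p_Y\Omega\|^2+\|\p_Y^2\Omega\|^2\bigr)$ up to weight terms controlled as above. The transport terms $\f32X\p_X+\f12Y\p_Y-Y\p_X$ integrate by parts with no boundary flux: the $Y$-flux carries a factor $Y=0$, and $X$ is tangential.

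\emph{Nonlinear term.} Write $\p_Y\cN_t\Omega=\nu^{-3/2}(1+t)^{-5/2}\bigl(u\cdot\nabla\p_Y\Omega+\p_Yu\cdot\nabla\Omega\bigr)$ with $u=(\p_Y\Delta_t^{-1}\Omega,-\p_X\Delta_t^{-1}\Omega)$, which is divergence free and tangent to the boundary.
\begin{itemize}
\item The piece $u\cdot\nabla\p_Y\Omega$ gives only a weight commutator, bounded by $\nu^{-3/2}(1+t)^{-5/2}\|u\|_{L^\infty}\|\p_Y\Omega\|^2$, which Lemma \ref{S3lem1} controls.
\item The piece $\p_Yu\cdot\nabla\Omega$ requires $\|\p_Y^2\Delta_t^{-1}\Omega\|_{L^\infty}$ and $\|\p_X\p_Y\Delta_t^{-1}\Omega\|_{L^\infty}$. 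For these I would use \eqref{eq5.14a} with small $\sigma$, or \eqref{eq:lem3.1b} combined with an $L^4$-type Gagliardo--Nirenberg bound. Either way the estimate is in terms of $\|\Omega_0\|$ or $\|\Omega(1)\|$ and $D$, with the powers of $(1+t)$ absorbed by the prefactor $(1+t)^{-5/2}$.
\end{itemize}
The factor $\theta$ then becomes $\theta^{1/2}\cdot\theta^{1/2}$. One factor $\theta^{1/2}$ is kept, and the other is paired with $\|\p_Y^2\Omega\|$ or $\|\p_X\Omega\|$ so that $\theta\|\p_Y^2\Omega\|^2$ and $\|\p_Y\Omega\|^2$ are recognized inside $D$.

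\emph{Main obstacle.} The delicate step is the term $\p_Y u^1\,\p_X\Omega=\p_Y^2\Delta_t^{-1}\Omega\,\p_X\Omega$. Here $\|\p_X\Omega\|$ appears in $D$ only with the weight $(1+t)^{-2}$ or $c_2\theta^2$, so it must be balanced against the decay factor $(1+t)^{-5/2}$ and the growth of $\p_Y^2\Delta_t^{-1}\Omega$. I would first try Cauchy--Schwarz in the form
\[
\theta\,\|\p_Y^2\Delta_t^{-1}\Omega\|_{L^\infty}\|\p_X\Omega\|\,\|\p_Y\Omega\|\le\theta^{1/2}\bigl((1+t)^{-2}\|\p_X\Omega\|^2+\|\p_Y\Omega\|^2\bigr)\cdot(\cdots),
\]
using that $D$ contains $(1+t)^{-2}\|\p_X\Omega\|^2$ with coefficient one. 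If the resulting power of $(1+t)$ does not close, I would fall back on integrating by parts in $X$ to move the derivative onto $\p_Y\Omega$, which produces $\p_X\p_Y\Omega$ with its $(1+t)^{-2}$ dissipation.
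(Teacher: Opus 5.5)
Your treatment of the linear part matches the paper and is correct: the commutator $\p_Y\cL_t=\cL_t\p_Y-\p_X+\frac12\p_Y$, the identity $\p_Y^2\Omega|_{Y=0}=0$, and the weight commutators. The same holds for the transport piece $u\cdot\nabla\p_Y\Omega$.

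The gap is in the remaining stretching piece
\[
\p_Yu\cdot\nabla\Omega=\p_Y^2\Delta_t^{-1}\Omega\,\p_X\Omega-\p_X\p_Y\Delta_t^{-1}\Omega\,\p_Y\Omega .
\]
None of your concrete suggestions for it yields the right-hand side of \eqref{eq:lem2}.
\begin{itemize}
\item The quantities $\p_Y^2\Delta_t^{-1}\Omega$ and $\p_X\p_Y\Delta_t^{-1}\Omega$ are Riesz-type transforms of $\Omega$. They admit no $L^\infty$ bound in terms of $\|\Omega_0\|_{L^2(m)}$. Estimate \eqref{eq5.14a} controls them only by $\|\p_Y\Omega\|_{L^2(1)}^{1/2+\sigma}\|\p_X\p_Y\Omega\|_{L^2(1)}^{1/2-\sigma}$. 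The term is therefore cubic in derivatives of $\Omega$, and it can only be closed by spending a factor $E(t)^{1/2}$. That produces a term like $\nu^{-3/2}(1+t)^{-1}\theta^{\sigma/2+1/4}E^{1/2}D$, which does not appear in \eqref{eq:lem2}. Such a term would be harmless for Proposition \ref{S3prop2}, but it is not the stated estimate. Your claim that either route gives a bound ``in terms of $\|\Omega_0\|$ and $D$'' is false for the $L^\infty$ route.
\item Your Cauchy--Schwarz ``first try'' would need $\|\p_Y^2\Delta_t^{-1}\Omega\|_{L^\infty}\lesssim(1+t)^{3/2}\theta^{-1/2}\|\Omega_0\|_{L^2(m)}$, and this is not available.
\item An isotropic $L^4$ Gagliardo--Nirenberg bound on $b^m\p_X\Omega$ generates $\|\p_X^2\Omega\|_{L^2(m)}$. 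That norm enters $D$ only with weight $c_3\theta^3(1+t)^{-2}$ or $c_6\theta^5$. The bookkeeping then gives at best $\theta^0D$ with constant $\nu^{-3/2}\|\Omega_0\|$, so there is no smallness.
\item Integrating by parts in $X$ either moves the derivative onto $\p_Y^2\Delta_t^{-1}\Omega$ or leaves an undifferentiated $\Omega$ paired with a single dissipative factor. It does not close.
\end{itemize}

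The missing idea is the paper's choice of norms. Keep the Riesz-type factor in $L^2$, where \eqref{eq:lem3.1b} gives
\[
\|\p_Y^2\Delta_t^{-1}\Omega\|_{L^2}\lesssim(1+t)\|\Omega_0\|_{L^2(m)},\qquad \|\p_X\p_Y\Delta_t^{-1}\Omega\|_{L^2}\lesssim(1+t)^2\|\Omega_0\|_{L^2(m)}.
\]
Split the other two factors between the anisotropic bounds \eqref{Sobolev in x} and \eqref{Sobolev in y}. Put $\p_X\Omega$ in $L^2_X(L^\infty_Y)$ and $\p_Y\Omega$ in $L^2_Y(L^\infty_X)$, so that only $\p_X\p_Y\Omega$ and $\p_Y^2\Omega$ are generated, never $\p_X^2\Omega$. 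With all norms in $L^2(m)$, this bounds the stretching contribution by
\[
\nu^{-\frac32}\|\Omega_0\|_{L^2(m)}\Bigl((1+t)^{-\frac32}\|\p_X\Omega\|^{\frac12}\|\p_Y\Omega\|^{\frac12}\|\p_X\p_Y\Omega\|+(1+t)^{-\frac12}\|\p_Y\Omega\|\,\|\p_Y^2\Omega\|^{\frac12}\|\p_X\p_Y\Omega\|^{\frac12}\Bigr).
\]
Assign the powers of $(1+t)$ to $(1+t)^{-1}\|\p_X\Omega\|$ and $(1+t)^{-1}\|\p_X\p_Y\Omega\|$, and use $\|\p_Y^2\Omega\|\le(c_1\theta)^{-1/2}D^{1/2}$. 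Both terms are then $\lesssim\theta^{-1/2}D$, which gives $\theta^{1/2}D$ after multiplying by $\theta=\ln\frac{1+t}{1+t_0}$. The product-form bound $\|g\|_{L^4}^4\lesssim\|g\|_{L^2}^2\|\p_Xg\|_{L^2}\|\p_Yg\|_{L^2}$ would also close the argument, but only with $\theta^{1/4}$ in place of $\theta^{1/2}$.
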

\begin{proof}
By applying $\p_Y$ to \eqref{eqs:Om} and then taking  $L^2(m)$ inner product of the resulting equation with $\ln(\f{t}{t_0})\p_Y\Omega$, we compute
\begin{equation}\label{eq5.21}
    \begin{aligned}
        &\f{1+t}2 \f{d}{dt}\Bigl(\bigl(\ln\f{1+t}{1+t_0}\bigr)\|\p_Y\Omega(t)\|_{L^2(m)}^2\Bigr) =\f12 \|\p_Y\Omega\|_{L^2(m)}^2\\
        &\qquad\qquad\qquad\qquad+\bigl(\ln\f{1+t}{1+t_0}\bigr)\int_{\R^2_+} \p_Y\bigl(\cL_t\Omega+\cN_t \Omega\bigr)\, \p_Y\Omega\, b^{2m}dXdY.
    \end{aligned}
\end{equation}

It is easy to observe
\begin{equation}\label{[py;L]}
\p_Y\cL_t=\cL_t\p_Y -\p_X+\f12\p_Y,
\end{equation}
from which and \eqref{eq5.18}, we deduce that
\begin{equation}\label{eq5.23}
\begin{aligned}
\int_{\R^2_+} \p_Y\cL_t\Omega \,\p_Y\Omega\, b^{2m}dXdY
&\leq \int_{\R^2_+} \cL_t\p_Y\Omega\, \p_Y\Omega\, b^{2m} dXdY  \\
&\quad+\int_{\R^2_+} \bigl(|\p_X\Omega|+|\p_Y\Omega| \bigr)\,|\p_Y\Omega|\, b^{2m}dXdY\\
&\leq -(1+t)^{-2}\|\p_X\p_Y\Omega\|_{L^2(m)}^2-\|\p_Y^2\Omega\|_{L^2(m)}^2\\
&\quad +C_m \|\p_Y\Omega\|_{L^2(m)}^2+\|\p_X\Omega\|_{L^2(m)}\|\p_Y\Omega\|_{L^2(m)},
\end{aligned}
\end{equation}
where we used the boundary condition $\p_Y^2 \Omega|_{Y=0}=0$ for the integration by parts following the proof of \eqref{eq5.18}.

To deal with  the nonlinear part in \eqref{eq5.21}, we first decompose it to be
\begin{align*}
&\int_{\R^2_+} \p_Y \cN_t\Omega \,\p_Y\Omega\, b^{2m}dXdY\\
=&\nu^{-\f32}(1+t)^{-\f52} \Bigl(\int_{\R^2_+} \bigl(\p_Y \Delta_t^{-1}\Omega\,\p_X \p_Y\Omega
- \p_X\Delta_t^{-1}\Omega\, \p_Y^2\Omega\bigr)\,\p_Y\Omega\, b^{2m}dXdY\\
&+\int_{\R^2_+} \bigl(\p_Y^2 \Delta_t^{-1}\Omega\,\p_X \Omega
- \p_X\p_Y\Delta_t^{-1}\Omega\, \p_Y\Omega\bigr)\,\p_Y\Omega\, b^{2m}dXdY\Bigr)
\eqdef I_1+I_2.
\end{align*}
 It follows from a similar derivation of \eqref{eq5.19} that
$$
I_1\leq {C_{m}}{\nu}^{-\f32} {\|\Omega_0\|_{L^2(m)}} \|\p_Y\Omega\|_{L^2(m)}^2 \leq {C_{m}}{\nu}^{-\f32} {\|\Omega_0\|_{L^2(m)}} D(t),
$$
where we used the boundary condition $\Delta_t^{-1}\Omega|_{Y=0}=0$ for the integration by parts.
For the second integral $I_2$, we write
\begin{align*}
I_2 \leq \nu^{-\f32} (1+t)^{-\f52}\Bigl(&\|\p_Y^2 \Delta_t^{-1}\Omega\|_{L^2}\|b^m\p_X\Omega\|_{L^2_X(L^\oo_Y)}\|b^m\p_Y\Omega\|_{L^2_Y(L^\oo_X)}\\
&+\|\p_X\p_Y\Delta_t^{-1}\Omega\|_{L^2}\|b^m\p_Y\Omega\|_{L^2_X(L^\oo_Y)}\|b^m\p_Y\Omega\|_{L^2_Y(L^\oo_X)}\Bigr).
\end{align*}
Whereas  we get, by using one-dimensional interpolation inequality, that
\begin{equation}\label{Sobolev in x}
\begin{aligned}
\|b^m f\|_{L^2_Y(L^\oo_X)}&\leq C \|b^m f\|_{L^2}^\f12
\|\p_X(b^m f)\|_{L^2}^\f12\\
&\leq C_m \|f\|_{L^2(m)}^\f12\Bigl(\|f\|_{L^2(m-1)}+\|\p_X f\|_{L^2(m)} \Bigr)^\f12\\
&\leq C_m \|f\|_{L^2(m)}^\f12\|\p_X f\|_{L^2(m)}^\f12
\end{aligned}
\end{equation}
and similarly
\begin{equation}\label{Sobolev in y}
\| b^m f\|_{L^2_X(L^\oo_Y)}\leq C_m \|f\|_{L^2_m}^\f12\|\p_Y f\|_{L^2_m}^\f12
\end{equation}
which together with \eqref{eq:lem3.1b} ensures that
\begin{align*}
I_2  \leq &C_m \nu^{-\f32} (1+t)^{-\f52} \Bigl( \tt \|\Omega_0\|_{L^2(m)}\|\p_X\Omega\|_{L^2(m)}^\f12\|\p_Y\Omega\|_{L^2(m)}^\f12 \|\p_X\p_Y\Omega\|_{L^2(m)}\\
&\quad + \tt^2 \|\Omega_0\|_{L^2(m)}\|\p_Y\Omega\|_{L^2(m)}\|\p_X\p_Y\Omega\|_{L^2(m)}^\f12\|\p_Y^2\Omega\|_{L^2(m)}^\f12\Bigr)\\
\leq & C_m \nu^{-\f32}\|\Omega_0\|_{L^2(m)} \bigl(\ln\f{1+t}{1+t_0}\bigr)^{-\f12}  \Bigl( \bigl(\tt^{-1}\|\p_X\Omega\|_{L^2(m)}\bigr)^\f12\|\p_Y\Omega\|_{L^2(m)}^\f12 \\
&\qquad\qquad\qquad\qquad\qquad\qquad\times\bigl( (\ln\f{1+t}{1+t_0})^\f12\tt^{-1}\|\p_X\p_Y\Omega\|_{L^2(m)}\bigr)\\
&\quad + \|\p_Y\Omega\|_{L^2(m)}\bigl( (\ln\f{1+t}{1+t_0})^\f12\tt^{-1}\|\p_X\p_Y\Omega\|_{L^2(m)}\bigr)^\f12\bigl((\ln\f{1+t}{1+t_0})^\f12\|\p_Y^2\Omega\|_{L^2(m)}\bigr)^\f12\Bigr)\\
\leq &C_m \nu^{-\f32}\|\Omega_0\|_{L^2(m)} \bigl(\ln\f{1+t}{1+t_0}\bigr)^{-\f12}  D(t).
\end{align*}
By summarizing the estimates of $I_1$ and $I_2$, we arrive at
\begin{equation}\label{eq5.26}
\begin{aligned}
&\int_{\R^2_+} \p_Y \cN_t\Omega\, \p_Y\Omega\, b^{2m}dXdY \leq {C_m}\nu^{-\f32} \|\Omega_0\|_{L^2(m)} \bigl(\ln{t}/{t_0}\bigr)^{-\f12}  D(t).
\end{aligned}
\end{equation}

By substituting \eqref{eq5.23} and \eqref{eq5.26} into \eqref{eq5.21},  we conclude the proof of \eqref{eq:lem2}.
\end{proof}

\begin{lem}\label{S3lem5}
{  Under the assumptions of Lemma \ref{S3lem4}, we have
\begin{equation}\label{eq:lem3}
\begin{aligned}
&(1+t)\f{d}{dt}\Bigl( \bigl(\ln\f{1+t}{1+t_0}\bigr)^2 \bigl(\p_X \Omega \big|\p_Y \Omega \bigr)_{L^2(m)} \Bigr)+\bigl(\ln\f{1+t}{1+t_0}\bigr)^2\|\p_X\Omega\|_{L^2(m)}^2\\
&\leq 2\bigl(\ln\f{1+t}{1+t_0}\bigr)\bigl(\p_X \Omega \big|\p_Y \Omega \bigr)_{L^2(m)}+2\bigl(\ln\f{1+t}{1+t_0}\bigr)^2\Bigl(\tt^{-2}\|\p_X^2\Omega\|_{L^2(m)}\|\p_X\p_Y\Omega\|_{L^2(m)}
\\
&\qquad+\|\p_Y^2\Omega\|_{L^2(m)}\|\p_X\p_Y\Omega\|_{L^2(m)}\Bigr) +C_m\bigl(1+\nu^{-\f32}{\|\Omega_0\|_{L^2(m)}}\bigr)\bigl(\ln\f{1+t}{1+t_0}\bigr)^\f12 D(t) .
\end{aligned}\end{equation}
}\end{lem}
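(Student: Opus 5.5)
We compute directly from the equation. Writing $\Lambda=\ln\frac{1+t}{1+t_0}$, so that $(1+t)\frac{d}{dt}\Lambda=1$, we have
\[
(1+t)\frac{d}{dt}\bigl(\Lambda^2(\p_X\Omega|\p_Y\Omega)_{L^2(m)}\bigr)=2\Lambda(\p_X\Omega|\p_Y\Omega)_{L^2(m)}+\Lambda^2\bigl[(\p_X(\cL_t+\cN_t)\Omega|\p_Y\Omega)_{L^2(m)}+(\p_X\Omega|\p_Y(\cL_t+\cN_t)\Omega)_{L^2(m)}\bigr].
\]
The first term is exactly the first term on the right of \eqref{eq:lem3}. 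For the linear part we use the commutators
\[
\p_X\cL_t=\cL_t\p_X+\tfrac32\p_X,\qquad \p_Y\cL_t=\cL_t\p_Y-\p_X+\tfrac12\p_Y
\]
(the second is \eqref{[py;L]}). The term $-\p_X\Omega$ produced by $[\p_Y,\cL_t]$, paired with $\p_X\Omega$, gives $-\Lambda^2\|\p_X\Omega\|_{L^2(m)}^2$. This is the key hypoelliptic gain and it yields the dissipative term on the left. The lower-order commutator pieces give $C\Lambda^2\|\p_X\Omega\|_{L^2(m)}\|\p_Y\Omega\|_{L^2(m)}$. Since $D$ contains $c_2\Lambda^2\|\p_X\Omega\|^2$ and $\|\p_Y\Omega\|^2$, this is bounded by $\Lambda D\lesssim\Lambda^{1/2}D$ for $t_0\le t\le2t_0$, where $\Lambda\le\ln2$ is small. (Alternatively it can be absorbed into half the good term.)

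The remaining linear part is $(\cL_t\p_X\Omega|\p_Y\Omega)+(\p_X\Omega|\cL_t\p_Y\Omega)$ in $L^2(m)$.

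For the diffusion $\Delta_t$ we integrate by parts. In the $Y$ direction, the boundary term involves $\p_Y\p_X\Omega\cdot\p_Y\Omega$ or $\p_X\Omega\cdot\p_Y^2\Omega$ at $Y=0$. Here $\p_X\Omega|_{Y=0}=0$ because $\Omega|_{Y=0}=0$, and $\p_Y^2\Omega|_{Y=0}=0$ from the equation (as used in Lemma \ref{S3lem4}). We therefore move one $\p_Y$ from $\p_Y^2\p_X\Omega$ onto $\p_Y\Omega$, which gives $-(\p_Y\p_X\Omega|\p_Y^2\Omega)$, and similarly in $X$, which gives $-(1+t)^{-2}(\p_X^2\Omega|\p_X\p_Y\Omega)$. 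This yields precisely the two products $2\Lambda^2\bigl((1+t)^{-2}\|\p_X^2\Omega\|\|\p_X\p_Y\Omega\|+\|\p_Y^2\Omega\|\|\p_X\p_Y\Omega\|\bigr)$. Weight derivatives of $b^{2m}$ only produce lower-order terms controlled by $C_m\Lambda^2(\|\p_X\Omega\|^2+\|\p_Y\Omega\|^2+\text{mixed terms})\lesssim \Lambda^{1/2}D$.

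For the transport and zeroth-order parts $\frac32X\p_X+\frac12Y\p_Y-Y\p_X+\frac52$, the bilinear form is symmetrized: $(A\p_X\Omega|\p_Y\Omega)+(\p_X\Omega|A\p_Y\Omega)$ with $A$ a first-order operator with linear coefficients. Integrating by parts reduces it to $\int \p_X\Omega\,\p_Y\Omega\,(\text{polynomial})\,b^{2m-2}$, bounded by $C_m\|\p_X\Omega\|\|\p_Y\Omega\|$. The boundary term at $Y=0$ from $\frac12Y\p_Y$ vanishes because of the factor $Y$.

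For the nonlinear terms $\Lambda^2[(\p_X\cN_t\Omega|\p_Y\Omega)+(\p_X\Omega|\p_Y\cN_t\Omega)]$ we expand by Leibniz, as in $I_1,I_2$ of Lemma \ref{S3lem4}.

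Terms in which the velocity $\nabla^\perp\Delta_t^{-1}\Omega$ is undifferentiated combine into transport structures: $u\cdot\nabla\p_X\Omega\cdot\p_Y\Omega+\p_X\Omega\,u\cdot\nabla\p_Y\Omega=u\cdot\nabla(\p_X\Omega\,\p_Y\Omega)$. Since $u$ is divergence free with $u_2|_{Y=0}=0$, integrating by parts leaves only the weight derivative. This is bounded via \eqref{eq:lem3.1a} by $C_m\nu^{-3/2}\|\Omega_0\|_{L^2(m)}\Lambda^2\|\p_X\Omega\|\|\p_Y\Omega\|$.

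Terms with a differentiated velocity, such as $\p_X\p_Y\Delta_t^{-1}\Omega$, $\p_X^2\Delta_t^{-1}\Omega$ and $\p_Y^2\Delta_t^{-1}\Omega$, are treated in two ways:
\begin{itemize}
\item using \eqref{eq:lem3.1b} together with the anisotropic inequalities \eqref{Sobolev in x}--\eqref{Sobolev in y};
\item for $\p_X^2\Delta_t^{-1}\Omega$, using Lemma \ref{S3lem2}, writing it as $\p_X\Delta_t^{-1}(\p_X\Omega)$ with $L^\infty$ bound $(1+t)^{1+\sigma}\|\p_X\Omega\|^{1/2+\sigma}\|\p_X^2\Omega\|^{1/2-\sigma}$.
\end{itemize}
The powers of $(1+t)$ are then converted, using the $(1+t)^{-5/2}$ prefactor, into products of factors each controlled by a weighted piece of $D$. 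The $\Lambda^2$ prefactor is distributed so that each factor carries its $D$ weight, with a leftover $\Lambda^{1/2}$ or better.

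The main obstacle is this power counting. Terms involving $\|\p_X^2\Omega\|$ appear in $D$ only with weight $c_3\Lambda^3(1+t)^{-2}$ or $c_6\Lambda^5$, so we must check that the available $\Lambda^2$ and the $(1+t)$ factors suffice. The fallback is to integrate by parts in $X$ to move the derivative onto $\p_Y\Omega$, producing $\|\p_X\p_Y\Omega\|$, which carries the better weight $c_3\Lambda^3$.
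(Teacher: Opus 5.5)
Your linear part matches the paper's: the same commutators, the symmetrized transport term bounded by $C_m\|\partial_X\Omega\|_{L^2(m)}\|\partial_Y\Omega\|_{L^2(m)}$, and $\Lambda^2\|\partial_X\Omega\|\|\partial_Y\Omega\|\lesssim\Lambda D$. There are two slips.

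\begin{itemize}
\item The $Y$-boundary term from $(\partial_Y^2\partial_X\Omega|\partial_Y\Omega)_{L^2(m)}$ is $-\int_{\R}\partial_X\partial_Y\Omega\,\partial_Y\Omega\,b^{2m}\,dX|_{Y=0}$. Neither $\partial_X\Omega|_{Y=0}=0$ nor $\partial_Y^2\Omega|_{Y=0}=0$ kills it. It equals $\frac12\int_{\R}|\partial_Y\Omega|^2\partial_Xb^{2m}\,dX|_{Y=0}$, a trace term bounded by $C_m\|\partial_Y\Omega\|_{L^2(m)}\|\partial_Y^2\Omega\|_{L^2(m)}$, so it is harmless. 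The paper's display drops it as well. You can also avoid it by integrating that piece by parts in $X$ instead.
\item The weight terms are harmless only because they are mixed products $\partial_X\Omega\,\partial_Y\Omega$. A term $C_m\Lambda^2\|\partial_X\Omega\|_{L^2(m)}^2$, which you list, is only $\lesssim D$, not $\lesssim\Lambda^{1/2}D$. It would swamp the dissipative gain on the left.
\end{itemize}

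For the nonlinear part your route is longer than the paper's, and one of your two options does not give the stated inequality.

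The paper never differentiates the velocity. It integrates by parts so both derivatives land on the test functions; the $Y$-boundary term vanishes because $\partial_X\Omega|_{Y=0}=0$. The bound $\|f\|_{L^2(m-1)}\le C_m\min(\|\partial_Xf\|_{L^2(m)},\|\partial_Yf\|_{L^2(m)})$ then reduces everything to $C_m\|\cN_t\Omega\|_{L^2(m)}\|\partial_X\partial_Y\Omega\|_{L^2(m)}$. After that only \eqref{eq:lem3.1a} is needed, since $\|\cN_t\Omega\|_{L^2(m)}\lesssim\nu^{-3/2}\|\Omega_0\|_{L^2(m)}(\|\partial_Y\Omega\|_{L^2(m)}+(1+t)^{-1}\|\partial_X\Omega\|_{L^2(m)})$. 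Because $\|\partial_X\partial_Y\Omega\|$ carries $c_3\Lambda^3$ in $D$, exactly $\Lambda^{1/2}D$ remains.

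In your Leibniz expansion, write $\psi=\Delta_t^{-1}\Omega$. The two $\partial_X\partial_Y\psi$ terms cancel exactly, leaving
\[
\nu^{-3/2}(1+t)^{-5/2}\int\bigl(\partial_Y^2\psi\,|\partial_X\Omega|^2-\partial_X^2\psi\,|\partial_Y\Omega|^2\bigr)b^{2m}.
\]
\begin{itemize}
\item The first term is fine via \eqref{eq:lem3.1b} and the anisotropic inequalities.
\item For the second term, Lemma \ref{S3lem2} gives a trilinear bound $\nu^{-3/2}(1+t)^{-1}\Lambda^{\sigma/2+1/4}E^{1/2}D$. That term is absent from the right side of \eqref{eq:lem3}. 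It would be harmless for Proposition \ref{S3prop2}, but it does not prove the lemma as stated.
\item Nor does \eqref{eq:lem3.1b} cover $\partial_X^2\psi$. Elliptic regularity gives only $\|\partial_X^2\psi\|_{L^2}\lesssim(1+t)^3\|\Omega_0\|_{L^2(m)}$, which leaves a growing factor $(1+t)^{1/2}$.
\end{itemize}

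So your fallback is forced, not optional. Integrate $\partial_X^2\psi\,|\partial_Y\Omega|^2$ by parts in $X$, and $\partial_Y^2\psi\,|\partial_X\Omega|^2$ in $Y$ (no boundary term). This reduces everything to $(\|\partial_X\psi\|_{L^\infty}\|\partial_Y\Omega\|+\|\partial_Y\psi\|_{L^\infty}\|\partial_X\Omega\|)\|\partial_X\partial_Y\Omega\|$, which is exactly the paper's bound reached in more steps.
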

\begin{proof} We first get, by a direct computation of the time derivative and using the equation of \eqref{eqs:Om}, that
\begin{equation}\label{eq5.28}
\begin{aligned}
&(1+t)\f{d}{dt}\Bigl( \bigl(\ln\f{1+t}{1+t_0}\bigr)^2 \bigl(\p_X \Omega \big|\p_Y \Omega \bigr)_{L^2(m)} \Bigr)=2\bigl(\ln\f{1+t}{1+t_0}\bigr)\bigl(\p_X \Omega \big|\p_Y \Omega \bigr)_{L^2(m)} \\
&+\bigl(\ln\f{1+t}{1+t_0}\bigr)^2 \Bigl(\int_{\R^2_+}\p_X \bigl(\cL_t\Omega+\cN_t\Omega \bigr) \,\p_Y\Omega \, b^{2m}dXdY
+ \int_{\R^2_+}\p_X \Omega\, \p_Y\bigl(\cL_t\Omega+\cN_t\Omega \bigr)\,b^{2m}dXdY\Bigr).
\end{aligned}
\end{equation}

It is easy to observe 
\begin{equation}\label{[px;L]}
\p_X \cL_t=\cL_t\p_X +\f32 \p_X ,
\end{equation}
from which and \eqref{[py;L]}, we infer
\begin{align*}
\int_{\R^2_+}&\p_X \cL_t\Omega\,\p_Y\Omega\, b^{2m}dXdY
+ \int_{\R^2_+}\p_X \Omega\, \p_Y\cL_t\Omega\, b^{2m} dXdY\\
=&\int_{\R^2_+}\cL_t\p_X \Omega\,\p_Y\Omega\, b^{2m}dXdY
+ \int_{\R^2_+}\p_X \Omega\,\cL_t \p_Y\Omega\, b^{2m}dXdY \\
&-\|\p_X\Omega\|_{L^2(m)}^2 +2\int_{\R^2_+} \p_X\Omega\, \p_Y\Omega\, b^{2m}dXdY.
\end{align*}
While we get, by using integration by parts, that
\begin{align*}
&\int_{\R^2_+}\cL_t\p_X \Omega\, \p_Y\Omega\, b^{2m}dXdY
+ \int_{\R^2_+}\p_X \Omega\,\cL_t \p_Y\Omega\, b^{2m}dXdY\\
=& -2(1+t)^{-2} \int_{\R^2_+}\p_X^2\Omega \, \p_X\p_Y\Omega\, b^{2m} dXdY -2\int_{\R^2_+} \p_Y^2\Omega\, \p_X\p_Y\Omega\, b^{2m} dX dY \\
&-(1+t)^{-2}\int_{\R^2_+}\p_X \bigl(\p_X\Omega \, \p_Y\Omega\bigr)\, \p_X b^{2m} dXdY-\int_{\R^2_+}\p_Y \bigl(\p_X\Omega \, \p_Y\Omega\bigr)\, \p_Y b^{2m} dXdY\\
&-\int_{\R^2_+} \p_X\Omega \, \p_Y\Omega\,\bigl(\f52 b^{2m} -\f32\p_X(Xb^{2m})-\f12 \p_Y(Yb^{2m})+Y\p_X b^{2m} \bigr)  dXdY\\
\leq &2\tt^{-2}\|\p_X^2\Omega\|_{L^2(m)}\|\p_X\p_Y\Omega\|_{L^2(m)}+2\|\p_Y^2\Omega\|_{L^2(m)}\|\p_X\p_Y\Omega\|_{L^2(m)}\\
&+C_m\|\p_X\Omega\|_{L^2(m)}\|\p_Y\Omega\|_{L^2(m)} .
\end{align*}
As a result, we use $\|\p_X\Omega\|_{L^2(m)}\|\p_Y\Omega\|_{L^2(m)} \leq C\bigl(\f{1+t}{1+t_0}\bigr)^{-1}D(t) $ to obtain
\begin{equation}\label{eq5.30}
\begin{aligned}
\int_{\R^2_+}&\p_X \cL_t\Omega\,\p_Y\Omega\, b^{2m}dXdY+ \int_{\R^2_+}\p_X \Omega\, \p_Y\cL_t\Omega\, b^{2m}dXdY\\
\leq & -\|\p_X\Omega\|_{L^2(m)}^2+C_m\bigl(\f{1+t}{1+t_0}\bigr)^{-1}D(t)\\
&+2\bigl(\tt^{-2}\|\p_X^2\Omega\|_{L^2(m)}\|\p_X\p_Y\Omega\|_{L^2(m)}+\|\p_Y^2\Omega\|_{L^2(m)}\|\p_X\p_Y\Omega\|_{L^2(m)}\bigr).
\end{aligned}
\end{equation}

For the nonlinear terms in \eqref{eq5.28}, we get, by using integration by parts and Poincare's inequality $\|f\|_{L^2(m-1)}\leq C_m \min(\|\p_Xf\|_{L^2(m)}, \|\p_Yf\|_{L^2(m)})$, that
\begin{align*}
&\int_{\R^2_+}\p_X \cN_t\Omega\,\p_Y\Omega\, b^{2m}dXdY
+ \int_{\R^2_+}\p_X \Omega\, \p_Y\cN_t\Omega\, b^{2m}dXdY \\
\leq &C_m\|\cN_t\Omega\|_{L^2(m)}\bigl(\|\p_X\p_Y\Omega\|_{L^2(m)}+\|\p_X\Omega\|_{L^2(m-1)}+\|\p_Y\Omega\|_{L^2(m-1)} \bigr)\\
\leq &C_m \nu^{-\f32}\tt^{-\f52}\bigl(\|\p_X\Delta_t^{-1}\Omega\|_{L^\oo}\|\p_Y\Omega\|_{L^2(m)}+\|\p_Y\Delta_t^{-1}\Omega\|_{L^\oo}\|\p_X\Omega\|_{L^2(m)} \bigr)\|\p_X\p_Y\Omega\|_{L^2(m)},
\end{align*}
from which and \eqref{eq:lem3.1a}, we deduce that
\begin{equation}\label{eq5.31}
\begin{aligned}
\int&\p_X \cN_t\Omega\p_Y\Omega b^{2m}
+ \int\p_X \Omega \p_Y\cN_t\Omega b^{2m}\\
\leq &  {C_m}{\nu}^{-\f32}{\|\Omega_0\|_{L^2(m)}}\bigl(\|\p_Y\Omega\|_{L^2(m)}+\tt^{-1}\|\p_X\Omega\|_{L^2(m)} \bigr)\|\p_X\p_Y\Omega\|_{L^2(m)}\\
\leq &C_m {\nu}^{-\f32}\|\Omega_0\|_{L^2(m)}\bigl(\ln\f{1+t}{1+t_0}\bigr)^\f12\Bigl(\|\p_Y\Omega\|_{L^2(m)}^2+\tt^{-2}\|\p_X\Omega\|_{L^2(m)}^2\Bigr)^\f12 \\
&\qquad \times\Bigl( \bigl(\ln\f{1+t}{1+t_0}\bigr)^3\|\p_X\p_Y\Omega\|_{L^2(m)}^2\Bigr)^\f12\\
&\leq C_m {\nu}^{-\f32}\|\Omega_0\|_{L^2(m)}\bigl(\ln\f{1+t}{1+t_0}\bigr)^\f12 D(t).
\end{aligned}
\end{equation}

By substituting \eqref{eq5.30} and \eqref{eq5.31} into \eqref{eq5.28}, we achieve \eqref{eq:lem3}. This finishes the proof of Lemma \ref{S3lem5}.
\end{proof}

\begin{lem}\label{S3lem6}
{  Under the assumptions of Lemma \ref{S3lem4}, for  $0<\sigma<\f12,$ we have
\begin{equation}\label{eq:lem4}
\begin{aligned}
(1+t)& \f{d}{dt}\Bigl(\bigl(\ln\f{1+t}{1+t_0}\bigr)^3\|\p_X\Omega\|_{L^2(m)}^2\Bigr) +\bigl(\ln\f{1+t}{1+t_0}\bigr)^3\tt^{-2}\|\p_X^2\Omega\|_{L^2(m)}^2\\
&+\bigl(\ln\f{1+t}{1+t_0}\bigr)^3\|\p_X\p_Y\Omega\|_{L^2(m)}^2
\leq 3\bigl(\ln\f{1+t}{1+t_0}\bigr)^2\|\p_X\Omega\|_{L^2(m)}^2\\
&+C_m\bigl(1+\nu^{-\f32}{\|\Omega_0\|_{L^2(m)}}\bigr)E(t)+C_{\sigma}\nu^{-\f32} \tt^{-1} \bigl(\ln\f{1+t}{1+t_0}\bigr)^{\f\sigma2+\f{1}4} E(t)^\f12 D(t).
\end{aligned}\end{equation}
}\end{lem}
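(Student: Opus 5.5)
We follow the pattern of Lemmas \ref{S3lem4} and \ref{S3lem5}. Apply $\p_X$ to \eqref{eqs:Om}, take the $L^2(m)$ inner product with $\bigl(\ln\f{1+t}{1+t_0}\bigr)^3\p_X\Omega$, and write
\begin{align*}
&\f{1+t}2\f{d}{dt}\Bigl(\bigl(\ln\tfrac{1+t}{1+t_0}\bigr)^3\|\p_X\Omega\|_{L^2(m)}^2\Bigr)=\f32\bigl(\ln\tfrac{1+t}{1+t_0}\bigr)^2\|\p_X\Omega\|_{L^2(m)}^2\\
&\qquad+\bigl(\ln\tfrac{1+t}{1+t_0}\bigr)^3\int_{\R^2_+}\p_X\bigl(\cL_t\Omega+\cN_t\Omega\bigr)\,\p_X\Omega\, b^{2m}\,dXdY .
\end{align*}
The first term on the right gives the $3(\ln)^2\|\p_X\Omega\|^2$ term after multiplying by $2$.

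For the linear part, the commutator \eqref{[px;L]}, $\p_X\cL_t=\cL_t\p_X+\f32\p_X$, reduces matters to \eqref{eq5.18} applied to $\p_X\Omega$. This is legitimate because $\p_X\Omega|_{Y=0}=0$. It produces $-\tt^{-2}\|\p_X^2\Omega\|_{L^2(m)}^2-\|\p_X\p_Y\Omega\|_{L^2(m)}^2+C_m\|\p_X\Omega\|_{L^2(m)}^2$. Since $\ln\f{1+t}{1+t_0}\le\ln 2$ and $c_3$ is a fixed constant, $(\ln)^3C_m\|\p_X\Omega\|^2\le C_m c_3^{-1}E(t)$, which is absorbed into $C_mE(t)$. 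After multiplying by $2$ we keep one copy of each dissipation term on the left, as in the statement.

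For the nonlinear part, $\p_X\cN_t\Omega$ splits as $I_1+I_2$. The term $I_1$ is the transport of $\p_X\Omega$ by the velocity $\nabla^\perp\Delta_t^{-1}\Omega$. Integrating by parts (using $\Delta_t^{-1}\Omega|_{Y=0}=0$) moves the derivative onto the weight, exactly as in \eqref{eq5.19}, and with \eqref{eq:lem3.1a} gives $C_m\nu^{-3/2}\|\Omega_0\|_{L^2(m)}\|\p_X\Omega\|_{L^2(m)}^2(\ln)^3\lesssim \nu^{-3/2}\|\Omega_0\|E(t)$. The term $I_2$ is
\[
\nu^{-3/2}\tt^{-5/2}\int\bigl(\p_X\p_Y\Delta_t^{-1}\Omega\,\p_X\Omega-\p_X^2\Delta_t^{-1}\Omega\,\p_Y\Omega\bigr)\p_X\Omega\, b^{2m}.
$$
Here I would not use the $L^2$ elliptic bound \eqref{eq:lem3.1b}, since that would cost powers of $\tt$. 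Instead, put $\p_X\Delta_t^{-1}$ applied to $\p_X\Omega$ in $L^\oo$ via Lemma \ref{S3lem2} with $f=\p_X\Omega$:
\[
\|\p_X\Delta_t^{-1}\p_X\Omega\|_{L^\oo}+\tt\|\p_Y\Delta_t^{-1}\p_X\Omega\|_{L^\oo}\le C_\sigma\tt^{1+\sigma}\|\p_X\Omega\|_{L^2(1)}^{\f12+\sigma}\|\p_X^2\Omega\|_{L^2(1)}^{\f12-\sigma}.
\]
Then bound $I_2$ by $\nu^{-3/2}\tt^{-5/2}$ times this, times $\|\p_X\Omega\|_{L^2(m)}(\|\p_X\Omega\|_{L^2(m)}+\tt\|\p_Y\Omega\|_{L^2(m)})$. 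Note that $\p_X$ commutes with $\Delta_t^{-1}$, so $\p_X\p_Y\Delta_t^{-1}\Omega=\p_Y\Delta_t^{-1}\p_X\Omega$.

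The main obstacle is the bookkeeping of powers of $\tt$ and of $\ln\f{1+t}{1+t_0}$ needed to land exactly on $C_\sigma\nu^{-3/2}\tt^{-1}(\ln)^{\f\sigma2+\f14}E^{1/2}D$. The plan for this step is as follows.
\begin{itemize}
\item Put one factor $\|\p_X\Omega\|_{L^2(m)}\le (c_3(\ln)^3)^{-1/2}E^{1/2}$ into $E^{1/2}$.
\item Write $\|\p_X\Omega\|^{\f12+\sigma}\|\p_X^2\Omega\|^{\f12-\sigma}$ together with the remaining $\|\p_Y\Omega\|$ or $\|\p_X\Omega\|$ as a product of square roots of terms of $D$. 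Use $\|\p_X\Omega\|^2\lesssim(\ln)^{-2}D$ (from the $c_2$ term), $\tt^{-2}\|\p_X^2\Omega\|^2\lesssim(\ln)^{-3}D$ (from the $c_3$ term), and $\|\p_Y\Omega\|^2\le D$.
\item Verify that the leftover time factor is at most $\tt^{-1}$. The $\tt^{1-\sigma}$ generated by converting $\|\p_X^2\Omega\|$ to its $D$-weighted form combines with $\tt^{-5/2}\tt^{1+\sigma}\cdot\tt$ to give exactly $\tt^{-1}$.
\item Track the logarithmic exponents. Losses of negative powers of $\ln$ are compensated by the prefactor $(\ln)^3$, leaving the positive exponent $\f\sigma2+\f14$.
\end{itemize}
If the exponent count fails, the fallback is to use the $c_5$ dissipation $(\ln)^4\tt^{-2}\|\p_X^2\p_Y\Omega\|^2$, or to interpolate with a different choice of $\sigma$.
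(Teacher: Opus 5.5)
Your route is essentially the paper's. You use the same weighted identity with prefactor $\bigl(\ln\f{1+t}{1+t_0}\bigr)^3$, the commutator \eqref{[px;L]} with \eqref{eq5.18}, the transport part handled as in \eqref{eq5.19}, and \eqref{eq5.14} for the $L^\infty$ factors of the remaining part. The only variation is how you bound $\p_X\p_Y\Delta_t^{-1}\Omega=\p_Y\Delta_t^{-1}\p_X\Omega$. You apply \eqref{eq5.14} with $f=\p_X\Omega$, giving $\tt^{\sigma}\|\p_X\Omega\|^{\f12+\sigma}\|\p_X^2\Omega\|^{\f12-\sigma}$. The paper instead uses $\tt^{1+\sigma}\|\p_Y\Omega\|^{\f12+\sigma}\|\p_X\p_Y\Omega\|^{\f12-\sigma}$. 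Your choice is legitimate, since $\p_X$ commutes with the Dirichlet inverse $\Delta_t^{-1}$.

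The problem is in the step you single out yourself: the power of $\tt$. The factor $\tt^{-1}$ is essential, because in Proposition \ref{S3prop3} it is what compensates $E(t_0)^{1/2}$, which is only known to be $O(1+t_0)$ by Proposition \ref{prop5.1}.

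First, your factor $\|\p_X\Omega\|+\tt\|\p_Y\Omega\|$ is $\tt$ times too large. The extra $\tt^{-1}$ in \eqref{eq5.14} sits on $\|\p_Y\Delta_t^{-1}\p_X\Omega\|_{L^\infty}$, so the correct bound is
\[
I_2\le C_\sigma\nu^{-\f32}\tt^{-\f32+\sigma}\|\p_X\Omega\|_{L^2(m)}^{\f32+\sigma}\|\p_X^2\Omega\|_{L^2(m)}^{\f12-\sigma}\bigl(\tt^{-1}\|\p_X\Omega\|_{L^2(m)}+\|\p_Y\Omega\|_{L^2(m)}\bigr).
\]

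Second, passing from $\|\p_X^2\Omega\|^{\f12-\sigma}$ to $(\tt^{-1}\|\p_X^2\Omega\|)^{\f12-\sigma}$ costs $\tt^{\f12-\sigma}$, not $\tt^{1-\sigma}$. Your product $\tt^{-\f52}\tt^{1+\sigma}\tt\,\tt^{1-\sigma}$ equals $\tt^{\f12}$. Even with the correct conversion, your grouping leaves $\tt^{0}$, so the claimed $\tt^{-1}$ never appears.

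With the grouping above, $\tt^{-\f52}\tt^{1+\sigma}\tt^{\f12-\sigma}=\tt^{-1}$. Set $\Lambda=\ln\f{1+t}{1+t_0}$. Your logarithmic count for the $\|\p_Y\Omega\|$ piece then gives exactly $\Lambda^{\f\sigma2-\f{11}4}$, hence $\Lambda^{\f\sigma2+\f14}$ after the prefactor, as in the paper.

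The other piece, $\tt^{-1}\|\p_X\Omega\|^{\f32+\sigma}(\tt^{-1}\|\p_X^2\Omega\|)^{\f12-\sigma}\cdot\tt^{-1}\|\p_X\Omega\|$, still fails with the three controls you list. They leave $\Lambda^{\f\sigma2-\f34}$ after the prefactor, which blows up as $t\to t_0$. You must bound $\tt^{-1}\|\p_X\Omega\|$ by $D^{1/2}$ through the term $\tt^{-2}\|\p_X\Omega\|_{L^2(m)}^2$ of $D$. The paper, with its different bound for this piece, instead pairs $\Lambda\|\p_Y\Omega\|^2$ from $E$ with $\Lambda\tt^{-2}\|\p_X\p_Y\Omega\|^2$ from $D$. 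With these corrections your argument closes.
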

\begin{proof} We get,
by first applying $\p_X$ to \eqref{eqs:Om} and then taking  $L^2(m)$ inner product of the resulting equation with $\bigl(\ln\f{1+t}{1+t_0}\bigr)^3\p_X\Omega$, that
\begin{equation}\label{eq5.33}\begin{aligned}
\f{1+t}2 \f{d}{dt}&\Bigl(\bigl(\ln\f{1+t}{1+t_0}\bigr)^3\|\p_X\Omega(t)\|_{L^2(m)}^2\Bigr) \\
=&\f32 \bigl(\ln\f{1+t}{1+t_0}\bigr)^2\|\p_X\Omega\|_{L^2(m)}^2+\bigl(\ln\f{1+t}{1+t_0}\bigr)^3\int_{\R^2_+} \p_X\bigl(\cL_t\Omega+\cN_t \Omega\bigr)\, \p_X\Omega\, b^{2m}dXdY.
\end{aligned}
\end{equation}
It follows from \eqref{[px;L]} and \eqref{eq5.18} that 
\begin{equation}\label{eq5.34}
\begin{aligned}
&\int_{\R^2_+} \p_X\cL_t\Omega \,\p_X\Omega\, b^{2m}dXdY
= \int_{\R^2_+} \cL_t\p_X\Omega\, \p_X\Omega\, b^{2m}dXdY +\f32\|\p_X \Omega \|_{L^2(m)}^2 \\
&\quad\leq  -\tt^{-2}\|\p_X^2\Omega\|_{L^2(m)}^2-\|\p_X\p_Y\Omega\|_{L^2(m)}^2+C_m \|\p_X\Omega\|_{L^2(m)}^2.
\end{aligned}
\end{equation}

For the nonlinear part in \eqref{eq5.33}, we first decompose it as
\begin{align*}
&\int_{\R^2_+} \p_X \cN_t\Omega\, \p_X\Omega\, b^{2m}dXdY\\
=&\nu^{-\f32}(1+t)^{-\f52} \Bigl(\int_{\R^2_+} \bigl(\p_Y \Delta_t^{-1}\Omega\,\p_X^2\Omega
- \p_X\Delta_t^{-1}\Omega\,\p_Y\p_X\Omega\bigr)\,\p_X\Omega\, b^{2m}dXdY\\
&\qquad+\int_{\R^2_+} \bigl(\p_X\p_Y \Delta_t^{-1}\Omega\,\p_X \Omega
- \p_X^2\Delta_t^{-1}\Omega\, \p_Y\Omega\bigr)\,\p_X\Omega\, b^{2m}dXdY \Bigr)
\eqdef A_1+A_2.
\end{align*}
We deduce from a similar derivation of \eqref{eq5.19} that
$$
A_1\leq C_m \nu^{-\f32} \|\Omega_0\|_{L^2(m)} \|\p_X\Omega\|_{L^2(m)}^2.
$$
For $A_2$, we write
\begin{align*}
A_2 \leq \nu^{-\f32} \tt^{-\f52}\bigl(\|\p_X\p_Y \Delta_t^{-1}\Omega\|_{L^\oo}\|\p_X\Omega\|_{L^2(m)}^2+\|\p_X^2\Delta_t^{-1}\Omega\|_{L^\oo}\|\p_Y\Omega\|_{L^2(m)}\|\p_X\Omega\|_{L^2(m)}\bigr),
\end{align*}
from which and   \eqref{eq5.14}, we infer
\begin{align*}
A_2  \leq &C_\sigma\nu^{-\f32} \tt^{\sigma-\f32} \Bigl(  \|\p_Y\Omega\|_{L^2(1)}^{\f12+\sigma}\|\p_X\p_Y\Omega\|_{L^2(1)}^{\f12-\sigma}\|\p_X\Omega\|_{L^2(m)}^2\\
& \qquad\qquad\qquad+ \|\p_X\Omega\|_{L^2(1)}^{\f12+\sigma}\|\p_X^2\Omega\|_{L^2(1)}^{\f12-\sigma}\|\p_Y\Omega\|_{L^2(m)}\|\p_X\Omega\|_{L^2(m)}\Bigr)\\
\leq &C_\sigma \nu^{-\f32} \tt^{-1}\bigl(\ln\f{1+t}{1+t_0}\bigr)^{\f\sigma2-\f{11}4} \Bigl(  \bigl(\bigl(\ln\f{1+t}{1+t_0}\bigr)^\f12\|\p_Y\Omega\|_{L^2(m)}\bigr)^{\f12+\sigma}\\
&\qquad\times\bigl(\bigl(\ln\f{1+t}{1+t_0}\bigr)^\f12\tt^{-1}\|\p_X\p_Y\Omega\|_{L^2(m)}\bigr)^{\f12-\sigma}\bigl(\bigl(\ln\f{1+t}{1+t_0}\bigr)^\f32\|\p_X\Omega\|_{L^2(m)}\bigr)^{\f12-\sigma}\\
&\quad\times\bigl(\bigl(\ln\f{1+t}{1+t_0}\bigr)\|\p_X\Omega\|_{L^2(m)}\bigr)^{\f32+\sigma}+\bigl(\bigl(\ln\f{1+t}{1+t_0}\bigr) \|\p_X\Omega\|_{L^2(m)}\bigr)^{\f32+\sigma}\\
&\times\bigl(\bigl(\ln\f{1+t}{1+t_0}\bigr)^\f32\tt^{-1}\|\p_X^2\Omega\|_{L^2(m)}\bigr)^{\f12-\sigma}\bigl(\ln\f{1+t}{1+t_0}\bigr)^\f12\|\p_Y\Omega\|_{L^2(m)}\Bigr)\\
\leq &C_\sigma \nu^{-\f32} \tt^{-1} \bigl(\ln\f{1+t}{1+t_0}\bigr)^{\f\sigma2-\f{11}4} \Bigl( \bigl(\ln\f{1+t}{1+t_0}\bigr)\|\p_Y\Omega\|_{L^2(m)}^2+ \bigl(\ln\f{1+t}{1+t_0}\bigr)^3\|\p_X\Omega\|_{L^2(m)}^2 \Bigr)^\f12 \\
&\quad \times \Bigl(\bigl(\ln\f{1+t}{1+t_0}\bigr)\tt^{-2}\|\p_X\p_Y\Omega\|_{L^2(m)}^2+\bigl(\ln\f{1+t}{1+t_0}\bigr)^2 \|\p_X\Omega\|_{L^2(m)}^2\\
&\qquad\qquad+ \bigl(\ln\f{1+t}{1+t_0}\bigr)^3\tt^{-2}\|\p_X^2\Omega\|_{L^2(m)}^2\Bigr).
\end{align*}
By combining the estimates of $A_1$ and $A_2$, we arrive at
\begin{equation}\label{eq5.35}
\begin{aligned}
\int_{\R^2_+} \p_X \cN_t\Omega \,\p_X\Omega\, b^{2m}dXdY \leq &C_m \nu^{-\f32}{\|\Omega_0\|_{L^2(m)}}\|\p_X\Omega\|_{L^2(m)}^2\\
&+{C_{\sigma}}{\nu}^{-\f32}\tt^{-1} \bigl(\ln{t}/{t_0}\bigr)^{\f\sigma2-\f{11}4} E(t)^\f12 D(t).
\end{aligned}
\end{equation}

By substituting  \eqref{eq5.34} and \eqref{eq5.35} into \eqref{eq5.33}, we conclude
\begin{align*}
(1+t) \f{d}{dt}&\Bigl(\bigl(\ln\f{1+t}{1+t_0}\bigr)^3\|\p_X\Omega\|_{L^2(m)}^2\Bigr) +\bigl(\ln\f{1+t}{1+t_0}\bigr)^3\tt^{-2}\|\p_X^2\Omega\|_{L^2(m)}^2\\
&+\bigl(\ln\f{1+t}{1+t_0}\bigr)^3\|\p_X\p_Y\Omega\|_{L^2(m)}^2
\leq 3\bigl(\ln\f{1+t}{1+t_0}\bigr)^2\|\p_X\Omega\|_{L^2(m)}^2\\
&+C_m\bigl(1+\nu^{-\f32}{\|\Omega_0\|_{L^2(m)}}\bigr)\bigl(\ln\f{1+t}{1+t_0}\bigr)^3\|\p_X\Omega\|_{L^2(m)}^2\\
&+C_{\sigma} \nu^{-\f32} \tt^{-1} \bigl(\ln\f{1+t}{1+t_0}\bigr)^{\f\sigma2+\f{1}4} E(t)^\f12 D(t),
\end{align*}
which gives rise to \eqref{eq:lem4}. This finishes the proof of Lemma \ref{S3lem6}.
\end{proof}

\begin{lem}\label{S3lem7}
{   Under the assumptions of Lemma \ref{S3lem6}, we have
\begin{equation}\label{eq:lem5}
\begin{aligned}
(1+t) &\f{d}{dt}\Bigl(\bigl(\ln\f{1+t}{1+t_0}\bigr)^2\|\p_Y^2\Omega\|_{L^2(m)}^2\Bigr) +2\bigl(\ln\f{1+t}{1+t_0}\bigr)^2\tt^{-2}\|\p_X\p_Y^2\Omega\|_{L^2(m)}^2\\
&+2\bigl(\ln\f{1+t}{1+t_0}\bigr)^2\|\p_Y^3\Omega\|_{L^2(m)}^2
\leq 2\bigl(\ln\f{1+t}{1+t_0}\bigr)\|\p_Y^2\Omega\|_{L^2(m)}^2\\
&+4 \bigl(\ln\f{1+t}{1+t_0}\bigr)^2\|\p_X\p_Y\Omega\|_{L^2(m)}\|\p_Y^2\Omega\|_{L^2(m)}\\
&+C_m\bigl(1+{\nu}^{-\f32}{\|\Omega_0\|_{L^2(m)}}\bigr) \bigl(\ln \f{1+t}{1+t_0}\bigr)^\f12 D(t) \\
&+C_{\sigma}{\nu}^{-1}\tt^{-1} \bigl(\ln \f{1+t}{1+t_0}\bigr)^{\f\sigma2+\f14} E(t)^\f12D(t).
\end{aligned}\end{equation}
}\end{lem}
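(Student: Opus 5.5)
We follow the template of Lemmas \ref{S3lem4} and \ref{S3lem6}. We apply $\p_Y^2$ to \eqref{eqs:Om} and take the $L^2(m)$ inner product with $\bigl(\ln\f{1+t}{1+t_0}\bigr)^2\p_Y^2\Omega$. Differentiating the logarithmic factor, via $(1+t)\f{d}{dt}\bigl(\ln\f{1+t}{1+t_0}\bigr)^2=2\ln\f{1+t}{1+t_0}$, produces the term $2\bigl(\ln\f{1+t}{1+t_0}\bigr)\|\p_Y^2\Omega\|_{L^2(m)}^2$ on the right. The remaining work is to bound $\int\p_Y^2(\cL_t\Omega+\cN_t\Omega)\,\p_Y^2\Omega\,b^{2m}$.

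For the linear part we iterate \eqref{[py;L]} to get
\[
\p_Y^2\cL_t=\cL_t\p_Y^2-2\p_X\p_Y+\p_Y^2 .
\]
We then apply \eqref{eq5.18} to $\p_Y^2\Omega$. This requires a boundary condition for $\p_Y^2\Omega$ in the integration by parts. From the equation at $Y=0$, using $\Omega|_{Y=0}=0$ and $\Delta_t^{-1}\Omega|_{Y=0}=0$, we get $\p_Y^2\Omega|_{Y=0}=0$, as already used in Lemma \ref{S3lem4}; the nonlinear term vanishes at $Y=0$ since $\p_X\Delta_t^{-1}\Omega=\p_X\Omega=0$ there. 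This yields the dissipation $-\tt^{-2}\|\p_X\p_Y^2\Omega\|^2-\|\p_Y^3\Omega\|^2$, an error $C_m\|\p_Y^2\Omega\|^2_{L^2(m)}$, and the cross term $2\|\p_X\p_Y\Omega\|\|\p_Y^2\Omega\|$. After multiplying by $2$ and by the log factor, these give exactly the stated terms. The error $C_m(\ln)^2\|\p_Y^2\Omega\|^2$ is absorbed into $(\ln)^{1/2}D(t)$ because $\ln\f{1+t}{1+t_0}\le 1$.

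For the nonlinear part, $\p_Y^2\cN_t\Omega$ expands by Leibniz into three groups:
\begin{itemize}
\item[(i)] terms with all derivatives on $\Omega$, namely $\p_Y\Delta_t^{-1}\Omega\,\p_X\p_Y^2\Omega-\p_X\Delta_t^{-1}\Omega\,\p_Y^3\Omega$. These are handled by integration by parts, exactly as in \eqref{eq5.19} and in $I_1$, using \eqref{eq:lem3.1a}, and give $C_m\nu^{-3/2}\|\Omega_0\|\,\|\p_Y^2\Omega\|^2$.
\item[(ii)] one derivative on the stream function: $\p_Y^2\Delta_t^{-1}\Omega\,\p_X\p_Y\Omega-\p_X\p_Y\Delta_t^{-1}\Omega\,\p_Y^2\Omega$.
\item[(iii)] two derivatives on the stream function: $\p_Y^3\Delta_t^{-1}\Omega\,\p_X\Omega-\p_X\p_Y^2\Delta_t^{-1}\Omega\,\p_Y\Omega$.
\end{itemize}

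For (ii) we use either the $L^2$ bounds \eqref{eq:lem3.1b} together with the anisotropic interpolation \eqref{Sobolev in x}--\eqref{Sobolev in y}, as in $I_2$, or the $L^\infty$ bound \eqref{eq5.14a}. Each resulting product is distributed among the weighted pieces of $D(t)$. The $(\ln)^{-1/2}$ loss is compensated by the prefactor $(\ln)^2$, which leaves a net $(\ln)^{1/2}D(t)$ contribution.

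For (iii) the stream-function factor must be placed in $L^\infty$, using \eqref{eq5.14a} with $\p_Y^3\Delta_t^{-1}$ and the analogous bound for $\p_X\p_Y^2\Delta_t^{-1}f$, which follows from \eqref{eq5.14} applied to $\p_Y f$. These factors are products of $\|\p_Y^2\Omega\|$, $\|\p_X\p_Y^2\Omega\|$ (respectively $\|\p_Y\Omega\|$, $\|\p_X\p_Y\Omega\|$) with powers $\f12\pm\sigma$ and weight $\tt^\sigma$. Combining with $\tt^{-5/2}$ and distributing the factors, one into $E^{1/2}$ (for instance $(\ln)\|\p_Y^2\Omega\|$) and the others into $D$ with the appropriate $\tt^{-1}$ and $\ln$ weights, yields $\tt^{-1}(\ln)^{\sigma/2+1/4}E^{1/2}D$.

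The main obstacle is step (iii), the bookkeeping of log powers. The factor $\tt^{-2}\|\p_X\p_Y^2\Omega\|^2$ carries weight $c_4(\ln)^2$ in $D$, while $\|\p_X\Omega\|^2$ carries weight $(\ln)^2$ and $\|\p_X\p_Y\Omega\|^2$ carries $(\ln)^3$. I would first match exponents exactly as in the $A_2$ estimate of Lemma \ref{S3lem6}, checking that the total power of $\ln$ after Hölder is at least $\f\sigma2+\f14$. If it falls short, I would trade a factor of $\tt^{-1}$ for the needed logarithmic gain, since $t\in[t_0,2t_0]$. Finally, collecting (i)--(iii) with the linear estimate gives \eqref{eq:lem5}.
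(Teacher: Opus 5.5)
Your overall plan is the paper's proof: the same commutator $\p_Y^2\cL_t=\cL_t\p_Y^2-2\p_X\p_Y+\p_Y^2$ with $\p_Y^2\Omega|_{Y=0}=0$, and the same three-way split of $\p_Y^2\cN_t\Omega$. For group (ii) the paper uses exactly your first option, \eqref{eq:lem3.1b} with \eqref{Sobolev in x}--\eqref{Sobolev in y}. Writing $\theta=\ln\frac{1+t}{1+t_0}$, the loss there is $\theta^{-3/2}$, not $\theta^{-1/2}$, and that is what leaves $\theta^{1/2}D(t)$ after the prefactor $\theta^2$.

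\textbf{The gap is in step (iii).} You assert
\[
\|\p_X\p_Y^2\Delta_t^{-1}\Omega\|_{L^\infty}\lesssim (1+t)^{\sigma}\|\p_Y\Omega\|^{\frac12+\sigma}\|\p_X\p_Y\Omega\|^{\frac12-\sigma}.
\]
This is false. Under $\Omega\mapsto\Omega(\lambda\,\cdot)$ with $\lambda\to\infty$, the left side grows like $\lambda$ and the right side like $\lambda^{\frac12-\sigma}$. That right-hand side is the one for $\p_Y^2\Delta_t^{-1}$ (first line of \eqref{eq5.14a}), so it is one derivative short. It also does not follow from \eqref{eq5.14} applied to $\p_Y f$. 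That estimate controls $\p_X\Delta_t^{-1}\p_Y f$ and $\p_Y\Delta_t^{-1}\p_Y f$, not $\p_X\p_Y^2\Delta_t^{-1}f$. Moreover, $\p_Y$ does not commute with the Dirichlet inverse on $\R^2_+$, as the remark after Lemma \ref{S3lem2} points out.

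\textbf{The fix is to commute $\p_Y^2$ instead.} Since $\p_Y^2\Delta_t^{-1}\Omega=\Omega-(1+t)^{-2}\p_X^2\Delta_t^{-1}\Omega$ vanishes at $Y=0$, one has $\p_Y^2\Delta_t^{-1}\Omega=\Delta_t^{-1}\p_Y^2\Omega$. Applying \eqref{eq5.14} to $\p_Y^2\Omega$ then gives
\[
\|\p_X\p_Y^2\Delta_t^{-1}\Omega\|_{L^\infty}\le C_\sigma(1+t)^{1+\sigma}\|\p_Y^2\Omega\|_{L^2(1)}^{\frac12+\sigma}\|\p_X\p_Y^2\Omega\|_{L^2(1)}^{\frac12-\sigma}.
\]
The extra factor $(1+t)$ is harmless because this term is paired with $\|\p_Y\Omega\|_{L^2(m)}$, which has weight $1$ in $D$. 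The other term, $\p_Y^3\Delta_t^{-1}\Omega$, is paired with $(1+t)^{-1}\|\p_X\Omega\|_{L^2(m)}$. Hence, with all norms in $L^2(m)$,
\[
B_3\le C_\sigma\nu^{-3/2}(1+t)^{\sigma-\frac32}\|\p_Y^2\Omega\|^{\frac32+\sigma}\|\p_X\p_Y^2\Omega\|^{\frac12-\sigma}\bigl((1+t)^{-1}\|\p_X\Omega\|+\|\p_Y\Omega\|\bigr).
\]
Now bound one factor $\theta\|\p_Y^2\Omega\|$ by $c_4^{-1/2}E^{1/2}$. Bound the remaining factors by $D$ through $c_1\theta\|\p_Y^2\Omega\|^2$ and $c_4\theta^2(1+t)^{-2}\|\p_X\p_Y^2\Omega\|^2$. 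This gives $(1+t)^{-1}\theta^{\frac\sigma2-\frac74}E^{1/2}D$, which becomes exactly $\theta^{\frac\sigma2+\frac14}$ after multiplying by $\theta^2$.

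So no fallback is needed. The fallback you propose would not work anyway: $(1+t)\theta\to0$ as $t\to t_0$, so powers of $(1+t)^{-1}$ cannot buy back negative powers of $\theta$.
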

\begin{proof}
By applying $\p_Y^2$ to \eqref{eqs:Om} and then taking $L^2(m)$ inner product of the resulting equation with $\bigl(\ln\f{1+t}{1+t_0}\bigr)^2\p_Y^2\Omega$, we compute
\begin{equation}\label{eq5.37}
\begin{split}
\f{1+t}2 \f{d}{dt}\Bigl(\bigl(\ln\f{1+t}{1+t_0}\bigr)^2&\|\p_Y\Omega\|_{L^2(m)}^2\Bigr) =\bigl(\ln\f{1+t}{1+t_0}\bigr)\|\p_Y^2\Omega\|_{L^2(m)}^2\\
&+\bigl(\ln\f{1+t}{1+t_0}\bigr)^2\int_{\R^2_+} \p_Y^2\Bigl(\cL_t\Omega+\cN_t \Omega\Bigr)\, \p_Y^2\Omega\, b^{2m}dXdY.
\end{split}
\end{equation}

By using \eqref{[py;L]} twice, we find
$$
\p_Y^2\cL_t=\cL_t\p_Y^2 -2\p_X\p_Y+\p_Y^2,
$$
from which and \eqref{eq5.18}, we deduce that 
\begin{equation}\label{eq5.38}
\begin{aligned}
&\int_{\R^2_+} \p_Y^2\cL_t\Omega\, \p_Y^2\Omega\, b^{2m}dXdY\\
\leq &\int_{\R^2_+} \cL_t\p_Y^2\Omega\, \p_Y^2\Omega\, b^{2m}dXdY+2\|\p_X\p_Y\Omega\|_{L^2(m)}\|\p_Y^2\Omega\|_{L^2(m)}+\|\p_Y^2\Omega\|_{L^2(m)}^2\\
\leq &-\tt^{-2}\|\p_X\p_Y^2\Omega\|_{L^2(m)}^2-\|\p_Y^3\Omega\|_{L^2(m)}^2+C_m \|\p_Y^2\Omega\|_{L^2(m)}^2\\
&+2\|\p_X\p_Y\Omega\|_{L^2(m)}\|\p_Y^2\Omega\|_{L^2(m)}.
\end{aligned}
\end{equation}
where we used again $\p_Y^2\Omega|_{Y=0}=0$ to do integration by parts.

To deal with the nonlinear part in \eqref{eq5.37}, we  decompose it as
\begin{align*}
&\int_{\R^2_+} \p_Y^2 \cN_t\Omega \,\p_Y^2\Omega\, b^{2m}dXdY\\=&\nu^{-\f32}(1+t)^{-\f52} \Bigl(\int_{\R^2_+} \bigl(\p_Y \Delta_t^{-1}\Omega\,\p_X\p_Y^2\Omega
- \p_X\Delta_t^{-1}\Omega\, \p_Y^3\Omega\bigr)\,\p_Y^2\Omega\,b^{2m}dXdY\\
&+2\int_{\R^2_+} \bigl(\p_Y^2 \Delta_t^{-1}\Omega\,\p_X\p_Y \Omega
- \p_X\p_Y\Delta_t^{-1}\Omega\, \p_Y^2\Omega\bigr)\,\p_Y^2\Omega\, b^{2m}dXdY\\
&+\int_{\R^2_+} \bigl(\p_Y^3 \Delta_t^{-1}\Omega\,\p_X \Omega
- \p_X\p_Y^2\Delta_t^{-1}\Omega\, \p_Y\Omega\bigr)\,\p_Y^2\Omega\, b^{2m}dXdY \Bigr)
\eqdef B_1+B_2+B_3.
\end{align*}
Along the same line as the derivation of \eqref{eq5.19}, we find
$$
B_1\leq C_{m}{\nu}^{-\f32} {\|\Omega_0\|_{L^2(m)}} \|\p_Y^2\Omega\|_{L^2(m)}^2.
$$
For  $B_2$, we write
\begin{align*}
B_2 \leq 2\nu^{-\f32} \tt^{-\f52}\|b^m\p_Y^2\Omega\|_{L^2_Y(L^\oo_X)}\bigl(&\|\p_Y^2 \Delta_t^{-1}\Omega\|_{L^2}\|b^m\p_X\p_Y\Omega\|_{L^2_X(L^\oo_Y)}\\
&+\|\p_X\p_Y\Delta_t^{-1}\Omega\|_{L^2}\|b^m\p_Y^2\Omega\|_{L^2_X(L^\oo_Y)}\bigr),
\end{align*}
from which,  \eqref{eq:lem3.1b}, \eqref{Sobolev in x} and \eqref{Sobolev in y}, we infer
\begin{align*}
B_2  \leq  &{C}\nu^{-\f32} \tt^{-\f52} \|\p_Y^2\Omega\|_{L^2(m)}^\f12\|\p_X\p_Y^2\Omega\|_{L^2(m)}^\f12
\Bigl( \tt\|\Omega_0\|_{L^2(m)}\|\p_X\p_Y\Omega\|_{L^2(m)}^{\f12}\|\p_X\p_Y^2\Omega\|_{L^2(m)}^{\f12}\\
&\qquad+  \tt^2\|\Omega_0\|_{L^2(m)} \|\p_Y^2\Omega\|_{L^2(m)}^\f12\|\p_Y^3\Omega\|_{L^2(m)}^\f12\Bigr)\\
\leq  &{C}\nu^{-\f32}{\|\Omega_0\|_{L^2(m)}} \bigl(\ln\f{1+t}{1+t_0}\bigr)^{-\f32}\Bigl( \bigl(\ln\f{1+t}{1+t_0}\bigr)^\f12 \|\p_Y^2\Omega\|_{L^2(m)}\Bigr)^\f12\Bigl(\bigl(\ln\f{1+t}{1+t_0}\bigr)\tt^{-1}\\
&\quad\times\|\p_X\p_Y^2\Omega\|_{L^2(m)}\Bigr)^\f12\Bigl( \bigl((\ln\f{1+t}{1+t_0})^\f12\tt^{-1}\|\p_X\p_Y\Omega\|_{L^2(m)}\bigr)^{\f12}\bigl( (\ln\f{1+t}{1+t_0})\tt^{-1}\\
&\qquad\times\|\p_X\p_Y^2\Omega\|_{L^2(m)}\bigr)^{\f12}+  \bigl( (\ln\f{1+t}{1+t_0})^\f12 \|\p_Y^2\Omega\|_{L^2(m)}\bigr)^\f12\bigl( (\ln\f{1+t}{1+t_0})\|\p_Y^3\Omega\|_{L^2(m)}\bigr)^\f12\Bigr)\\
\leq &{ C_m}\nu^{-\f32}  {\|\Omega_0\|_{L^2(m)}}  \bigl(\ln\f{1+t}{1+t_0}\bigr)^{-\f32} D(t).
\end{align*}
Finally, we get, by using \eqref{eq5.14a}, that
\begin{align*}
B_3  \leq & \nu^{-\f32} \tt^{-\f52}\bigl(\|\p_Y^3 \Delta_t^{-1}\Omega\|_{L^\oo}\|\p_X\Omega\|_{L^2(m)}+\|\p_X\p_Y^2\Delta_t^{-1}\Omega\|_{L^\oo}\|\p_Y\Omega\|_{L^2(m)}\bigr)\|\p_Y^2\Omega\|_{L^2(m)}\\
\leq &{C_\sigma}\nu^{-\f32} \tt^{\sigma-\f32} \|\p_Y^2\Omega\|_{L^2(m)}\|\p_Y^2\Omega\|_{L^2(1)}^{\f12+\sigma}\|\p_X\p_Y^2\Omega\|_{L^2(1)}^{\f12-\sigma}\\
&\times\bigl(  \tt^{-1}\|\p_X\Omega\|_{L^2(m)}+\|\p_Y\Omega\|_{L^2(m)}\bigr)\\
\leq &{C_\sigma}\nu^{-\f32} \tt^{-1}\bigl(\ln\f{1+t}{1+t_0}\bigr)^{\f\sigma2-\f{7}4} \Bigl(\bigl(\ln\f{1+t}{1+t_0}\bigr)\|\p_Y^2\Omega\|_{L^2(m)}\Bigr)
 \Bigl(\bigl(\ln\f{1+t}{1+t_0}\bigr)^\f12\|\p_Y^2\Omega\|_{L^2(m)}\Bigr)^{\f12+\sigma}\\
&\quad\times\Bigl(\bigl(\ln\f{1+t}{1+t_0}\bigr)\tt^{-1}\|\p_X\p_Y^2\Omega\|_{L^2(m)}\Bigr)^{\f12-\sigma}\bigl(  \tt^{-1}\|\p_X\Omega\|_{L^2(m)}+\|\p_Y\Omega\|_{L^2(m)} \bigr)\\
\leq &{C_{\sigma}}\nu^{-\f32} \tt^{-1} \bigl(\ln\f{1+t}{1+t_0}\bigr)^{\f\sigma2-\f{7}4} E(t)^\f12 D(t).
\end{align*}
As a consequence, we obtain
\begin{equation}\label{eq5.39}
\begin{aligned}
\int_{\R^2_+} \p_Y^2 \cN_t\Omega\, \p_Y^2\Omega\, b^{2m}dXdY \leq & {C_m}{\nu}^{-\f32} {\|\Omega_0\|_{L^2(m)}}\bigl(\ln \f{1+t}{1+t_0}\bigr)^{-\f32} D(t) \\
&+{C_{\sigma}}{\nu}^{-\f32}\tt^{-1} \bigl(\ln \f{1+t}{1+t_0}\bigr)^{\f\sigma2-\f74} E(t)^\f12D(t) .
\end{aligned}
\end{equation}

By substituting \eqref{eq5.38} and \eqref{eq5.39} into \eqref{eq5.37}, we conclude the proof of \eqref{eq:lem5}.
\end{proof}

\begin{lem}\label{S3lem8}
{  Let $m>1$. For $t_0\leq t\leq 2t_0$, one has
\begin{equation}\label{eq:lem6}
\begin{aligned}
(&1+t) \f{d}{dt}\Bigl(\bigl(\ln\f{1+t}{1+t_0}\bigr)^4\|\p_X\p_Y\Omega\|_{L^2(m)}^2\Bigr) +2\bigl(\ln\f{1+t}{1+t_0}\bigr)^4\tt^{-2}\|\p_X^2\p_Y\Omega\|_{L^2(m)}^2\\
&+2\bigl(\ln\f{1+t}{1+t_0}\bigr)^4\|\p_X\p_Y^2\Omega\|_{L^2(m)}^2
\leq 2\bigl(\ln\f{1+t}{1+t_0}\bigr)^3\|\p_X\p_Y\Omega\|_{L^2(m)}^2\\
&+4\bigl(\ln\f{1+t}{1+t_0}\bigr)^4\|\p_X^2\Omega\|_{L^2(m)}\|\p_X\p_Y\Omega\|_{L^2(m)}+C_m \bigl(1+\nu^{-\f32}{\|\Omega_0\|_{L^2(m)}}\bigr) \bigl(\ln\f{1+t}{1+t_0}\bigr)^{\f12} D(t).
\end{aligned}\end{equation}
}\end{lem}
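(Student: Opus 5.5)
The estimate follows the template of Lemmas \ref{S3lem4} and \ref{S3lem6}. We apply $\p_X\p_Y$ to \eqref{eqs:Om} and take the $L^2(m)$ inner product with $\bigl(\ln\f{1+t}{1+t_0}\bigr)^4\p_X\p_Y\Omega$. The time derivative of the logarithmic weight produces exactly the term $2\bigl(\ln\f{1+t}{1+t_0}\bigr)^3\|\p_X\p_Y\Omega\|_{L^2(m)}^2$ on the right-hand side. The remaining terms are $\bigl(\ln\f{1+t}{1+t_0}\bigr)^4$ times the linear contribution $\int\p_X\p_Y\cL_t\Omega\,\p_X\p_Y\Omega\, b^{2m}$ and the nonlinear contribution from $\cN_t$.

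For the linear part we combine the commutators \eqref{[py;L]} and \eqref{[px;L]}, which give
\[
\p_X\p_Y\cL_t=\cL_t\p_X\p_Y-\p_X^2+2\p_X\p_Y .
\]
We then apply \eqref{eq5.18} to $\p_X\p_Y\Omega$. The boundary condition needed for the integration by parts is $\p_X\p_Y^2\Omega|_{Y=0}=0$. It follows from $\Omega|_{Y=0}=0$ and the equation, because $\cN_t\Omega$ vanishes on the boundary together with the needed derivative, exactly as was used for $\p_Y^2\Omega$. This yields the dissipation terms $-\tt^{-2}\|\p_X^2\p_Y\Omega\|_{L^2(m)}^2-\|\p_X\p_Y^2\Omega\|_{L^2(m)}^2$, an error $C_m\|\p_X\p_Y\Omega\|_{L^2(m)}^2$, and the cross term $\|\p_X^2\Omega\|_{L^2(m)}\|\p_X\p_Y\Omega\|_{L^2(m)}$. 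After multiplying by 2, the cross term gives the displayed term with constant $4$. The error term $C_m\bigl(\ln\f{1+t}{1+t_0}\bigr)^4\|\p_X\p_Y\Omega\|^2$ is bounded by $C\bigl(\ln\f{1+t}{1+t_0}\bigr)D(t)$, since $D$ contains $c_3\bigl(\ln\f{1+t}{1+t_0}\bigr)^3\|\p_X\p_Y\Omega\|^2$.

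For the nonlinear part we expand $\p_X\p_Y(\p_Y\Delta_t^{-1}\Omega\,\p_X\Omega-\p_X\Delta_t^{-1}\Omega\,\p_Y\Omega)$ by Leibniz's rule. The top-order terms, where both derivatives fall on $\Omega$, are transport terms. We integrate them by parts using $\Delta_t^{-1}\Omega|_{Y=0}=0$ as in \eqref{eq5.19}, and they give $C_m\nu^{-3/2}\|\Omega_0\|_{L^2(m)}\|\p_X\p_Y\Omega\|^2$. The terms with one derivative on the stream function and one on $\Omega$ are handled as follows:
\begin{itemize}
\item $\p_Y^2\Delta_t^{-1}\Omega$ and $\p_X\p_Y\Delta_t^{-1}\Omega$ go in $L^2$ via \eqref{eq:lem3.1b}, with the anisotropic Sobolev bounds \eqref{Sobolev in x}--\eqref{Sobolev in y} on the $\Omega$ factors, as for $I_2$ and $B_2$;
\item $\p_X^2\Delta_t^{-1}\Omega$ goes in $L^\oo$ via \eqref{eq5.14}.
\end{itemize}
The terms with two derivatives on the stream function and none on $\Omega$, such as $\p_X\p_Y^2\Delta_t^{-1}\Omega\,\p_X\Omega$ and $\p_X^2\p_Y\Delta_t^{-1}\Omega\,\p_Y\Omega$, are controlled by $L^\oo$ bounds of Lemma~\ref{S3lem2} type, writing $\p_X\p_Y^2\Delta_t^{-1}\Omega$ as $\p_X$ of the quantity controlled in \eqref{eq5.14a}. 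Each product is then redistributed among the logarithmic weights appearing in $E$ and $D$.

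The main obstacle is this weight bookkeeping. Every nonlinear term must be bounded by $(\ln)^{1/2}D$ times a constant depending only on $\nu^{-3/2}\|\Omega_0\|_{L^2(m)}$, with no negative powers of the logarithm left over. The available factor $\tt^{-5/2}$, against growth of at most $\tt^{2}$ from \eqref{eq:lem3.1b} and $\tt^{5/2}$ from \eqref{eq:lem3.1a}, supplies the smallness needed. I would first check the worst term, $\p_X^2\p_Y\Delta_t^{-1}\Omega\,\p_Y\Omega\,\p_X\p_Y\Omega$. Here one cannot absorb a full $\tt^{-1}$ from $\|\p_X^2\p_Y\Omega\|$. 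If this fails to close, I would accept an extra term of the form $\tt^{-1}(\ln)^{\sigma/2+1/4}E^{1/2}D$, as already appears in Lemmas~\ref{S3lem6} and \ref{S3lem7}. That term is harmless once $E$ is bounded.
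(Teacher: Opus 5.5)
Your linear part matches the paper's. You use the commutator $\p_X\p_Y\cL_t=\cL_t\p_X\p_Y-\p_X^2+2\p_X\p_Y$, then \eqref{eq5.18} for $\p_X\p_Y\Omega$. The boundary term vanishes because $\p_Y^2\Omega|_{Y=0}=0$, hence $\p_X\p_Y^2\Omega|_{Y=0}=0$.

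\textbf{The gap: the nonlinear part.} Write $\theta=\ln\f{1+t}{1+t_0}$. After your Leibniz expansion you propose to bound three terms with the stream-function factor in $L^\infty$ via Lemma~\ref{S3lem2}: $\p_X^2\Delta_t^{-1}\Omega\,\p_Y^2\Omega$, $\p_X\p_Y^2\Delta_t^{-1}\Omega\,\p_X\Omega$ and $\p_X^2\p_Y\Delta_t^{-1}\Omega\,\p_Y\Omega$.
\begin{itemize}
\item The bounds \eqref{eq5.14}--\eqref{eq5.14a} control these by norms of derivatives of $\Omega(t)$ itself, not by $\|\Omega_0\|_{L^2(m)}$.
\item So each such term is trilinear in quantities controlled by $E$ and $D$. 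The best you can get is something like $C_\sigma\nu^{-\f32}\tt^{-1}\theta^{\beta}E(t)^{\f12}D(t)$.
\item The inequality \eqref{eq:lem6} contains no such term. Its only nonlinear contribution is $C_m\nu^{-\f32}\|\Omega_0\|_{L^2(m)}\theta^{\f12}D(t)$, which is linear in $D$.
\item Redistributing logarithmic weights cannot fix this. At this stage there is no bound on $\|\p_X\Omega(t)\|_{L^2(m)}$ or $\|\p_Y\Omega(t)\|_{L^2(m)}$ in terms of $\Omega_0$; that is exactly what Proposition~\ref{S3prop3} later extracts from these lemmas.
\end{itemize}
Your fallback therefore proves a weaker inequality. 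It would suffice for Proposition~\ref{S3prop2}, which carries an $E^{\f12}D$ term anyway, but it is not the stated lemma.

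\textbf{The missing step.} Integrate by parts once in $X$ before estimating; there is no boundary term since $X\in\R$:
\[
\int_{\R^2_+}\p_X\p_Y\cN_t\Omega\,\p_X\p_Y\Omega\,b^{2m}dXdY=-\int_{\R^2_+}\p_Y\cN_t\Omega\,\bigl(\p_X^2\p_Y\Omega\,b^{2m}+2m\,Xb^{2m-2}\p_X\p_Y\Omega\bigr)dXdY .
\]
The whole nonlinear contribution is then controlled by $C_m\|\p_Y\cN_t\Omega\|_{L^2(m)}\|\p_X^2\p_Y\Omega\|_{L^2(m)}$.
\begin{itemize}
\item $\p_Y\cN_t\Omega$ contains only $\p_X\Delta_t^{-1}\Omega$ and $\p_Y\Delta_t^{-1}\Omega$, which go in $L^\infty$ by \eqref{eq:lem3.1a}.
\item It also contains $\p_X\p_Y\Delta_t^{-1}\Omega$ and $\p_Y^2\Delta_t^{-1}\Omega$, which go in $L^2$ by \eqref{eq:lem3.1b}, with $\|b^m\p_X\Omega\|_{L^\infty}$ and $\|b^m\p_Y\Omega\|_{L^\infty}$ handled by \eqref{Sobolev in x}--\eqref{Sobolev in y}.
\item Every piece therefore carries the factor $\nu^{-\f32}\|\Omega_0\|_{L^2(m)}$.
\item The $X$-derivative moved onto the test function lands on $\p_X^2\p_Y\Omega$, which $D$ controls strongly. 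The $c_7$ term gives $\|\p_X^2\p_Y\Omega\|_{L^2(m)}\lesssim\theta^{-3}D^{\f12}$. The $c_5$ term gives $\|\p_X^2\p_Y\Omega\|_{L^2(m)}\lesssim\tt\,\theta^{-2}D^{\f12}$, used for the pieces of $\p_Y\cN_t\Omega$ that come with $\tt^{-1}$.
\end{itemize}
This yields $C_m\nu^{-\f32}\|\Omega_0\|_{L^2(m)}\theta^{-\f72}D$, and the prefactor $\theta^4$ gives exactly the $\theta^{\f12}D$ term.

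Equivalently, you can keep your expansion and integrate by parts in $X$ only the three offending terms. For example, $\p_X^2\Delta_t^{-1}\Omega\,\p_Y^2\Omega$ becomes $\p_X\Delta_t^{-1}\Omega$, which is in $L^\infty$ with size $\tt^{\f52}\|\Omega_0\|_{L^2(m)}$. It is paired against $\|\p_X\p_Y^2\Omega\|\|\p_X\p_Y\Omega\|+\|\p_Y^2\Omega\|\|\p_X^2\p_Y\Omega\|\lesssim\theta^{-\f72}D$. Your handling of the transport term and of $\p_Y^2\Delta_t^{-1}\Omega\,\p_X^2\Omega$ does close, and the two $\p_X\p_Y\Delta_t^{-1}\Omega\,\p_X\p_Y\Omega$ terms cancel.

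\textbf{A minor arithmetic point.} Since $(1+t)\f{d}{dt}\theta^4=4\theta^3$, the weight derivative produces $4\theta^3\|\p_X\p_Y\Omega\|_{L^2(m)}^2$, not $2\theta^3$. After doubling, the cross term has coefficient $2$, not $4$. The constants in the displayed statement appear swapped, which is harmless in Proposition~\ref{S3prop2} since $c_5\ll c_3$.
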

\begin{proof} We first get,
by applying $\p_X\p_Y$ to \eqref{eqs:Om} and then taking  $L^2(m)$ inner product of the resulting equation with $\bigl(\ln{t}/{t_0}\bigr)^4\p_X\p_Y\Omega$, that
\begin{equation}\label{eq5.41}
    \begin{aligned}
\f{1+t}2 \f{d}{dt}\Bigl(&\bigl(\ln\f{1+t}{1+t_0}\bigr)^4\|\p_X\p_Y\Omega(t)\|_{L^2(m)}^2\Bigr) =2 \bigl(\ln\f{1+t}{1+t_0}\bigr)^3\|\p_X\p_Y\Omega\|_{L^2(m)}^2\\
&+\bigl(\ln\f{1+t}{1+t_0}\bigr)^4\int_{\R^2_+} \p_X\p_Y\bigl(\cL_t\Omega+\cN_t \Omega\bigr)\, \p_X\p_Y\Omega \,b^{2m}dXdY.
\end{aligned}
\end{equation}

It follows from \eqref{[px;L]} and \eqref{[py;L]} that
\begin{equation}\label{[pypx;L]}
\begin{aligned}
\p_X\p_Y\cL_t=\cL_t\p_X\p_Y -\p_X^2+2\p_X\p_Y,
\end{aligned}
\end{equation}
from which and \eqref{eq5.18}, we deduce that
\begin{equation}\label{eq5.43}
\begin{aligned}
&\int_{\R^2_+} \p_X\p_Y\cL_t\Omega\, \p_X\p_Y\Omega\, b^{2m} dXdY\\
&\leq \int_{\R^2_+} \cL_t\p_X\p_Y\Omega\, \p_X\p_Y\Omega\, b^{2m}dXdY +\|\p_X^2\Omega\|_{L^2(m)}\|\p_X\p_Y\Omega\|_{L^2(m)}+2\|\p_X\p_Y\Omega\|_{L^2(m)}^2 \\
&\leq -\tt^{-2}\|\p_X^2\p_Y\Omega\|_{L^2(m)}^2-\|\p_X\p_Y^2\Omega\|_{L^2(m)}^2+C_m \|\p_X\p_Y\Omega\|_{L^2(m)}^2\\
&\quad+\|\p_X^2\Omega\|_{L^2(m)}\|\p_X\p_Y\Omega\|_{L^2(m)}.
\end{aligned}
\end{equation}

To deal with the nonlinear part in \eqref{eq5.41}, we get, by using integration by parts and Poincare's inequality, that
\begin{align*}
\int_{\R^2_+} \p_X \p_Y\cN_t\Omega\, \p_X\p_Y\Omega\, b^{2m}dXdY
&=-\int_{\R^2_+} \p_Y\cN_t\Omega \,\bigl(\p_X^2\p_Y\Omega b^{2m}+2m\,\p_X\p_Y \Omega\, b^{2m-2}X\bigr)dXdY\\
&\leq C_m\|\p_Y\cN_t\Omega \|_{L^2(m)} \bigl( \|\p_X^2\p_Y\Omega\|_{L^2(m)}+\|\p_X\p_Y\Omega\|_{L^2(m-1)} \bigr)\\
&\leq C_m \|\p_Y\cN_t\Omega \|_{L^2(m)}  \|\p_X^2\p_Y\Omega\|_{L^2(m)}.
\end{align*}
 Whereas we deduce from \eqref{eq:lem3.1a}, \eqref{eq:lem3.1b}, \eqref{Sobolev in x} and \eqref{Sobolev in y} that
\begin{align*}
\|\p_Y\cN_t\Omega\|_{L^2(m)}\lesssim &\nu^{-\f32}\tt^{-\f52} \Bigl( \|\p_X\Delta_t^{-1}\Omega\|_{L^\oo}\|\p_Y^2\Omega\|_{L^2(m)}+\|\p_Y\Delta_t^{-1}\Omega\|_{L^\oo}\|\p_X\p_Y\Omega\|_{L^2(m)} \\
&+\|\p_X\p_Y\Delta_t^{-1}\Omega\|_{L^2}\|b^m\p_Y\Omega\|_{L^\oo}+\|\p_Y^2\Delta_t^{-1}\Omega\|_{L^2}\|b^m\p_X\Omega\|_{L^\oo}\Bigr)\\
\lesssim & \nu^{-\f32}{\|\Omega_0\|_{L^2(m)}} \Bigl( \|\p_Y^2\Omega\|_{L^2(m)} +\tt^{-1}\|\p_X\p_Y\Omega\|_{L^2(m)}\\
&+\tt^{-\f12}\|\p_X\p_Y^2\Omega\|_{L^2(m)}^\f14 \|\p_X\p_Y\Omega\|_{L^2(m)}^\f14 \|\p_Y^2\Omega\|_{L^2(m)}^\f14 \|\p_Y\Omega\|_{L^2(m)}^\f14\\
&+\tt^{-\f32}\|\p_X^2\p_Y\Omega\|_{L^2(m)}^\f14 \|\p_X^2\Omega\|_{L^2(m)}^\f14 \|\p_X\p_Y\Omega\|_{L^2(m)}^\f14 \|\p_X\Omega\|_{L^2(m)}^\f14 \Bigr)\\
\lesssim & \nu^{-\f32} {\|\Omega_0\|_{L^2(m)}}\Bigl( \|\p_Y^2\Omega\|_{L^2(m)} +\tt^{-1}\|\p_X\p_Y\Omega\|_{L^2(m)}\\
&+\bigl(\ln\f{1+t}{1+t_0}\bigr)^\f12\tt^{-1}\|\p_X\p_Y^2\Omega\|_{L^2(m)}+\bigl(\ln\f{1+t}{1+t_0}\bigr)^{-\f12}\|\p_Y\Omega\|_{L^2(m)}\Bigr)\\
&+\nu^{-\f32}{\|\Omega_0\|_{L^2(m)}}\tt^{-1}\Bigl(\tt^{-1}\|\p_X^2\Omega\|_{L^2(m)}+\|\p_X\p_Y\Omega\|_{L^2(m)}\\
&+\bigl(\ln\f{1+t}{1+t_0}\bigr)^\f12\tt^{-1}\|\p_X^2\p_Y\Omega\|_{L^2(m)}+\bigl(\ln\f{1+t}{1+t_0}\bigr)^{-\f12}\|\p_X\Omega\|_{L^2(m)} \Bigr).
\end{align*}
As a result, it comes out
\begin{align*}
\int_{\R^2_+}& \p_X \p_Y\cN_t\Omega\, \p_X\p_Y\Omega\, b^{2m}dXdY \leq C_m \nu^{-\f32}\|\Omega_0\|_{L^2(m)} \bigl(\ln\f{1+t}{1+t_0}\bigr)^{-\f72}\\
 & \quad\times\Bigl(\bigl(\ln\f{1+t}{1+t_0}\bigr)^6 \|\p_X^2\p_Y\Omega\|_{L^2(m)}^2  \Bigr)^\f12 \Bigl(\|\p_Y\Omega\|_{L^2(m)}^2+\bigl(\ln\f{1+t}{1+t_0}\bigr) \|\p_Y^2\Omega\|_{L^2(m)}^2 \\
&\qquad+\bigl(\ln\f{1+t}{1+t_0}\bigr)\tt^{-2}\|\p_X\p_Y\Omega\|_{L^2(m)}^2+\bigl(\ln\f{1+t}{1+t_0}\bigr)^2\tt^{-2}\|\p_X\p_Y^2\Omega\|_{L^2(m)}^2\Bigr)^\f12\\
&+C_m \nu^{-\f32}{\|\Omega_0\|_{L^2(m)}} \bigl(\ln\f{1+t}{1+t_0}\bigr)^{-\f72} \Bigl(\bigl(\ln\f{1+t}{1+t_0}\bigr)^4 \tt^{-2} \|\p_X^2\p_Y\Omega\|_{L^2(m)}^2  \Bigr)^\f12 \\
&\quad\times \Bigl( \bigl(\ln\f{1+t}{1+t_0}\bigr)^3\bigl(\tt^{-2}\|\p_X^2\Omega\|_{L^2(m)}^2+\|\p_X\p_Y\Omega\|_{L^2(m)}^2 \bigr)\\
 &\qquad+\bigl(\ln\f{1+t}{1+t_0}\bigr)^2 \|\p_X\Omega\|_{L^2(m)}^2+\bigl(\ln\f{1+t}{1+t_0}\bigr)^4\tt^{-2}\|\p_X^2\p_Y\Omega\|_{L^2(m)}^2\Bigr)^\f12,
\end{align*}
from which and \eqref{def:D}, we infer
\begin{equation}\label{eq5.44}
\int_{\R^2_+} \p_X \p_Y\cN_t\Omega\, \p_X\p_Y\Omega\, b^{2m}dXdY \leq C_m \nu^{-\f32}{\|\Omega_0\|_{L^2(m)}}\bigl(\ln\f{1+t}{1+t_0}\bigr)^{-\f72} D(t).
\end{equation}

By substituting \eqref{eq5.43} and \eqref{eq5.44} into \eqref{eq5.41}, we  conclude the proof of \eqref{eq:lem6}.
\end{proof}

\begin{lem}\label{S3lem9}
{  Let $m>1$ and $0<\sigma<\f12$. Then for $t_0\leq t\leq 2t_0 $, we have
\begin{equation}\label{eq:lem7}
\begin{aligned}
&(1+t)\f{d}{dt}\Bigl( \bigl(\ln\f{1+t}{1+t_0}\bigr)^5 \bigl(\p_X^2 \Omega \big|\p_X\p_Y \Omega \bigr)_{L^2(m)} \Bigr)+\bigl(\ln\f{1+t}{1+t_0}\bigr)^5\|\p_X^2\Omega\|_{L^2(m)}^2\\
\leq & 5\bigl(\ln\f{1+t}{1+t_0}\bigr)^4\bigl(\p_X^2 \Omega \big|\p_X\p_Y \Omega \bigr)_{L^2(m)}+2\bigl(\ln\f{1+t}{1+t_0}\bigr)^5\tt^{-2}\|\p_X^3\Omega\|_{L^2(m)}\|\p_X^2\p_Y\Omega\|_{L^2(m)}\\
&+2\bigl(\ln\f{1+t}{1+t_0}\bigr)^5\|\p_X^2\p_Y\Omega\|_{L^2(m)}\|\p_X\p_Y^2\Omega\|_{L^2(m)}\\
&+C_m \bigl(1+{\nu}^{-\f32}{\|\Omega_0\|_{L^2(m)}} \bigr)\bigl(\ln \f{1+t}{1+t_0}\bigr)^{\f12} D(t)\\
&+C_{m,\sigma} \nu^{-\f32} \tt^{-1} \bigl(\ln \f{1+t}{1+t_0}\bigr)^{\f\sigma2+\f14} E(t)^\f12 D(t).
\end{aligned}\end{equation}
}\end{lem}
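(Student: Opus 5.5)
The plan mirrors Lemma \ref{S3lem5}, one $X$-derivative higher. Differentiating the weighted cross term in time and using \eqref{eqs:Om} gives
\begin{align*}
&(1+t)\f{d}{dt}\Bigl( \bigl(\ln\tfrac{1+t}{1+t_0}\bigr)^5 \bigl(\p_X^2 \Omega \big|\p_X\p_Y \Omega \bigr)_{L^2(m)} \Bigr)=5\bigl(\ln\tfrac{1+t}{1+t_0}\bigr)^4\bigl(\p_X^2 \Omega \big|\p_X\p_Y \Omega \bigr)_{L^2(m)}\\
&\quad+\bigl(\ln\tfrac{1+t}{1+t_0}\bigr)^5\Bigl(\bigl(\p_X^2(\cL_t+\cN_t)\Omega\big|\p_X\p_Y\Omega\bigr)_{L^2(m)}+\bigl(\p_X^2\Omega\big|\p_X\p_Y(\cL_t+\cN_t)\Omega\bigr)_{L^2(m)}\Bigr).
\end{align*}
For the linear part, I will use the commutators \eqref{[px;L]} and \eqref{[pypx;L]}. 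From $\p_X^2\cL_t=\cL_t\p_X^2+3\p_X^2$ and $\p_X\p_Y\cL_t=\cL_t\p_X\p_Y-\p_X^2+2\p_X\p_Y$, the term $-\p_X^2$ paired with $\p_X^2\Omega$ yields the good term $-\|\p_X^2\Omega\|_{L^2(m)}^2$. The remaining commutator contributions are bounded by $C\|\p_X^2\Omega\|_{L^2(m)}\|\p_X\p_Y\Omega\|_{L^2(m)}$.

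The symmetric part $(\cL_t\p_X^2\Omega|\p_X\p_Y\Omega)+(\p_X^2\Omega|\cL_t\p_X\p_Y\Omega)$ is handled by integrating by parts exactly as in the proof of \eqref{eq5.30}. This produces $2\tt^{-2}\|\p_X^3\Omega\|\|\p_X^2\p_Y\Omega\|+2\|\p_X^2\p_Y\Omega\|\|\p_X\p_Y^2\Omega\|$ plus weight-derivative terms $C_m\|\p_X^2\Omega\|_{L^2(m)}\|\p_X\p_Y\Omega\|_{L^2(m)}$. Boundary terms vanish since $\p_X^2\Omega|_{Y=0}=0$. The product $\|\p_X^2\Omega\|\|\p_X\p_Y\Omega\|$, multiplied by $(\ln)^5$, is bounded by $(\ln)^{1/2}D$ using the $c_3(\ln)^3\|\p_X\p_Y\Omega\|^2$ and $c_6(\ln)^5\|\p_X^2\Omega\|^2$ entries of $D$. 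The leftover power $(\ln)^{5-4}\ge(\ln)^{1/2}$ on the short window, and the constants $c_j$ are absorbed into $C_m$.

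For the nonlinear part I will move one $\p_X$ off $\p_X^2\cN_t\Omega$ onto $\p_X\p_Y\Omega\, b^{2m}$, reducing everything to $\|\p_X\cN_t\Omega\|_{L^2(m)}(\|\p_X^2\p_Y\Omega\|_{L^2(m)}+\|\p_X\p_Y\Omega\|_{L^2(m-1)})$ and $(\p_X^2\Omega|\p_X\p_Y\cN_t\Omega)$. Expand $\p_X\cN_t\Omega$ as in Lemma \ref{S3lem6}. The terms with $\nabla\Delta_t^{-1}\Omega$ in $L^\infty$ are bounded via \eqref{eq:lem3.1a}, giving $C_m\nu^{-3/2}\|\Omega_0\|_{L^2(m)}(\ln)^{1/2}D$. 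The terms with $\p_X\p_Y\Delta_t^{-1}\Omega$ and $\p_X^2\Delta_t^{-1}\Omega$ are bounded via Lemma \ref{S3lem2}, whose $\tt^{\sigma}$ loss produces the term $C_{m,\sigma}\nu^{-3/2}\tt^{-1}(\ln)^{\sigma/2+1/4}E^{1/2}D$, exactly as in $A_2$ of Lemma \ref{S3lem6}. For $\p_X\p_Y\cN_t\Omega$, the transport part is handled by integration by parts with the divergence-free structure and $\Delta_t^{-1}\Omega|_{Y=0}=0$. The commutator pieces are controlled with \eqref{Sobolev in x}, \eqref{Sobolev in y} and \eqref{eq:lem3.1b}, as in the proof of \eqref{eq5.44}.

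The main obstacle is the log-power bookkeeping: each nonlinear product has to be matched to entries of $D$ and $E$ so that the total power of $\ln\frac{1+t}{1+t_0}$ is at least $1/2$, or respectively $\sigma/2+1/4$. The third-order $X$-derivatives $\|\p_X^3\Omega\|$ and $\|\p_X^2\p_Y\Omega\|$ carry weights $c_7(\ln)^6\tt^{-2}$ and $c_5(\ln)^4\tt^{-2}$ in $D$. I would first put $\p_X^2\p_Y\Omega$ on the $c_7$ entry $(\ln)^6\|\p_X^2\p_Y\Omega\|^2$, and use the $\tt^{-5/2}$ prefactor to absorb the $\tt$ losses from the elliptic estimates.
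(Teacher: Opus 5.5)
Your linear part and your treatment of $(\p_X^2\cN_t\Omega|\p_X\p_Y\Omega)_{L^2(m)}$ match the paper. Putting $\p_X\p_Y\Delta_t^{-1}\Omega$ in $L^\infty$ via Lemma~\ref{S3lem2}, instead of using \eqref{eq:lem3.1b} with an $L^\infty$ bound on $b^m\p_X\Omega$, also closes.

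The gap is the other nonlinear term, $(\p_X^2\Omega|\p_X\p_Y\cN_t\Omega)_{L^2(m)}$. If you expand $\p_X\p_Y\cN_t\Omega$ into transport plus commutator, the commutator contains $\p_Y^2\Delta_t^{-1}\Omega\,\p_X^2\Omega-\p_X^2\Delta_t^{-1}\Omega\,\p_Y^2\Omega$ and also the terms
\[
\nu^{-\f32}\tt^{-\f52}\bigl(\p_X\p_Y^2\Delta_t^{-1}\Omega\,\p_X\Omega-\p_X^2\p_Y\Delta_t^{-1}\Omega\,\p_Y\Omega\bigr),
\]
which carry three derivatives on $\Delta_t^{-1}\Omega$. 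The estimates you cite do not reach them. \eqref{eq:lem3.1b} controls only second derivatives of $\Delta_t^{-1}\Omega$, in terms of $\|\Omega_0\|_{L^2(m)}$. \eqref{Sobolev in x} and \eqref{Sobolev in y} act on $\Omega$, not on the stream function. Nor is this ``as in the proof of \eqref{eq5.44}'': that proof never expands $\p_X\p_Y\cN_t$; it first integrates by parts to reduce to $\p_Y\cN_t$.

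The natural elliptic bound here is $\|\p_Y^2\Delta_t^{-1}\p_X\Omega\|_{L^2}\le\|\p_X\Omega\|_{L^2}$, not a bound by $\|\Omega_0\|_{L^2(m)}$. So these terms are genuinely cubic and would have to be forced into $\tt^{-1}(\ln\f{1+t}{1+t_0})^{\f\sigma2+\f14}E^{\f12}D$. That can only be done by trading essentially all of the excess decay of $\tt^{-5/2}$ over $\tt^{-1}$ for powers of $\ln\f{1+t}{1+t_0}$, via the $\tt^{-2}$-weighted entries of $D$. There is essentially no margin, since a net negative power of the logarithm blows up as $t\to t_0$. None of this appears in your proposal, so as written this step does not close.

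The missing idea is a single integration by parts in $Y$. Since $\p_X^2\Omega|_{Y=0}=0$, the boundary term vanishes and
\[
(\p_X^2\Omega|\p_X\p_Y\cN_t\Omega)_{L^2(m)}=-\int_{\R^2_+}\p_X\cN_t\Omega\,\bigl(\p_X^2\p_Y\Omega\,b^{2m}+\p_X^2\Omega\,\p_Yb^{2m}\bigr)\,dXdY.
\]
Hence both nonlinear contributions are bounded by $C_m\|\p_X\cN_t\Omega\|_{L^2(m)}\bigl(\|\p_X^2\p_Y\Omega\|_{L^2(m)}+\|\p_X\p_Y\Omega\|_{L^2(m-1)}+\|\p_X^2\Omega\|_{L^2(m-1)}\bigr)$. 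You then only need to estimate $\|\p_X\cN_t\Omega\|_{L^2(m)}$, which involves at most two derivatives of $\Delta_t^{-1}\Omega$; this is exactly what you already do for the first term. Pairing it with $\|\p_X^2\p_Y\Omega\|_{L^2(m)}\le C(\ln\f{1+t}{1+t_0})^{-3}D^{\f12}$ yields the two nonlinear terms of the statement.

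A minor point in the linear part: integrate $(\p_Y^2\p_X^2\Omega|\p_X\p_Y\Omega)_{L^2(m)}$ by parts in $X$, not $Y$. The $Y$-boundary term $\p_X^2\p_Y\Omega\,\p_X\p_Y\Omega\,b^{2m}|_{Y=0}$ is not killed by $\p_X^2\Omega|_{Y=0}=0$.
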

\begin{proof} In view of \eqref{eqs:Om}, we write
\begin{equation}\label{eq5.46}
\begin{aligned}
(1&+t)\f{d}{dt}\Bigl( \bigl(\ln\f{1+t}{1+t_0}\bigr)^5 \bigl(\p_X^2\Omega \big|\p_X\p_Y \Omega \bigr)_{L^2(m)} \Bigr)=5\bigl(\ln\f{1+t}{1+t_0}\bigr)^4\bigl(\p_X^2 \Omega \big|\p_X\p_Y \Omega \bigr)_{L^2(m)} \\
&+\bigl(\ln\f{1+t}{1+t_0}\bigr)^5 \Bigl(\int_{\R^2_+}\p_X^2 \bigl(\cL_t\Omega+\cN_t\Omega \bigr)\,\p_X\p_Y\Omega \,b^{2m}dXdY \\
&\qquad\qquad\qquad\qquad+ \int_{\R^2_+}\p_X^2 \Omega\, \p_X\p_Y\bigl(\cL_t\Omega+\cN_t\Omega \bigr)\,b^{2m}dXdY\Bigr).
\end{aligned}
\end{equation}

By applying \eqref{[px;L]} twice, we find
\begin{equation}\label{[pxpx;L]}
\p_X^2 \cL_t=\cL_t\p_X^2 +3 \p_X^2,
\end{equation}
from which and \eqref{[pypx;L]}, we infer
\begin{align*}
\int_{\R^2_+}&\p_X^2 \cL_t\Omega\,\p_X\p_Y\Omega\,b^{2m}dXdY
+ \int_{\R^2_+}\p_X^2 \Omega\, \p_X\p_Y\cL_t\Omega\, b^{2m}dXdY\\
=&\int_{\R^2_+}\cL_t\p_X^2 \Omega\,\p_X\p_Y\Omega\, b^{2m}dXdY
+ \int_{\R^2_+}\p_X^2 \Omega\,\cL_t \p_X\p_Y\Omega \,b^{2m}dXdY-\|\p_X^2\Omega\|_{L^2(m)}^2 \\
&+5\int_{\R^2_+} \p_X^2\Omega \,\p_X\p_Y\Omega\, b^{2m}dXdY.
\end{align*}
By using integration by parts, we have
\begin{align*}
&\int_{\R^2_+}\cL_t\p_X^2 \Omega\,\p_X\p_Y\Omega\, b^{2m}dXdY
+ \int_{\R^2_+}\p_X^2 \Omega\,\cL_t \p_X\p_Y\Omega \,b^{2m}dXdY\\
\leq &2\tt^{-2}\|\p_X^3\Omega\|_{L^2(m)}\|\p_X^2\p_Y\Omega\|_{L^2(m)}+2\|\p_X^2\p_Y\Omega\|_{L^2(m)}\|\p_X\p_Y^2\Omega\|_{L^2(m)}^2\\
&+C_m\|\p_X^2\Omega\|_{L^2(m)}\|\p_X\p_Y\Omega\|_{L^2(m)}.
\end{align*}
Therefore, for the linear terms, we conclude that 
\begin{equation}\label{eq5.48}
\begin{aligned}
\int_{\R^2_+}&\p_X^2 \cL_t\Omega\,\p_X\p_Y\Omega \,b^{2m}dXdY + \int_{\R^2_+}\p_X^2 \Omega\, \p_X\p_Y\cL_t\Omega\, b^{2m}dXdY\leq-\|\p_X^2\Omega\|_{L^2(m)}^2\\
 &+2\tt^{-2}\|\p_X^3\Omega\|_{L^2(m)}\|\p_X^2\p_Y\Omega\|_{L^2(m)}+2\|\p_X^2\p_Y\Omega\|_{L^2(m)}\|\p_X\p_Y^2\Omega\|_{L^2(m)}^2\\
&+C_m\|\p_X^2\Omega\|_{L^2(m)}\|\p_X\p_Y\Omega\|_{L^2(m)}.
\end{aligned}
\end{equation}

To deal with the nonlinear terms in \eqref{eq5.46}, we use integration by parts to write
\begin{align*}
\int_{\R^2_+}&\p_X^2 \cN_t\Omega\,\p_X\p_Y\Omega\, b^{2m}dXdy
+ \int_{\R^2_+}\p_X^2 \Omega\, \p_X\p_Y\cN_t\Omega\, b^{2m} dXdY\\
\leq &C_m\|\p_X\cN_t\Omega\|_{L^2(m)}\bigl(\|\p_X^2\p_Y\Omega\|_{L^2(m)}+\|\p_X\p_Y\Omega\|_{L^2(m-1)}+\|\p_X^2\Omega\|_{L^2(m-1)} \bigr)\\
\leq &C_m\|\p_X\cN_t\Omega\|_{L^2(m)}\|\p_X^2\p_Y\Omega\|_{L^2(m)}.
\end{align*}
Yet it is easy to observe from \eqref{eqs:Om} that
\begin{align*}
\|\p_X\cN_t\Omega\|_{L^2(m)}\lesssim &\nu^{-\f32}\tt^{-\f52} \Bigl( \|\p_X\Delta_t^{-1}\Omega\|_{L^\oo}\|\p_X\p_Y\Omega\|_{L^2(m)}+\|\p_Y\Delta_t^{-1}\Omega\|_{L^\oo}\|\p_X^2\Omega\|_{L^2(m)} \\
&\qquad+\|\p_X\p_Y\Delta_t^{-1}\Omega\|_{L^2}\|b^m\p_X\Omega\|_{L^\oo}+\|\p_X^2\Delta_t^{-1}\Omega\|_{L^\oo}\|\p_Y\Omega\|_{L^2(m)}\Bigr).
\end{align*}
It follows from  \eqref{eq:lem3.1a} that
\begin{align*}
\nu^{-\f32}&\tt^{-\f52} \bigl( \|\p_X\Delta_t^{-1}\Omega\|_{L^\oo}\|\p_X\p_Y\Omega\|_{L^2(m)}+\|\p_Y\Delta_t^{-1}\Omega\|_{L^\oo}\|\p_X^2\Omega\|_{L^2(m)}\bigr)\\
\leq & {C}\nu^{-\f32}{\|\Omega_0\|_{L^2(m)}}\bigl( \|\p_X\p_Y\Omega\|_{L^2(m)}+\tt^{-1}\|\p_X^2\Omega\|_{L^2(m)}\bigr)\\
\leq &{C}\nu^{-\f32} {\|\Omega_0\|_{L^2(m)}}  \bigl(\ln \f{1+t}{1+t_0}\bigr)^{-\f32} D(t)^\f12.
\end{align*}
Along the same line, we deduce from \eqref{eq:lem3.1b} that
\begin{align*}
\nu^{-\f32}&\tt^{-\f52} \|\p_X\p_Y\Delta_t^{-1}\Omega\|_{L^2}\|b^m\p_X\Omega\|_{L^\oo}\\
\leq & C\nu^{-\f32}{\|\Omega_0\|_{L^2(m)}} \tt^{-\f12} \|\p_X\Omega\|_{L^2(m)}^\f14\|\p_X\p_Y\Omega\|_{L^2(m)}^\f14\|\p_X^2\Omega\|_{L^2(m)}^\f14  \|\p_X^2\p_Y\Omega\|_{L^2(m)}^\f14\\
\leq & C\nu^{-\f32}{\|\Omega_0\|_{L^2(m)}} \bigl(\ln\f{1+t}{1+t_0}\bigr)^{-\f32} \Bigl( \bigl(\ln\f{1+t}{1+t_0}\bigr)\|\p_X\Omega\|_{L^2(m)}\Bigr)^\f14
\Bigl( \bigl(\ln\f{1+t}{1+t_0}\bigr)^\f32\|\p_X\p_Y\Omega\|_{L^2(m)}\Bigr)^\f14 \\
&\qquad\times\Bigl( \bigl(\ln\f{1+t}{1+t_0}\bigr)^\f32\tt^{-1}\|\p_X^2\Omega\|_{L^2(m)}\Bigr)^\f14
\Bigl( \bigl(\ln\f{1+t}{1+t_0}\bigr)^2\tt^{-1}\|\p_X^2\p_Y\Omega\|_{L^2(m)}\Bigr)^\f14\\
\leq &C_m\nu^{-\f32}  {\|\Omega_0\|_{L^2(m)}}\bigl(\ln {t}/{t_0}\bigr)^{-\f32} D(t)^\f12.
\end{align*}
Whereas we deduce from \eqref{eq5.14}  that for any $0<\sigma<\f12$,
\begin{align*}
\nu^{-\f32}&\tt^{-\f52} \|\p_X^2\Delta_t^{-1}\Omega\|_{L^\oo}\|\p_Y\Omega\|_{L^2(m)}\\
\leq & C_\sigma\nu^{-\f32} \tt^{\sigma-\f32} \|\p_X\Omega\|_{L^2(m)}^{\f12+\sigma}\|\p_X^2\Omega\|_{L^2(m)}^{\f12-\sigma}\|\p_Y\Omega\|_{L^2(m)}\\
\leq & C_\sigma \nu^{-\f32} \tt^{-1} \bigl(\ln \f{1+t}{1+t_0}\bigr)^{\f\sigma2-\f74} \Bigl( \bigl(\ln \f{1+t}{1+t_0}\bigr)\|\p_X\Omega\|_{L^2(m)} \Bigr)^{\f12+\sigma}  \\
&\qquad\times\Bigl( \bigl(\ln \f{1+t}{1+t_0}\bigr)^{\f32}\tt^{-1}\|\p_X^2\Omega\|_{L^2(m)} \Bigr)^{\f12-\sigma}\Bigl( \bigl(\ln \f{1+t}{1+t_0}\bigr)^\f12\|\p_Y\Omega\|_{L^2(m)}\Bigr)\\
\leq & C_{\sigma} \nu^{-\f32} \tt^{-1} \bigl(\ln \f{1+t}{1+t_0}\bigr)^{\f\sigma2-\f74} E(t)^\f12 D(t)^\f12.
\end{align*}
Noticing that
$$
\|\p_X^2\p_Y\Omega\|_{L^2(m)} \leq C \bigl(\ln {t}/{t_0}\bigr)^{-3} D(t)^\f12,
$$
we get, by summarizing the above estimates, that
\begin{equation}\label{eq5.49}
\begin{aligned}
&\int_{\R^2_+}\p_X^2 \cN_t\Omega\,\p_X\p_Y\Omega\, b^{2m}dXdY
+ \int_{\R^2_+}\p_X^2 \Omega\, \p_X\p_Y\cN_t\Omega\, b^{2m}dXdY\\
&\leq C_m {\nu}^{-\f32}{\|\Omega_0\|_{L^2(m)}} \bigl(\ln \f{1+t}{1+t_0}\bigr)^{-\f92} D(t)+C_{m,\sigma} \nu^{-\f32} \tt^{-1} \bigl(\ln \f{1+t}{1+t_0}\bigr)^{\f\sigma2-\f{19}4} E(t)^\f12 D(t).
\end{aligned}
\end{equation}

By substituting the estimates \eqref{eq5.48} and \eqref{eq5.49} into \eqref{eq5.46}, we arrive at \eqref{eq:lem7}. This
finishes the proof of Lemma \ref{S3lem9}.
\end{proof}

\begin{lem}\label{S3lem10}
{  Under the assumptions of Lemma \ref{S3lem9}, we have
\begin{equation}\label{eq:lem8}
\begin{aligned}
(1+t) &\f{d}{dt}\Bigl(\bigl(\ln\f{1+t}{1+t_0}\bigr)^6\|\p_X^2\Omega(t)\|_{L^2(m)}^2\Bigr) +2\bigl(\ln\f{1+t}{1+t_0}\bigr)^6\tt^{-2}\|\p_X^3\Omega\|_{L^2(m)}^2\\
&+2\bigl(\ln\f{1+t}{1+t_0}\bigr)^6\|\p_X^2\p_Y\Omega\|_{L^2(m)}^2
\leq 6\bigl(\ln\f{1+t}{1+t_0}\bigr)^5\|\p_X^2\Omega\|_{L^2(m)}^2\\
&+C_m\bigl(1+\nu^{-\f32}{\|\Omega_0\|_{L^2(m)}}\bigr)E(t)+C_{m,\sigma} \nu^{-\f32} \tt^{-1} \bigl(\ln\f{1+t}{1+t_0}\bigr)^{\f\sigma2+\f{1}4} E(t)^\f12 D(t).
\end{aligned}\end{equation}
}\end{lem}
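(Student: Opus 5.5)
We follow the template of Lemma \ref{S3lem6}, which is the $\p_X$ analogue. First we apply $\p_X^2$ to \eqref{eqs:Om} and take the $L^2(m)$ inner product with $\bigl(\ln\f{1+t}{1+t_0}\bigr)^6\p_X^2\Omega$. This gives
\[
\f{1+t}2\f{d}{dt}\Bigl(\bigl(\ln\tfrac{1+t}{1+t_0}\bigr)^6\|\p_X^2\Omega\|_{L^2(m)}^2\Bigr)=3\bigl(\ln\tfrac{1+t}{1+t_0}\bigr)^5\|\p_X^2\Omega\|_{L^2(m)}^2+\bigl(\ln\tfrac{1+t}{1+t_0}\bigr)^6\int_{\R^2_+}\p_X^2(\cL_t\Omega+\cN_t\Omega)\,\p_X^2\Omega\,b^{2m}\,dXdY .
\]
After multiplying by $2$, the first term on the right produces the term $6(\ln)^5\|\p_X^2\Omega\|^2$ in \eqref{eq:lem8}.

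For the linear part we use the commutator \eqref{[pxpx;L]}, $\p_X^2\cL_t=\cL_t\p_X^2+3\p_X^2$. Then we apply \eqref{eq5.18} to $\p_X^2\Omega$; this is legitimate because $\p_X^2\Omega|_{Y=0}=0$, since $\p_X$ is tangential to the boundary. This yields the dissipation $-\tt^{-2}\|\p_X^3\Omega\|^2_{L^2(m)}-\|\p_X^2\p_Y\Omega\|^2_{L^2(m)}$, plus a remainder $C_m\|\p_X^2\Omega\|^2_{L^2(m)}$. Multiplied by $(\ln)^6$, the remainder is bounded by $C_m E(t)$ through the $c_7$ term. (We take $t_0\le t\le 2t_0$, so the logarithmic weight is bounded and the normalizing constants $c_i$ are harmless.)

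For the nonlinear part, we expand $\p_X^2$ of $\p_Y\Delta_t^{-1}\Omega\,\p_X\Omega-\p_X\Delta_t^{-1}\Omega\,\p_Y\Omega$ by Leibniz and split it into three groups.

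\textbf{Group $A_1$ (no derivative on $\Delta_t^{-1}\Omega$).} This is the transport term. Integrating by parts as in \eqref{eq5.19}, using $\Delta_t^{-1}\Omega|_{Y=0}=0$ and \eqref{eq:lem3.1a}, gives $C_m\nu^{-3/2}\|\Omega_0\|_{L^2(m)}\|\p_X^2\Omega\|^2_{L^2(m)}$, which is $\lesssim E(t)$ after the log weight.

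\textbf{Group $A_2$ (one $\p_X$ on $\Delta_t^{-1}\Omega$).} These are the terms $2(\p_X\p_Y\Delta_t^{-1}\Omega\,\p_X^2\Omega-\p_X^2\Delta_t^{-1}\Omega\,\p_X\p_Y\Omega)\p_X^2\Omega$. For $\p_X\p_Y\Delta_t^{-1}\Omega$ and $\p_X^2\Delta_t^{-1}\Omega$ in $L^\infty$ we use Lemma \ref{S3lem2}, applied to $f=\p_X\Omega$ in \eqref{eq5.14}. This gives $\tt^{1+\sigma}\|\p_X\Omega\|^{1/2+\sigma}\|\p_X^2\Omega\|^{1/2-\sigma}$, with the $\tt$ powers adjusted. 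The prefactor $\tt^{-5/2}$ then leaves $\tt^{\sigma-3/2}$. We use one factor $\tt^{-1/2+\sigma}$ to convert a $\|\p_X^2\Omega\|$ into the dissipative $\tt^{-1}\|\p_X^3\Omega\|$ or $\|\p_X^2\p_Y\Omega\|$ pieces of $D$ (through the interpolation exponents), and the remaining $\tt^{-1}$ survives. Distributing the log weights exactly as in the estimate of $A_2$ in Lemma \ref{S3lem6} gives $C_\sigma\nu^{-3/2}\tt^{-1}(\ln)^{\sigma/2+1/4}E^{1/2}D$ after multiplying by $(\ln)^6$.

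\textbf{Group $A_3$ (two $\p_X$ on $\Delta_t^{-1}\Omega$).} These are the terms $(\p_X^2\p_Y\Delta_t^{-1}\Omega\,\p_X\Omega-\p_X^3\Delta_t^{-1}\Omega\,\p_Y\Omega)\p_X^2\Omega$, estimated the same way with $f=\p_X^2\Omega$ in \eqref{eq5.14}. This brings in $\|\p_X^2\Omega\|^{1/2+\sigma}\|\p_X^3\Omega\|^{1/2-\sigma}$. Here $\|\p_X^3\Omega\|$ must be paired with $\tt^{-1}$ to land in $D$ (the $c_7(\ln)^6\tt^{-2}\|\p_X^3\Omega\|^2$ term). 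The other factors, $\|\p_X\Omega\|_{L^2(m)}$ or $\|\p_Y\Omega\|_{L^2(m)}$, are absorbed by $E^{1/2}$ or $D^{1/2}$.

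The main obstacle is the bookkeeping in $A_3$. We must check that the total $\tt$ power is at most $-1$ and that the log powers combine to a nonnegative exponent $\sigma/2+1/4$. This uses $(\ln)^6\|\p_X^2\Omega\|^2\le E/c_7$ and $(\ln)^6\tt^{-2}\|\p_X^3\Omega\|^2\le D/c_7$, together with $(\ln)^{2}\|\p_X\Omega\|^2\le D/c_2$ and $\|\p_Y\Omega\|^2\le D$. Where a weight $L^2(1)$ appears instead of $L^2(m)$ we use $m>1$, and lower-order weight terms are handled by the Poincaré inequality $\|f\|_{L^2(m-1)}\lesssim\|\p_Xf\|_{L^2(m)}$. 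Combining the linear and nonlinear estimates gives \eqref{eq:lem8}.
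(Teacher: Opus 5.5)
Your scheme is the paper's: the commutator \eqref{[pxpx;L]} with \eqref{eq5.18} for the linear part, and the same three Leibniz groups, with the transport group handled by \eqref{eq:lem3.1a} and the other two by \eqref{eq5.14}. The one variant is how you bound $\p_X\p_Y\Delta_t^{-1}\Omega$ and $\p_X^2\p_Y\Delta_t^{-1}\Omega$: you go through the $(1+t)\|\p_Y\Delta_t^{-1}f\|_{L^\infty}$ slot with $f=\p_X\Omega,\p_X^2\Omega$. This is legitimate, since $\p_X$ commutes with the Dirichlet $\Delta_t^{-1}$, and it gains an extra $(1+t)^{-1}$. But it replaces the cheaply weighted factors the paper uses ($\|\p_Y\Omega\|$, $\|\p_X\p_Y\Omega\|$, $\|\p_X^2\p_Y\Omega\|$) by $\|\p_X^2\Omega\|$ and $\|\p_X^3\Omega\|$, which carry much larger logarithmic weights.

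As a result, the step you call the main obstacle is not closed by the ingredients you list. Write $L=\ln\frac{1+t}{1+t_0}$; all norms are in $L^2(m)$.

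\begin{itemize}
\item Your list contains no piece of $D$ controlling $\|\p_X^2\Omega\|$. Yet every term of $A_3$ carries $\|\p_X^2\Omega\|^{\frac32+\sigma}$, and $E^{1/2}$ (through $c_7L^6\|\p_X^2\Omega\|^2$) absorbs only one power. You also need $c_6L^5\|\p_X^2\Omega\|^2\le D$ or $c_3L^3(1+t)^{-2}\|\p_X^2\Omega\|^2\le D$.
\item In your variant, the term $\p_X^2\p_Y\Delta_t^{-1}\Omega\,\p_X\Omega\,\p_X^2\Omega$ is bounded by $\nu^{-\frac32}(1+t)^{\sigma-\frac52}\|\p_X^2\Omega\|^{\frac32+\sigma}\|\p_X^3\Omega\|^{\frac12-\sigma}\|\p_X\Omega\|$. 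Here $\|\p_X^3\Omega\|$ costs $L^3$ even after pairing with $(1+t)^{-1}$. If you absorb $\|\p_X\Omega\|$ through $L^2\|\p_X\Omega\|^2\le D/c_2$, as proposed, the surplus $(1+t)^{-1}$ cannot be turned into logarithmic gain. After multiplying by $L^6$, the exponents then add up at best to $L^{\frac{3\sigma}{2}-\frac14}$, not $L^{\frac\sigma2+\frac14}$. This is negative for $\sigma<\frac16$, so it blows up as $t\downarrow t_0$.
\item Your $A_2$ sketch has the same blind spot. For $\p_X\p_Y\Delta_t^{-1}\Omega\,(\p_X^2\Omega)^2$ the leftover prefactor is $(1+t)^{\sigma-\frac52}$, not $(1+t)^{\sigma-\frac32}$. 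All of it beyond $(1+t)^{-1}$ must be spent on $(1+t)^{-2}$-weighted pieces of $D$, such as $c_3L^3(1+t)^{-2}\|\p_X^2\Omega\|^2$. The factors $(1+t)^{-1}\|\p_X^3\Omega\|$ and $\|\p_X^2\p_Y\Omega\|$ that you invoke there do not occur in $A_2$.
\end{itemize}

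\textbf{Fix 1: keep your variant and use the unweighted piece $(1+t)^{-2}\|\p_X\Omega\|^2$ of $D$.}
\[
\begin{aligned}
&L^6(1+t)^{\sigma-\frac52}\|\p_X^2\Omega\|^{\frac32+\sigma}\|\p_X^3\Omega\|^{\frac12-\sigma}\|\p_X\Omega\|\\
&\quad=(1+t)^{-1}L^{\frac\sigma2+\frac14}\bigl(L^3\|\p_X^2\Omega\|\bigr)\bigl(L^{\frac52}\|\p_X^2\Omega\|\bigr)^{\frac12+\sigma}\bigl(L^3(1+t)^{-1}\|\p_X^3\Omega\|\bigr)^{\frac12-\sigma}\bigl((1+t)^{-1}\|\p_X\Omega\|\bigr),
\end{aligned}
\]
which is $\lesssim (1+t)^{-1}L^{\frac\sigma2+\frac14}E^{1/2}D$.

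\textbf{Fix 2: follow the paper.} Bound $\p_X^2\p_Y\Delta_t^{-1}\Omega$ by the whole-plane form of \eqref{eq5.14}, applied to the even extension of $\p_X\p_Y\Omega$. This gives $(1+t)^{1+\sigma}\|\p_X\p_Y\Omega\|^{\frac12+\sigma}\|\p_X^2\p_Y\Omega\|^{\frac12-\sigma}$, and the term factors as
\[
(1+t)^{-1}L^{\frac\sigma2+\frac14}\bigl(L^3\|\p_X^2\Omega\|\bigr)\bigl(L^{\frac32}\|\p_X\p_Y\Omega\|\bigr)^{\frac12+\sigma}\bigl(L^2(1+t)^{-1}\|\p_X^2\p_Y\Omega\|\bigr)^{\frac12-\sigma}\bigl(L\|\p_X\Omega\|\bigr).
\]
It then closes with $c_3L^3\|\p_X\p_Y\Omega\|^2$, $c_5L^4(1+t)^{-2}\|\p_X^2\p_Y\Omega\|^2$ and your $c_2$ term.

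The remaining term $\p_X^3\Delta_t^{-1}\Omega\,\p_Y\Omega\,\p_X^2\Omega$ closes as you describe, once $c_6L^5\|\p_X^2\Omega\|^2$ is added to your list.
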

\begin{proof} We first get,
by applying $\p_X^2$ to \eqref{eqs:Om} and then taking  $L^2(m)$ inner product of the resulting equation with $\bigl(\ln\f{1+t}{1+t_0}\bigr)^6\p_X^2\Omega$, that
\begin{equation}\label{eq5.51}\begin{aligned}
\f{1+t}2 \f{d}{dt}\Bigl(\bigl(\ln\f{1+t}{1+t_0}\bigr)^6&\|\p_X^2\Omega(t)\|_{L^2(m)}^2\Bigr) =3\bigl(\ln\f{1+t}{1+t_0}\bigr)^5\|\p_X^2\Omega\|_{L^2(m)}^2\\
&+\bigl(\ln\f{1+t}{1+t_0}\bigr)^6\int_{\R^2_+} \p_X^2\Bigl(\cL_t\Omega+\cN_t \Omega\Bigr)\, \p_X^2\Omega\, b^{2m}dXdY.
\end{aligned}
\end{equation}

By applying \eqref{[pxpx;L]} and \eqref{eq5.18}, we deduce  that 
\begin{equation}\label{eq5.52}
\begin{aligned}
\int_{\R^2_+} &\p_X^2\cL_t\Omega \,\p_X^2\Omega\, b^{2m}dXdY
= \int_{\R^2_+} \cL_t\p_X^2\Omega\, \p_X^2\Omega \,b^{2m}dXdY +3\|\p_X^2\Omega\|_{L^2(m)}\\
\leq &-\tt^{-2}\|\p_X^3\Omega\|_{L^2(m)}^2-\|\p_X^2\p_Y\Omega\|_{L^2(m)}^2+C_m \|\p_X^2\Omega\|_{L^2(m)}^2.
\end{aligned}
\end{equation}

For the nonlinear part in \eqref{eq5.51}, we first decompose it as
\begin{align*}
\int_{\R^2_+} \p_X^2 \cN_t\Omega \,&\p_X^2\Omega\, b^{2m}dXdY\\
=&\nu^{-\f32}(1+t)^{-\f52} \Bigl(\int_{\R^2_+} \bigl(\p_Y \Delta_t^{-1}\Omega\,\p_X^3\Omega
- \p_X\Delta_t^{-1}\Omega \,\p_Y\p_X^2\Omega\bigr)\,\p_X^2\Omega\, b^{2m}dXdY\\
&+2\int_{\R^2_+} \bigl(\p_X\p_Y \Delta_t^{-1}\Omega\,\p_X^2 \Omega
- \p_X^2\Delta_t^{-1}\Omega\, \p_X\p_Y\Omega\bigr)\,\p_X^2\Omega\,b^{2m}dXdY\\
&+\int_{\R^2_+} \bigl(\p_X^2\p_Y \Delta_t^{-1}\Omega\,\p_X \Omega
- \p_X^3\Delta_t^{-1}\Omega\, \p_Y\Omega\bigr)\,\p_X\Omega\, b^{2m}dXdY\Bigr)
\eqdef D_1+D_2+D_3.
\end{align*}
It follows from a similar derivation of \eqref{eq5.19} that
$$
D_1\leq C_{m} {\nu}^{-\f32}{\|\Omega_0\|_{L^2(m)}} \|\p_X^2\Omega\|_{L^2(m)}^2.
$$
For  $D_2$, we write
\begin{align*}
D_2 \leq 2\nu^{-\f32} \tt^{-\f52}\Bigl(&\|\p_X\p_Y \Delta_t^{-1}\Omega\|_{L^\oo}\|\p_X^2\Omega\|_{L^2(m)}^2\\
&+\|\p_X^2\Delta_t^{-1}\Omega\|_{L^\oo}\|\p_X\p_Y\Omega\|_{L^2(m)}\|\p_X^2\Omega\|_{L^2(m)}\Bigr),
\end{align*}
from which and  \eqref{eq5.14}, we infer
\begin{align*}
D_2  \leq &C_\sigma\nu^{-\f32} \tt^{\sigma-\f32} \Bigl(  \|\p_Y\Omega\|_{L^2(m)}^{\f12+\sigma}\|\p_X\p_Y\Omega\|_{L^2(m)}^{\f12-\sigma}\|\p_X^2\Omega\|_{L^2(m)}^2\\
& \qquad\qquad\qquad\qquad+ \|\p_X\Omega\|_{L^2(m)}^{\f12+\sigma}\|\p_X^2\Omega\|_{L^2(m)}^{\f12-\sigma}\|\p_X\p_Y\Omega\|_{L^2(m)}\|\p_X^2\Omega\|_{L^2(m)}\Bigr)\\
\leq &C_\sigma \nu^{-\f32} \tt^{-1}\bigl(\ln\f{1+t}{1+t_0}\bigr)^{\f\sigma2-\f{23}4} \Bigl(  \bigl((\ln\f{1+t}{1+t_0})^\f12\|\p_Y\Omega\|_{L^2(m)}\bigr)^{\f12+\sigma}\\
&\times\bigl((\ln\f{1+t}{1+t_0})^\f12\tt^{-1}\|\p_X\p_Y\Omega\|_{L^2(m)}\bigr)^{\f12-\sigma}\bigl((\ln\f{1+t}{1+t_0})^3\|\p_X^2\Omega\|_{L^2(m)}\bigr)^{\f12-\sigma}\\
&\times\bigl((\ln\f{1+t}{1+t_0})^\f52\|\p_X^2\Omega\|_{L^2(m)}\bigr)^{\f32+\sigma}+\bigl((\ln\f{1+t}{1+t_0}) \|\p_X\Omega\|_{L^2(m)}\bigr)^{\f12+\sigma}\\
&\times\bigl((\ln\f{1+t}{1+t_0})^\f32\tt^{-1}\|\p_X^2\Omega\|_{L^2(m)}\bigr)^{\f12-\sigma}\bigl((\ln\f{1+t}{1+t_0})^\f32\|\p_X\p_Y\Omega\|_{L^2(m)}\bigr)\\
&\times\bigl((\ln\f{1+t}{1+t_0})^3\|\p_X^2\Omega\|_{L^2(m)}\bigr)\Bigr)\\
\leq &C_\sigma \nu^{-\f32} \tt^{-1} \bigl(\ln\f{1+t}{1+t_0}\bigr)^{\f\sigma2-\f{23}4} \Bigl( \bigl(\ln\f{1+t}{1+t_0}\bigr)\|\p_Y\Omega\|_{L^2(m)}^2+ \bigl(\ln\f{1+t}{1+t_0}\bigr)^6\|\p_X^2\Omega\|_{L^2(m)}^2 \Bigr)^\f12 \\
& \times \Bigl(\bigl(\ln\f{1+t}{1+t_0}\bigr)\tt^{-2}\|\p_X\p_Y\Omega\|_{L^2(m)}^2+\bigl(\ln\f{1+t}{1+t_0}\bigr)^5 \|\p_X^2\Omega\|_{L^2(m)}^2\\
&+\bigl(\ln\f{1+t}{1+t_0}\bigr)^2 \|\p_X\Omega\|_{L^2(m)}^2+ \bigl(\ln\f{1+t}{1+t_0}\bigr)^3\tt^{-2}\|\p_X^2\Omega\|_{L^2(m)}^2\\
&+\bigl(\ln\f{1+t}{1+t_0}\bigr)^3\|\p_X\p_Y\Omega\|_{L^2(m)}^2\Bigr)\\
\leq &C_{\sigma} \nu^{-\f32} \tt^{-1} \bigl(\ln{t}/{t_0}\bigr)^{\f\sigma2-\f{23}4} E(t)^\f12 D(t).
\end{align*}
Along the same line, we get, by using
 \eqref{eq5.14}, that
\begin{align*}
D_3  \leq & \nu^{-\f32} \tt^{-\f52}\Bigl(\|\p_X^2\p_Y \Delta_t^{-1}\Omega\|_{L^\oo}\|\p_X\Omega\|_{L^2(m)}+\|\p_X^3\Delta_t^{-1}\Omega\|_{L^\oo}\|\p_Y\Omega\|_{L^2(m)}\Bigr)\|\p_X^2\Omega\|_{L^2(m)}\\
\leq &C_\sigma\nu^{-\f32} \tt^{\sigma-\f32} \|\p_X^2\Omega\|_{L^2(m)}\Bigl(  \|\p_X\p_Y\Omega\|_{L^2(m)}^{\f12+\sigma}\|\p_X^2\p_Y\Omega\|_{L^2(m)}^{\f12-\sigma}\|\p_X\Omega\|_{L^2(m)}\\
& \qquad\qquad\qquad\qquad\qquad\qquad\qquad+ \|\p_X^2\Omega\|_{L^2(m)}^{\f12+\sigma}\|\p_X^3\Omega\|_{L^2(m)}^{\f12-\sigma}\|\p_Y\Omega\|_{L^2(m)}\Bigr)\\
\leq &C_\sigma \nu^{-\f32} \tt^{-1}\bigl(\ln\f{1+t}{1+t_0}\bigr)^{\f\sigma2-\f{23}4} \Bigl(\bigl(\ln\f{1+t}{1+t_0}\bigr)^3\|\p_X^2\Omega\|_{L^2(m)}\Bigr) \\
&\times\Bigl(  \bigl((\ln\f{1+t}{1+t_0})^\f32\|\p_X\p_Y\Omega\|_{L^2(m)}\bigr)^{\f12+\sigma}\bigl((\ln\f{1+t}{1+t_0})^2\tt^{-1}\|\p_X^2\p_Y\Omega\|_{L^2(m)}\bigr)^{\f12-\sigma}\\
&\quad\times\bigl((\ln\f{1+t}{1+t_0})\|\p_X\Omega\|_{L^2(m)}\bigr)+\bigl((\ln\f{1+t}{1+t_0})^\f52 \|\p_X^2\Omega\|_{L^2(m)}\bigr)^{\f12+\sigma}\\
&\qquad\times\bigl((\ln\f{1+t}{1+t_0})^3\tt^{-1}\|\p_X^3\Omega\|_{L^2(m)}\bigr)^{\f12-\sigma}\|\p_Y\Omega\|_{L^2(m)}\Bigr)\\
\leq &C_\sigma \nu^{-\f32} \tt^{-1} \bigl(\ln\f{1+t}{1+t_0}\bigr)^{\f\sigma2-\f{23}4} \Bigl( \bigl(\ln\f{1+t}{1+t_0}\bigr)^6\|\p_X^2\Omega\|_{L^2(m)}^2 \Bigr)^\f12 \Bigl(\|\p_Y\Omega\|_{L^2(m)}^2+\\
&  +\bigl(\ln\f{1+t}{1+t_0}\bigr)^2 \|\p_X\Omega\|_{L^2(m)}^2+\bigl(\ln\f{1+t}{1+t_0}\bigr)^3\|\p_X\p_Y\Omega\|_{L^2(m)}^2+\bigl(\ln\f{1+t}{1+t_0}\bigr)^5 \|\p_X^2\Omega\|_{L^2(m)}^2\\
&+\bigl(\ln\f{1+t}{1+t_0}\bigr)^4\tt^{-2}\|\p_X^2\p_Y\Omega\|_{L^2(m)}^2+\bigl(\ln\f{1+t}{1+t_0}\bigr)^6\tt^{-2}\|\p_X^3\Omega\|_{L^2(m)}^2+\Bigr)\\
\leq &C_{\sigma} \nu^{-\f32} \tt^{-1} \bigl(\ln\f{1+t}{1+t_0}\bigr)^{\f\sigma2-\f{23}4} E(t)^\f12 D(t).
\end{align*}
By summarizing the above estimates, we arrive at
\begin{equation}\label{eq5.53}
\begin{aligned}
\int_{\R^2_+} \p_X^2 \cN_t\Omega\, \p_X^2\Omega\, b^{2m}dXdY \leq &C_m\nu^{-\f32}{\|\Omega_0\|_{L^2(m)}}\|\p_X^2\Omega\|_{L^2(m)}^2 \\
&+C_{m,\sigma} \nu^{-\f32} \tt^{-1} \bigl(\ln\f{1+t}{1+t_0}\bigr)^{\f\sigma2-\f{23}4} E(t)^\f12 D(t).
\end{aligned}
\end{equation}

By inserting the estimates \eqref{eq5.52} and \eqref{eq5.53} into \eqref{eq5.51}, we  conclude the proof of \eqref{eq:lem8}.
\end{proof}

For terms with three derivatives, one can easily repeat the above approaches to prove the following estimates.

\begin{lem}\label{S3lem13}
{   Under the assumptions of Lemma \ref{S3lem9}, we have
\begin{equation}\label{eq:lem11}
\begin{aligned}
(1+t) &\f{d}{dt}\Bigl(\bigl(\ln\f{1+t}{1+t_0}\bigr)^7\|\p_X^2\p_Y\Omega\|_{L^2(m)}^2\Bigr) +2\bigl(\ln\f{1+t}{1+t_0}\bigr)^7\tt^{-2}\|\p_X^3\p_Y\Omega\|_{L^2(m)}^2\\
&+2\bigl(\ln\f{1+t}{1+t_0}\bigr)^7\|\p_X^2\p_Y^2\Omega\|_{L^2(m)}^2
\leq 7\bigl(\ln\f{1+t}{1+t_0}\bigr)^6\|\p_X^2\p_Y\Omega\|_{L^2(m)}^2\\
&+2 \bigl(\ln\f{1+t}{1+t_0}\bigr)^7\|\p_X^3\Omega\|_{L^2(m)}\|\p_X^2\p_Y\Omega\|_{L^2(m)}\\
&+C_m\bigl(1+{\nu}^{-\f32}{\|\Omega_0\|_{L^2(m)}}\bigr) \bigl(\ln \f{1+t}{1+t_0}\bigr)^\f12 D(t) \\
&+C_{\sigma}{\nu}^{-1}\tt^{-1} \bigl(\ln \f{1+t}{1+t_0}\bigr)^{\f\sigma2+\f14} E(t)^\f12D(t).
\end{aligned}\end{equation}
}\end{lem}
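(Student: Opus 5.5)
We follow the template of Lemmas \ref{S3lem8} and \ref{S3lem10}. First apply $\p_X^2\p_Y$ to \eqref{eqs:Om} and take the $L^2(m)$ inner product with $\bigl(\ln\f{1+t}{1+t_0}\bigr)^7\p_X^2\p_Y\Omega$. Differentiating the logarithmic weight, $(1+t)\f{d}{dt}\bigl(\ln\f{1+t}{1+t_0}\bigr)^7=7\bigl(\ln\f{1+t}{1+t_0}\bigr)^6$, produces the first term on the right of \eqref{eq:lem11}.

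\textbf{Linear part.} Use \eqref{[pxpx;L]} together with \eqref{[py;L]} to obtain the commutator identity
\[
\p_X^2\p_Y\cL_t=\cL_t\p_X^2\p_Y-\p_X^3+4\p_X^2\p_Y .
\]
Then apply \eqref{eq5.18} to $\p_X^2\p_Y\Omega$. The integrations by parts in $Y$ need $\p_X^2\p_Y^2\Omega|_{Y=0}=0$, which follows from the equation and the Dirichlet condition, exactly as $\p_Y^2\Omega|_{Y=0}=0$ was used before. This gives the dissipation $-\tt^{-2}\|\p_X^3\p_Y\Omega\|_{L^2(m)}^2-\|\p_X^2\p_Y^2\Omega\|_{L^2(m)}^2$. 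The commutator term contributes $\|\p_X^3\Omega\|_{L^2(m)}\|\p_X^2\p_Y\Omega\|_{L^2(m)}$. The remaining $C_m\bigl(\ln\f{1+t}{1+t_0}\bigr)^7\|\p_X^2\p_Y\Omega\|_{L^2(m)}^2$ is bounded by $C_m\bigl(\ln\f{1+t}{1+t_0}\bigr)^{\f12}D(t)$, since $D$ contains $c_7\bigl(\ln\f{1+t}{1+t_0}\bigr)^6\|\p_X^2\p_Y\Omega\|^2_{L^2(m)}$ and the logarithm is small.

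\textbf{Nonlinear part.} Expand $\p_X^2\p_Y\cN_t\Omega$ by Leibniz into groups, as in $B_1,B_2,B_3$ and $D_1,D_2,D_3$.
\begin{itemize}
\item Terms where all derivatives fall on $\Omega$: after integrating by parts with $\Delta_t^{-1}\Omega|_{Y=0}=0$, these are bounded by \eqref{eq:lem3.1a} as in \eqref{eq5.19}.
\item Terms with one or two derivatives on the stream function: use the $L^2$ bounds \eqref{eq:lem3.1b} together with the anisotropic Sobolev inequalities \eqref{Sobolev in x}--\eqref{Sobolev in y}.
\item Terms with three derivatives on the stream function, namely $\p_X^2\p_Y^2\Delta_t^{-1}\Omega$ and $\p_X^3\p_Y\Delta_t^{-1}\Omega$: use Lemma \ref{S3lem2}, applied to $\p_X^2 f$ or $\p_X f$, or to the $\p_Y$-versions \eqref{eq5.14a}. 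This yields a factor $\tt^{\sigma}$ times an interpolation between consecutive $X$-derivatives.
\end{itemize}
Alternatively, to avoid the highest derivative falling on $\Omega$, we can integrate one $\p_X$ by parts as in Lemma \ref{S3lem8} and reduce to bounding $\|\p_X\p_Y\cN_t\Omega\|_{L^2(m)}\|\p_X^3\p_Y\Omega\|_{L^2(m)}$.

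\textbf{Main obstacle.} The hard step is the bookkeeping of logarithmic powers and powers of $(1+t)$, so that every nonlinear product is absorbed either into $\bigl(\ln\f{1+t}{1+t_0}\bigr)^{1/2}D(t)$ or into $\tt^{-1}\bigl(\ln\f{1+t}{1+t_0}\bigr)^{\f\sigma2+\f14}E^{1/2}D$. For each factor, we attach the log power with which it appears in $E$ or $D$. Examples are $\bigl(\ln\bigr)^{9/2}\tt^{-1}\|\p_X^3\Omega\|$ and $\bigl(\ln\bigr)^{7/2}\tt^{-1}\|\p_X^3\p_Y\Omega\|$ from $D$, and $\bigl(\ln\bigr)^{7/2}\|\p_X^2\p_Y\Omega\|$ from $E$. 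The negative powers of $\ln$ produced by this bookkeeping are then multiplied back by the prefactor $\bigl(\ln\bigr)^7$ and checked to leave a nonnegative exponent. The worst term is $\p_X^3\Delta_t^{-1}\Omega\,\p_X^2\p_Y\Omega$-type interaction with $\p_Y\Omega$, handled via \eqref{eq5.14} with $f=\p_X^2\Omega$. There the exponent balance mirrors $D_3$ in Lemma \ref{S3lem10}, and the $\tt^{\sigma-3/2}$ gain beats the $\tt^{-5/2}$ prefactor to leave $\tt^{-1}$.
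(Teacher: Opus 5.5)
Your proposal is correct and follows the same route the paper has in mind: the paper gives no separate proof here, only the instruction to repeat the arguments of Lemmas \ref{S3lem8}--\ref{S3lem10}, and your Leibniz-term bookkeeping closes, each product ending with a leftover factor $\bigl(\ln\f{1+t}{1+t_0}\bigr)^{1/2}$ or $\tt^{-1}\bigl(\ln\f{1+t}{1+t_0}\bigr)^{\f\sigma2+\f14}$. One harmless slip: the commutator is $\p_X^2\p_Y\cL_t=\cL_t\p_X^2\p_Y-\p_X^3+\f72\p_X^2\p_Y$, not $4\p_X^2\p_Y$, and this only changes the constant absorbed into $C_m$.
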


\begin{lem}\label{S3lem14}
{  Under the assumptions of Lemma \ref{S3lem9}, we have
\begin{equation}\label{eq:lem12}
\begin{aligned}
&(1+t)\f{d}{dt}\Bigl( \bigl(\ln\f{1+t}{1+t_0}\bigr)^8 \bigl(\p_X^3 \Omega \big|\p_X^2\p_Y \Omega \bigr)_{L^2(m)} \Bigr)+\bigl(\ln\f{1+t}{1+t_0}\bigr)^8\|\p_X^3\Omega\|_{L^2(m)}^2\\
\leq & 8\bigl(\ln\f{1+t}{1+t_0}\bigr)^7\bigl(\p_X^3 \Omega \big|\p_X^2\p_Y \Omega \bigr)_{L^2(m)}+2\bigl(\ln\f{1+t}{1+t_0}\bigr)^8\tt^{-2}\|\p_X^4\Omega\|_{L^2(m)}\|\p_X^3\p_Y\Omega\|_{L^2(m)}\\
&+2\bigl(\ln\f{1+t}{1+t_0}\bigr)^8\|\p_X^3\p_Y\Omega\|_{L^2(m)}\|\p_X\p_Y^3\Omega\|_{L^2(m)}\\
&+C_m \bigl(1+{\nu}^{-\f32}{\|\Omega_0\|_{L^2(m)}} \bigr)\bigl(\ln \f{1+t}{1+t_0}\bigr)^{\f12} D(t)\\
&+C_{m,\sigma} \nu^{-\f32} \tt^{-1} \bigl(\ln \f{1+t}{1+t_0}\bigr)^{\f\sigma2+\f14} E(t)^\f12 D(t).
\end{aligned}\end{equation}
}\end{lem}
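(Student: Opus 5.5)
We follow the template of Lemma \ref{S3lem9}, now one derivative higher in $X$. Start from the identity obtained by differentiating in time and using \eqref{eqs:Om}:
\begin{align*}
(1+t)\f{d}{dt}\Bigl( \bigl(\ln\f{1+t}{1+t_0}\bigr)^8 \bigl(\p_X^3 \Omega \big|\p_X^2\p_Y \Omega \bigr)_{L^2(m)} \Bigr)
={}&8\bigl(\ln\f{1+t}{1+t_0}\bigr)^7\bigl(\p_X^3 \Omega \big|\p_X^2\p_Y \Omega \bigr)_{L^2(m)}\\
&+\bigl(\ln\f{1+t}{1+t_0}\bigr)^8\Bigl(\bigl(\p_X^3(\cL_t+\cN_t)\Omega\big|\p_X^2\p_Y\Omega\bigr)_{L^2(m)}\\
&\qquad+\bigl(\p_X^3\Omega\big|\p_X^2\p_Y(\cL_t+\cN_t)\Omega\bigr)_{L^2(m)}\Bigr).
\end{align*}
The two new pieces are the linear commutator terms and the nonlinear terms.

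For the linear part, iterate \eqref{[px;L]} and \eqref{[py;L]} to get
\[
\p_X^3\cL_t=\cL_t\p_X^3+\tfrac92\p_X^3,\qquad \p_X^2\p_Y\cL_t=\cL_t\p_X^2\p_Y-\p_X^3+\tfrac72\p_X^2\p_Y.
\]
The $-\p_X^3$ commutator produces the good term $-\|\p_X^3\Omega\|_{L^2(m)}^2$. This is the hypoelliptic gain that the cross term $c_9$ is designed to capture.

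The symmetric part $(\cL_t\p_X^3\Omega|\p_X^2\p_Y\Omega)+(\p_X^3\Omega|\cL_t\p_X^2\p_Y\Omega)$ is handled by integration by parts exactly as in \eqref{eq5.48}. The only boundary information needed is $\p_X^j\Omega|_{Y=0}=0$, so no boundary term arises from the $\p_Y^2$ part. The result is the bound
\[
2\tt^{-2}\|\p_X^4\Omega\|_{L^2(m)}\|\p_X^3\p_Y\Omega\|_{L^2(m)}+2\|\p_X^3\p_Y\Omega\|_{L^2(m)}\|\p_X^2\p_Y^2\Omega\|_{L^2(m)},
\]
together with lower-order terms $C_m\|\p_X^3\Omega\|_{L^2(m)}\|\p_X^2\p_Y\Omega\|_{L^2(m)}$ and the terms produced by derivatives of the weight $b^{2m}$. (The statement writes $\p_X\p_Y^3$; we expect the natural quantity here to be $\p_X^2\p_Y^2$, which is controlled by the $c_8$ dissipation.) The lower-order pieces are bounded by $C_m(\ln\f{1+t}{1+t_0})^{1/2}D(t)$, after multiplying by $(\ln)^8$ and comparing with the $c_7,c_8,c_9$ entries of $D$. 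The factor $(\ln)^{1/2}$ comes from the mismatch of log powers.

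For the nonlinear terms, first integrate by parts once in $X$ to move a derivative onto $\p_X^2\p_Y\Omega$, as in Lemma \ref{S3lem9}, and absorb weight derivatives via Poincaré. This reduces everything to bounding $\|\p_X^2\cN_t\Omega\|_{L^2(m)}\|\p_X^3\p_Y\Omega\|_{L^2(m)}$. Expand $\p_X^2\cN_t\Omega$ by Leibniz.

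The terms with no derivative on the stream function are estimated with \eqref{eq:lem3.1a}. They give $C\nu^{-3/2}\|\Omega_0\|_{L^2(m)}$ times $\|\p_X^2\p_Y\Omega\|_{L^2(m)}+\tt^{-1}\|\p_X^3\Omega\|_{L^2(m)}$.

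The terms with one or two $X$-derivatives on $\Delta_t^{-1}\Omega$ are treated with \eqref{eq5.14}, applied to $\p_X\Omega$ and to $\p_X^2\Omega$, or with \eqref{eq:lem3.1b} combined with \eqref{Sobolev in x}–\eqref{Sobolev in y}. The gain $\tt^{\sigma-3/2}$ then becomes $\tt^{-1}$ times powers of $\ln$, which yields the $E^{1/2}D$ term.

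The main obstacle is the bookkeeping of the logarithmic weights. One must check that every product can be written as $(\ln)^{-\alpha}$ times factors each matching an entry of $E$ or $D$ with its exact log power. Then, when multiplied by $(\ln)^8$ and by $\|\p_X^3\p_Y\Omega\|\le C(\ln)^{-9/2}\tt D^{1/2}$, which comes from the $c_{10}$ dissipation, the total log exponent must be $\ge 1/2$ for the $\Omega_0$-terms and $\ge \sigma/2+1/4$ for the $E^{1/2}D$ terms. The most delicate term is $\p_X^3\Delta_t^{-1}\Omega\,\p_Y\Omega$. Here I would use \eqref{eq5.14} with $f=\p_X^2\Omega$, pairing $\|\p_X^2\Omega\|^{1/2+\sigma}$ (from $c_6$) with $\|\p_X^3\Omega\|^{1/2-\sigma}$ (from $c_9$), exactly mirroring the $D_3$ estimate in Lemma \ref{S3lem10}, and verify that the exponents close. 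Collecting all terms gives \eqref{eq:lem12}.
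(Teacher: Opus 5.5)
Your route is the one the paper intends: it gives no proof of this lemma, only ``repeat the above approaches''. Several of your steps are right as stated:
\begin{itemize}
\item the commutator identities;
\item the reduction of the nonlinear part to $\|\p_X^2\cN_t\Omega\|_{L^2(m)}\|\p_X^3\p_Y\Omega\|_{L^2(m)}$ (for the pairing $(\p_X^3\Omega|\p_X^2\p_Y\cN_t\Omega)$ the derivative to move is $\p_Y$, and the boundary term vanishes because $\p_X^3\Omega|_{Y=0}=0$);
\item replacing $\|\p_X\p_Y^3\Omega\|$ by $\|\p_X^2\p_Y^2\Omega\|$, which is what the $c_8$ entry of $D(t)$ and the absorption step in Proposition~\ref{S3prop2} actually need.
\end{itemize}
However, the log/time bookkeeping you prescribe, which you rightly call the whole difficulty, does not close as written. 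There are two concrete problems.

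First, write $\theta=\ln\frac{1+t}{1+t_0}$. The $c_{10}$ entry of $D(t)$ is $c_{10}\theta^9\bigl((1+t)^{-2}\|\p_X^4\Omega\|^2+\|\p_X^3\p_Y\Omega\|^2\bigr)$. It therefore gives $\|\p_X^3\p_Y\Omega\|_{L^2(m)}\le C\theta^{-9/2}D^{1/2}$, with no factor $(1+t)$. With the extra $(1+t)$ you wrote:
\begin{itemize}
\item the transport terms come out as $C\nu^{-3/2}\|\Omega_0\|_{L^2(m)}(1+t)\theta^{1/2}D$;
\item the $E^{1/2}D$ terms lose their $(1+t)^{-1}$.
\end{itemize}
Both are fatal for large $t_0$.

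Second, in the term $\p_X^3\Delta_t^{-1}\Omega\,\p_Y\Omega$ you cannot mirror $D_3$ of Lemma~\ref{S3lem10} literally. The test factor $\|\p_X^3\p_Y\Omega\|$ is not in $E$, so take $E^{1/2}$ instead from $\|\p_Y\Omega\|\le C\theta^{-1/2}E^{1/2}$ (the $c_1$ entry).

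Also, $\|\p_X^3\Omega\|^{1/2-\sigma}$ cannot simply be taken from the $c_9$ entry while bounding $(1+t)^{\sigma-3/2}\le(1+t)^{-1}$. That gives $\theta^{3\sigma/2-1/4}$, which falls short of $\theta^{\sigma/2+1/4}$ by $\theta^{1/2-\sigma}$, and $\theta\to0$ as $t\to t_0$. The leftover $(1+t)^{\sigma-1/2}$ must be spent, since each $(1+t)^{-1}$ is worth one power of $\theta$. For example, write $(1+t)^{\sigma-1/2}\|\p_X^3\Omega\|^{1/2-\sigma}=\bigl((1+t)^{-1}\|\p_X^3\Omega\|\bigr)^{1/2-\sigma}$ and control it by the $(1+t)^{-2}\|\p_X^3\Omega\|^2$ part of the $c_7$ entry; this is what $D_3$ in Lemma~\ref{S3lem10} really does. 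Together with $\|\p_X^2\Omega\|\le C\theta^{-5/2}D^{1/2}$ (the $c_6$ entry), the exponent is $8-\tfrac12-\tfrac52(\tfrac12+\sigma)-3(\tfrac12-\sigma)-\tfrac92=\tfrac14+\tfrac\sigma2$, exactly as claimed.

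The same rule closes the other Leibniz terms, namely $\p_X\p_Y\Delta_t^{-1}\Omega\,\p_X^2\Omega$, $\p_X^2\p_Y\Delta_t^{-1}\Omega\,\p_X\Omega$ and $\p_X^2\Delta_t^{-1}\Omega\,\p_X\p_Y\Omega$. Route all excess time decay through the $(1+t)^{-2}$-weighted entries of $D$ (the $c_3$, $c_7$, $c_8$ slots, or the first term $(1+t)^{-2}\|\p_X\Omega\|^2$), and take $E^{1/2}$ from the appropriate lower-order factor. Each of these terms then also lands on $\theta^{\sigma/2+1/4}E^{1/2}D$.
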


\begin{lem}\label{S3lem15}
{  Under the assumptions of Lemma \ref{S3lem9}, we have
\begin{equation}\label{eq:lem13}
\begin{aligned}
(1+t) &\f{d}{dt}\Bigl(\bigl(\ln\f{1+t}{1+t_0}\bigr)^9\|\p_X^3\Omega(t)\|_{L^2(m)}^2\Bigr) +2\bigl(\ln\f{1+t}{1+t_0}\bigr)^9\tt^{-2}\|\p_X^4\Omega\|_{L^2(m)}^2\\
&+2\bigl(\ln\f{1+t}{1+t_0}\bigr)^9\|\p_X^3\p_Y\Omega\|_{L^2(m)}^2
\leq 9\bigl(\ln\f{1+t}{1+t_0}\bigr)^8\|\p_X^3\Omega\|_{L^2(m)}^2\\
&+C_m\bigl(1+\nu^{-\f32}{\|\Omega_0\|_{L^2(m)}}\bigr)E(t)+C_{m,\sigma} \nu^{-\f32} \tt^{-1} \bigl(\ln\f{1+t}{1+t_0}\bigr)^{\f\sigma2+\f{1}4} E(t)^\f12 D(t).
\end{aligned}\end{equation}
}\end{lem}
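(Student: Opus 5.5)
We follow the template of Lemma \ref{S3lem10}: apply $\p_X^3$ to \eqref{eqs:Om} and take the $L^2(m)$ inner product with $\bigl(\ln\f{1+t}{1+t_0}\bigr)^9\p_X^3\Omega$. This gives
\[
\f{1+t}{2}\f{d}{dt}\Bigl(\bigl(\ln\tfrac{1+t}{1+t_0}\bigr)^9\|\p_X^3\Omega\|_{L^2(m)}^2\Bigr)=\f92\bigl(\ln\tfrac{1+t}{1+t_0}\bigr)^8\|\p_X^3\Omega\|_{L^2(m)}^2+\bigl(\ln\tfrac{1+t}{1+t_0}\bigr)^9\int_{\R^2_+}\p_X^3(\cL_t\Omega+\cN_t\Omega)\,\p_X^3\Omega\, b^{2m}\,dXdY,
\]
and the first term on the right, after multiplying by $2$, is exactly $9(\ln)^8\|\p_X^3\Omega\|^2$.

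\textbf{Linear part.} Iterating \eqref{[px;L]} gives $\p_X^3\cL_t=\cL_t\p_X^3+\f92\p_X^3$. Since $\p_X^3\Omega|_{Y=0}=0$, estimate \eqref{eq5.18} applies to $\p_X^3\Omega$ and produces the dissipation $-\tt^{-2}\|\p_X^4\Omega\|^2-\|\p_X^3\p_Y\Omega\|^2$ plus $C_m\|\p_X^3\Omega\|^2_{L^2(m)}$. Multiplied by $(\ln)^9$, this last term is bounded by $C_m E(t)$.

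\textbf{Nonlinear part.} By Leibniz, $\p_X^3\cN_t\Omega=\sum_{j=0}^3\binom3j$ of terms with $j$ derivatives on the stream function and $3-j$ on $\nabla\Omega$.
\begin{itemize}
\item $j=0$: after integrating by parts using the divergence-free structure and $\Delta_t^{-1}\Omega|_{Y=0}=0$, this is bounded as in \eqref{eq5.19} by $C\nu^{-3/2}\|\Omega_0\|\,\|\p_X^3\Omega\|^2$, hence by $E(t)$.
\item $j=1$: the term $\p_X\p_Y\Delta_t^{-1}\Omega\,\p_X^3\Omega$ and the term $\p_X^2\Delta_t^{-1}\Omega\,\p_X^2\p_Y\Omega$. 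Use \eqref{eq5.14} for the $L^\infty$ factors, with $f=\p_X\Omega$ or $\p_Y\Omega$ in the role of $f$. Since $\Delta_t^{-1}$ commutes with $\p_X$, only the $X$-derivative versions are needed.
\item $j=2$: the terms $\p_X^3\Delta_t^{-1}\Omega\,\p_X\p_Y\Omega$ and $\p_X^2\p_Y\Delta_t^{-1}\Omega\,\p_X^2\Omega$. Use \eqref{eq5.14} with $f=\p_X^2\Omega$ and $f=\p_X\p_Y\Omega$.
\item $j=3$: the terms $\p_X^4\Delta_t^{-1}\Omega\,\p_Y\Omega$ and $\p_X^3\p_Y\Delta_t^{-1}\Omega\,\p_X\Omega$. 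Use \eqref{eq5.14} with $f=\p_X^3\Omega$, which brings in $\|\p_X^4\Omega\|$ (controlled by the $c_{10}$ dissipation term $\tt^{-2}\|\p_X^4\Omega\|^2$), and with $f=\p_X^2\p_Y\Omega$, which brings in $\|\p_X^3\p_Y\Omega\|$.
\end{itemize}
Each such product carries the factor $\tt^{\sigma-3/2}$. Exactly as in $D_2$ and $D_3$, we distribute powers of $\ln\f{1+t}{1+t_0}$ so that one factor is a component of $E^{1/2}$ (typically $(\ln)^{9/2}\|\p_X^3\Omega\|$ or $(\ln)^{1/2}\|\p_Y\Omega\|$) and the remaining factors are components of $D$. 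This yields $C_{m,\sigma}\nu^{-3/2}\tt^{-1}(\ln)^{\sigma/2-\kappa}E^{1/2}D$ for some $\kappa$. After multiplication by $(\ln)^9$, the exponent becomes $\sigma/2+1/4$ provided the bookkeeping closes; the $\tt^{-1}$ absorbs the leftover $\tt^{\sigma-1/2}$ against the $\tt^{-1}$ weights in $D$.

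\textbf{Main obstacle.} The hardest step is this power-of-logarithm bookkeeping for the $j=3$ terms, in particular $\p_X^4\Delta_t^{-1}\Omega\,\p_Y\Omega$. There we must check that $\|\p_X^3\Omega\|^{1/2+\sigma}\|\p_X^4\Omega\|^{1/2-\sigma}\|\p_Y\Omega\|\,\|\p_X^3\Omega\|$ can be rewritten as $(\ln)^{-\kappa}E^{1/2}D$ with $9-\kappa\ge\sigma/2+1/4$. This uses the weights $(\ln)^9\tt^{-2}$ on $\|\p_X^4\Omega\|^2$, $(\ln)^8$ on $\|\p_X^3\Omega\|^2$ in $D$, and $(\ln)^9$ in $E$. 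We will choose the $E^{1/2}$ factor to be $(\ln)^{9/2}\|\p_X^3\Omega\|$, and the $D$-factors to be $\|\p_Y\Omega\|$ and the interpolated pair. The exponent count then gives $\kappa=9-(\sigma/2+1/4)$, which matches \eqref{eq:lem13}. Summing the linear and nonlinear bounds gives the claim.
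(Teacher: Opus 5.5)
Your proposal is correct and follows exactly the route the paper intends. The paper states this lemma without proof, as a repetition of Lemma \ref{S3lem10}. The ingredients match: the commutator $\p_X^3\cL_t=\cL_t\p_X^3+\f92\p_X^3$ with \eqref{eq5.18}, the transport cancellation for $j=0$, and \eqref{eq5.14} for $j=1,2,3$. Your logarithm count for $\p_X^4\Delta_t^{-1}\Omega\,\p_Y\Omega$ indeed gives $\bigl(\ln\f{1+t}{1+t_0}\bigr)^{\f\sigma2+\f14}$, and the same count works for every other term.

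One justification needs fixing. Taking $f=\p_Y\Omega$, $\p_X\p_Y\Omega$, or $\p_X^2\p_Y\Omega$ in the $\p_X\Delta_t^{-1}$ slot of \eqref{eq5.14} does not follow from $\p_X$ commuting with $\Delta_t^{-1}$, because the Dirichlet $\Delta_t^{-1}$ does not commute with $\p_Y$. It follows instead from the whole-plane version of \eqref{eq5.14} applied to even extensions. This is the same step the paper uses implicitly in $D_2$ and $D_3$.
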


Now we are in a position to complete the estimate for the energy functionals defined by \eqref{def:E} and \eqref{def:D}.

\begin{prop}\label{S3prop2}
{  Let $m>1$ and $0<\sigma<\f12$. Let $(c_1,\cdots,c_{10})$ satisfy \eqref{assumptions on c1-7}.
Then for $t_0\leq t\leq 2t_0,$ the energy functionals: $E(t)$ and $D(t),$ defined by \eqref{def:E} and \eqref{def:D} satisfy
\begin{equation}\label{eq5.54}
\begin{aligned}
(1+t)\f{d}{dt}E(t)+\f12D(t)\leq C_m &\bigl(1+\nu^{-\f32}{\|\Omega_0\|_{L^2(m)}}\bigr)\bigl( E(t)+\bigl(\ln\f{1+t}{1+t_0}\bigr)^\f12D(t)\bigr)\\
&+C_{m,\sigma} \nu^{-\f32} \tt^{-1} \bigl(\ln\f{1+t}{1+t_0}\bigr)^{\f\sigma2+\f{1}4} E(t)^\f12 D(t).
\end{aligned}
\end{equation}
}\end{prop}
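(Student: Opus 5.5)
The plan is to form the weighted sum $E(t)$ of the differential inequalities from Lemmas \ref{S3lem3}--\ref{S3lem15}, with weights $1,c_1,\dots,c_{10}$ matching \eqref{def:E}. Then I show that the \emph{bad} linear terms on the right-hand sides are absorbed into half of the dissipation $D(t)$, using the hierarchy \eqref{assumptions on c1-7}. Write $L\eqdefa\ln\frac{1+t}{1+t_0}\in[0,\ln 2]$.

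First I add $(1+t)\frac{d}{dt}$ of each term. Each lemma supplies on its left the good dissipative quantities, which together make up exactly $D(t)$, up to factors $1$ or $2$. On the right there are three kinds of terms. The first kind comes from differentiating the powers of $L$, giving $jc_jL^{j-1}(\cdots)$. The second kind are the commutator cross terms, such as $2L\|\p_X\Omega\|\|\p_Y\Omega\|$ and $2L^2\|\p_Y^2\Omega\|\|\p_X\p_Y\Omega\|$. The third kind are the nonlinear and lower-order remainders, of the form $C_m(1+\nu^{-3/2}\|\Omega_0\|)(E+L^{1/2}D)$ and $C\nu^{-3/2}(1+t)^{-1}L^{\sigma/2+1/4}E^{1/2}D$. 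The third kind already has the form of the right side of \eqref{eq5.54}. The mismatches $\nu^{-1}\|\Omega(1)\|$ and $\nu^{-1}$ in Lemmas \ref{S3lem4}, \ref{S3lem7} and \ref{S3lem13} are harmless, since they are bounded by the same quantity after rescaling and Proposition \ref{prop5.1}.

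The main step is absorbing the first two kinds into $\frac12 D$. I pair terms as follows.
\begin{itemize}
\item The term $\|\p_Y\Omega\|^2$ from $\frac{d}{dt}(c_1L\|\p_Y\Omega\|^2)$ carries the factor $c_1\ll1$ against the dissipation $\|\p_Y\Omega\|^2$ from Lemma \ref{S3lem3}.
\item The cross term $2c_1L\|\p_X\Omega\|\|\p_Y\Omega\|$ is bounded by $\epsilon c_2L^2\|\p_X\Omega\|^2+C c_1^2c_2^{-1}\|\p_Y\Omega\|^2$, which is fine by $c_1^2\ll c_2$.
\item The mixed term $2c_2L(\p_X\Omega|\p_Y\Omega)$ is bounded by $c_2L^2\|\p_X\Omega\|^2/4+Cc_2\|\p_Y\Omega\|^2$, which uses $c_2\ll 1$. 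The term $c_2L^2\|\p_X\Omega\|^2$ is exactly the good term of Lemma \ref{S3lem5}.
\item The term $3c_3L^2\|\p_X\Omega\|^2$ is controlled by the $c_2$ dissipation, since $c_3\ll c_2$.
\item The cross term $2c_2L^2\|\p_Y^2\Omega\|\|\p_X\p_Y\Omega\|$ is split using $c_2^2\ll c_1c_3$ between $c_1L\|\p_Y^2\|^2$ and $c_3L^3\|\p_X\p_Y\|^2$.
\item The term $c_2L^2(1+t)^{-2}\|\p_X^2\|\|\p_X\p_Y\|$ is split between the $c_3$ and $c_1$ dissipations.
\end{itemize}
The $c_4$ and $c_5$ levels, and then the three-derivative chain $c_6,\dots,c_{10}$, repeat this pattern. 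The conditions used there are $c_5^2\ll c_3c_6$, $c_6^2\ll c_5c_7$, $c_8^2\ll c_7c_9$ and $c_9^2\ll c_8c_{10}$. Where a right-hand side is merely bounded by $E(t)$, as in Lemmas \ref{S3lem6}, \ref{S3lem10} and \ref{S3lem15}, it goes directly into the $C_mE$ term.

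The step I expect to be the main obstacle is the bookkeeping at the top level. The term $2c_9L^8\|\p_X^3\p_Y\Omega\|\|\p_X\p_Y^3\Omega\|$ in Lemma \ref{S3lem14} involves $\p_X\p_Y^3\Omega$, which does not appear in $D$. I would first try to integrate by parts in $X$ to move it to $\|\p_X^2\p_Y^2\Omega\|\,\|\p_X^2\p_Y\Omega\|$-type terms, which the $c_8$ and $c_7$ dissipations control. Failing that, I would reread the commutator $\p_X^2\p_Y\cL_t$, since the correct term is presumably $\|\p_X^3\p_Y\|\|\p_X^2\p_Y^2\|$. I also need to check that the powers of $L$ line up, and that the Young-inequality choices close with all error coefficients at most $\frac1{2}$ of the corresponding $D$ component. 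The explicit choice $c_j=A^{-\text{power}}$ with $A$ large makes this a finite check. Summing everything then gives \eqref{eq5.54}.
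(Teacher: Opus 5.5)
Your plan is the paper's proof: add \eqref{eq:lem1}$+c_1$\eqref{eq:lem2}$+\cdots+c_{10}$\eqref{eq:lem13}, keep the remainder terms on the right, and absorb the $L^{j-1}$-terms and commutator cross terms into part of $D(t)$ by Young's inequality under \eqref{assumptions on c1-7}. Your second guess at the top level is the correct one: as in Lemma \ref{S3lem9}, the $\partial_Y^2$ part of $\mathcal{L}_t$ yields $-2\int\partial_X^3\partial_Y\Omega\,\partial_X^2\partial_Y^2\Omega\,b^{2m}\,dXdY$, so the $\partial_X\partial_Y^3\Omega$ in Lemma \ref{S3lem14} (and in the paper's last Young step) is a typo for $\partial_X^2\partial_Y^2\Omega$, which the $c_8$-dissipation controls via $c_9^2\ll c_8c_{10}$.

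One correction concerns the factors $\nu^{-1}\|\Omega(1)\|_{L^2(m)}$ and $\nu^{-1}$ in Lemmas \ref{S3lem4}, \ref{S3lem7} and \ref{S3lem13}. These are also typos, since the proofs directly produce $\nu^{-\frac32}\|\Omega_0\|_{L^2(m)}$ and $\nu^{-\frac32}$. Going through Proposition \ref{prop5.1} as you suggest would instead give a prefactor of order $\nu^{\frac12}\bigl(1+\nu^{-\frac32}\|\Omega_0\|_{L^2(m)}\bigr)^{m+1}$, not the linear one in \eqref{eq5.54}.
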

\begin{proof} In view of \eqref{def:E} and \eqref{def:D}, we get,
by summarizing the estimates obtained in the previous lemmas in such a way that $\eqref{eq:lem1}+c_1\eqref{eq:lem2}+c_2\eqref{eq:lem3}+c_3\eqref{eq:lem4}+c_4\eqref{eq:lem5}+c_5\eqref{eq:lem6}+c_6\eqref{eq:lem7}+c_7\eqref{eq:lem8}+c_{8}\eqref{eq:lem11}+c_{9}\eqref{eq:lem12}+c_{10}\eqref{eq:lem13}$, that
\beq\label{S3eq6}
\begin{split}
(1+t)&\f{d}{dt}E(t)+\f34D(t)
\leq C_m \bigl(1+\nu^{-\f32}{\|\Omega_0\|_{L^2(m)}}\bigr)\bigl( E(t)+\bigl(\ln\f{1+t}{1+t_0}\bigr)^\f12D(t)\bigr)\\
&+C_{m,\sigma} \nu^{-\f32} \tt^{-1} \bigl(\ln\f{1+t}{1+t_0}\bigr)^{\f\sigma2+\f{1}4} E(t)^\f12 D(t)\\
&+2(c_1+c_2)\bigl(\ln\f{1+t}{1+t_0}\bigr)\|\p_X\Omega\|_{L^2(m)}\|\p_Y\Omega\|_{L^2(m)}\\
&+(2c_2+4c_4)\bigl(\ln\f{1+t}{1+t_0}\bigr)^2\bigl(\tt^{-2}\|\p_X^2\Omega\|_{L^2(m)}+\|\p_Y^2\Omega\|_{L^2(m)}\bigr)\|\p_X\p_Y\Omega\|_{L^2(m)}\\
&+(4c_5+5c_6)\bigl(\ln\f{1+t}{1+t_0}\bigr)^4\|\p_X^2\Omega\|_{L^2(m)}\|\p_X\p_Y\Omega\|_{L^2(m)}\\
&+2c_6\bigl(\ln\f{1+t}{1+t_0}\bigr)^5\bigl(\tt^{-2}\|\p_X^3\Omega\|_{L^2(m)}+\|\p_X\p_Y^2\Omega\|_{L^2(m)}\bigr)\|\p_X^2\p_Y\Omega\|_{L^2(m)}\\
&+(2c_{9}+8c_{10}) \bigl(\ln\f{1+t}{1+t_0}\bigr)^7\|\p_X^3\Omega\|_{L^2(m)}\|\p_X^2\p_Y\Omega\|_{L^2(m)}\\
&+2c_{9}\bigl(\ln\f{1+t}{1+t_0}\bigr)^8\bigl(\tt^{-2}\|\p_X^4\Omega\|_{L^2(m)}+\|\p_X\p_Y^3\Omega\|_{L^2(m)}\bigr)\|\p_X^3\p_Y\Omega\|_{L^2(m)},
\end{split}\eeq
where we used the facts:
$$
c_1\leq \f54, \, c_3\leq \f{c_2}{12}, \, c_4\leq \f{c_1}{4}, \, c_5\leq \f{c_3}{8}, \, c_7\leq \f{c_6}{24}, \, c_{8}\leq \f{c_7}{28}, \, c_{10}\leq \f{c_{9}}{36}.
$$

Next we handle the terms in \eqref{S3eq6} which do not contain  $E(t)$ or $D(t).$  If $c_1$ and $c_2$ satisfy moreover $64c_1^2\leq c_2$ and $64c_2\leq 1$, then one has
\begin{align*}
2(c_1+c_2)\bigl(\ln\f{1+t}{1+t_0}\bigr)\|\p_X\Omega\|_{L^2(m)}\|\p_Y\Omega\|_{L^2(m)}
\leq \f{1}4\|\p_Y\Omega\|_{L^2(m)}^2+\f{c_2}4\bigl(\ln{t}/{t_0}\bigr)^2\|\p_X\Omega\|_{L^2(m)}^2.
\end{align*}
In case $32(c_2+2c_4)^2\leq c_1c_3$, we have
\begin{align*}
&(2c_2+4c_4)\bigl(\ln\f{1+t}{1+t_0}\bigr)^2\bigl(\tt^{-2}\|\p_X^2\Omega\|_{L^2(m)}+\|\p_Y^2\Omega\|_{L^2(m)}\bigr)\|\p_X\p_Y\Omega\|_{L^2(m)}\\
 &\leq \f{c_3}{8}\bigl(\ln\f{1+t}{1+t_0}\bigr)^3\bigl( \tt^{-2}\|\p_X^2\Omega\|_{L^2(m)}^2+\|\p_X\p_Y\Omega\|_{L^2(m)}^2\bigr)\\
&\quad+\f{c_1}{4}\bigl(\ln\f{1+t}{1+t_0}\bigr)\bigl(\tt^{-2}\|\p_X\p_Y\Omega\|_{L^2(m)}^2+\|\p_Y^2\Omega\|_{L^2(m)}^2 \bigr).
\end{align*}
When $8(4c_5+5c_6)^2\leq c_3c_6$, there holds
\begin{align*}
&(4c_5+5c_6)\bigl(\ln\f{1+t}{1+t_0}\bigr)^4\|\p_X^2\Omega\|_{L^2(m)}\|\p_X\p_Y\Omega\|_{L^2(m)}\\
&\leq \f{c_6}4 \bigl(\ln\f{1+t}{1+t_0}\bigr)^5\|\p_X^2\Omega\|_{L^2(m)}^2+\f{c_3}{8}\bigl(\ln\f{1+t}{1+t_0}\bigr)^3\|\p_X\p_Y\Omega\|_{L^2(m)}^2.
\end{align*}
If $64c_6^2\leq c_5c_7$, one has
\begin{align*}
&2c_6\bigl(\ln\f{1+t}{1+t_0}\bigr)^5\bigl(\tt^{-2}\|\p_X^3\Omega\|_{L^2(m)}+\|\p_X\p_Y^2\Omega\|_{L^2(m)}\bigr)\|\p_X^2\p_Y\Omega\|_{L^2(m)}\\
& \leq \f{c_7}{8}\bigl(\ln\f{1+t}{1+t_0}\bigr)^6\bigl( \tt^{-2}\|\p_X^3\Omega\|_{L^2(m)}^2+\|\p_X^2\p_Y\Omega\|_{L^2(m)}^2\bigr)\\
&\quad+\f{c_5}{8}\bigl(\ln\f{1+t}{1+t_0}\bigr)^4\bigl(\tt^{-2}\|\p_X^2\p_Y\Omega\|_{L^2(m)}^2+\|\p_X\p_Y^2\Omega\|_{L^2(m)}^2 \bigr).
\end{align*}
Similarly, when $32(c_{8}+4c_{9})^2\leq c_7c_{11}$, there holds 
\begin{align*}
    &(2c_{8}+8c_{9}) \bigl(\ln\f{1+t}{1+t_0}\bigr)^7\|\p_X^3\Omega\|_{L^2(m)}\|\p_X^2\p_Y\Omega\|_{L^2(m)}\\
    &\leq \f{c_7}8 \bigl(\ln\f{1+t}{1+t_0}\bigr)^6\|\p_X^2\p_Y\Omega\|_{L^2(m)}^2 +\f{c_{9}}{4} \bigl(\ln\f{1+t}{1+t_0}\bigr)^8\|\p_X^3\Omega\|_{L^2(m)}^2.
\end{align*}
Also, if $64c_{9}^2\leq c_{8}c_{10}$, one has
\begin{align*}
    &2c_{9}\bigl(\ln\f{1+t}{1+t_0}\bigr)^8\bigl(\tt^{-2}\|\p_X^4\Omega\|_{L^2(m)}+\|\p_X\p_Y^3\Omega\|_{L^2(m)}\bigr)\|\p_X^3\p_Y\Omega\|_{L^2(m)}\\
    &\leq \f{c_{8}}4 \bigl(\ln\f{1+t}{1+t_0}\bigr)^7\bigl(\tt^{-2}\|\p_X^3\p_Y\Omega\|_{L^2(m)}^2+\|\p_X\p_Y^3\Omega\|_{L^2(m)}^2\bigr)\\
    &\quad+ \f{c_{10}}4 \bigl(\ln\f{1+t}{1+t_0}\bigr)^9\bigl(\tt^{-2}\|\p_X^4\Omega\|_{L^2(m)}^2+\|\p_X^3\p_Y\Omega\|_{L^2(m)}^2\bigr).
\end{align*}

Noticing that under the assumption \eqref{assumptions on c1-7}, all the previous assumptions on $
 c_1,\cdots,c_{10},$ are satisfied. Therefore, by substituting the above estimates into \eqref{S3eq6}, we conclude the
 proof of \eqref{eq5.54}.
\end{proof}

With Proposition \ref{S3prop2}, let us show the estimates for higher regularities:

\begin{prop}\label{S3prop3}
{  Let $m>6$ and $0<\sigma<\f12$. Suppose \eqref{eqs:Om} has a solution $\Omega(t)\in L^2(m_0)$. Then, for $|({\alpha_1},{\alpha_2})|\leq 2$ or $({\alpha_1},{\alpha_2})=(3,0)$, there holds
\begin{equation}\label{eq:prop3.3}
\begin{aligned}
\|\p_X^{\alpha_1}\p_Y^{\alpha_2}\Omega(t)\|_{L^2(m)}
\leq &C_{m,\sigma} \bigl( 1 +{\nu}^{-\f32}{\|\Omega_0\|_{L^2(m_0)}}\bigr)^{m+\f{2(3{\alpha_1}+{\alpha_2})(m+1)}{1+2\sigma}} \tt\|\Omega_0\|_{L^2(m)}.
\end{aligned}
\end{equation}
}\end{prop}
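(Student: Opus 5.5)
We would follow the strategy of \cite{liu2026nonlinear}: split the time axis into a short-time regime and dyadic windows $[t_0,2t_0]$, and on each window integrate the differential inequality \eqref{eq5.54} of Proposition~\ref{S3prop2}. Set $R\eqdef 1+\nu^{-\f32}\|\Omega_0\|_{L^2(m_0)}$ and let $\theta(t)=\ln\f{1+t}{1+t_0}$.

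\textbf{Absorption.} Restrict to a sub-window $t\in[t_0,t_0+\tau_*]$, with $\tau_*$ chosen so that $\theta(t)\le\theta_*$ and $\theta_*\sim (C_mR)^{-2}$. Then the term $C_mR\,\theta^{1/2}D$ on the right of \eqref{eq5.54} is at most $\f18D$. For the cubic term we use a bootstrap hypothesis $E(t)\le 4E(t_0)$. By Proposition~\ref{prop5.1}, $E(t_0)=\|\Omega(t_0)\|_{L^2(m)}^2\lesssim (1+t_0)^2R^{2m}\|\Omega_0\|_{L^2(m)}^2$. Hence
\[
C_{m,\sigma}\nu^{-\f32}(1+t)^{-1}\theta^{\f\sigma2+\f14}E^{1/2}\lesssim R^{m+1}\theta^{\f\sigma2+\f14}.
\]
This is at most $\f18$ once $\theta_*\lesssim R^{-4(m+1)/(1+2\sigma)}$. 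Under both smallness conditions, \eqref{eq5.54} reduces to $(1+t)\f{d}{dt}E+\f14D\le C_mR\,E$. Gronwall in the variable $\theta$ gives $E(t)\le e^{C_mR\theta}E(t_0)\le 2E(t_0)$, which closes the bootstrap.

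\textbf{Conversion to derivative bounds.} At the end of the sub-window, $\theta=\theta_*$. The definition \eqref{def:E} gives $\theta_*^{k}\|\p^\alpha\Omega\|_{L^2(m)}^2\lesssim E$. Here the power $k$ matches the anisotropic weight: $k=3\alpha_1+\alpha_2$, so $\p_Y$ carries $\theta$, $\p_X$ carries $\theta^3$, $\p_X^2$ carries $\theta^6$, and $\p_X^3$ carries $\theta^9$. The cross terms are controlled by the diagonal ones through \eqref{assumptions on c1-7}. Thus
\[
\|\p_X^{\alpha_1}\p_Y^{\alpha_2}\Omega(t_0+\tau_*)\|_{L^2(m)}\lesssim \theta_*^{-(3\alpha_1+\alpha_2)/2}R^{m}(1+t_0)\|\Omega_0\|_{L^2(m)}.
\]
Inserting $\theta_*^{-1}\sim R^{4(m+1)/(1+2\sigma)}$ yields exactly the exponent $m+\f{2(3\alpha_1+\alpha_2)(m+1)}{1+2\sigma}$ in \eqref{eq:prop3.3}. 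Since $t_0\ge T_0$ is arbitrary and $\theta_*$ does not depend on $t_0$, every $t\ge T_0+\tau_*$ is covered by taking $t_0=t-\tau_*$. Times $t\lesssim T_0$ are handled by parabolic smoothing of $\Delta_t$ at finite time, which is uniformly elliptic there, and this only costs constants.

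\textbf{Main obstacle.} We expect the delicate step to be the bootstrap for the cubic term. It requires an a priori bound on $E$ near the left end of the window, where the weights $\theta^k$ vanish so that only $\|\Omega\|_{L^2(m)}$ matters. The power of $R$ must also be tracked exactly to match the stated exponent, and the dependence of $\tau_*$ on $t_0$ (through the factor $(1+t)^{-1}$) must be checked to be harmless. Here $(1+t)^{-1}E^{1/2}$ stays bounded by Proposition~\ref{prop5.1}.
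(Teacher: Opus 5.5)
Your proposal is correct and is essentially the paper's proof. The paper runs the same continuity argument, with stopping times $T^*,T_1^*,T_2^*$, on a window of logarithmic length $\theta_*\sim R^{-4(m+1)/(1+2\sigma)}$; it absorbs the $\theta^{1/2}D$ and cubic terms of \eqref{eq5.54} using Proposition~\ref{prop5.1} for $E(t_0)=\|\Omega(t_0)\|_{L^2(m)}^2$, applies Gronwall, and reads off $\|\p_X^{\alpha_1}\p_Y^{\alpha_2}\Omega\|_{L^2(m)}$ from the weight $\theta_*^{3\alpha_1+\alpha_2}$ in $E$ after fixing $t_0$ by $\ln\f{1+t}{1+t_0}=\theta_*$, as in \eqref{eq3.55}. (It is $\theta_*$, not $\tau_*=(1+t_0)(e^{\theta_*}-1)$, that is independent of $t_0$.)

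The only inaccuracy is your short-time remark. Like the paper's, your argument gives the estimate only for $t$ large. Parabolic smoothing cannot give a bound uniform as $t\to0^+$ for data merely in $L^2(m)$, since the derivatives of $\Omega(t)$ may blow up there.
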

\begin{proof}
We first fix $t_0$ to be a large enough time and prove that \eqref{eq:prop3.3} holds near $t_0$. Let us define
\begin{equation}\label{def:T^*}
T^*\eqdefa \inf \bigl\{\ T>t_0 |\ \sup_{t\in [t_0,T]}E(t)<2E(t_0)  \ \bigr\}.
\end{equation}
Observing that $T^*>t_0$ is well-defined.
We define $T_1^*$ to be determined by
\begin{equation}\label{def:T_1^*}
C_m \bigl(1+\nu^{-\f32}{\|\Omega_0\|_{L^2(m)}}\bigr)\bigl(\ln\f{1+T_1^*}{1+t_0}\bigr)^\f12+C_{m,\sigma} \nu^{-\f32} (1+t_0)^{-1} \bigl(\ln\f{1+T_1^*}{1+t_0}\bigr)^{\f\sigma2+\f{1}4} \bigl(2E(t_0)\bigr)^\f12=\f12.
\end{equation}
Then, we get that for all $t_0\leq t\leq \min(T^*,T^*_1)$,
\begin{align*}
    &C_m \bigl(1+\nu^{-\f32}{\|\Omega_0\|_{L^2(m)}}\bigr)\bigl(\ln\f{1+t}{1+t_0}\bigr)^\f12D(t)\\
    &+C_{m,\sigma} \nu^{-\f32} (1+t)^{-1} \bigl(\ln\f{1+t}{1+t_0}\bigr)^{\f\sigma2+\f{1}4} \bigl(E(t)\bigr)^\f12 D(t)\leq \f12 D(t),
\end{align*}
from which and \eqref{eq5.18}, we infer
$$
(1+t)\f{d}{dt} E(t)\leq C_m \bigl(1+\nu^{-\f32}{\|\Omega_0\|_{L^2(m)}}\bigr)E(t), \qquad\forall \quad t\in [t_0,\min(T^*,T^*_1)].
$$
Applying Gronwall's inequality gives rise to
$$
E(t)\leq E(t_0)e^{C_m \bigl(1+\nu^{-\f32}{\|\Omega_0\|_{L^2(m)}}\bigr)\bigl(\ln \f{1+t}{1+t_0}\bigr)}, \qquad\forall \ t\in [t_0,\min(T^*,T^*_1)].
$$
We denote $T^*_2$ to be determined by
\begin{equation}\label{def:T_2^*}
C_m \bigl(1+\nu^{-\f32}{\|\Omega_0\|_{L^2(m)}}\bigr)\bigl(\ln \f{1+T_2^*}{1+t_0})=\ln \f32.
\end{equation}
As a consequence, we obtain
$$
E(t)\leq \f32 E(t_0), \qquad\forall \quad t\in [t_0,\min(T^*,T^*_1,T^*_2)].
$$
Comparing the above inequality with \eqref{def:T^*},  we get, by using a standard continuous argument, that $T^*>\min(T^*_1,T^*_2)$.

On the other hand, we deduce from  Proposition \ref{prop5.1} that,
$$
E(t_0)= \|\Omega(t_0)\|_{L^2(m)}^2\leq C_{m}(1+t_0)^2 \bigl( 1 +{\nu}^{-\f32}{\|\Omega_0\|_{L^2(m)}}\bigr)^{2m} \|\Omega_0\|_{L^2(m)}^2,
$$
from which we infer
$$
E(t)\leq  C_{m}(1+t_0)^2 \bigl( 1 +{\nu}^{-\f32}{\|\Omega_0\|_{L^2(m)}}\bigr)^{2m} \|\Omega_0\|_{L^2(m)}^2, \qquad \forall t\in [t_0,f_0(t_0)],
$$
where
\begin{align*}
f_0(t_0)\eqdefa (1+t_0) \exp \Bigl( \f1{C_{m,\sigma}} \bigl( 1 +{\nu}^{-\f32}{\|\Omega_0\|_{L^2(m)}}\bigr)^{-\f{4(m+1)}{1+2\sigma}} \Bigr)-1,
\end{align*}
so that for any $T\leq f_0(t_0),$ there holds
\begin{align*}
&C_m \bigl(1+\nu^{-\f32}{\|\Omega_0\|_{L^2(m)}}\bigr)\bigl(\ln\f{1+T}{1+t_0}\bigr)^\f12+C_{m,\sigma} \nu^{-\f32} (1+t_0)^{-1} \bigl(\ln\f{1+T}{1+t_0}\bigr)^{\f\sigma2+\f{1}4} \bigl(2E(t_0)\bigr)^\f12\leq \f12,\\
& C_m \bigl(1+\nu^{-\f32}{\|\Omega_0\|_{L^2(m)}}\bigr)\bigl(\ln \f{1+T}{1+t_0})\leq \ln \f32,
\end{align*}
which implies $f_0(t_0)\leq \min (T_1^*,T_2^*)$.

Now for any $t$ being large enough, we  choose some $t_0<t$ so that
\begin{equation}\label{eq3.55}
\ln\f{1+t}{1+t_0}={C_{m,\sigma}}^{-1}   \bigl( 1 +{\nu}^{-\f32}{\|\Omega_0\|_{L^2(m_0)}}\bigr)^{-\f{4(m+1)}{1+2\sigma}}.
\end{equation}
Due to the fact: $t_0<t\leq f_0(t_0)$,  we deduce that
$$
E(t)\leq C_{m} \bigl( 1 +{\nu}^{-1}{\|\Omega(1)\|_{L^2(m_0)}}\bigr)^{2m} (1+t_0)^{2}\|\Omega_0\|_{L^2(m_0)}^2,
$$
from which and \eqref{def:E},  we conclude the proof of \eqref{eq:prop3.3}.
\end{proof}

\begin{rmk}\label{Rmk: 4.1}
    { In Proposition \ref{S3prop3}, we only prove the estimates for ${\alpha_2}\leq 2$, because higher derivatives in $Y$ direction may cause additional difficulties from the boundary. However, the estimates we obtained are sufficient to prove the results in this paper.
    }
\end{rmk}

\section{Proof of the main theorem}\label{section 5}
In this section, we present the proof of  Theorem \ref{Thm2}.

\begin{proof}[Proof of Theorem \ref{Thm2}]
Let us denote
$$
\tilde{\Omega}\eqdef \Omega-\alpha\bar{\Omega},
$$
where $\alpha=\int_{\R^2_+}Y\Omega(X,Y)dXdY $ is the vertical momentum of $\Omega$, and $\bar{\Omega}$ is the kernel of $\cL$ obtained from Proposition \ref{prop3.1}. As we pointed out, $M_2(\Omega)$ is a conserved
quantity, so that $\alpha=M_2(\Omega(1)),$ which is independent of time. Then in view of \eqref{eqs:Om}  and $\cL \bar{\Omega}=0,$  we find
\beq \label{S4eq4}
(1+t)\p_t \tilde{\Omega}=\cL \tilde{\Omega}+(\cL_t-\cL)(\tilde{\Omega}+\alpha\bar{\Omega})+\cN_t (\tilde{\Omega}+\alpha \bar{\Omega}).
\eeq

Since we are investigating the long-time behavior of $\tilde{\Omega}(t),$ we focus on the time-span $[t_0,+\oo)$ beginning at some large time $t_0$. We write
\begin{equation}
\tilde{\Omega}(t)=e^{(\ln \f{1+t}{1+t_0})\cL}\tilde{\Omega}(t_0)+\int_{t_0}^t e^{\left(\ln \f{1+t}{1+s}\right)\cL_\oo} \bigl( (\cL_s-\cL) (\tilde{\Omega}(s)+\alpha \bar{\Omega})+\cN_s (\tilde{\Omega}(s)+\alpha \bar{\Omega}) \bigr)\,\f{ds}{1+s}.
\end{equation}

Due to $\int_{\R^2_+}Y\tilde{\Omega}(t,X,Y)dXdY=0$, we first derive from  \eqref{eq2.5a} and \eqref{eq5.1} that
\begin{equation}\label{eq6.3}
\begin{aligned}
&\bigl\|e^{(\ln \f{1+t}{1+t_0})\cL_\oo}\tilde{\Omega}(t_0)\bigr\|_{L^2(m)}
\leq C_{m} e^{-\ln \f{1+t}{1+t_0}}\bigl(\|{\Omega}(t_0)\|_{L^2(m)}+\alpha\|\bar{\Omega}\|_{L^2(m)}\bigr)\\
&\leq  C_{m} \bigl(\f{1+t_0}{1+t}\bigr)\Bigl(\bigl( 1 +\nu^{-\f32}{\|\Omega_0\|_{L^2(m_0)}}\bigr)^{m} (1+t_0)\|\Omega_0\|_{L^2(m)}+\alpha\Bigr)\\
&\leq  C_{m} (1+t)^{-1}(1+t_0)^2\bigl( 1 +{\nu}^{-\f32}{\|\Omega_0\|_{L^2(m)}}\bigr)^{m} \|\Omega_0\|_{L^2(m)}.
\end{aligned}\end{equation}

Since $\cL_t-\cL=(1+t)^{-2}\p_X^2$, we have 
$$
(\cL_s-\cL) (\tilde{\Omega}(s)+\alpha \bar{\Omega})=(1+t)^{-2}\p_X^2\Omega(s).
$$
Then we get, by using \eqref{eq2.5a} and \eqref{eq:prop3.3}, that 
\begin{equation}\notag
\begin{aligned}
\bigl\|\int_{t_0}^t& e^{\left(\ln \f{1+t}{1+s}\right)\cL} (\cL_s-\cL) \bigl(\tilde{\Omega}(s)+\alpha\bar{\Omega}\bigr)\,\f{ds}{1+s}\bigr\|_{L^2(m)}\\
&\leq  C_{m}\int_{t_0}^t e^{-\ln \f{1+t}{1+s}} (1+s)^{-2}  \|\p_X^2 {\Omega}(s)\|_{L^2(m)}\,\f{ds}{1+s}.
%\leq & C_{m,\sigma}\int_{t_0}^t (\f{1+t}{1+s})^{-1} (1+s)^{-2}
% \bigl( 1 +\nu^{-\f32}{\|\Omega_0\|_{L^2(m_0)}}\bigr)^{m+\f{12(m+1)}{1+2\sigma}} (1+s)\|\Omega_0\|_{L^2(m)}\f{ds}{1+s}\\
%\leq & C_{m,\sigma}(1+t)^{-1}\bigl( 1 +\nu^{-\f32}{\|\Omega_0\|_{L^2(m)}}\bigr)^{m+\f{12(m+1)}{1+2\sigma}}\|\Omega_0\|_{L^2(m)}{\color{red} \int_{t_0}^t  (1+s)^{-1}\, ds.}
\end{aligned}
\end{equation}
By the interpolation inequality $\|\p_X^2\Omega\|_{L^2(m)}\leq \|\p_X^3\Omega\|_{L^2(m)}^\f23\|\Omega\|_{L^2(m)}^\f13$ and
$$
    \|\Omega(s)\|_{L^2(m)}\leq \|\tilde{\Omega}(s)\|_{L^2(m)}+\alpha\|\bar{\Omega}\|_{L^2(m)}\leq (1+s)^{-\f12} \sup_{s\in[t_0,t]}\bigl((1+s)\|\tilde{\Omega}(s)\|_{L^2(m)}\bigr) +\|\Omega_0\|_{L^2(m)},
$$
we apply \eqref{eq:prop3.3} with $(\alpha_1,\alpha_2)=(3,0)$ to get
\begin{align*}
    \bigl\|\int_{t_0}^t& e^{\left(\ln \f{1+t}{1+s}\right)\cL} (\cL_s-\cL) \bigl(\tilde{\Omega}(s)+\alpha\bar{\Omega}\bigr)\,\f{ds}{1+s}\bigr\|_{L^2(m)}\\
    \leq & C_{m,\sigma} \int_{t_0}^t \f{1+s}{1+t} (1+s)^{-2} \Bigl( \bigl( 1 +{\nu}^{-\f32}{\|\Omega_0\|_{L^2(m_0)}}\bigr)^{m+\f{18(m+1)}{1+2\sigma}} (1+s)\|\Omega_0\|_{L^2(m)} \Bigr)^\f23\\
    &\qquad\qquad\times \Bigl((1+s)^{-\f12} \sup_{s\in[t_0,t]}\bigl((1+s)\|\tilde{\Omega}(s)\|_{L^2(m)}\bigr) +\|\Omega_0\|_{L^2(m)}\Bigr)^\f13 \f{ds}{1+s}\\
    \leq & C_{m,\sigma} (1+t)^{-1}   \bigl( 1 +{\nu}^{-\f32}{\|\Omega_0\|_{L^2(m_0)}}\bigr)^{\f23m+\f{12(m+1)}{1+2\sigma}}\|\Omega_0\|_{L^2(m)}^\f23  \\ &\qquad\qquad\times\Bigl(\sup_{s\in[t_0,t]}\bigl((1+s)\|\tilde{\Omega}(s)\|_{L^2(m)}\bigr)^\f13 \int_{t_0}^t (1+s)^{-\f32}ds+\|\Omega_0\|_{L^2(m)}^\f13 \int_{t_0}^t (1+s)^{-\f43}ds \Bigr) 
\end{align*}
Therefore, we use Young's inequality to achieve
\begin{equation}\label{eq6.4}
\begin{aligned}
\bigl\|\int_{t_0}^t& e^{\left(\ln \f{1+t}{1+s}\right)\cL} (\cL_s-\cL) \bigl(\tilde{\Omega}(s)+\alpha\bar{\Omega}\bigr)\,\f{ds}{1+s}\bigr\|_{L^2(m)}
\leq \f18 (1+t)^{-1}\sup_{s\in[t_0,t]}\bigl((1+s)\|\tilde{\Omega}(s)\|_{L^2(m)}\bigr)
\\
&\qquad\qquad +  C_{m,\sigma}(1+t)^{-1}\bigl( 1 +\nu^{-\f32}{\|\Omega_0\|_{L^2(m)}}\bigr)^{m+\f{18(m+1)}{1+2\sigma}} \|\Omega_0\|_{L^2(m)}.
\end{aligned}
\end{equation}

For the nonlinear terms, one can easily use integration by parts and $\Delta_t^{-1}\Omega|_{Y=0}=0$ to check that 
\begin{align*}
        &\int_{\R^2_+} Y \,\cN_t (\Omega)dXdY=\nu^{-\f32}(1+t)^{-\f52}\int_{\R^2_+} \p_X \Delta_t^{-1}\Omega\, \Omega dXdY \\
        &=-(1+t)^{-2}\int_{\R^2_+}\p_X^2 \Delta_t^{-1}\Omega \, \p_X\Delta_t^{-1}\Omega dXdY-\int_{\R^2_+}\p_X\p_Y \Delta_t^{-1}\Omega \, \p_Y\Delta_t^{-1}\Omega dXdY=0,
\end{align*}
which together with \eqref{eq2.5a} imply
\begin{align*}
\bigl\|\int_{t_0}^t& e^{\left(\ln \f{1+t}{1+s}\right)\cL} \cN_s (\Omega(s))\f{ds}{1+s}\bigr\|_{L^2(m)}
\leq C_{m}\int_{t_0}^t e^{-\ln \f{1+t}{1+s}} \|\cN_s (\Omega)\|_{L^2(m)}\f{ds}{1+s}\\
\leq & C_{m} \Bigl( \alpha^2 (1+t)^{-1}\int_{t_0}^t  \|\cN_s (\bar{\Omega})\|_{L^2(m)}ds \\
&+\alpha (1+t)^{-1}\nu^{-\f32}\int_{t_0}^t  \|\p_Y \Delta_s^{-1}\bar{\Omega}\p_X \tilde{\Omega}(s)- \p_X\Delta_s^{-1}\bar{\Omega}\p_Y \tilde{\Omega}(s)\|_{L^2(m)}(1+s)^{-\f52}ds\\
&+\alpha (1+t)^{-1}\nu^{-\f32}\int_{t_0}^t  \|\p_Y \Delta_s^{-1}\tilde{\Omega}(s)\p_X \bar{\Omega}- \p_X\Delta_s^{-1}\tilde{\Omega}(s)\p_Y\bar{\Omega}\|_{L^2(m)}(1+s)^{-\f52}ds\\
&+ (1+t)^{-1}\int_{t_0}^t  \|\cN_s ( \tilde{\Omega}(s))\|_{L^2(m)}ds\Bigr) \eqdefa \mathcal{M}_1+\mathcal{M}_2+\mathcal{M}_3+\mathcal{M}_4.
\end{align*}
Next, we estimate term by term above.

It follows from \eqref{eq5.14}, Corollary \ref{col3.2} and $\alpha\leq \|\Omega_0\|_{L^2(m)}$ that
\begin{equation}\notag
\begin{aligned}
\mathcal{M}_1 \leq &C_{m} \alpha^2 (1+t)^{-1} \nu^{-\f32} \int_{t_0}^t  \bigl(\|\p_Y \Delta_s^{-1}\bar{\Omega}\|_{L^\oo}\|\p_X \bar{\Omega}\|_{L^2(m)}\\
&\qquad\qquad\qquad\qquad\qquad\qquad+ \|\p_X\Delta_s^{-1}\bar{\Omega}\|_{L^\oo}\|\p_Y \bar{\Omega}\|_{L^2(m)}\bigr)(1+s)^{-\f52}\,ds\\
\leq & C_{m,\sigma_0}  \|\Omega_0\|_{L^2(m)}^2 (1+t)^{-1} \nu^{-\f32} \|(\bar{\Omega},\nabla \bar{\Omega})\|_{L^2(m)}^2\int_{t_0}^t  (1+s)^{-\f32+\sigma_0}\,ds  \\
\leq & C_{m,\sigma_0} (1+t)^{-1}  {\nu}^{-\f32} (1+t_0)^{\sigma_0-\f12} \|\Omega_0\|_{L^2(m)}^2,
\end{aligned}
\end{equation}
where $0<\sigma_0<\f12$ is a small constant. For simplicity, we just take $\sigma_0=\f14$ to conclude
\begin{equation}\label{eq6.5}
\mathcal{M}_1\leq C_{m}(1+t)^{-1} {\nu}^{-\f32}\|\Omega_0\|_{L^2(m)}^2.
\end{equation}

Similarly, we get, by using \eqref{eq5.14} and $\|\p_Yf\|_{L^2(m)}\lesssim \|f\|_{L^2(m)}^\f12\|\p_Y^2f\|_{L^2(m)}^\f12$ that
\begin{align*}
\mathcal{M}_2 \leq &C_{m} \alpha (1+t)^{-1} \nu^{-\f32} \int_{t_0}^t  \bigl(\|\p_Y \Delta_s^{-1}\bar{\Omega}\|_{L^\oo}\|\p_X \tilde{\Omega}(s)\|_{L^2(m)}\\
&\qquad\qquad\qquad\qquad\qquad\qquad+ \|\p_X\Delta_s^{-1}\bar{\Omega}\|_{L^\oo}\|\p_Y \tilde{\Omega}(s)\|_{L^2(m)}\bigr)(1+s)^{-\f52}ds\\
\leq &C_{m,\sigma_1}  (1+t)^{-1}\nu^{-\f32}\|\Omega_0\|_{L^2(m)} \int_{t_0}^t  \Bigl(\|\p_X \tilde{\Omega}(s)\|_{L^2(m)}\\
&\qquad\qquad\qquad\qquad\qquad+ (1+s)\|\tilde{\Omega}(s)\|_{L^2(m)}^\f12\|\p_Y^2\tilde{\Omega}(s)\|_{L^2(m)}\Bigr)(1+s)^{-\f52+\sigma_1}ds,
\end{align*}
where $0<\sigma_1<\f12$ is a small constant to be chosen. Then by applying \eqref{eq:prop3.3}and
\begin{equation}\label{eq6.6}
\| \tilde{\Omega}(s)\|_{L^2(m)} \leq (1+s)^{-1} \sup_{s\in[t_0,t]} \bigl((1+s)\|\tilde{\Omega}(s)\|_{L^2(m)}\bigr),
\end{equation}
 we find
\begin{equation}\notag
    \begin{aligned}
\mathcal{M}_2 \leq & C_{m,\sigma,\sigma_1} (1+t)^{-1}\nu^{-\f32}\|\Omega_0\|_{L^2(m)} \Bigl(  \bigl( 1 +\nu^{-\f32}\|\Omega_0\|_{L^2(m)}\bigr)^{m+\f{6(m+1)}{1+2\sigma}}\|\Omega_0\|_{L^2(m)}\\
&\qquad\times\int_{t_0}^t (1+s)^{-\f32+\sigma_1-1}ds +\bigl( 1 +\nu^{-\f32}\|\Omega_0\|_{L^2(m)}\bigr)^{\f{m}2+\f{2(m+1)}{1+2\sigma}}\|\Omega_0\|_{L^2(m)}^\f12\\
&\qquad\times \sup_{s\in [t_0,t]} \bigl((1+s)\|\tilde{\Omega}\|_{L^2(m)}\bigr)^\f12 \int_{t_0}^t (1+s)^{-\f32+\sigma_1}ds\Bigr)\\
\leq&C_{m,\sigma,\sigma_1} (1+t)^{-1} \Bigl( (1+t_0)^{-\f12+\sigma_1} \bigl( 1 +\nu^{-\f32}\|\Omega_0\|_{L^2(m)}\bigr)^{m+1+\f{6(m+1)}{1+2\sigma}}\|\Omega_0\|_{L^2(m)}\\
&\qquad +(1+t_0)^{-\f12+\sigma_1}\bigl( 1 +\nu^{-\f32}\|\Omega_0\|_{L^2(m)}\bigr)^{\f{m}2+1+\f{2(m+1)}{1+2\sigma}}\|\Omega_0\|_{L^2(m)}^\f12\\
&\qquad\qquad\times \sup_{s\in [t_0,t]} \bigl((1+s)\|\tilde{\Omega}\|_{L^2(m)}\bigr)^\f12 \Bigr).
\end{aligned}
\end{equation}
Therefore, we deduce that
\begin{equation}\label{eq6.7}
\begin{aligned}
\mathcal{M}_2 \leq &\f18(1+t)^{-1}\sup_{s\in[t_0,t]} \bigl((1+s)\|\tilde{\Omega}(s)\|_{L^2(m)}\bigr)+C_{m,\sigma,\sigma_1} (1+t)^{-1} \|\Omega_0\|_{L^2(m)} \\
&\times \Bigl(\bigl( 1 +\nu^{-\f32}\|\Omega_0\|_{L^2(m)}\bigr)^{m+1+\f{6(m+1)}{1+2\sigma}}(1+t_0)^{-\f12+\sigma_1}\\
 &\qquad+\bigl( 1 +\nu^{-\f32}\|\Omega_0\|_{L^2(m)}\bigr)^{m+2+\f{4(m+1)}{1+2\sigma}} (1+t_0)^{-1+2\sigma_1}\Bigr).
\end{aligned}
\end{equation}

For  $\mathcal{M}_3$, we get, by applying \eqref{eq5.14}, that
\begin{align*}
\mathcal{M}_3 \leq &C_{m} \alpha (1+t)^{-1} \nu^{-\f32} \int_{t_0}^t  \bigl(\|\p_Y \Delta_s^{-1}\tilde{\Omega}(s)\|_{L^\oo}\|\p_X \bar{\Omega}\|_{L^2(m)}\\
&\qquad\qquad\qquad\qquad\qquad\qquad+ \|\p_X\Delta_s^{-1}\tilde{\Omega}(s)\|_{L^\oo}\|\p_Y \bar{\Omega}\|_{L^2(m)}\bigr)(1+s)^{-\f52}ds\\
\leq &C_{m,\sigma_2}  (1+t)^{-1}\nu^{-\f32}\|\Omega_0\|_{L^2(m)} \int_{t_0}^t  \| \tilde{\Omega}(s)\|_{L^2(m)}^{\f12+\sigma_2}\|\p_X \tilde{\Omega}(s)\|_{L^2(m)}^{\f12-\sigma_2}(1+s)^{-\f32+\sigma_2}\,ds,
\end{align*}
from which, \eqref{eq6.6} and \eqref{eq:prop3.3}, we infer
\begin{align*}
\mathcal{M}_3 \leq  &C_{m,\sigma,\sigma_2}  (1+t)^{-1}  \bigl( 1 +\nu^{-\f32}\|\Omega_0\|_{L^2(m)}\bigr)^{1+\left(\f12-\sigma_2\right)\left(m+\f{6(m+1)}{1+2\sigma}\right)}\|\Omega_0\|_{L^2(m)}^{\f12-\sigma_2}\\
&\quad\times\sup_{s\in[t_0,t]} \bigl((1+s)\|\tilde{\Omega}(s)\|_{L^2(m)}\bigr)^{\f12+\sigma_2}\int_{t_0}^t  (1+s)^{-\f32-\sigma_2}\,ds\\
\leq  &C_{m,\sigma,\sigma_2}  (1+t)^{-1} (1+t_0)^{-\f12-\sigma_2}   \bigl( 1 +\nu^{-\f32}\|\Omega_0\|_{L^2(m)}\bigr)^{1+\left(\f12-\sigma_2\right)\left(m+\f{6(m+1)}{1+2\sigma}\right)}\\
&\quad\times\|\Omega_0\|_{L^2(m)}^{\f12-\sigma_2}\sup_{s\in[t_0,t]} \bigl((1+s)\|\tilde{\Omega}(s)\|_{L^2(m)}\bigr)^{\f12+\sigma_2},
\end{align*}
where $0<\sigma_2<\f12$ is some constant to be chosen.
By applying Young's inequality, we obtain
\begin{equation}\label{eq6.8}
\begin{aligned}
&\mathcal{M}_3 \leq \f{1}8 (1+t)^{-1}\sup_{s\in[t_0,t]} \bigl((1+s)\|\tilde{\Omega}(s)\|_{L^2(m)}\bigr) +C_{m,\sigma,\sigma_2} (1+t)^{-1}  \\
&\qquad\qquad \times(1+t_0)^{-\f2{1-2\sigma_2}+1} \bigl( 1 +\nu^{-\f32}\|\Omega_0\|_{L^2(m)}\bigr)^{\f{2}{1-2\sigma_2}+m+\f{6(m+1)}{1+2\sigma}}\|\Omega_0\|_{L^2(m_0)}.
\end{aligned}\end{equation}

Finally, we consider the fully nonlinear part $M_4$. By applying \eqref{eq5.14} with $0<\sigma_3<\f12$ and the interpolation inequality, we find
\begin{align*}
\mathcal{M}_4 \leq &C_{m}  (1+t)^{-1} \nu^{-\f32} \int_{t_0}^t  \bigl(\|\p_Y \Delta_s^{-1}\tilde{\Omega}(s)\|_{L^\oo}\|\p_X \tilde{\Omega}(s)\|_{L^2(m)}\\
&\qquad\qquad\qquad\qquad\qquad+ \|\p_X\Delta_s^{-1}\tilde{\Omega}(s)\|_{L^\oo}\|\p_Y \tilde{\Omega}(s)\|_{L^2(m)}\bigr)(1+s)^{-\f52}\,ds\\
\leq &C_{m,\sigma_3} (1+t)^{-1} \nu^{-\f32} \int_{t_0}^t  \| \tilde{\Omega}(s)\|_{L^2(m)}^{\f12+\sigma_3}\|\p_X \tilde{\Omega}(s)\|_{L^2(m)}^{\f12-\sigma_3} \\
&\qquad\qquad\qquad\times\bigl(\|\p_X \tilde{\Omega}(s)\|_{L^2(m)}+ (1+s)\|\p_Y \tilde{\Omega}(s)\|_{L^2(m)}\bigr)(1+s)^{-\f52+\sigma_3}\,ds\\
\leq &C_{m,\sigma_3}  (1+t)^{-1} \nu^{-\f32} \int_{t_0}^t  \| \tilde{\Omega}(s)\|_{L^2(m)}^{1+\sigma_3}\|\p_X \tilde{\Omega}(s)\|_{L^2(m)}^{\f12-\sigma_3} \\
&\qquad\qquad\qquad\times\bigl(\|\p_X^2 \tilde{\Omega}(s)\|_{L^2(m)}^\f12+ (1+s)\|\p_Y^2 \tilde{\Omega}(s)\|_{L^2(m)}^\f12\bigr)(1+s)^{-\f52+\sigma_3}\,ds.
\end{align*}
Then we  get, by applying \eqref{eq6.6} and \eqref{eq:prop3.3}, that
\begin{equation}\begin{aligned}\label{eq6.9}
\mathcal{M}_4 \leq &C_{m,\sigma,\sigma_3}  (1+t)^{-1} \sup_{s\in[t_0,t]} \bigl((1+s)\|\tilde{\Omega}(s)\|_{L^2(m)}\bigr)\\
&\times\Bigl( \bigl( 1 +\nu^{-\f32}\|\Omega_0\|_{L^2(m)}\bigr)^{m+1+(9-6\sigma_3)\f{m+1}{1+2\sigma}} \int_{t_0}^t(1+s)^{-\f52+\sigma_3} \,ds\\
&\qquad+\bigl( 1 +\nu^{-\f32}\|\Omega_0\|_{L^2(m)}\bigr)^{m+1+(5-6\sigma_3)\f{m+1}{1+2\sigma}} \int_{t_0}^t (1+s)^{-\f32+\sigma_3} \,ds\Bigr)\\
\leq &C_{m,\sigma,\sigma_3}  (1+t)^{-1} \sup_{s\in[t_0,t]} ((1+s)\|\tilde{\Omega}(s)\|_{L^2(m)})\\
&\times\Bigl( \bigl( 1 +\nu^{-\f32}\|\Omega_0\|_{L^2(m)}\bigr)^{m+1+(9-6\sigma_3)\f{m+1}{1+2\sigma}}  (1+t_0)^{-\f32+\sigma_3} \\
&\qquad+\bigl( 1 +\nu^{-\f32}\|\Omega_0\|_{L^2(m)}\bigr)^{m+1+(5-6\sigma_3)\f{m+1}{1+2\sigma}} (1+t_0)^{-\f12+\sigma_3} \Bigr).
\end{aligned}
\end{equation}
To control \eqref{eq6.9}, we need to use some smallness from the negative power of the large time $t_0$. To do so, we take a large time $T_1$ such that,
$$
C_{m,\sigma,\sigma_3} \bigl( 1 +\nu^{-\f32}\|\Omega_0\|_{L^2(m)}\bigr)^{m+1+(9-6\sigma_3)\f{m+1}{1+2\sigma}}  (1+T_1)^{-\f32+\sigma_3}\leq \f1{16},
$$
and
$$
C_{m,\sigma,\sigma_3} \bigl( 1 +\nu^{-\f32}\|\Omega_0\|_{L^2(m)}\bigr)^{m+1+(5-6\sigma_3)\f{m+1}{1+2\sigma}}  (1+T_1)^{-\f12+\sigma_3}\leq \f1{16},
$$
which results in
\begin{align*}
&T_1\geq C_{m,\sigma,\sigma_3} \bigl( 1 +\nu^{-\f32}\|\Omega_0\|_{L^2(m)}\bigr)^{(m+1)\max\Bigl(\f{1+\f{9-6\sigma_3}{1+2\sigma}}{\f32-\sigma_3},\f{1+\f{5-6\sigma_3}{1+2\sigma}}{\f12-\sigma_3}\Bigr)}-1.
\end{align*}
Noticing that as $\sigma\rightarrow \f12$ and $\sigma_3 \rightarrow0$, it comes out
$$
(m+1)\max\Bigl(\f{1+\f{9-6\sigma_3}{1+2\sigma}}{\f32-\sigma_3},\f{1+\f{5-6\sigma_3}{1+2\sigma}}{\f12-\sigma_3}\Bigr) \rightarrow {7(m+1)},
$$
so that we can take $\sigma_3$ small enough and $\sigma$ close enough to $\f12$ and choose
\begin{equation}\label{def:T_1}
T_1\eqdefa C_{m,\epsilon} \bigl( 1 +\nu^{-\f32}\|\Omega_0\|_{L^2(m)}\bigr)^{{7(m+1)}+\epsilon}-1,
\end{equation}
where $\epsilon$ can be taken arbitrarily small. We conclude that for $t_0\geq T_1$, one has
\begin{equation}\label{eq6.11}
\mathcal{M}_4 \leq \f{(1+t)^{-1}}8 \sup_{s\in[t_0,t]} \bigl((1+s)\|\tilde{\Omega}(s)\|_{L^2(m)}\bigr).
\end{equation}

Therefore by summarizing the estimates \eqref{eq6.3}, \eqref{eq6.4}, \eqref{eq6.5}, \eqref{eq6.7}, \eqref{eq6.8} and \eqref{eq6.11},
 we deduce that for $t_0\geq T_1$
\begin{align*}
(1+t)&\|\tilde{\Omega}(t)\|_{L^2(m)}
\leq \f12\sup_{s\in[t_0,t]} \bigl((1+s)\|\tilde{\Omega}(s)\|_{L^2(m)}\bigr) \\
&+C_{m} (1+t_0)^{2}\bigl( 1 +\nu^{-\f32}\|\Omega_0\|_{L^2(m)}\bigr)^{m} \|\Omega_0\|_{L^2(m)}\\
&+C_{m,\sigma} \bigl( 1 +\nu^{-\f32}\|\Omega_0\|_{L^2(m)}\bigr)^{m+\f{18(m+1)}{1+2\sigma}}\|\Omega_0\|_{L^2(m)}\\
&+C_{m,\sigma,\sigma_1}  \Bigl((1+t_0)^{-\f12+\sigma_1}\bigl( 1 +\nu^{-\f32}\|\Omega_0\|_{L^2(m)}\bigr)^{m+1+\f{6(m+1)}{1+2\sigma}}\\
& \quad +(1+t_0)^{-1+2\sigma_1}\bigl( 1 +\nu^{-\f32}\|\Omega_0\|_{L^2(m)}\bigr)^{m+2+\f{4(m+1)}{1+2\sigma}}\Bigr)\|\Omega_0\|_{L^2(m)}\\
&+C_{m,\sigma,\sigma_2} (1+t_0)^{-\f2{1-2\sigma_2}+1} \bigl( 1 +\nu^{-\f32}\|\Omega_0\|_{L^2(m)}\bigr)^{\f{2}{1-2\sigma_2}+m+\f{6(m+1)}{1+2\sigma}}\|\Omega_0\|_{L^2(m)}.
\end{align*}
Then we get, by choosing $t_0=T_1$ with $T_1$ being defined in \eqref{def:T_1} and taking supremum over $t\in [T_1, T],$ that
\begin{align*}
&\sup_{t\in[T_1,T]} \bigl((1+t)\|\tilde{\Omega}(t)\|_{L^2(m)}\bigr) \leq C_{m,\sigma,\sigma_1,\sigma_2} \Bigl( \bigl( 1 +\nu^{-\f32}\|\Omega_0\|_{L^2(m)}\bigr)^{m+2\bigl({7(m+1)}+\epsilon\bigr)} \\
&\quad+\bigl( 1 +\nu^{-\f32}\|\Omega_0\|_{L^2(m)}\bigr)^{m+\f{18(m+1)}{1+2\sigma}} \\
&\quad+\bigl( 1 +\nu^{-\f32}\|\Omega_0\|_{L^2(m)}\bigr)^{m+1+\f{6(m+1)}{1+2\sigma}-\bigl(\f12-\sigma_1\bigr)\bigl({7(m+1)}+\epsilon\bigr)}\\
&\quad+\bigl( 1 +\nu^{-\f32}\|\Omega_0\|_{L^2(m)}\bigr)^{m+2+\f{4(m+1)}{1+2\sigma}-\bigl(1-2\sigma_1\bigr)\bigl({7(m+1)}+\epsilon\bigr)}\\
&\quad+\bigl( 1 +\nu^{-\f32}\|\Omega_0\|_{L^2(m)}\bigr)^{\f{2}{1-2\sigma_2}+m+\f{6(m+1)}{1+2\sigma}-\bigl(\f2{1-2\sigma_2}-1\bigr)\bigl({7(m+1)}+\epsilon\bigr)}\Bigr)\|\Omega_0\|_{L^2(m)}.
\end{align*}
To simplify these complicated powers, one can focus on the limit case that when $\sigma\rightarrow\f12$, $\sigma_1\rightarrow0$, $\sigma_2\rightarrow0$, and $\epsilon\rightarrow0$, the largest power is
$15m+14$,
which comes from the first term, reflecting the evolution of initial data from $T_0$.  Therefore, for any small $\epsilon>0$, we can take $( \sigma_1,\sigma_2)$ to be small enough and $\sigma$ to be close enough to $\f12$ so that
\begin{equation}\label{eq4.21}
\sup_{t\in[T_1,T]} \bigl((1+t)\|\tilde{\Omega}(t)\|_{L^2(m)}) \leq C_{m,\epsilon} \bigl(  1 +\nu^{-\f32}\|\Omega_0\|_{L^2(m)}\bigr)^{15m+14+\e}\|\Omega_0\|_{L^2(m)}.
\end{equation}
 This completes the proof of Theorem \ref{Thm2}.
\end{proof}

\appendix
\section{The proof of Lemma \ref{lem2.3}}\label{appendix A}
In this appendix, we present the proof of Lemma \ref{lem2.3}.

Before the proof, let us recall the asymptotic behaviors of Airy function $Ai(z)$ as $|z|$ goes to infinity. For $|\arg z|<(1-\varepsilon)\pi$ with any $\varepsilon\in (0,1)$, we have 
\begin{equation}\label{eqA.1}
    Ai(z)=\f{z^{-\f14}}{2\pi^\f12 }e^{-\f23z^\f32}\bigl(1+O(|z|^{-\f32})\bigr)
    \andf Ai'(z)=-\f{z^\f14}{2\pi^\f12 }e^{-\f23z^\f32}\bigl(1+O(|z|^{-\f32})\bigr).
\end{equation}
For $|\arg z|<\f23\pi$, we also have 
\begin{equation}\label{eqA.2}
    Ai(-z)=\f{z^{-\f14}}{\pi^\f12 }\sin (\f23z^\f32+\f\pi4)\bigl(1+O(|z|^{-\f32})\bigr)
    \andf Ai'(-z)=-\f{z^\f14}{\pi^\f12 }\cos (\f23z^\f32+\f\pi4)\bigl(1+O(|z|^{-\f32})\bigr).
\end{equation}

\begin{proof}[Proof of \eqref{eq2.22}]
    Let us first consider $\| \f{Ai'\bigl(e^{i(\f{\pi}6+\delta)}\lambda+e^{i\f\pi6}y\bigr)}{Ai\bigl(e^{i(\f{\pi}6+\delta)}\lambda\bigr)} \|_{L^\oo_y(\R_+)} $. 

    Using \eqref{eqA.1}, we introduce a universal large constant $M$ such that for $|z|>M$ and $|\arg z|\leq\f89\pi$, 
    $$
        \f{|z|^{-\f14}}{4\pi^\f12 }|e^{-\f23z^\f32}|\leq |Ai(z)|\leq \f{|z|^{-\f14}}{\pi^\f12 }|e^{-\f23z^\f32}|
        \andf \f{|z|^{\f14}}{4\pi^\f12 }|e^{-\f23z^\f32}|\leq |Ai'(z)|\leq \f{|z|^{\f14}}{\pi^\f12 }|e^{-\f23z^\f32}|.
    $$

    If $1\leq \lambda\leq M$ and $0\leq y\leq M$, one can easily estimate the term as
    \begin{equation}\label{eqA.3}
        \begin{aligned}
            | \f{Ai'\bigl(e^{i(\f{\pi}6+\delta)}\lambda+e^{i\f\pi6}y\bigr)}{Ai\bigl(e^{i(\f{\pi}6+\delta)}\lambda\bigr)}| \leq \|Ai'(z)\|_{L^\oo(|z|<2M)} \|Ai(z)\|^{-1}_{L^\oo(|z|<2M,|\arg z|<\f89\pi)}\leq C_M.
        \end{aligned}
    \end{equation}
    If $1\leq \lambda\leq M$ and $y>M$, we can use the asymptotical behavior to get
    \begin{equation}\label{eqA.4}
        \begin{aligned}
            | \f{Ai'\bigl(e^{i(\f{\pi}6+\delta)}\lambda+e^{i\f\pi6}y\bigr)}{Ai\bigl(e^{i(\f{\pi}6+\delta)}\lambda\bigr)}| \leq C y^\f14 e^{-c y^\f32} \|Ai(z)\|^{-1}_{L^\oo(|z|<2M,|\arg z|<\f89\pi)}\leq C_M.
        \end{aligned}
    \end{equation}

    If $\lambda>M$, the asymptotical behavior implies
    \begin{align*}
        | \f{Ai'\bigl(e^{i(\f{\pi}6+\delta)}\lambda+e^{i\f\pi6}y\bigr)}{Ai\bigl(e^{i(\f{\pi}6+\delta)}\lambda\bigr)}| &\leq C |e^{i\delta}\lambda+y|^\f14 \lambda^{\f14} |\exp \Bigl( -\f23 (e^{i(\f{\pi}6+\delta)}\lambda+e^{i\f\pi6}y)^\f32+\f23 e^{i(\f\pi4+\f32\delta)} \lambda^\f32 \Bigr)|\\
        &\leq C(\lambda+y)^\f14 \lambda^\f14 e^{-c(\lambda+y)^\f12y}
        \leq C\lambda^\f12,
    \end{align*}
    where we use the fact that 
    \begin{align*}
        \Re \bigl(-\f23 (e^{i(\f{\pi}6+\delta)}\lambda+e^{i\f\pi6}y)^\f32+\f23 e^{i(\f\pi4+\f32\delta)} \lambda^\f32 \bigr)&=-\Re\int_0^y  (e^{i(\f{\pi}6+\delta)}\lambda+e^{i\f\pi6}z)^\f12 e^{i\f\pi6}dz 
        \leq -c(\lambda+y)^\f12 y.
    \end{align*}

    Combining the above cases, we conclude that  $\| \f{Ai'\bigl(e^{i(\f{\pi}6+\delta)}\lambda+e^{i\f\pi6}y\bigr)}{Ai\bigl(e^{i(\f{\pi}6+\delta)}\lambda\bigr)} \|_{L^\oo_y(\R_+)}\leq C \lambda^\f12 $.
    
    Now, we turn to estimate $\| \f{Ai'\bigl(e^{-i(\f{5\pi}6+\delta)}\lambda+e^{i\f\pi6}y\bigr)}{Ai\bigl(e^{-i(\f{5\pi}6+\delta)}\lambda\bigr)} \|_{L^\oo_y(\R_+)}.$ The region $\lambda<M$ follows the same estimate as \eqref{eqA.3} and \eqref{eqA.4}. However, when $\lambda>M$, we have to discuss whether $\arg \bigl(e^{-i(\f{5\pi}6+\delta)}\lambda+e^{i\f\pi6}y\bigr)\approx \pi$ to apply different asymptotic behaviors. 
    Recall that $\delta<\f\pi{100}$, we introduce two special values $y_1=\f{\sin (\f\pi{18}-\delta)}{\sin (\f\pi{18})}\lambda$ and $y_2=\f{\sin (\f{5}{18}\pi+\delta)}{\sin (\f{5}{18}\pi+2\delta) }\lambda$, which are chosen to make $\arg \bigl(e^{-i(\f{5\pi}6+\delta)}\lambda+e^{i\f\pi6}y_1\bigr)=-\f89\pi$ and $\arg \bigl(e^{-i(\f{5\pi}6+\delta)}\lambda+e^{i\f\pi6}y_2\bigr)=\f89\pi$.

    When $\lambda>M$ and $0\leq y<y_1$, we observe $-\f56\pi<\arg\bigl(e^{-i(\f{5\pi}6+\delta)}\lambda+e^{i\f\pi6}y\bigr)<-\f89\pi$ and use \eqref{eqA.1} to find
    \begin{align*}
        &|\f{Ai'\bigl(e^{-i(\f{5\pi}6+\delta)}\lambda+e^{i\f\pi6}y\bigr)}{Ai\bigl(e^{-i(\f{5\pi}6+\delta)}\lambda\bigr)}| \\
        \leq &C |e^{-i(\f{5\pi}6+\delta)}\lambda+e^{i\f\pi6}y|^\f14 \lambda^\f14 |\exp \Bigl(-\f23 (e^{-i(\f{5\pi}6+\delta)}\lambda+e^{i\f\pi6}y)^\f32+\f23 e^{-i(\f54\pi+\f32\delta)}\lambda^\f32\Bigr)| \\
        \leq &C(y_1+\lambda)^\f14\lambda^\f14\leq C \lambda^\f12.
    \end{align*}
    where we used that for $y<y_1$,
    \begin{align*}
        &\Re \bigl(-\f23 (e^{-i(\f{5\pi}6+\delta)}\lambda+e^{i\f\pi6}y)^\f32+\f23 e^{-i(\f54\pi+\f32\delta)}\lambda^\f32\bigr) =-\Re \int_0^y (e^{-i(\f{5\pi}6+\delta)}\lambda+e^{i\f\pi6}z)^\f12 e^{i\f\pi6}dz\leq0. 
    \end{align*}

    When $\lambda>M$ and $y_1\leq y\leq y_2$, there hold $\arg\bigl(-e^{-i(\f{5\pi}6+\delta)}\lambda-e^{i\f\pi6}y\bigr)\leq \f\pi9$ and $\f{\sin (\delta)}{\sin (\f5{18}\pi+2\delta)}\lambda\leq |e^{-i(\f{5\pi}6+\delta)}\lambda+e^{i\f\pi6}y|\leq \f{\sin (\delta)}{\sin (\f\pi{18})}\lambda$, which together with \eqref{eqA.2} implies
    $$
        |Ai'\bigl(e^{-i(\f{5\pi}6+\delta)}\lambda+e^{i\f\pi6}y\bigr)| \leq C |e^{-i(\f{5\pi}6+\delta)}\lambda+e^{i\f\pi6}y|^\f14 e^{|e^{-i(\f{5\pi}6+\delta)}\lambda+e^{i\f\pi6}y|^\f32} 
        \leq C \lambda^\f14 e^{(\f{\sin (\delta)}{\sin (\f\pi{18})})^\f32\lambda^\f32}.
    $$
    Therefore, we have 
    \begin{align*}
        &|\f{Ai'\bigl(e^{-i(\f{5\pi}6+\delta)}\lambda+e^{i\f\pi6}y\bigr)}{Ai\bigl(e^{-i(\f{5\pi}6+\delta)}\lambda\bigr)}| 
        \leq C \lambda^\f12 |\exp \Bigl(\f23 e^{-i(\f{5}4\pi+\f32\delta)}\lambda^\f32+(\f{\sin (\delta)}{\sin (\f\pi{18})})^\f32\lambda^\f32\Bigr)| \leq C ,
    \end{align*}
    where we used from the smallness of $\delta$ that 
    $$
        \Re \bigl( \f23 e^{-i(\f{5}4\pi+\f32\delta)}\lambda^\f32+(\f{\sin (\delta)}{\sin (\f\pi{18})})^\f32\lambda^\f32\bigr)\leq -c\lambda^\f32.
    $$

    When $\lambda>M$ and $y> y_2$, we observe $\f\pi6<\arg\bigl(e^{-i(\f{5\pi}6+\delta)}\lambda+e^{i\f\pi6}y\bigr)<\f89\pi$ and use \eqref{eqA.1} to find
    \begin{align*}
        &|\f{Ai'\bigl(e^{-i(\f{5\pi}6+\delta)}\lambda+e^{i\f\pi6}y\bigr)}{Ai\bigl(e^{-i(\f{5\pi}6+\delta)}\lambda\bigr)}| \\
        \leq &C |e^{-i(\f{5\pi}6+\delta)}\lambda+e^{i\f\pi6}y|^\f14 \lambda^\f14 |\exp \Bigl(-\f23 (e^{-i(\f{5\pi}6+\delta)}\lambda+e^{i\f\pi6}y)^\f32+\f23 e^{-i(\f54\pi+\f32\delta)}\lambda^\f32\Bigr)| \\
        \leq &C(y+\lambda)^\f14\lambda^\f14 e^{-c(y^\f32+\lambda^\f32)}\leq C .
    \end{align*}
    where we used that for $y<y_1$, there exists a small enough constant $c>0$ depending on $\delta$ such that 
    \begin{align*}
        &\Re \bigl(-\f23 (e^{-i(\f{5\pi}6+\delta)}\lambda+e^{i\f\pi6}y)^\f32+\f23 e^{-i(\f54\pi+\f32\delta)}\lambda^\f32\bigr) \\
        =&\Re\Bigl(-\f23 (e^{-i(\f{5\pi}6+\delta)}\lambda+e^{i\f\pi6}y_2)^\f32+\f23 e^{-i(\f54\pi+\f32\delta)}\lambda^\f32- \int_{y_2}^y (e^{-i(\f{5\pi}6+\delta)}\lambda+e^{i\f\pi6}z)^\f12 e^{i\f\pi6}dz\Bigr)\\
        \leq & -\f23 \bigl(\cos (\f\pi4-\f32\delta)+(\f{\sin (\delta)}{\sin (\f5{18}\pi+2\delta)})^\f32\bigr)\lambda^\f32 -c (y-y_2)^\f32\\
        \leq & -c(y^\f32+\lambda^\f32).
    \end{align*}

    Combining the above discussions, we arrive at $\| \f{Ai'\bigl(e^{-i(\f{5\pi}6+\delta)}\lambda+e^{i\f\pi6}y\bigr)}{Ai\bigl(e^{-i(\f{5\pi}6+\delta)}\lambda\bigr)} \|_{L^\oo_y(\R_+)}\leq C \lambda^\f12,$ which finishes the proof of \eqref{eq2.22}.
\end{proof}

\begin{proof}[Proof of \eqref{eq2.23}]
    By a change of variable, it remains to prove the following case with $k=1$:
    \begin{equation}\label{eqA.5}
        \begin{aligned}
            &\|Ai\bigl(e^{-i(\f{\pi}6+\delta)}\lambda\bigr) Ai\bigl(e^{-i(\f{5\pi}6+\delta)}\lambda+e^{i\f{\pi}6}y\bigr) \|_{L^2_y(\R_+)}\\
        &\qquad\qquad+\|Ai\bigl(e^{i(\f{5\pi}6+\delta)}\lambda\bigr) Ai\bigl(e^{i(\f{\pi}6+\delta)}\lambda+e^{i\f{\pi}6}y\bigr) \|_{L^2_y(\R_+)} \leq C  \lambda^{-\f34}.
        \end{aligned}
    \end{equation}
    Similarly as in the proof of \eqref{eq2.22}, the second term in \eqref{eqA.5} is easier to handle, since $e^{i(\f{\pi}6+\delta)}\lambda+e^{i\f{\pi}6}y$ never touches the negative real line $\R_{-}$. Therefore, we only do the estimate of $\|Ai\bigl(e^{-i(\f{\pi}6+\delta)}\lambda\bigr) Ai\bigl(e^{-i(\f{5\pi}6+\delta)}\lambda+e^{i\f{\pi}6}y\bigr) \|_{L^2_y(\R_+)}$.

    Again, we denote a large enough constant $M>0$ so that \eqref{eqA.1} and \eqref{eqA.2} make sense for $|z|>cM$.

    For $1\leq \lambda\leq M$, we only need to show the $L^2_y$ norm is bounded. When we also have $y\leq 2M$, one easily have
    $$
    \|Ai\bigl(e^{-i(\f{\pi}6+\delta)}\lambda\bigr) Ai\bigl(e^{-i(\f{5\pi}6+\delta)}\lambda+e^{i\f{\pi}6}y\bigr) \|_{L^2_y([0,2M])}
    \leq C M^\f12 \|Ai(z)\|_{L^\oo(|z|\leq 2M)}^2 \leq C_M.
    $$
    When $1\leq\lambda\leq M$ and $y\geq 2M$, we deduce from \eqref{eqA.1} that 
    \begin{align*}
        &\|Ai\bigl(e^{-i(\f{\pi}6+\delta)}\lambda\bigr) Ai\bigl(e^{-i(\f{5\pi}6+\delta)}\lambda+e^{i\f{\pi}6}y\bigr) \|_{L^2_y([2M,\oo))}\\
        &\qquad\qquad\leq C \|Ai(z)\|_{L^\oo(|z|\leq M)} \||y|^{-\f14} e^{-cy^\f32}\|_{L^2_y([2M,\oo))}
        \leq C_M.
    \end{align*}

    For large $\lambda>M$, we again discuss the region of $y$. We recall the notations $y_1$ and $y_2$ introduced in the proof of \eqref{eq2.22}.

     When $\lambda>M$ and $0\leq y<y_1$, we observe $-\f56\pi<\arg\bigl(e^{-i(\f{5\pi}6+\delta)}\lambda+e^{i\f\pi6}y\bigr)<-\f89\pi$, $\f{\sin (\delta)}{\sin (\f\pi{18})}\lambda\leq |e^{-i(\f{5\pi}6+\delta)}\lambda+e^{i\f\pi6}y|\leq \lambda$ and use \eqref{eqA.1} to find
    \begin{align*}
        &|Ai\bigl(e^{-i(\f{\pi}6+\delta)}\lambda\bigr) Ai\bigl(e^{-i(\f{5\pi}6+\delta)}\lambda+e^{i\f{\pi}6}y\bigr)| \\
        \leq &C \lambda^{-\f12} |\exp \Bigl(-\f23 (e^{-i(\f{5\pi}6+\delta)}\lambda+e^{i\f\pi6}y)^\f32+\f23 e^{-i(\f54\pi+\f32\delta)}\lambda^\f32\Bigr)| \\
        \leq &C\lambda^{-\f12}e^{-c\lambda^\f12 y}.
    \end{align*}
    where we used that for $y<y_1$,
    \begin{align*}
        \Re \bigl(-\f23 (e^{-i(\f{5\pi}6+\delta)}\lambda+e^{i\f\pi6}y)^\f32+\f23 e^{-i(\f54\pi+\f32\delta)}\lambda^\f32\bigr) &=-\Re \int_0^y (e^{-i(\f{5\pi}6+\delta)}\lambda+e^{i\f\pi6}z)^\f12 e^{i\f\pi6}dz\\
        &\leq -c \lambda^\f12y. 
    \end{align*}
    Therefore, the $L^2_y$ norm on this region satisfies
    \begin{equation}\label{eqA.6}
        \begin{aligned}
            \|Ai\bigl(e^{-i(\f{\pi}6+\delta)}\lambda\bigr) Ai\bigl(e^{-i(\f{5\pi}6+\delta)}\lambda+e^{i\f{\pi}6}y\bigr)\|_{L^2_y([0,y_1])}
        \leq C \lambda^{-\f12} \|e^{-c\lambda^\f12 y}\|_{L^2_y(\R^2_+)}\leq C \lambda^{-\f34}.
        \end{aligned}
    \end{equation}

    When $\lambda>M$ and $y_1\leq y\leq y_2$, there hold $\arg\bigl(-e^{-i(\f{5\pi}6+\delta)}\lambda-e^{i\f\pi6}y\bigr)\leq \f\pi9$ and $\f{\sin (\delta)}{\sin (\f5{18}\pi+2\delta)}\lambda\leq |e^{-i(\f{5\pi}6+\delta)}\lambda+e^{i\f\pi6}y|\leq \f{\sin (\delta)}{\sin (\f\pi{18})}\lambda$, which together with \eqref{eqA.2} implies
    $$
        |Ai\bigl(e^{-i(\f{5\pi}6+\delta)}\lambda+e^{i\f\pi6}y\bigr)| \leq C |e^{-i(\f{5\pi}6+\delta)}\lambda+e^{i\f\pi6}y|^{-\f14} e^{|e^{-i(\f{5\pi}6+\delta)}\lambda+e^{i\f\pi6}y|^\f32} 
        \leq C \lambda^{-\f14} e^{(\f{\sin (\delta)}{\sin (\f\pi{18})})^\f32\lambda^\f32}.
    $$
    Therefore, we have 
    \begin{align*}
        |Ai\bigl(e^{-i(\f{\pi}6+\delta)}\lambda\bigr) Ai\bigl(e^{-i(\f{5\pi}6+\delta)}\lambda+e^{i\f{\pi}6}y\bigr)| 
        &\leq C \lambda^{-\f12} |\exp \Bigl(\f23 e^{-i(\f{5}4\pi+\f32\delta)}\lambda^\f32+(\f{\sin (\delta)}{\sin (\f\pi{18})})^\f32\lambda^\f32\Bigr)| \\
        &\leq C \lambda^{-\f12} e^{-c\lambda^\f32} ,
    \end{align*}
    where we used from the smallness of $\delta$ that 
    $$
        \Re \bigl( \f23 e^{-i(\f{5}4\pi+\f32\delta)}\lambda^\f32+(\f{\sin (\delta)}{\sin (\f\pi{18})})^\f32\lambda^\f32\bigr)\leq -c\lambda^\f32.
    $$
    Noticing that $y_2-y_1\leq C\lambda$, we arrive at
    \begin{align*}
        \|Ai\bigl(e^{-i(\f{\pi}6+\delta)}\lambda\bigr) Ai\bigl(e^{-i(\f{5\pi}6+\delta)}\lambda+e^{i\f{\pi}6}y\bigr)\|_{L^2_y([y_1,y_2])}\leq C\lambda^{-\f12} e^{-c\lambda^\f32} |y_2-y_1|^\f12\leq Ce^{-c\lambda^\f32}.
    \end{align*}

    When $\lambda>M$ and $y> y_2$, we observe $\f\pi6<\arg\bigl(e^{-i(\f{5\pi}6+\delta)}\lambda+e^{i\f\pi6}y\bigr)<\f89\pi$, $|e^{-i(\f{5\pi}6+\delta)}\lambda+e^{i\f\pi6}y|>|e^{-i(\f{5\pi}6+\delta)}\lambda+e^{i\f\pi6}y_2|\geq c\lambda$ and use \eqref{eqA.1} to find
    \begin{align*}
        &|Ai\bigl(e^{-i(\f{\pi}6+\delta)}\lambda\bigr) Ai\bigl(e^{-i(\f{5\pi}6+\delta)}\lambda+e^{i\f{\pi}6}y\bigr)| \\
        \leq &C |e^{-i(\f{5\pi}6+\delta)}\lambda+e^{i\f\pi6}y|^{-\f14} \lambda^{-\f14} |\exp \Bigl(-\f23 (e^{-i(\f{5\pi}6+\delta)}\lambda+e^{i\f\pi6}y)^\f32+\f23 e^{-i(\f54\pi+\f32\delta)}\lambda^\f32\Bigr)| \\
        \leq &C\lambda^{-\f12} e^{-c(y^\f32+\lambda^\f32)} .
    \end{align*}
    where we used that for $y<y_1$, there exists a small enough constant $c>0$ depending on $\delta$ such that 
    \begin{align*}
        &\Re \bigl(-\f23 (e^{-i(\f{5\pi}6+\delta)}\lambda+e^{i\f\pi6}y)^\f32+\f23 e^{-i(\f54\pi+\f32\delta)}\lambda^\f32\bigr) \\
        =&\Re\Bigl(-\f23 (e^{-i(\f{5\pi}6+\delta)}\lambda+e^{i\f\pi6}y_2)^\f32+\f23 e^{-i(\f54\pi+\f32\delta)}\lambda^\f32- \int_{y_2}^y (e^{-i(\f{5\pi}6+\delta)}\lambda+e^{i\f\pi6}z)^\f12 e^{i\f\pi6}dz\Bigr)\\
        \leq & -\f23 \bigl(\cos (\f\pi4-\f32\delta)+(\f{\sin (\delta)}{\sin (\f5{18}\pi+2\delta)})^\f32\bigr)\lambda^\f32 -c (y-y_2)^\f32\\
        \leq & -c(y^\f32+\lambda^\f32).
    \end{align*}
    Since $e^{-cy^\f32}$ is integrable, we get
    \begin{align*}
        \|Ai\bigl(e^{-i(\f{\pi}6+\delta)}\lambda\bigr) Ai\bigl(e^{-i(\f{5\pi}6+\delta)}\lambda+e^{i\f{\pi}6}y\bigr)\|_{L^2_y([y_2,\oo))}\leq C\lambda^{-\f12} e^{-c\lambda^\f32}\|e^{-cy^\f32}\|_{L^2_y}\leq Ce^{-c\lambda^\f32}.
    \end{align*}

    Combining all the estimates above, we conclude that $$
    \|Ai\bigl(e^{-i(\f{\pi}6+\delta)}\lambda\bigr) Ai\bigl(e^{-i(\f{5\pi}6+\delta)}\lambda+e^{i\f{\pi}6}y\bigr)\|_{L^2_y(\R_+)}\leq C\lambda^{-\f34},
    $$
    where the right hand side $\lambda^{-\f34}$ comes from the largest part \eqref{eqA.6}.This finishes the proof.
\end{proof}

\begin{proof}[Proof of \eqref{eq2.24} and \eqref{eq2.25}]
    The proof of these two inequalities are similar to the proof of \eqref{eq2.22} and \eqref{eq2.23}, but we only need to discuss whether $y$ is large.

    When $0\leq y\leq M$, there holds
    \begin{align*}
        &|\f{Ai'\bigl(e^{i\f{\pi}6}(\cos \delta\, \lambda+i\sin\delta)+e^{i\f\pi6}y\bigr)}{Ai\bigl(e^{i\f{\pi}6}(\cos \delta\, \lambda+i\sin\delta)\bigr)}|\\
        &\qquad+|Ai\bigl(e^{i\f{5\pi}6}(\cos \delta\, \lambda+i\sin\delta)\bigr) Ai\bigl(e^{i\f{\pi}6}(\cos \delta\, \lambda+i\sin\delta+y)\bigr)| \leq C_M.
    \end{align*}

    When $y>M$, we can use \eqref{eqA.1} to find 
    \begin{align*}
        &|\f{Ai'\bigl(e^{i\f{\pi}6}(\cos \delta\, \lambda+i\sin\delta)+e^{i\f\pi6}y\bigr)}{Ai\bigl(e^{i\f{\pi}6}(\cos \delta\, \lambda+i\sin\delta)\bigr)}|\\
        &\qquad+|Ai\bigl(e^{i\f{5\pi}6}(\cos \delta\, \lambda+i\sin\delta)\bigr) Ai\bigl(e^{i\f{\pi}6}(\cos \delta\, \lambda+i\sin\delta+y)\bigr)| \leq C y^{-\f14} e^{-cy^\f32}.
    \end{align*}

    Combining the above two point-wise estimates, we conclude that 
    \begin{align*}
        &\|\f{Ai'\bigl(e^{i\f{\pi}6}(\cos \delta\, \lambda+i\sin\delta)+e^{i\f\pi6}y\bigr)}{Ai\bigl(e^{i\f{\pi}6}(\cos \delta\, \lambda+i\sin\delta)\bigr)}\|_{L^\oo_y}\\
        &\qquad+\|Ai\bigl(e^{i\f{5\pi}6}(\cos \delta\, \lambda+i\sin\delta)\bigr) Ai\bigl(e^{i\f{\pi}6}(\cos \delta\, \lambda+i\sin\delta+y)\bigr) \|_{L^2_y}\leq C,
    \end{align*}
    which implies \eqref{eq2.24}, and \eqref{eq2.25} follows from a further change of variables. This finishes the proof.
\end{proof}

\section{The Laplace transform of $H_\kappa$}

In this appendix, we provide some basic properties about the Laplace transform of $H_\kappa$ which are used in Subsection \ref{subsection3.3}.

\begin{lem}
    {  Let $H_\kappa(\tau)=\tau^\kappa e^{-\f1{12}\tau^3}$ be defined on $\R_+$ with $\kappa>-1$. Then, there holds that for all $|\arg\lambda|<\f23\pi-\delta$,
    \begin{equation}\label{eq:lem A}
        \Bigl|\ccL[H_{\kappa}](\lambda)- \sum_{m=0}^n \f1{m!}\f{(-1)^m}{12^{m}} \Gamma(\kappa+3m+1) \lambda^{-\kappa-3m-1} \Bigr|\leq C_{\delta,\kappa,n} |\lambda|^{-\kappa-3n-4}.
    \end{equation}
    }
\end{lem}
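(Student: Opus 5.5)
The plan is to prove \eqref{eq:lem A} by a Watson-lemma argument: Taylor expand the factor $e^{-\tau^3/12}$ and rotate the integration contour so that the remainder decays in the whole sector $|\arg\lambda|<\f23\pi-\delta$. For real $\lambda>0$ we write
\[
\ccL[H_\kappa](\lambda)=\int_0^\oo \tau^\kappa e^{-\f1{12}\tau^3}e^{-\lambda\tau}\,d\tau .
\]
We expand $e^{-\tau^3/12}=\sum_{m=0}^n \f{(-1)^m}{12^m m!}\tau^{3m}+R_n(\tau)$, where $|R_n(\tau)|\le C_n|\tau|^{3n+3}$ holds uniformly on any sector in which $\Re(\tau^3)\ge0$. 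Each polynomial term integrates to $\f{(-1)^m}{12^m m!}\Gamma(\kappa+3m+1)\lambda^{-\kappa-3m-1}$, which is exactly the sum in \eqref{eq:lem A}. The remainder term $\int_0^\oo\tau^\kappa R_n(\tau)e^{-\lambda\tau}d\tau$ is bounded by $C\int_0^\oo \tau^{\kappa+3n+3}e^{-c|\lambda|\tau}d\tau\lesssim|\lambda|^{-\kappa-3n-4}$, provided $\Re(\lambda\tau)\ge c|\lambda||\tau|$ along the path of integration.

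To reach complex $\lambda$, we use analyticity to rotate the ray of integration to $\tau\in e^{i\theta}[0,\oo)$ with $|\theta|\le\f\pi6$. On such a ray $\Re(\tau^3)=|\tau|^3\cos3\theta\ge0$, so $e^{-\tau^3/12}$ stays bounded and decays at infinity. This decay, together with the decay of the integrand on the arc at infinity, justifies Cauchy's theorem. After rotation the Laplace transform and all the Gamma-function terms continue analytically, and the formula is unchanged because both sides are analytic in $\lambda$. For a given $\lambda$ with $|\arg\lambda|<\f23\pi-\delta$ we choose $\theta=-\sgn(\arg\lambda)\min(|\arg\lambda|,\f\pi6)$. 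Then $|\arg(\lambda\tau)|\le\f\pi2-\delta$, which gives $\Re(\lambda\tau)\ge\sin\delta\,|\lambda||\tau|$.

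On the rotated ray the remainder estimate $|R_n(\tau)|\le C_n|\tau|^{3n+3}$ follows from the integral form of Taylor's theorem. It applies because $|e^{-s\tau^3/12}|\le1$ for $s\in[0,1]$ whenever $\Re\tau^3\ge0$. The remainder integral is then bounded by $C_{n}\int_0^\oo r^{\kappa+3n+3}e^{-\sin\delta\,|\lambda| r}dr=C_{\delta,\kappa,n}|\lambda|^{-\kappa-3n-4}$, and this bound holds for all $\lambda\ne0$ in the sector, including small $|\lambda|$.

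The main obstacle is the uniform choice of contour near the edges of the sector, together with checking that the arcs at infinity vanish. On the arc between angle $0$ and angle $\theta$, the integrand is bounded by $r^\kappa e^{-r^3\cos(3\phi)/12}e^{-\Re(\lambda re^{i\phi})}$ with $\cos3\phi\ge0$. This bound does not obviously decay if $\cos3\phi=0$ exactly, so we stop at $|\theta|=\f\pi6-\delta'$ instead. Taking $\delta'$ small compared with $\delta$ still leaves $|\arg(\lambda\tau)|\le\f\pi2-\delta/2$, so the argument above goes through.
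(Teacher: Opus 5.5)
Your proposal is correct and follows essentially the paper's argument: rotate the Laplace contour into the sector where $\Re(\tau^3)\ge 0$ and $\Re(\lambda\tau)\gtrsim|\lambda||\tau|$, Taylor expand $e^{-\tau^3/12}$, integrate the monomials to Gamma functions, and bound the remainder by a Gamma integral. The only difference is the choice of ray: the paper always rotates to the edge ray $\arg\tau=-\pi/6$, using conjugation symmetry for $\Im\lambda<0$, where $e^{-\tau^3/12}$ becomes the pure phase $e^{ix^3/12}$; your $\lambda$-dependent angle strictly inside $|\theta|<\pi/6$ instead makes the vanishing of the arc at infinity immediate.
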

\begin{proof}
    Due to conjugation symmetry, we only prove the expansion for $\text{Im} \lam>0$.
    First, we extend the definition of $H_\kappa$ to be a complex function $H_\kappa(z)$ on $\arg z\in (-\pi,\pi)$. Noticing that $H_\kappa(z)\eqdefa z^\kappa e^{-\f1{12}z^3}$ is analytic and $|e^{-\f1{12}z^3}|$ remains bounded in the region $\arg z \in [-\f\pi6,\f\pi6]$, we can use Cauchy's theorem to get that for any $\arg\lambda\in [0,\f23\pi)$,
    $$
        \ccL[H_\kappa](\lambda)=\int_0^\oo H_\kappa(\tau)e^{-\lambda \tau}d\tau=\int_{L_{-\f\pi6}} H_\kappa(z) e^{-\lambda z}dz,
    $$
    where $L_{-\f\pi6}=\{\arg z=-\f\pi6\}$. Given the variable $z=e^{-i\f{\pi}6}x$ for the line $L_{-\f\pi6}$, we have
    $$
        \ccL[H_\kappa](\lambda)=\int_0^\oo e^{-i(\kappa+1)\f\pi6} x^{\kappa}e^{\f{i}{12}x^3} e^{-e^{-i\f{\pi}6}\lambda x}dx.
    $$
    Then, by Taylor expansion, we have that for all $x>0$,
    $$
        e^{\f{i}{12}x^3}=\sum_{m=0}^n \f1{m!}(\f{i}{12})^m x^{3m} +R_n(x) \with |R_n(x)|\leq C x^{3n+3}.
    $$
    For the polynomials, by changing $z=e^{-i\f{\pi}6}\lambda x$, we use again the Cauchy's theorem and the notation of $\Gamma$-functions to compute
    \begin{align*}
        &\f1{m!}(\f{i}{12})^{m}\int_0^\oo e^{-i(\kappa+1)\f\pi6} x^{\kappa+3m}e^{-e^{-i\f{\pi}6}\lambda x}dx \\
        =& \f1{m!}(\f{i}{12})^{m} \int_{L_{\arg \lambda-\f\pi 6}}  e^{-i(\kappa+1)\f\pi6} (\f{e^{i\f\pi6}}{\lambda})^{\kappa+3m+1} z^{\kappa+3m}e^{-z}dz\\
        =& \f1{m!}\f{(-1)^m}{12^{m}} \lambda^{-\kappa-3m-1}\int_0^\oo  z^{\kappa+3m}e^{-z}dz\\
        =& \f1{m!}\f{(-1)^m}{12^{m}} \Gamma(1+\kappa+3m) \lambda^{-\kappa-3m-1},
    \end{align*}
    where we used again $\arg \lambda\in[0,\f23\pi)$ so that $L_{\arg \lambda-\f\pi 6}$ belongs to $\{ \text{Re} z>0 \}$. For the remainder term, we need $\arg (e^{-i\f\pi6}\lambda)\in[-\f\pi6,\f\pi2-\delta)$ so that $\text{Re}(e^{-i\f\pi6}\lambda) \geq C_\delta|\lambda| $ to estimate
    \begin{align*}
        &|\int_0^\oo e^{-i(\kappa+1)\f\pi6} x^{\kappa}R_n(x) e^{-e^{-i\f{\pi}6}\lambda x}dx| \\
        \leq & C \int_0^\oo x^{\kappa+3n+3}e^{-\text{Re}(e^{-i\f\pi6}\lambda)x}dx\\
        =& C \text{Re}(e^{-i\f\pi6}\lambda)^{-\kappa-3n-4}\int_0^\oo  x^{\kappa+3n+3}e^{-x}dx\\
        \leq& C_{\delta,\kappa,n}  |\lambda|^{-\kappa-3n-4}.
    \end{align*}

Combining the above estimates, we arrive at \eqref{eq:lem A}, which finishes the proof.
\end{proof}

\noindent{\bf Conflict of interest:}  We confirm that we do not have any conflict of interest. 

\noindent{\bf Data availability:} The manuscript has no associated data. 

\noindent{\bf Author Contributions Statement:} N.L., N.M., and W.Z. have the same contribution to the manuscript.

\bibliographystyle{siam.bst} 
\bibliography{references.bib}

\end{document}